\documentclass[reqno]{amsart}
\usepackage{amssymb,amsmath,amsfonts,amsthm,mathrsfs}
\usepackage[colorlinks=true, citecolor=blue, anchorcolor=red]{hyperref}
\usepackage{enumerate}
\usepackage{bbm}
\usepackage{tikz-cd}

\newcommand{\R}{\mathbb R}
\newcommand{\N}{\mathbb N}
\newcommand{\C}{\mathbb C}
\newcommand{\Z}{\mathbb Z}

\renewcommand{\Re}{\mathop{\text{\upshape{Re}}}}
\renewcommand{\Im}{\mathop{\text{\upshape{Im}}}}

\renewcommand{\Im}{\operatorname{Im}}

\renewcommand{\epsilon}{\varepsilon}
\renewcommand{\bar}[1]{\overline{#1}}

\renewcommand{\tilde}{\widetilde}
\renewcommand{\phi}{\varphi}

\def\cO{\mathcal{O}}

\def\cS{\mathcal{S}}


\def\txtd{{\textnormal{d}}}
\def\txte{{\textnormal{e}}}
\def\txti{{\textnormal{i}}}

\def\txtD{{\textnormal{D}}}

\def\Re{{\textnormal{Re}}}
\def\Im{{\textnormal{Im}}}

\def\ra{\rightarrow}

\newtheorem{theorem}{Theorem}[section]
\newtheorem{lemma}[theorem]{Lemma}
\newtheorem{corollary}[theorem]{Corollary}
\newtheorem{proposition}[theorem]{Proposition}
\theoremstyle{definition}

\newtheorem{remark}[theorem]{Remark}
\newtheorem{example}[theorem]{Example}

\numberwithin{equation}{section}

\def\XXint#1#2#3{{\setbox0=\hbox{$#1{#2#3}{\int}$ }
\vcenter{\hbox{$#2#3$ }}\kern-.6\wd0}}

\newcommand{\be}{\begin{equation}}
\newcommand{\ee}{\end{equation}}
\newcommand{\benn}{\begin{equation*}}
\newcommand{\eenn}{\end{equation*}}
\newcommand{\bea}{\begin{eqnarray}}
\newcommand{\eea}{\end{eqnarray}}
\newcommand{\beann}{\begin{eqnarray*}}
\newcommand{\eeann}{\end{eqnarray*}}

\makeatletter
\@namedef{subjclassname@2020}{\textup{2020} Mathematics Subject Classification}
\makeatother

\begin{document}

\title[Slow Manifolds for Infinite-Dimensional Evolution Equations]
{Slow Manifolds for Infinite-Dimensional Evolution Equations}
\author{Felix Hummel}
\address{Technical University of Munich\\ Department of Mathematics \\ Boltzmannstra{\ss}e 3\\ 85748 Garching bei M\"unchen \\ Germany}
\email{hummel@ma.tum.de}

\author{Christian Kuehn}
\address{Technical University of Munich\\ Department of Mathematics \\ Boltzmannstra{\ss}e 3\\ 85748 Garching bei M\"unchen \\ Germany}
\email{ckuehn@ma.tum.de}

\subjclass[2020]{35B25, 37D10, 37L25, 35A24}
\keywords{geometric singular perturbation theory, slow manifolds, infinite dimensions}

\begin{abstract}
	We extend classical finite-dimensional Fenichel theory in two directions to infinite dimensions. Under comparably weak assumptions we show that the solution of an infinite-dimensional fast-slow system is approximated well by the corresponding slow flow. After that we construct a two-parameter family of slow manifolds $S_{\epsilon,\zeta}$ under more restrictive assumptions on the linear part of the slow equation. The second parameter $\zeta$ does not appear in the finite-dimensional setting and describes a certain splitting of the slow variable space in a fast decaying part and its complement. The finite-dimensional setting is contained as a special case in which $S_{\epsilon,\zeta}$ does not depend on $\zeta$. Finally, we apply our new techniques to three examples of fast-slow systems of partial differential equations.
\end{abstract}

\maketitle

\section{Introduction}
\label{sec:intro}

In this work, we study infinite-dimensional fast-slow evolution equations of the form
\be
\label{eq:maineq}
\begin{array}{rcl}
\epsilon \partial_t u^\epsilon &=& A u^\epsilon + f(u^\epsilon,v^\epsilon),\\
\partial_t v^\epsilon &=& B v^\epsilon + g(u^\epsilon,v^\epsilon), 
\end{array}
\ee
where $\varepsilon \geq 0$ is a small parameter, $A$ and $B$ are linear operators on Banach spaces $X$ and $Y$ respectively, $f,g$ are sufficiently regular nonlinearities, and $(u^\epsilon,v^\epsilon)=(u^\epsilon(t),v^\epsilon(t))\in X\times Y$ are the unknown functions, where the superscript indicates the dependence of the solution on $\epsilon$. In particular, the class of systems~\eqref{eq:maineq} are multiscale evolution equations, where the small parameter $\epsilon$ hints at a formal time-scale separation between the variables $u^\epsilon$ and $v^\epsilon$. 

The motivation to study~\eqref{eq:maineq} is best explained via the finite-dimensional setting, where $(u^\epsilon,v^\epsilon)\in\R^m\times \R^n$, $A\in\R^{m\times m}$, $B\in\R^{n\times n}$, and one often assumes that $f,g$ are sufficiently smooth. Multiple time scale ordinary differential equations (ODEs) are employed across broad areas of mathematics~\cite{KuehnBook} and form one of the few classes of higher-dimensional dynamical systems, where analytical results about nonlinear dynamics can be obtained due to the time scale separation structure. If we let $\epsilon\rightarrow 0$ in~\eqref{eq:maineq} we obtain the slow subsystem (or reduced system)
\be
\label{eq:slowsub}
\begin{array}{rcl}
0 &=& A u^0 + f(u^0,v^0),\\
\partial_t v^0 &=& B v^0 + g(u^0,v^0), 
\end{array}
\ee
which is a differential-algebraic equation defined on the critical set
\benn
S_0:=\{(u^0,v^0)\in\R^m\times \R^n:0=A u^0 + f(u^0,v^0)\},
\eenn 
which we shall assume to be a manifold referred to as the critical manifold. If $\cS_0\subseteq S_0$ is compact and normally hyperbolic submanifold, i.e., all eigenvalues of the matrix $A+\txtD_uf(z)\in\R^{m\times m}$ have nonzero real part for all $z\in\cS_0$, then Fenichel-Tikhonov theory~\cite{Fenichel4,Tikhonov} guarantees the existence of a locally invariant slow manifold $\cS_\epsilon$. Of course, for practical applications, the case of a critical manifold, which attracting in the fast directions, is the most frequently encountered. This case occurs when all eigenvalues of $A+\txtD_uf(z)$ have negative real part and we shall focus on the attracting setting here. For any normally hyperbolic critical manifold, the flow on $\cS_\epsilon$ is approximated well by the slow subsystem flow of~\eqref{eq:maineq}; see also~\cite{Jones,KuehnBook,WigginsIM} for detailed expositions of Fenichel theory. One reason to intuitively expect such an approximation result in finite dimensions is better visible on the fast time scale $r:=t/\epsilon$, which leads upon substitution in~\eqref{eq:maineq} to 
\be
\label{eq:fastsys}
\begin{array}{rcl}
\partial_{r} u^\epsilon &=& A u^\epsilon + f(u^\epsilon,v^\epsilon),\\
\partial_r v^\epsilon &=& \epsilon (B v^\epsilon + g(u^\epsilon,v^\epsilon)). 
\end{array}
\ee
Indeed, sending $\epsilon\ra 0$ in~\eqref{eq:fastsys} yields the fast subsystem (or layer equations)
\be
\label{eq:fastsub}
\begin{array}{rcl}
\partial_{r} u^0 &=& A u^0 + f(u^0,v^0),\\
\partial_r v^0 &=& 0. 
\end{array}
\ee
The full fast-slow system on $\R^m\times \R^n$ can then be treated near $\cS_0$ as a bounded perturbation of the fast subsystem since $B$ and $g$ satisfy local bounds due to the assumptions of sufficient regularity on $g$, so the fast linear hyperbolic dynamics driven by $A+\txtD_uf(z)$ for $z\in \cS_0$ dominates near $z$. To make this intuition precise is already difficult in the finite-dimensional setting with Fenichel theory providing the comprehensive standard~\cite{Fenichel4}, even for multiple time scale dynamical systems, which cannot be written directly~\cite{Wechselberger4} in the standard form~\eqref{eq:maineq}.\medskip   

Transferring the finite-dimensional situation to general evolution equations on Banach spaces turns out to be challenging. At first sight, one may hope that the classical Fenichel approach to show the existence of $\cS_\epsilon$ via a Lyapunov-Perron method or via a Hadamard graph transform~\cite{Fenichel4,WigginsIM} can still be applied utilizing variants/extensions of infinite-dimensional center manifold theory~\cite{VanderbauwhedeIooss}. So far, the best available results in this direction are due to Bates et al.~\cite{Bates_Lu_Zeng_1998,Bates_Lu_Zeng_2008}, who cover the case of semiflows, when the perturbation induced by the slow dynamics is bounded. In particular, this includes the case of partially dissipative systems, where $A=\Delta$ is the Laplacian and $B=0$ so that the slow variable dynamics is an ODE. Yet, even for quite standard reaction-diffusion systems~\cite{Grindrod,Henry,KuehnBook1} with $A=\Delta$ and $B=\Delta$ on bounded domains, there has been no major progress to generalize Fenichel's theory from the 1970s. The main problem is that on the fast time scale we can never view $\epsilon Bv^\epsilon$ as a bounded perturbation if $B$ is a differential operator (this statement will be made precise below); indeed, for differential operators we encounter the formal limit $0\cdot \infty$ since $B$ is an unbounded operator. Furthermore, the classical concept of normal hyperbolicity is problematic since $\epsilon Bv^\epsilon$ is not necessarily ``small'' in any norm compared to the linear part of the $u^\epsilon$-variable. For example, when $B=\Delta$ on a bounded domain, a spectral Galerkin decomposition shows that the $v^\epsilon$-variable may have fast decaying components in its linear part. This implies that the case of hyperbolic operators for $B$ (which we include here as well) is somewhat easier. In fact, a very special case of fast-slow invariant manifold theory was carried out for the Maxwell-Bloch equations in~\cite{MenonHaller}, where $u^\epsilon$ is governed by an ODE and $B$ is a first-order partial derivative.

Another hope might be that one can adapt the theory of inertial manifolds~\cite{Robinson1,Temam}, which has been used to constructed low-dimensional attracting invariant manifolds for several classes of partial differential equations (PDEs). Yet, inertial manifold theory is based on global dissipation and compact embeddings to construct reduced lower-dimensional invariant manifolds. For the fast-slow evolution system~\eqref{eq:maineq}, we are not interested in global reduction but local persistence/perturbation of manifolds. In fact, we shall see below that our slow manifold can even grow upon perturbation in a suitable sense in comparison to the critical manifold.\medskip

In this work, we provide a quite general fast-slow invariant manifold theory for the evolution equations~\eqref{eq:maineq}. We briefly outline our results in a non-technical form:

\begin{itemize}
 \item We identify the key problems with Fenichel theory in infinite dimensions via several explicit examples including the problems with unbounded and differential operators $B$ as well as with the notion of normal hyperbolicity; see Section~\ref{Section:Bates_Difficult}. 
 \item We assume that $A$ is the generator of a $C_0$ semigroup having zero in its resolvent and that the nonlinearity is (locally) Lipschitz. Then we prove an approximation result that the flow of the full evolution equation for sufficiently small $\epsilon>0$ is, near $S_0$, well-approximated by the flow of the slow subsystem on $S_0$; see Theorem~\ref{Thm:Original_and_Modified_Flow_Close}.
 \item Under suitable regularity assumptions on $B$ and $g$, we prove the existence of a two-parameter family of slow manifolds $S_{\epsilon,\zeta}$. The second small parameter $\zeta>0$ controls additional ``fast'' contributions of the $v^\epsilon$-dynamics. We also prove differentiability of $S_{\epsilon,\zeta}$ if $f$ is $C^1$, we show estimates on the distance of $S_{\epsilon,\zeta}$ to the critical manifold, and a result regarding local attraction of trajectories near $S_{\epsilon,\zeta}$; see Section~\ref{sec:slowfinal}.      
\end{itemize}      

In the proofs, there are several important new technical steps. The approximation result given in Theorem~\ref{Thm:Original_and_Modified_Flow_Close} does not provide a slow manifold, and is hence weaker than classical Fenichel theory but it also uses weaker assumptions. It shows that there exists a very general result that the slow subsystem can be used to approximate the full dynamics in a suitable sense near $S_0$. In fact, the proof of this result seems to be difficult to achieve on the fast time scale, or even directly with the original full evolution equations~\eqref{eq:maineq} on the slow time scale. We use an intermediate approximating evolution equation (see also the calculations starting from equation \eqref{Eq:Modified_Fast}), which changes the right-hand side of the fast component as follows
\be
\label{eq:intermediate}
\begin{array}{rcl}
\epsilon \partial_t u^{\epsilon,0} &=& A u^{\epsilon,0} + f(u^{\epsilon,0},v^0) -\epsilon\partial_tA^{-1}f(h^0(v^0),v^0),\\
\partial_t v^0 &=& B v^0 + g(h^0(v^0),v^0), 
\end{array}
\ee
where $h^0:Y\rightarrow X$ is a local parametrization of the critical manifold. On the finite-dimensional level, when $X=\R^m$ and $Y=\R^n$ one can nicely see, why this choice might be helpful. Looking formally at different orders of $\cO(\epsilon^k)$ one has for $k=0,1$ from the first equation 
\benn
A u^{0,0} + f(u^{0,0},v^0)=0\quad \text{and}\quad u^{0,0}+A^{-1}f(h^0(v^0),v^0) =\textnormal{constant},
\eenn 
so upon using an initial condition with $h^0(v^0)=u^{0,0}$ one just obtains the condition of the critical manifold twice, to leading-order and to first order in $\epsilon$. This means that our intermediate system \eqref{eq:intermediate} is likely to be a locally better approximation to the full fast-slow dynamics near $S_0$ and it is a regularization of the slow subsystem. Other important ingredients to obtain the approximation result are the use of interpolation-extrapolation scales and suitably adapted Gronwall-type arguments involving mild solutions. 

For the construction of the slow manifold family $S_{\epsilon,\zeta}$, we use a re-partitioning the slow dynamics into two parts, which can formally be expressed as
\benn
Y=Y^\zeta_F\oplus Y^\zeta_S.
\eenn
The part $Y^\zeta_S$ comes from modes/directions, where $\epsilon B$ yields a sufficiently small perturbation so that these modes are slow. The other part $Y^\zeta_F$ comes from modes, which are fast as $B$ dominates the small parameter $\epsilon$. The control of this splitting leads to a doubly-singularly perturbed problem with a second small parameter $\zeta>0$. Evidently, such a splitting relies on having a certain spectral gap of the slow dynamics, which we need to impose. Having this splitting available, we then proceed to set up a Lyapunov-Perron functional iteration to obtain the existence of $S_{\epsilon,\zeta}$. The dynamical properties of $S_{\epsilon,\zeta}$ can be established using relatively long estimates in combination with mild solution representations, time differentiation of the manifold parametrization along solutions, and contraction mapping arguments.\medskip

The paper is structured as follows: In Section~\ref{sec:prelim} we collect technical background results regarding interpolation-extrapolation scales of Banach spaces and operators on these spaces, as well as suitable variants of Gronwall-type lemmas. In Section~\ref{Section:Bates_Difficult}, we illustrate the difficulties of the classical Fenichel viewpoint and the barriers to generalize the bounded perturbation results for semiflows. In Section~\ref{sec:genfs}, we prove the general result on slow flow approximation for semiflows, while in Section~\ref{sec:slowfinal} we obtain the slow manifold family and its precise dynamic properties. We present three illustrating examples in Section~\ref{sec:examples} and conclude with an outlook in Section~\ref{sec:outlook}.   


\section{Preliminaries}
\label{sec:prelim}

\subsection{Interpolation-Extrapolation Scales}

We briefly introduce some required notions and results in connection with interpolation-extrapolation scales. As a general reference, we would like to mention \cite[Chapter V]{Amann_1995}.\\
 Let $T\colon X\supset D(T)\to X$ be a densely defined closed linear operator on a Banach space $X$ with $0\in\rho(T)$. Moreover, for $\theta\in(0,1)$ let $(\cdot,\cdot)_{\theta}$ be an exact admissible interpolation functor, i.e. an exact interpolation functor such that $X_1$ is dense in $(X_0,X_1)_{\theta}$ whenever $X_1\stackrel{d}{\hookrightarrow} X_0$. We define a family of Banach spaces $(X_{\alpha})_{\alpha\in[-1,\infty)}$ and a family of operators $(T_{\alpha})_{\alpha\in[-1,\infty)}\in\mathcal{B}(X_{\alpha},X_{\alpha+1})$ as follows :
 \begin{itemize}
  \item For $k\in\N_0$ we choose $X_k:=D(T^k)$ endowed with $\|x\|_{X_k}:=\|T^kx\|_X$ $(x\in D(T^k))$. In particular, $X_0=D(T^0)=D(\operatorname{id}_X)=X$. Moreover, $T_k:=T\vert_{E_{k+1}}$.
  \item $X_{-1}$ is defined as the completion of $X=X_0$ with respect to the norm $\|x\|_{X_{-1}}=\| T^{-1} x \|_{X_0}$. The operator $T_{0}=T$ is then closable on $X_{-1}$ and $T_{-1}$ is defined to be the closure. One can also define  $(X_{-k},T_{-k})$ for $k\in\N$ by iteration, but we do not go beyond $k=-1$ in this paper.
  \item For $k\in\N_0\cup\{-1\}$, $\theta\in(0,1)$ and $\alpha=k+\theta$ we define $X_{\alpha}:=(X_{k},X_{k+1})_{\theta}$ and $T_{\alpha}=T_k\vert_{D(T_{\alpha})}$ where
  \[
   D(T_{\alpha})=\{x\in X_{k+1}: T_kx\in X_{\alpha}\}.
  \]
 \end{itemize}
The family $(X_{\alpha},T_{\alpha})_{\alpha\in[-1,\infty)}$ is a densely injected Banach scale in the sense that
\[
 X_{\alpha}\stackrel{d}{\hookrightarrow} X_{\beta}
\]
whenever $\alpha\geq\beta$ (i.e. the injection is continuous with dense range).  and
\[
 T_{\alpha}\colon X_{\alpha+1}\to X_{\alpha}
\]
is an isomorphism for all $\alpha\in\R$. Moreover $T_{\alpha}\colon X_{\alpha}\supset X_{\alpha+1}\to X_{\alpha}$ is a densely defined closed linear operator with $0\in\rho(T_{\alpha})$ for all $\alpha\in\R$. The family $(X_{\alpha},T_{\alpha})_{\alpha\in\R}$ is an interpolation-extrapolation scale.

One of the nice things about interpolation-extrapolation scales is that semigroups can be shifted along these scales. More precisely, we have the following (c.f. \cite[Chapter V, Theorem 2.1.3]{Amann_1995}):
\begin{theorem}\label{Thm:Semigroup_in_Scales}
 Let $T$ be the generator of a $C_0$-semigroup $(S(t))_{t\geq0}$ and let $\omega_S\in\R$ be the growth bound of $S$, i.e.
 \[
	 \omega_S:=\{\omega\in\R\,\vert\,\exists M>0~\forall t\geq0: \|S(t)\|_{\mathcal{B}(X)}\leq M\txte^{\omega t} \}.
 \]
Then $T_{\alpha}\colon X_{\alpha}\supset X_{\alpha+1}\to X_{\alpha}$ also generates a $C_0$ semigroup $(S_{\alpha}(t))_{t\geq0}$ with the same growth bound and for all $\alpha,\beta \in[-1,\infty)$, $\alpha\geq \beta$, the diagram
\[
\begin{tikzcd}
X_{\alpha} \arrow{r}{S_{\alpha}(t)} \arrow[hookrightarrow]{d} & X_{\alpha} \arrow[hookrightarrow]{d} \\
X_{\beta} \arrow{r}{S_{\beta}(t)}& X_{\beta}
\end{tikzcd}
\]
 commutes. Moreover, if $(S(t))_{t\geq0}$ is holomorphic then the same holds for $(S_{\beta}(t))_{t\geq0}$ and for all $\omega> \omega_S$ there is a constant $C$ also depending on $\alpha$ and $\beta$ such that
 \[
  \| S_{\beta}(t) \|_{\mathcal{B}(E_{\beta},E_{\alpha})}\leq Ct^{\beta-\alpha}\txte^{-\omega t}\quad(t>0).
 \]
\end{theorem}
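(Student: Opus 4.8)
The plan is to reduce the entire statement to the single fact that on each \emph{integer} rung of the scale the operator $T_k$ is a similarity copy of $T$, to transport the semigroup along that similarity, and then to fill in the fractional rungs purely by interpolation --- for a general admissible interpolation functor there is no functional calculus available, so this bootstrapping from the integer levels is the essential device. First I would treat $k\in\N_0$: by the very definition of the norms, $\Phi_k:=T^k\colon X_k\to X_0$ is an isometric isomorphism, and $\Phi_k T_k x=T^{k+1}x=T\Phi_k x$ for $x\in X_{k+1}=D(T_k)$ with $\Phi_k x\in D(T)$. Thus $T_k$ on $X_k$ is similar to $T$ on $X_0$ and hence generates the $C_0$-semigroup $S_k(t):=\Phi_k^{-1}S(t)\Phi_k$, which satisfies $\norm{S_k(t)}_{\mathcal{B}(X_k)}=\norm{S(t)}_{\mathcal{B}(X_0)}$ and therefore has growth bound $\omega_S$; moreover, since $S(t)$ commutes with $T^k$ on $D(T^k)$, one in fact has $S_k(t)x=S(t)x$ for $x\in X_k$, so these semigroups are mutually consistent. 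For $k=-1$ I would instead note that $\norm{S(t)x}_{X_{-1}}=\norm{S(t)T^{-1}x}_{X_0}\le M\txte^{\omega_S t}\norm{x}_{X_{-1}}$, so $S(t)$ extends by density from $X_0$ to a bounded operator $S_{-1}(t)$ on $X_{-1}$ with the same growth bound, whose generator is the closure $T_{-1}$. In particular, every semigroup produced so far is a restriction of $S_{-1}(t)$ to the appropriate scale space.

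Next I would handle the fractional rungs $\alpha=k+\theta$. Here $S_k(t)$ leaves both $X_k$ and $X_{k+1}$ invariant with equal operator norms --- invariance of $X_{k+1}$ because $S(t)D(T)\subseteq D(T)$ --- so by exactness of $(\cdot,\cdot)_\theta$ it restricts to a bounded operator $S_\alpha(t)$ on $X_\alpha=(X_k,X_{k+1})_\theta$ with $\norm{S_\alpha(t)}_{\mathcal{B}(X_\alpha)}=\norm{S(t)}_{\mathcal{B}(X_0)}$. Strong continuity on $X_\alpha$ follows from strong continuity on the dense subspace $X_{k+1}$ together with uniform boundedness, and a resolvent computation identifies the generator as precisely $T_\alpha$ with its prescribed domain. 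This establishes the first assertion for every $\alpha\in[-1,\infty)$, and the commuting diagram is then automatic: for $\alpha\ge\beta$ both $S_\alpha(t)$ and $S_\beta(t)$ are restrictions of $S_{-1}(t)$, so they intertwine the inclusion $X_\alpha\hookrightarrow X_\beta$.

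For the holomorphic case, holomorphy of $S$ passes to each $S_k(t)=\Phi_k^{-1}S(t)\Phi_k$ and thence to every $S_\alpha(t)$, since $S_k(z)$ is simultaneously holomorphic into $\mathcal{B}(X_k)$ and into $\mathcal{B}(X_{k+1})$ while $(\cdot,\cdot)_\theta$ carries $\mathcal{B}(X_k)\cap\mathcal{B}(X_{k+1})$ continuously into $\mathcal{B}(X_\alpha)$; in particular each $T_\beta$ is sectorial. Fixing an integer $m$, the standard smoothing estimate for the holomorphic semigroup $S_\beta$ on $X_\beta$ then gives $\norm{S_\beta(t)}_{\mathcal{B}(X_\beta,X_{\beta+m})}\le C_m\,t^{-m}\txte^{-\omega t}$ for $t>0$, using that $D(T_\beta^m)=X_{\beta+m}$ with equivalent norm. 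For arbitrary $\alpha\ge\beta$ I would choose $m\ge\alpha-\beta$ and interpolate the range between $X_\beta$ (where $S_\beta(t)$ is merely bounded) and $X_{\beta+m}$, invoking the reiteration identity $(X_\beta,X_{\beta+m})_{(\alpha-\beta)/m}=X_\alpha$ for the scale; this yields
\[
\norm{S_\beta(t)}_{\mathcal{B}(X_\beta,X_\alpha)}\;\le\;C\,t^{\beta-\alpha}\,\txte^{-\omega t}\qquad(t>0),
\]
which is the claimed inequality.

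The hard part is not the conceptual step --- transporting $S$ along the isomorphisms $\Phi_k$ is routine --- but the fractional-level bookkeeping: checking that the generator of $S_\alpha$ is the specific operator $T_\alpha$ with its prescribed domain rather than merely \emph{a} generator, and making the holomorphic smoothing estimate robust by running it through the sectoriality of $T_\beta$ and the reiteration property of the scale rather than through a crude interpolation between adjacent integer levels, which is lossy when $\alpha-\beta<1$. This is precisely where the properties of admissible interpolation functors recorded above do the work that a bounded-imaginary-powers calculus for $T$ would otherwise do.
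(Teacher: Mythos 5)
First, a point of reference: the paper does not prove this theorem at all --- it is quoted from Amann (Chapter V, Theorem 2.1.3), so your proposal is being measured against the standard proof there rather than against anything in this manuscript. Your overall architecture is exactly the standard one: transport $S$ to the integer levels by the similarity $\Phi_k=T^k$ (and extend by density for $k=-1$), observe that all these semigroups are restrictions of $S_{-1}$, use exactness plus admissibility (density of $X_{k+1}$ in $X_\alpha$) to restrict to the fractional levels and to get strong continuity, and identify the generator through the resolvent. All of that is correct, modulo two cosmetic slips: exactness gives $\|S_\alpha(t)\|_{\mathcal{B}(X_\alpha)}\leq\|S(t)\|_{\mathcal{B}(X_k)}^{1-\theta}\|S(t)\|_{\mathcal{B}(X_{k+1})}^{\theta}$, not equality; and the growth bound need not be attained, so the estimate should read $\|S(t)\|\leq M_\omega\txte^{\omega t}$ for each $\omega>\omega_S$ rather than with $\omega_S$ itself (this does not affect the conclusion about equal growth bounds).

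The one genuine gap is the last step of the smoothing estimate. You invoke the reiteration identity $(X_\beta,X_{\beta+m})_{(\alpha-\beta)/m}=X_\alpha$, but the theorem is stated for an arbitrary family of exact admissible interpolation functors $(\cdot,\cdot)_\theta$ (possibly a different functor for each $\theta$), and in that generality no reiteration property is available: $X_\alpha$ is \emph{defined} only through the adjacent integer couple $(X_j,X_{j+1})_\theta$, and nothing in the axioms relates it to interpolation of non-adjacent or fractional couples, nor even guarantees the interpolation inequality $\|x\|_{X_\alpha}\lesssim\|x\|_{X_\beta}^{1-\eta}\|x\|_{X_{\beta+m}}^{\eta}$ that your estimate would need. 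Ironically, the ``crude interpolation between adjacent integer levels'' that you dismiss as lossy is, when done in two steps, exactly sharp and is how the proof should go: from the similarity one has the integer-to-integer bounds $\|S_k(t)\|_{\mathcal{B}(X_k,X_j)}=\|T^{j-k}S(t)\|_{\mathcal{B}(X)}\leq C t^{k-j}\txte^{\omega t}$; interpolating in the \emph{source} between $X_k$ and $X_{k+1}$ (with the trivial couple $(X_j,X_j)$ in the target) gives, for $\beta=k+\sigma$,
\[
\|S_\beta(t)\|_{\mathcal{B}(X_\beta,X_j)}\leq C\,t^{(1-\sigma)(k-j)+\sigma(k+1-j)}\txte^{\omega t}=C\,t^{\beta-j}\txte^{\omega t},
\]
and then interpolating in the \emph{target} between $X_j$ and $X_{j+1}$ (with $\alpha=j+\theta$) gives $\|S_\beta(t)\|_{\mathcal{B}(X_\beta,X_\alpha)}\leq C\,t^{\beta-\alpha}\txte^{\omega t}$, with no loss and using only exactness and the definition of the scale. (For the concrete scales used later in the paper --- complex interpolation of Bessel potential spaces --- reiteration does hold, so your argument is fine there; it just does not prove the theorem in the stated generality.) Your passage of holomorphy to the fractional levels is acceptable as sketched, though it should be phrased as: local boundedness of $\|S_\alpha(z)\|_{\mathcal{B}(X_\alpha)}$ on a sector via exact interpolation, combined with holomorphy of $z\mapsto S(z)x$ in $X_{k+1}$ for $x$ in the dense subspace $X_{k+1}$, upgrades to $\mathcal{B}(X_\alpha)$-valued holomorphy by the usual vector-valued argument.
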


\subsection{Estimates for the Incomplete Gamma Function}

In this paper we frequently encounter terms of the form
\[
	\int_0^t\frac{\txte^{\epsilon^{-1}\omega(t-s)}}{\epsilon^{\gamma}(t-s)^{1-\gamma}}\,\txtd s
\]
with $\gamma\in(0,1]$, $\omega<0$ and $\epsilon>0$. In the following, we derive certain elementary estimates which we use several times. They might not be of great importance on their own, but being able to refer to them will be useful at some places. Note that the substitution $r=-\epsilon^{-1}\omega(t-s)$ yields
\[
	\int_0^t\frac{\txte^{\epsilon^{-1}\omega(t-s)}}{\epsilon^{\gamma}(t-s)^{1-\gamma}}\,\txtd s=\frac{1}{|\omega|^{\gamma}}\int_0^{\epsilon^{-1}\omega t}\frac{\txte^{-r}}{r^{1-\gamma}}\,\txtd r=\frac{\tilde{\Gamma}(\gamma,\epsilon^{-1}\omega t)}{|\omega|^{\gamma}},
\]
where $\tilde{\Gamma}(\gamma,t):=\int_0^t\frac{\txte^{-r}}{r^{1-\gamma}}\,\txtd r$ denotes the incomplete gamma function.

\begin{lemma}\label{Lemma:Incomplete_Gamma_1}
	For all $t\geq0$, $\epsilon>0$, $\gamma\in(0,1]$ and $\omega<0$ it holds that
	\[
		\int_0^t\frac{\txte^{\epsilon^{-1}\omega(t-s)}}{\epsilon^{\gamma}(t-s)^{1-\gamma}}\,\txtd s\leq \min\left\{\frac{t^\gamma}{\gamma\epsilon^{\gamma}},\frac{\Gamma(\gamma)}{|\omega|^{\gamma}}\right\}.
	\]
	Here, $\Gamma$ denotes the gamma function.
\end{lemma}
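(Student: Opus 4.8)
The plan is to reduce everything to two elementary estimates on the incomplete gamma function, building directly on the substitution already recorded above. That substitution expresses
\[
 \int_0^t\frac{\txte^{\epsilon^{-1}\omega(t-s)}}{\epsilon^{\gamma}(t-s)^{1-\gamma}}\,\txtd s=\frac{\tilde{\Gamma}(\gamma,\epsilon^{-1}|\omega|t)}{|\omega|^{\gamma}},
\]
so it suffices to bound $\tilde{\Gamma}(\gamma,x)=\int_0^x r^{\gamma-1}\txte^{-r}\,\txtd r$ for the nonnegative argument $x=\epsilon^{-1}|\omega|t$ in two complementary ways and then take the smaller of the resulting bounds.

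For the first entry of the minimum, I would simply discard the exponential factor: since $\txte^{-r}\le1$ for $r\ge0$ and $r^{\gamma-1}$ is integrable at the origin (here $\gamma>0$ is used), we get $\tilde{\Gamma}(\gamma,x)\le\int_0^x r^{\gamma-1}\,\txtd r=x^{\gamma}/\gamma$. Substituting $x=\epsilon^{-1}|\omega|t$ and dividing by $|\omega|^{\gamma}$ gives exactly $t^{\gamma}/(\gamma\epsilon^{\gamma})$.

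For the second entry, I would instead enlarge the domain of integration: the integrand $r^{\gamma-1}\txte^{-r}$ is nonnegative, so $\tilde{\Gamma}(\gamma,x)$ is nondecreasing in $x$ and therefore bounded above by its limit $\tilde{\Gamma}(\gamma,\infty)=\Gamma(\gamma)$. Dividing by $|\omega|^{\gamma}$ yields the bound $\Gamma(\gamma)/|\omega|^{\gamma}$, and combining the two estimates completes the argument.

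There is no genuine obstacle here; the only point requiring a little care is the sign bookkeeping in the substitution $r=-\epsilon^{-1}\omega(t-s)$ — since $\omega<0$, the new upper limit $-\epsilon^{-1}\omega t=\epsilon^{-1}|\omega|t$ is positive and the change of variables produces the positive factor $\txtd s=\epsilon|\omega|^{-1}\,\txtd r$, so that the integral indeed becomes $|\omega|^{-\gamma}\tilde{\Gamma}(\gamma,\epsilon^{-1}|\omega|t)$ with the correct sign and normalization. Everything after that is immediate.
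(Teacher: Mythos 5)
Your proof is correct and follows essentially the same route as the paper: bound the exponential by $1$ to get $t^\gamma/(\gamma\epsilon^\gamma)$, and use monotonicity of the incomplete gamma function to get $\Gamma(\gamma)/|\omega|^\gamma$ (the paper applies the first bound before the substitution, you after — an immaterial difference). Your sign bookkeeping in the substitution is in fact slightly more careful than the paper's, which writes $\epsilon^{-1}\omega t$ where $\epsilon^{-1}|\omega|t$ is meant.
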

\begin{proof}
 H\"older's inequality yields
 \[
  \int_0^t\frac{\txte^{\epsilon^{-1}\omega(t-s)}}{\epsilon^{\gamma}(t-s)^{1-\gamma}}\leq \frac{1}{\epsilon^{\gamma}}\int_0^t\frac{1}{(t-s)^{1-\gamma}}\,\txtd s=\frac{t^\gamma}{\gamma\epsilon^{\gamma}}.
 \]
 On the other hand, since $\tilde{\Gamma}(\gamma,t)$ is increasing in $t$, it follows that
 \[
 	\int_0^t\frac{\txte^{\epsilon^{-1}\omega(t-s)}}{\epsilon^{\gamma}(t-s)^{1-\gamma}}\,\txtd s=\frac{\tilde{\Gamma}(\gamma,\epsilon^{-1}\omega t)}{|\omega|^{\gamma}}\leq \lim_{t\to\infty}\frac{\tilde{\Gamma}(\gamma,\epsilon^{-1}\omega t)}{|\omega|^{\gamma}}=\frac{\Gamma(\gamma)}{|\omega|^{\gamma}}.
 \]
\end{proof}

\begin{lemma}\label{Lemma:Incomplete_Gamma_2}
	For all $t\geq0$, $\epsilon>0$, $\gamma\in(0,1]$ and $\omega<\tilde{\omega}$ it holds that
	\[
		\txte^{\epsilon^{-1}\tilde{\omega}t}\int_0^t\frac{\txte^{\epsilon^{-1}(\omega-\tilde{\omega})s}}{\epsilon^{\gamma}s^{1-\gamma}}\,\txtd s\leq\frac{\txte^{\gamma}}{\gamma^{1-\gamma}|\tilde{\omega}|^{\gamma}}
	\]
\end{lemma}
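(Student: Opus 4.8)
The plan is to reduce the claim to a one–variable maximization by simply bounding the decaying exponential inside the integral by $1$; somewhat surprisingly, this is not wasteful. Throughout I work under the assumption $\tilde\omega<0$, which is the case arising in the applications and the one in which the right-hand side is finite (and nontrivial).

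\textbf{Step 1: remove the inner exponential.} Since $\omega<\tilde\omega$, the exponent $\epsilon^{-1}(\omega-\tilde\omega)s$ is $\leq 0$ for $s\geq 0$, so $\txte^{\epsilon^{-1}(\omega-\tilde\omega)s}\leq 1$. Arguing exactly as in the H\"older step of the proof of Lemma~\ref{Lemma:Incomplete_Gamma_1}, this gives
\[
\int_0^t\frac{\txte^{\epsilon^{-1}(\omega-\tilde\omega)s}}{\epsilon^\gamma s^{1-\gamma}}\,\txtd s\leq\frac{1}{\epsilon^\gamma}\int_0^t s^{\gamma-1}\,\txtd s=\frac{1}{\gamma}\Bigl(\frac{t}{\epsilon}\Bigr)^{\gamma}.
\]

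\textbf{Step 2: maximize in $t$.} Multiplying by $\txte^{\epsilon^{-1}\tilde\omega t}=\txte^{-|\tilde\omega|\,t/\epsilon}$ and writing $\tau:=t/\epsilon\geq 0$, the left-hand side of the asserted inequality is bounded by $\tfrac1\gamma\,\tau^{\gamma}\txte^{-|\tilde\omega|\tau}$. A direct computation (equivalently, the elementary inequality $\ln y\leq y-1$ applied with $y=|\tilde\omega|\tau/\gamma$) shows that $\tau\mapsto\tau^{\gamma}\txte^{-|\tilde\omega|\tau}$ attains its maximum over $\tau\geq 0$ at $\tau=\gamma/|\tilde\omega|$, with value $(\gamma/|\tilde\omega|)^{\gamma}\txte^{-\gamma}$. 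Hence the left-hand side is at most
\[
\frac{1}{\gamma}\cdot\frac{\gamma^{\gamma}}{|\tilde\omega|^{\gamma}}\,\txte^{-\gamma}=\frac{\txte^{-\gamma}}{\gamma^{1-\gamma}|\tilde\omega|^{\gamma}}\leq\frac{\txte^{\gamma}}{\gamma^{1-\gamma}|\tilde\omega|^{\gamma}},
\]
which is the claim.

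I do not anticipate a genuine obstacle. The only step deserving a second thought is Step~1: one might worry that discarding $\txte^{\epsilon^{-1}(\omega-\tilde\omega)s}$ loses too much, but the computation shows otherwise — it even delivers the sharper constant $\txte^{-\gamma}$, and the $\txte^{\gamma}$ in the statement is just a convenient, non-optimal bound (presumably chosen for uniformity with the neighbouring estimates). The hypothesis $\omega<\tilde\omega$ enters only through the sign of the exponent in Step~1, while the decay $\tilde\omega<0$ is what makes the $t$-maximization in Step~2 close and keeps the right-hand side finite.
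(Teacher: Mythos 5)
Your proof is correct and follows essentially the same route as the paper's: the paper invokes Lemma~\ref{Lemma:Incomplete_Gamma_1} (whose first bound is exactly your Step~1 estimate $t^\gamma/(\gamma\epsilon^\gamma)$) and then maximizes $\txte^{\epsilon^{-1}\tilde{\omega}t}\,t^\gamma/(\gamma\epsilon^{\gamma})$ at $t=|\gamma\epsilon\tilde{\omega}^{-1}|$, which is your Step~2. Your remarks that $\tilde{\omega}<0$ is implicitly required and that the computation actually yields the sharper constant $\txte^{-\gamma}$ apply equally to the paper's own argument.
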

\begin{proof}
	By Lemma \ref{Lemma:Incomplete_Gamma_1} it holds that
		\[
		\txte^{\epsilon^{-1}\tilde{\omega}t}\int_0^t\frac{\txte^{\epsilon^{-1}(\omega-\tilde{\omega})s}}{\epsilon^{\gamma}s^{1-\gamma}}\,\txtd s\leq \txte^{\epsilon^{-1}\tilde{\omega}t}\frac{t^\gamma}{\gamma\epsilon^{\gamma}}.
	\]
	The right hand side attains its maximum at $t=|\gamma\epsilon\tilde{\omega}^{-1}|$. This yields the assertion.
\end{proof}

\begin{lemma}\label{Lemma:Incomplete_Gamma_3}
	For all $t\geq0$, $\epsilon>0$, $\gamma\in(0,1]$ and $\omega<\tilde{\omega}<0$ it holds that
	\[
		\int_0^t \epsilon^{-1}|\omega|\txte^{\epsilon^{-1}\tilde{\omega}(t-s)}\int_0^s\frac{\txte^{\epsilon^{-1}\omega r}}{\epsilon^{\gamma}r^{1-\gamma}}\,\txtd r\,\txtd s\leq \frac{\Gamma(\gamma)|\omega|^{1-\gamma}}{\tilde{\omega}}
	\]
\end{lemma}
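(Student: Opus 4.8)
The plan is to peel off the two nested integrals one at a time, using Fubini to reorder them so that each remaining one-dimensional integral is elementary or already covered by Lemma~\ref{Lemma:Incomplete_Gamma_1}. Writing the region of integration as $\{0\le r\le s\le t\}$ and interchanging the order, the left-hand side becomes
\[
\int_0^t \frac{\txte^{\epsilon^{-1}\omega r}}{\epsilon^{\gamma}r^{1-\gamma}}\left(\int_r^t \epsilon^{-1}|\omega|\,\txte^{\epsilon^{-1}\tilde\omega(t-s)}\,\txtd s\right)\txtd r .
\]
The inner $s$-integral is a pure exponential: substituting $v=t-s$ gives $\epsilon^{-1}|\omega|\int_0^{t-r}\txte^{\epsilon^{-1}\tilde\omega v}\,\txtd v=\frac{|\omega|}{|\tilde\omega|}\bigl(1-\txte^{\epsilon^{-1}\tilde\omega(t-r)}\bigr)\le \frac{|\omega|}{|\tilde\omega|}$, where $\tilde\omega<0$ is used. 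Pulling out this constant leaves $\frac{|\omega|}{|\tilde\omega|}\int_0^t \frac{\txte^{\epsilon^{-1}\omega r}}{\epsilon^{\gamma}r^{1-\gamma}}\,\txtd r$.

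For the remaining integral I would invoke the estimate already in hand: after the substitution $r\mapsto s=t-r$ it has exactly the form treated in Lemma~\ref{Lemma:Incomplete_Gamma_1}, whose second ($\epsilon$- and $t$-independent) branch yields $\int_0^t \frac{\txte^{\epsilon^{-1}\omega r}}{\epsilon^{\gamma}r^{1-\gamma}}\,\txtd r\le \Gamma(\gamma)/|\omega|^{\gamma}$ (equivalently this follows directly from the substitution $\rho=-\epsilon^{-1}\omega r$ and $\tilde\Gamma(\gamma,\cdot)\le\Gamma(\gamma)$, as in the computation preceding Lemma~\ref{Lemma:Incomplete_Gamma_1}). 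Multiplying the two factors, the $|\omega|^{\gamma}$ in the denominator combines with the $|\omega|$ out front to give $|\omega|^{1-\gamma}$, and one arrives at the claimed bound $\Gamma(\gamma)|\omega|^{1-\gamma}/|\tilde\omega|$.

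I do not anticipate a genuine obstacle here. The only choice to be made is to use the second (bounded) estimate from Lemma~\ref{Lemma:Incomplete_Gamma_1} rather than the first: the first would reintroduce a power of $s$ under the outer exponential and produce another incomplete-gamma expression that does not telescope cleanly. The hypothesis $\omega<0$ is precisely what makes Lemma~\ref{Lemma:Incomplete_Gamma_1} applicable to the innermost integral, while $\tilde\omega<0$ (which follows from $\omega<\tilde\omega<0$) is what makes the outer exponential integrable with the $\epsilon/|\tilde\omega|$ bound; the strict inequality $\omega<\tilde\omega$ is not otherwise needed for this particular estimate. Finally, since $\tilde\omega<0$ the right-hand side as printed is negative, so the inequality should be read with $|\tilde\omega|$ in the denominator, which is exactly what the argument produces.
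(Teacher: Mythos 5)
Your proof is correct and follows essentially the same route as the paper's: the paper simply bounds the inner integral in place by $\Gamma(\gamma)/|\omega|^{\gamma}$ via Lemma~\ref{Lemma:Incomplete_Gamma_1} and then integrates the remaining outer exponential, which is exactly the pair of estimates you use after an (unnecessary but harmless) Fubini/Tonelli swap. Your remark that the stated right-hand side should be read with $|\tilde{\omega}|$ in the denominator is also accurate, since that is precisely the bound both arguments produce.
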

\begin{proof}
	Using Lemma \ref{Lemma:Incomplete_Gamma_1} we obtain
	\begin{align*}
	\int_0^t \epsilon^{-1}|\omega|\txte^{\epsilon^{-1}\tilde{\omega}(t-s)}\int_0^s\frac{\txte^{\epsilon^{-1}\omega r}}{\epsilon^{\gamma}r^{1-\gamma}}\,\txtd r\,\txtd s&\leq \epsilon^{-1}|\omega|^{1-\gamma}\Gamma(\gamma)\int_0^t\txte^{\epsilon^{-1}\tilde{\omega}(t-s)}\,\txtd s\\
	&\leq \frac{\Gamma(\gamma)|\omega|^{1-\gamma}}{\tilde{\omega}}
	\end{align*}
\end{proof}

\begin{corollary}\label{Cor:Incomplete_Gamma_1}
For all $t\geq0$, $\epsilon>0$, $\gamma\in(0,1]$ and $\omega<\tilde{\omega}<0$ it holds that
	\[
		\int_0^t \left(\frac{\txte^{\epsilon^{-1}\omega s}}{\epsilon^{\gamma}s^{1-\gamma}}+\epsilon^{-1}|\omega|\int_0^s\frac{\txte^{\epsilon^{-1}\omega r}}{\epsilon^{\gamma}r^{1-\gamma}}\,\txtd r\right)\txte^{\epsilon^{-1}\tilde{\omega}(t-s)}\,\txtd s\leq \left(\frac{\txte^{\gamma}}{\gamma^{1-\gamma}}+\Gamma(\gamma)\left|\frac{\omega}{\tilde{\omega}}\right|^{1-\gamma}\right)\frac{1}{\tilde{\omega}^\gamma}
	\]
\end{corollary}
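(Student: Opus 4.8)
The plan is to split the integrand by linearity and recognize each of the two resulting integrals as the left-hand side of one of the preceding lemmas. Write the quantity to be estimated as $I_1+I_2$ with
\[
I_1 := \int_0^t \frac{\txte^{\epsilon^{-1}\omega s}}{\epsilon^{\gamma}s^{1-\gamma}}\,\txte^{\epsilon^{-1}\tilde{\omega}(t-s)}\,\txtd s, \qquad I_2 := \int_0^t \epsilon^{-1}|\omega|\left(\int_0^s\frac{\txte^{\epsilon^{-1}\omega r}}{\epsilon^{\gamma}r^{1-\gamma}}\,\txtd r\right)\txte^{\epsilon^{-1}\tilde{\omega}(t-s)}\,\txtd s.
\]

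For $I_1$ I would first factor the constant $\txte^{\epsilon^{-1}\tilde\omega t}$ out of the integral and combine the two exponentials, giving $I_1 = \txte^{\epsilon^{-1}\tilde\omega t}\int_0^t \epsilon^{-\gamma}s^{\gamma-1}\txte^{\epsilon^{-1}(\omega-\tilde\omega)s}\,\txtd s$. Since the standing hypothesis $\omega<\tilde\omega<0$ in particular gives $\omega<\tilde\omega$, Lemma~\ref{Lemma:Incomplete_Gamma_2} applies verbatim and yields $I_1 \leq \txte^{\gamma}\gamma^{\gamma-1}|\tilde\omega|^{-\gamma}$. The second integral $I_2$ is literally the left-hand side appearing in Lemma~\ref{Lemma:Incomplete_Gamma_3}, whose hypothesis $\omega<\tilde\omega<0$ is exactly our assumption, so $I_2 \leq \Gamma(\gamma)|\omega|^{1-\gamma}|\tilde\omega|^{-1}$.

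It then remains to add the two bounds and rearrange. Using $|\tilde\omega|^{-1}=|\tilde\omega|^{-\gamma}|\tilde\omega|^{-(1-\gamma)}$ to pull out the common factor $|\tilde\omega|^{-\gamma}$,
\[
I_1+I_2 \leq \frac{\txte^{\gamma}}{\gamma^{1-\gamma}|\tilde\omega|^{\gamma}} + \frac{\Gamma(\gamma)|\omega|^{1-\gamma}}{|\tilde\omega|} = \left(\frac{\txte^{\gamma}}{\gamma^{1-\gamma}} + \Gamma(\gamma)\left|\frac{\omega}{\tilde\omega}\right|^{1-\gamma}\right)\frac{1}{|\tilde\omega|^{\gamma}},
\]
which is the claimed inequality (reading the denominator $\tilde\omega^{\gamma}$ on the right-hand side as $|\tilde\omega|^{\gamma}$). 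There is no genuine obstacle here: the corollary is pure bookkeeping built from the two lemmas, and the only things to verify are that their hypotheses hold — which is immediate from $\omega<\tilde\omega<0$ — and the trivial exponent splitting used in the last display.
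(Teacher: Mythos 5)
Your proof is correct and is exactly the paper's argument: the corollary is proved there by summing the estimates of Lemma~\ref{Lemma:Incomplete_Gamma_2} and Lemma~\ref{Lemma:Incomplete_Gamma_3}, which is precisely your decomposition into $I_1$ and $I_2$ (with the same reading of the sign conventions, i.e. $\tilde\omega^{\gamma}$ as $|\tilde\omega|^{\gamma}$). Nothing is missing; your write-up just makes the bookkeeping explicit.
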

\begin{proof}
	This follows from summing up the estimates of Lemma \ref{Lemma:Incomplete_Gamma_2} and Lemma \ref{Lemma:Incomplete_Gamma_3}.
\end{proof}

\begin{lemma}\label{Lemma:Not_Incomplete_Gamma}
	Let $\omega<0$ and $\gamma\in(0,1]$. Then it holds that
	\[
		\int_0^t\frac{\txte^{\omega s}}{(t-s)^{1-\gamma}}\,ds\leq \frac{\txte^{1+\omega t}+\gamma}{\gamma|\omega|^{\gamma}}.
	\]
\end{lemma}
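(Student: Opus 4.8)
The plan is to estimate the integral $\int_0^t\frac{\txte^{\omega s}}{(t-s)^{1-\gamma}}\,ds$ by decoupling its two competing features: the integrable singularity of $(t-s)^{\gamma-1}$ at the right endpoint $s=t$, and the exponentially decaying weight $\txte^{\omega s}$, which — because $\omega<0$ — is in fact smallest precisely where the singularity sits. I would split the domain of integration at the point $s=t-|\omega|^{-1}$, i.e.\ at distance $|\omega|^{-1}$ from the singularity, this being the natural length scale set by the exponential; the degenerate regime $t<|\omega|^{-1}$, where this split point falls outside $[0,t]$, is handled separately.

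On the ``boundary layer'' $[t-|\omega|^{-1},t]$ (assuming $t\geq|\omega|^{-1}$) I would bound $\txte^{\omega s}$ by its maximum $\txte^{\omega t+1}$ over the subinterval (using $\omega(t-|\omega|^{-1})=\omega t+1$), pull it out, and evaluate $\int_{t-|\omega|^{-1}}^{t}(t-s)^{\gamma-1}\,ds=\tfrac{1}{\gamma}|\omega|^{-\gamma}$ via the substitution $u=t-s$; this piece contributes $\frac{\txte^{\omega t+1}}{\gamma|\omega|^{\gamma}}$. On the ``bulk'' part $[0,t-|\omega|^{-1}]$ I would instead estimate the kernel: there $t-s\geq|\omega|^{-1}$, hence $(t-s)^{\gamma-1}\leq|\omega|^{1-\gamma}$ since $1-\gamma\geq0$, and combine this with $\int_0^{t-|\omega|^{-1}}\txte^{\omega s}\,ds\leq\int_0^\infty\txte^{\omega s}\,ds=|\omega|^{-1}$ to get the contribution $|\omega|^{-\gamma}$. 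Summing the two pieces yields exactly $\frac{\gamma+\txte^{\omega t+1}}{\gamma|\omega|^{\gamma}}$, the claimed bound.

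It remains to treat $0\leq t<|\omega|^{-1}$. Here I would simply discard the weight, $\txte^{\omega s}\leq1$, so that $\int_0^t\frac{\txte^{\omega s}}{(t-s)^{1-\gamma}}\,ds\leq\int_0^t(t-s)^{\gamma-1}\,ds=\tfrac{t^\gamma}{\gamma}<\tfrac{1}{\gamma}|\omega|^{-\gamma}$; and since $t<|\omega|^{-1}$ forces $\omega t+1>0$, hence $\txte^{\omega t+1}>1$, this is $\leq\frac{\gamma+\txte^{\omega t+1}}{\gamma|\omega|^{\gamma}}$, finishing the proof. An equivalent route is to first substitute $u=t-s$ and then $r=|\omega|u$, reducing the whole quantity to $|\omega|^{-\gamma}\txte^{-T}\int_0^T\txte^{r}r^{\gamma-1}\,dr$ with $T:=|\omega|t$, then split \emph{that} integral at $r=1$ and use $\int_0^1\txte^{r}r^{\gamma-1}\,dr\leq\tfrac{\txte}{\gamma}$ together with $\int_1^T\txte^{r}r^{\gamma-1}\,dr\leq\txte^{T}$; this is the same argument in rescaled variables and makes the name of the lemma transparent, since the integral here is genuinely of the growing ``$\int\txte^{+r}$'' type rather than the incomplete-gamma ``$\int\txte^{-r}$'' type of the preceding lemmas.

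I do not anticipate any genuine obstacle: the statement is elementary. The only points needing care are choosing the split point to be exactly $|\omega|^{-1}$ — any other choice gives a weaker bound and messier constants — and not overlooking the small-$t$ regime in which the natural split point lies outside the interval. All remaining steps are the substitution $u=t-s$ and the monotonicity of $x\mapsto x^{1-\gamma}$ and of the exponential.
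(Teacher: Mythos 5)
Your proof is correct, and it is essentially the paper's argument: the paper first substitutes $r=|\omega|(t-s)$, obtaining $\frac{\txte^{\omega t}}{|\omega|^{\gamma}}\int_0^{|\omega|t}\txte^{r}r^{\gamma-1}\,\txtd r$, and then splits at $r=1$, which is precisely your split at $s=t-|\omega|^{-1}$ in rescaled variables (the alternative route you sketch at the end is verbatim the paper's proof, with the small-$t$ case absorbed via $\max\{1,|\omega|t\}$). No gaps.
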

\begin{proof}
	This follows from
	\begin{align*}
	\int_0^t\frac{\txte^{\omega s}}{(t-s)^{1-\gamma}}\txtd s&=e^{\omega t}\int_0^t\frac{\txte^{-\omega s}}{s^{1-\gamma}}\txtd s=\frac{\txte^{\omega t}}{|\omega|^{\gamma}}\int_0^{|\omega| t} \frac{\txte^{r}}{r^{1-\gamma}}\txtd r\\
	&\leq \frac{\txte^{\omega t}}{|\omega|^{\gamma}}\left(\int_0^{1}\frac{\txte^{r}}{r^{1-\gamma}}\txtd r+\int_1^{\max\{1,|\omega|t\}}\frac{\txte^{r}}{r^{1-\gamma}}\txtd r\right)\\
	&\leq \frac{\txte^{\omega t}}{|\omega|^{\gamma}}\left(\frac{\txte}{\gamma}+e^{-\omega t}\right)= \frac{\txte^{1+\omega t}+\gamma}{\gamma|\omega|^{\gamma}}.
	\end{align*}
\end{proof}

\subsection{Some Gronwall Type Inequalities}
In most of the proofs of this paper, Gronwall type inequalities are essential ingredients. Here, we collect the versions which we use throughout this work.

\begin{lemma}\label{Lemma:Gronwall_Version}
 Let $T>0$, $u,v,c\colon [0,T]\to[0,\infty)$ be continuous and suppose that $c'$ is locally integrable. If $v(t)\leq c(t)+\int_0^t u(s) v(s)\,\txtd s$ for all $t\in[0,T]$, then
 \[
  v(t)\leq c(0)\exp\bigg(\int_0^t u(s)\,\txtd s\bigg)+\int_0^t c'(s)\exp\bigg(\int_{s}^t u(r)\,\txtd r\bigg)\,\txtd s\quad(t\in[0,T]).
 \]

\end{lemma}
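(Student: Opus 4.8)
The plan is to prove this generalized Gronwall inequality by reducing it to the classical case via integration by parts. First I would define the auxiliary function $w(t):=\int_0^t u(s)v(s)\,\txtd s$, so that the hypothesis reads $v(t)\le c(t)+w(t)$ and $w'(t)=u(t)v(t)\le u(t)(c(t)+w(t))$. Applying the classical (linear, variable-coefficient) Gronwall lemma to $w$ with forcing term $u(t)c(t)$ gives
\[
 w(t)\le\int_0^t u(s)c(s)\exp\Bigl(\int_s^t u(r)\,\txtd r\Bigr)\,\txtd s,
\]
and hence $v(t)\le c(t)+\int_0^t u(s)c(s)\exp(\int_s^t u(r)\,\txtd r)\,\txtd s$. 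This is already a valid bound, but it is not in the stated form; the point of the lemma is to trade the factor $u(s)c(s)$ for $c'(s)$ plus a boundary term, which is advantageous in the applications where $c$ is, say, a power of $t$ or an exponential whose derivative is easier to estimate than its product with $u$.

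The key step is therefore an integration by parts in the variable $s$. Writing $E(s):=\exp(\int_s^t u(r)\,\txtd r)$, one has $\frac{\txtd}{\txtd s}E(s)=-u(s)E(s)$, so $u(s)E(s)=-E'(s)$ and
\[
 \int_0^t u(s)c(s)E(s)\,\txtd s=-\int_0^t c(s)E'(s)\,\txtd s=-\bigl[c(s)E(s)\bigr]_0^t+\int_0^t c'(s)E(s)\,\txtd s.
\]
Since $E(t)=1$ and $E(0)=\exp(\int_0^t u(r)\,\txtd r)$, the boundary term equals $c(0)\exp(\int_0^t u(r)\,\txtd r)-c(t)$. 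Substituting back, the $-c(t)$ cancels the $c(t)$ coming from $v(t)\le c(t)+\dots$, and we arrive at exactly
\[
 v(t)\le c(0)\exp\Bigl(\int_0^t u(s)\,\txtd s\Bigr)+\int_0^t c'(s)\exp\Bigl(\int_s^t u(r)\,\txtd r\Bigr)\,\txtd s,
\]
as claimed.

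The main technical point to be careful about is the regularity needed to justify the integration by parts: this is why the hypothesis requires $c'$ to be locally integrable (so that $c$ is absolutely continuous and the fundamental theorem of calculus applies to $c(s)E(s)$), while continuity of $u$ and $v$ ensures $w\in C^1$ and that all integrands are well-defined and the classical Gronwall lemma applies. I expect the only genuine obstacle to be bookkeeping: making sure the classical Gronwall estimate is invoked in its correct variable-coefficient form and that the boundary terms are evaluated with the right orientation of the integral in the exponent. Alternatively, one can avoid citing the classical lemma and argue directly: set $\phi(t):=v(t)-c(t)$, note $\phi(t)\le\int_0^t u(s)(\phi(s)+c(s))\,\txtd s$, multiply the differential inequality for $w$ by the integrating factor $\exp(-\int_0^t u)$, integrate, and then perform the same integration by parts; this is essentially the same computation and I would present whichever is shorter given what has already been stated in the paper.
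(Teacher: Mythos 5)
Your proof is correct, and it fills in exactly the argument the paper only cites: the paper refers to Dragomir's Corollary 2 and remarks that the proof there "relies on integration by parts," which is precisely your computation (classical Gronwall via the integrating factor applied to $w(t)=\int_0^t u(s)v(s)\,\txtd s$, followed by integrating $u(s)c(s)E(s)$ by parts so the boundary term cancels $c(t)$). Your regularity remark is also the right one — the hypothesis on $c'$ is used exactly so that $c(s)E(s)$ obeys the fundamental theorem of calculus — so there is nothing to add beyond what you wrote.
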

\begin{proof}
    This is a well-known version of Gronwall's inequality. A proof can for example be found in \cite[Corollary 2]{Dragomir_2003}. The statement therein is formulated for $c$ being differentiable, but the argument relies on integration by parts and thus, also the asserted version holds true.
\end{proof}

\begin{lemma}\label{Lemma:Gronwall_Specific}
 Let $x\in\R$, $\epsilon,N,T>0$, $\gamma\in(0,1]$, $p\in(1,\infty)$ and let $p'=\frac{p}{p-1}$ be the conjugated H\"older index. Let further $v,c\colon [0,T]\to[0,\infty)$ be continuous. Suppose that $c'$ is locally integrable and that $[t\mapsto \txte^{-\epsilon^{-1}xt}c(t)]$ is non-decreasing. If
 \[
  v(t)\leq c(t)+N\int_0^t\frac{\txte^{\epsilon^{-1}x(t-s)}}{\epsilon^{\gamma}(t-s)^{1-\gamma}}v(s)\,\txtd s
 \]
for all $t\in[0,T]$, then we have the estimate
\[
 v(t) \leq pc(0)\txte^{\epsilon^{-1}\tilde{x}t}+p\int_0^t (c'(s)-\epsilon^{-1}xc(s))\txte^{\epsilon^{-1}\tilde{x}(t-s)}\,\txtd s\quad(t\in[0,T])
\]
where $\tilde{x}:=x+p N^{\tfrac{1}{\gamma}}(\tfrac{p'}{\gamma})^{\frac{1-\gamma}{\gamma}}$.
\end{lemma}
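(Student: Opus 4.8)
The goal is a singular-Gronwall estimate with a weakly singular kernel $\txte^{\epsilon^{-1}x(t-s)}/(\epsilon^\gamma(t-s)^{1-\gamma})$. The plan is to reduce to the classical (non-singular) Gronwall Lemma~\ref{Lemma:Gronwall_Version} by an iteration-and-Hölder argument. First I would substitute $w(t):=\txte^{-\epsilon^{-1}xt}v(t)$ and $\tilde c(t):=\txte^{-\epsilon^{-1}xt}c(t)$, so the hypothesis becomes
\[
 w(t)\leq \tilde c(t)+N\int_0^t\frac{1}{\epsilon^{\gamma}(t-s)^{1-\gamma}}w(s)\,\txtd s,
\]
with $\tilde c$ non-decreasing by assumption. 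Denote by $K$ the integral operator with kernel $N\epsilon^{-\gamma}(t-s)^{-(1-\gamma)}\one_{\{s<t\}}$, so $w\leq \tilde c+Kw$ pointwise.

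\textbf{Key steps.} The standard trick for weakly singular kernels is to iterate: from $w\leq \tilde c+Kw$ one gets $w\leq \sum_{j=0}^{m-1}K^j\tilde c+K^m w$, and for $m$ large enough $K^m$ has a \emph{bounded} kernel (each convolution with $t^{-(1-\gamma)}$ improves the singularity by $\gamma$ via the Beta-function identity $\int_0^t (t-s)^{a-1}s^{b-1}\txtd s=B(a,b)t^{a+b-1}$), after which ordinary Gronwall applies. However, the clean closed form in the statement, with the explicit constant $\tilde x=x+pN^{1/\gamma}(p'/\gamma)^{(1-\gamma)/\gamma}$, suggests a slicker route: fix $p\in(1,\infty)$ and split each application of $K$ using Hölder in the time variable so that the singular factor $(t-s)^{-(1-\gamma)}$ is absorbed in $L^{p'}_{\mathrm{loc}}$ (which requires $(1-\gamma)p'<1$, i.e. this is where $p$ enters and why $p>1$ must be taken close enough to $1$ — actually $(1-\gamma)p' <1 \iff p > 1/\gamma$, but the final constant is valid for all $p>1$ by monotonicity/optimization, so one should be careful here), leaving an $L^p$ factor of $w$ raised to a power; combined with a suitable rescaling $s\mapsto s/\epsilon$ one lands on a non-singular inequality for (a power of) $\|w\|_{L^p(0,t)}$ or for $w$ itself against the modified exponent. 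Concretely, I expect the argument to run: (i) apply Hölder to $\int_0^t (t-s)^{-(1-\gamma)}w(s)\txtd s \leq \big(\int_0^t (t-s)^{-(1-\gamma)p'}\txtd s\big)^{1/p'}\big(\int_0^t w(s)^p\txtd s\big)^{1/p}$, evaluate the first integral as $\big(\frac{t^{1-(1-\gamma)p'}}{1-(1-\gamma)p'}\big)^{1/p'}$; (ii) feed this into the hypothesis and take $L^p$ norms, using Minkowski / a Gronwall-for-$L^p$-norms step to control $\|w\|_{L^p(0,t)}$; (iii) re-insert the resulting bound on $\|w\|_{L^p(0,t)}$ into the hypothesis once more to get a pointwise bound on $w(t)$ with the non-singular exponential kernel $\txte^{\epsilon^{-1}\tilde x(t-s)}$; (iv) apply Lemma~\ref{Lemma:Gronwall_Version} with $u\equiv \epsilon^{-1}\tilde x$ (or its integrated form) to $\txte^{\epsilon^{-1}\tilde x t}$-rescaled quantities, producing the factor $p$ in front and the $c'(s)-\epsilon^{-1}xc(s)$ term from $\txtd/\txtd s[\txte^{-\epsilon^{-1}xs}c(s)]$ inside the variation-of-constants expression; (v) undo the substitution $w\mapsto v$. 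The constant $\tilde x$ should emerge from optimizing the scaling parameter exactly as in the proof of Lemma~\ref{Lemma:Incomplete_Gamma_2}, where the maximum of $t^\gamma\txte^{\epsilon^{-1}\tilde\omega t}$ was computed — so I would expect to reuse that computation, which is why these incomplete-gamma lemmas were placed just before.

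\textbf{Main obstacle.} The delicate point is bookkeeping the constants so that precisely $\tilde x=x+pN^{1/\gamma}(p'/\gamma)^{(1-\gamma)/\gamma}$ comes out, and making the Hölder/iteration step rigorous uniformly in $t$ and $\epsilon$ — in particular ensuring the $L^p$-Gronwall step (ii) closes without losing the sharp exponential rate. A secondary subtlety is that $c$ is only assumed continuous with $c'$ locally integrable (and $\txte^{-\epsilon^{-1}xt}c(t)$ non-decreasing), so all manipulations of $c$ must go through the integration-by-parts form of Lemma~\ref{Lemma:Gronwall_Version} rather than assuming differentiability; I would handle this exactly as the cited Corollary~2 of \cite{Dragomir_2003} does. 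I expect steps (i)–(iv) to be essentially a careful but routine computation once the scaling is chosen correctly; the real work is choosing that scaling to hit the stated $\tilde x$.
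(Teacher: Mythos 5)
Your reduction $w(t)=\txte^{-\epsilon^{-1}xt}v(t)$ is the right first move, but the core of your plan --- absorbing the singular factor $(t-s)^{-(1-\gamma)}$ via H\"older with the exponents $p,p'$ from the statement --- has a genuine gap. That step needs $(1-\gamma)p'<1$, i.e.\ $p>1/\gamma$, which you notice yourself; but the lemma is asserted for \emph{all} $p\in(1,\infty)$ and $\gamma\in(0,1]$, and the paper actually uses it with $p=2$ for arbitrary $\gamma\in(0,1)$ (so even the case the authors need is excluded once $\gamma\leq\tfrac12$). Your fallback, that the remaining range follows ``by monotonicity/optimization in $p$'', does not work: as $p$ decreases the prefactor $p$ in the conclusion gets \emph{smaller} while the rate $\tilde x=x+pN^{1/\gamma}(p'/\gamma)^{(1-\gamma)/\gamma}$ gets \emph{larger}, so the statements for different $p$ are not comparable and the small-$p$ case cannot be deduced from the large-$p$ case. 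In addition, the $L^p$-Gronwall closure in your steps (ii)--(iii) is left unspecified, and it is not clear it reproduces the exact constant $\tilde x$; the iteration/Beta-function alternative you mention would produce Mittag--Leffler-type constants rather than the stated ones.

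For comparison, the paper's proof never uses H\"older with the exponents $p,p'$ (despite the name); $p$ enters only as an absorption parameter. One sets $\theta(t):=\sup_{0\leq s\leq t}\txte^{-\epsilon^{-1}xs}v(s)$ and splits the convolution at lag $\sigma=(\tfrac{\gamma}{p'N})^{1/\gamma}\epsilon$: on $[t-\sigma,t]$ one bounds $\theta(s)\leq\theta(t)$ and computes the singular integral exactly, so this piece contributes at most $\tfrac{1}{p'}\theta(t)$, which is absorbed into the left-hand side (producing the factor $p$); on $[0,t-\sigma]$ the kernel is bounded by $\epsilon^{-\gamma}\sigma^{\gamma-1}$, leaving a \emph{non-singular} inequality $\theta(t)\leq pc(t)\txte^{-\epsilon^{-1}xt}+\tfrac{pN}{\epsilon^{\gamma}\sigma^{1-\gamma}}\int_0^t\theta(s)\,\txtd s$, to which Lemma~\ref{Lemma:Gronwall_Version} applies; the identity $\tfrac{pN}{\epsilon^{\gamma}\sigma^{1-\gamma}}=\epsilon^{-1}pN^{1/\gamma}(p'/\gamma)^{(1-\gamma)/\gamma}$ is exactly where $\tilde x$ comes from (not from the optimization in Lemma~\ref{Lemma:Incomplete_Gamma_2}). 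If you replace your H\"older/$L^p$ steps by this sup-plus-lag-splitting device, the rest of your outline (monotonicity of $\txte^{-\epsilon^{-1}xt}c(t)$, the integration-by-parts form of classical Gronwall, undoing the substitution) goes through as you describe.
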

\begin{proof}
 Let $\theta(t):= \sup_{0\leq s\leq t}\txte^{-\epsilon^{-1}xs}v(s)$. Then we have the estimate
 \[
  \txte^{-\epsilon^{-1}xt}v(t)\leq c(t)\txte^{-\epsilon^{-1}xt}+N\int_0^t\frac{1}{\epsilon^{\gamma}(t-s)^{1-\gamma}}\theta(s)\,\txtd s
 \]
If we choose $\sigma=(\frac{\gamma}{p'N})^{1/\gamma}\epsilon$, then we obtain
\begin{align*}
 \txte^{-\epsilon^{-1}xt}v(t)&\leq c(t)\txte^{-\epsilon^{-1}xt}+N\int_0^{[t-\sigma]_+}\frac{1}{\epsilon^{\gamma}(t-s)^{1-\gamma}}\theta(s)\,\txtd s\\
 &\qquad\qquad+N\int_{[t-\sigma]_+}^{t}\frac{1}{\epsilon^{\gamma}(t-s)^{1-\gamma}}\theta(t)\,\txtd s\\
 &\leq c(t)\txte^{-\epsilon^{-1}xt}+\frac{N}{\epsilon^{\gamma}\sigma^{1-\gamma}}\int_0^{t}\theta(s)\,\txtd s-\frac{N}{\gamma\epsilon^\gamma}\big[(t-s)^\gamma\big]_{s=[t-\sigma]_+}^t\theta(t)\\
 &\leq c(t)\txte^{-\epsilon^{-1}xt}+\frac{N}{\epsilon^{\gamma}\sigma^{1-\gamma}}\int_0^{t}\theta(s)\,\txtd s+\frac{1}{p'}\theta(t)
\end{align*}
By the monotonicity of the right hand side, it follows that we can replace $\txte^{-\epsilon^{-1}xt}v(t)$ by $\theta(t)$ on the left hand side. Therefore, we obtain
\[
 \theta(t)\leq pc(t)\txte^{-\epsilon^{-1}xt}+\frac{pN}{\epsilon^{\gamma}\sigma^{1-\gamma}}\int_0^{t}\theta(s)\,\txtd s
\]
so that Lemma \ref{Lemma:Gronwall_Version} implies
\begin{align*}
 \theta(t)&\leq pc(0)\exp\left(\frac{pN}{\epsilon^{\gamma}\sigma^{1-\gamma}}t\right)\\
 &\qquad\qquad+p\int_0^t (c'(s)-\epsilon^{-1}xc(s))\exp\left(-\epsilon^{-1}xs+\frac{pN}{\epsilon^{\gamma}\sigma^{1-\gamma}}(t-s)\right)\,\txtd s
\end{align*}
and therefore
\begin{align*}
 v(t)&\leq pc(0)\exp\left(\bigg(\epsilon^{-1}x+\frac{pN}{\epsilon^{\gamma}\sigma^{1-\gamma}}\bigg)t\right)\\
 &\qquad\qquad+p\int_0^t (c'(s)-\epsilon^{-1}xc(s))\exp\left(\bigg(\epsilon^{-1}x+\frac{pN}{\epsilon^{\gamma}\sigma^{1-\gamma}}\bigg)(t-s)\right)\,\txtd s\\
 &=pc(0)\txte^{\epsilon^{-1}\tilde{x}t}+p\int_0^t (c'(s)-\epsilon^{-1}xc(s))\txte^{\epsilon^{-1}\tilde{x}(t-s)}\,\txtd s.
\end{align*}
\end{proof}

\begin{remark}\label{Rem:Gronwall_Specific}
	For the sake of simplicity, we will apply Lemma~\ref{Lemma:Gronwall_Specific} with $p=2$ most of the time. However, this is not optimal in many cases. In particular, if $\gamma=1$ then it is actually better to take $p$ close to $1$. This way, we may actually take $\omega_f>\omega_A+C_AL_f$ instead of $\omega_f=\omega+(2C_AL_{f})^{\frac{1}{\gamma}} (\frac{1}{\gamma})^{\frac{1-\gamma}{\gamma}}$ later in this paper. This might be of importance if one wants $\omega_f$ to be as small as possible.
\end{remark}

\begin{lemma}\label{Lemma:Gronwall_Specific_Sum}
 Let $x,y\in\R$, $\epsilon,N,M,T>0$ as well as $\gamma,\delta\in(0,1]$. Let further $v,c\colon [0,T]\to[0,\infty)$ be continuous. Suppose that $c'$ is locally integrable and that $[t\mapsto \txte^{-yt}c(t)]$ is non-decreasing. If $0<\frac{N\Gamma(\gamma)}{(\epsilon y-x)^{\gamma}}<1$ and if
 \[
  v(t)\leq c(t)+N\int_0^t\frac{\txte^{\epsilon^{-1}x(t-s)}}{\epsilon^{\gamma}(t-s)^{1-\gamma}}v(s)\,\txtd s+M\int_0^t\frac{\txte^{y(t-s)}}{(t-s)^{1-\delta}}v(s)\,\txtd s
 \]
for all $t\in[0,T]$, then for all $\mu\in(0,1-\frac{N\Gamma(\gamma)}{(\epsilon y-x)^{\gamma}})$ we have the estimate
\[
 v(t) \leq \frac{1}{1-\mu-\frac{N\Gamma(\gamma)}{(\epsilon y-x)^{\gamma}}}\left[c(0)\txte^{\tilde{y}t}+\int_0^t (c'(s)-yc(s))\txte^{\tilde{y}(t-s)}\,\txtd s\right]\quad(t\in[0,T])
\]
where $\tilde{y}:=y+M^{\frac{1}{\delta}}(\delta\mu)^{\frac{\delta-1}{\delta}}(1-\mu-\frac{N\Gamma(\gamma)}{(\epsilon y-x)^{\gamma}})^{-1}$.
\end{lemma}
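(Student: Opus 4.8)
The plan is to adapt the proof of Lemma~\ref{Lemma:Gronwall_Specific}, but now normalising by the weight $\txte^{-yt}$ rather than $\txte^{-\epsilon^{-1}xt}$, since $y$ is the exponential rate appearing both in the monotonicity hypothesis on $c$ and in the desired conclusion. I would set $\theta(t):=\sup_{0\le s\le t}\txte^{-ys}v(s)$, which is continuous and non-decreasing and satisfies $\txte^{-ys}v(s)\le\theta(s)$ for all $s$. The two integral kernels are then handled differently: the first one is \emph{absorbed directly}, which is legitimate precisely because $0<\tfrac{N\Gamma(\gamma)}{(\epsilon y-x)^{\gamma}}<1$, whereas the second one still carries a weak singularity with no available smallness and so requires the splitting trick from Lemma~\ref{Lemma:Gronwall_Specific}.

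Concretely, I would fix $\tau\in[0,T]$, multiply the assumed inequality at time $\tau$ by $\txte^{-y\tau}$, and combine exponentials inside the two integrals. In the first integral, $\txte^{\epsilon^{-1}x(\tau-s)-y\tau}=\txte^{\epsilon^{-1}(x-\epsilon y)(\tau-s)}\txte^{-ys}$; since $\epsilon y-x>0$ (forced by $\tfrac{N\Gamma(\gamma)}{(\epsilon y-x)^{\gamma}}\in(0,1)$), bounding $\txte^{-ys}v(s)\le\theta(\tau)$ and applying Lemma~\ref{Lemma:Incomplete_Gamma_1} with $\omega=x-\epsilon y<0$ yields a contribution $\le\tfrac{N\Gamma(\gamma)}{(\epsilon y-x)^{\gamma}}\theta(\tau)$. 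In the second integral, $\txte^{y(\tau-s)-y\tau}=\txte^{-ys}$, so the integrand is $\le\tfrac{\theta(s)}{(\tau-s)^{1-\delta}}$; splitting the integral at $[\tau-\sigma]_+$ and choosing $\sigma:=(\delta\mu/M)^{1/\delta}$ bounds the portion over $[[\tau-\sigma]_+,\tau]$ by $\tfrac{M\sigma^{\delta}}{\delta}\theta(\tau)=\mu\theta(\tau)$ and the portion over $[0,[\tau-\sigma]_+]$ by $\tfrac{M}{\sigma^{1-\delta}}\int_0^{\tau}\theta(s)\,\txtd s$. Altogether,
\[
 \txte^{-y\tau}v(\tau)\le \txte^{-y\tau}c(\tau)+\Big(\tfrac{N\Gamma(\gamma)}{(\epsilon y-x)^{\gamma}}+\mu\Big)\theta(\tau)+\tfrac{M}{\sigma^{1-\delta}}\int_0^{\tau}\theta(s)\,\txtd s .
\]

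Every term on the right is non-decreasing in $\tau$ (the first by the assumption on $\txte^{-yt}c(t)$, the others trivially), so for all $\tau\le t$ the right-hand side is bounded by its value at $t$; taking the supremum over $\tau\in[0,t]$ then gives $\theta(t)$ on the left, and $\theta(t)$ also occurs on the right. Since $\mu<1-\tfrac{N\Gamma(\gamma)}{(\epsilon y-x)^{\gamma}}$, the constant $K:=1-\mu-\tfrac{N\Gamma(\gamma)}{(\epsilon y-x)^{\gamma}}$ is positive, so the $\theta(t)$-term may be absorbed, leaving $\theta(t)\le K^{-1}\txte^{-yt}c(t)+\tfrac{M}{K\sigma^{1-\delta}}\int_0^t\theta(s)\,\txtd s$, a linear inequality with a constant kernel. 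Applying Lemma~\ref{Lemma:Gronwall_Version} with $c_{\mathrm G}(t)=K^{-1}\txte^{-yt}c(t)$, so that $c_{\mathrm G}'(t)=K^{-1}\txte^{-yt}(c'(t)-yc(t))$, and $u\equiv\tfrac{M}{K\sigma^{1-\delta}}$, and then multiplying through by $\txte^{yt}$ (using $v(t)\le\txte^{yt}\theta(t)$), gives the claimed bound once one checks the rate: $\tfrac{M}{\sigma^{1-\delta}}=M\,(M/(\delta\mu))^{(1-\delta)/\delta}=M^{1/\delta}(\delta\mu)^{(\delta-1)/\delta}$, hence $y+\tfrac{M}{K\sigma^{1-\delta}}=\tilde{y}$.

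The argument is largely bookkeeping once the correct weight is identified. The one genuinely delicate point is the decision to absorb the first kernel as a constant multiple $\tfrac{N\Gamma(\gamma)}{(\epsilon y-x)^{\gamma}}$ of $\theta$ rather than feed it into the Gronwall loop: this is exactly the content of the smallness hypothesis and is what allows the single weight $\txte^{-yt}$ to reconcile the two incompatible exponential rates $\txte^{\epsilon^{-1}x(t-s)}$ and $\txte^{y(t-s)}$. One should also check that $c_{\mathrm G}$ meets the regularity required by Lemma~\ref{Lemma:Gronwall_Version}, which holds since $c$ is continuous with locally integrable derivative and $\txte^{-yt}$ is smooth and bounded on $[0,T]$.
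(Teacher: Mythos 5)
Your proof is correct and follows essentially the same route as the paper: the weight $\txte^{-yt}$, absorption of the first (singular, $\epsilon$-scaled) kernel via Lemma~\ref{Lemma:Incomplete_Gamma_1} and the smallness hypothesis, the split of the second kernel at $[t-\sigma]_+$ with $\sigma=(\delta\mu/M)^{1/\delta}$, and a final application of Lemma~\ref{Lemma:Gronwall_Version}. The bookkeeping of the rate $\tilde{y}$ also matches the paper's.
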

\begin{proof}
	The proof is similar to the one of Lemma~\ref{Lemma:Gronwall_Specific}. We define $$\theta(t):=\sup_{0\leq s \leq t} \txte^{-ys}v(s)$$ so that we obtain
	\begin{align*}
		\txte^{-yt}v(t)&\leq \txte^{-yt}c(t)+N\int_0^t\frac{\txte^{(\epsilon^{-1}x-y)(t-s)}}{\epsilon^{\gamma}(t-s)^{1-\gamma}}\theta(s)\,\txtd s+M\int_0^t\frac{1}{(t-s)^{1-\delta}}\theta(s)\,\txtd s\\
		&\leq \txte^{-yt}c(t)+N\int_0^t\frac{\txte^{(\epsilon^{-1}x-y)(t-s)}}{\epsilon^{\gamma}(t-s)^{1-\gamma}}\,\txtd s\;\theta(t)+M\int_0^t\frac{1}{(t-s)^{1-\delta}}\theta(s)\,\txtd s\\
		&\leq \txte^{-yt}c(t)+\frac{N\Gamma(\gamma)}{(\epsilon y-x)^{\gamma}}\theta(t)+M\int_0^t\frac{1}{(t-s)^{1-\delta}}\theta(s)\,\txtd s,
	\end{align*}
	where we used Lemma~\ref{Lemma:Incomplete_Gamma_1}.
	For some $\sigma\geq0$ we split again
	{\allowdisplaybreaks{\begin{align*}
		\txte^{-yt}v(t)&\leq \txte^{-yt}c(t)+\frac{N\Gamma(\gamma)}{(\epsilon y-x)^{\gamma}}\theta(t)+\int_0^{[t-\sigma]_+}M\frac{1}{(t-s)^{1-\delta}}\theta(s)\,\txtd s\\
		&\quad+\int_{[t-\sigma]_+}^{t}M\frac{1}{(t-s)^{1-\delta}}\,\txtd s\,\theta(t)\\
		&\leq \txte^{-yt}c(t)+\left(\frac{N\Gamma(\gamma)}{(\epsilon y-x)^{\gamma}}+\frac{M\sigma^\delta}{\delta}\right)\theta(t)+\frac{M}{\sigma^{1-\delta}}\int_0^{t}\theta(s)\,\txtd s.
	\end{align*}}}
	Now we choose $\mu\in(0,1-\frac{N\Gamma(\gamma)}{(\epsilon y-x)^{\gamma}})$ and $\sigma=\left(\frac{\delta\mu}{M}\right)^{\frac{1}{\delta}}$. If we also use the monotonicity of the right-hand side, then we obtain
	\begin{align*}
		\theta(t)\leq \txte^{-yt}c(t)+\left(\frac{N\Gamma(\gamma)}{(\epsilon y-x)^{\gamma}}+\mu\right)\theta(t)+M^{\frac{1}{\delta}}(\delta\mu)^{\frac{\delta-1}{\delta}}\int_0^{t}\theta(s)\,\txtd s.
	\end{align*}
	Since $0<\frac{N\Gamma(\gamma)}{(\epsilon y-x)^{\gamma}}+\mu<1$ this yields
	\begin{align*}
		\theta(t)\leq \frac{1}{1-\mu-\frac{N\Gamma(\gamma)}{(\epsilon y-x)^{\gamma}}}\txte^{-yt}c(t)+\frac{M^{\frac{1}{\delta}}(\delta\mu)^{\frac{\delta-1}{\delta}}}{1-\mu-\frac{N\Gamma(\gamma)}{(\epsilon y-x)^{\gamma}}}\int_0^{t}\theta(s)\,\txtd s.
	\end{align*}
	Hence, the assertion follows from Lemma~\ref{Lemma:Gronwall_Version}.
\end{proof}

\section{Problems with Fast-Slow Systems in Infinite Dimensions}
\label{Section:Bates_Difficult}

Here we give some reasons why it is difficult to apply perturbation theorems for normally hyperbolic invariant manifolds in infinite dimensions such as the ones in \cite{Bates_Lu_Zeng_1998, Bates_Lu_Zeng_2008} to infinite-dimensional fast-slow systems.
\subsection{Problems with Small Perturbations}
\label{Subection:Bates_Difficult:Small_Perturbations}

In finite dimensions, the usual approach to show the existence of slow manifolds is to show that the flow of the fast-slow system on the fast time scale is a small perturbation of the flow generated by the fast subsystem. Then the existence of slow manifolds follows from the persistence of normally hyperbolic invariant manifolds under small perturbation. But even though such persistence results are also available in infinite dimensions (see for example \cite{Bates_Lu_Zeng_1998, Bates_Lu_Zeng_2008}), this approach does not work directly for many interesting infinite-dimensional examples. Consider for instance the following situation:\\
Let $X,Y$ be Banach spaces. Suppose that $A\colon X\supset D(A)\to X$ and $B\colon Y\supset D(B)\to Y$ are generators of $C_0$-semigroups $(T_A(t))_{t\geq0}\subset\mathcal{B}(X)$ and $(T_B(t))_{t\geq0}\subset\mathcal{B}(Y)$, respectively. Let further $L_1\in\mathcal{B}(Y,X)$ and $L_2\in\mathcal{B}(X,Y)$ be bounded linear operators. Then the operator
\[
    \begin{pmatrix} A & L_1 \\ \epsilon L_2 & \epsilon B\end{pmatrix} \colon X\times Y\supset D(A)\times D(B)\to X\times Y
\]
generates a $C_0$-semigroup $(T_{\epsilon}(t))_{t\geq0}$ for all $\epsilon\geq0$. Hence, for all $u_0\in X$, $v_0\in Y$ and all $\epsilon\geq0$ there is a unique solution to the fast-slow system
\begin{align}
   \begin{aligned}\label{Eq:FastSlow:SlowTime}
     \partial_{t}u^{\epsilon} &= Au^{\epsilon}+L_1v^{\epsilon},\\
    \partial_{t}v^{\epsilon} &= \epsilon Bv^{\epsilon} + \epsilon L_2u^{\epsilon},\\
    u^{\epsilon}(0)&=u_0,\quad v^{\epsilon}(0)= v_0
    \end{aligned}
\end{align}
on the fast time scale which is given by a semiflow
\[
    \begin{pmatrix} u^{\epsilon}(t) \\ v^{\epsilon}(t) \end{pmatrix} = T_{\epsilon}(t)\begin{pmatrix} u_0 \\ v_0 \end{pmatrix}.
\]
For the sake of argument, we assume that the embedding 
\[
 D(A)\times D(B)\to X\times Y
\]
is compact so that the intersection of the critical subspace
\[
  S_0:=\{(u,v)\in D(A)\times D(B): Au+L_1v=0\}
\]
with the ball $B(0,R)$ in $D(A)\times D(B)$ around $0$ with arbitrary radius $R>0$ is relatively compact in $X\times Y$. Note that this assumption is frequently satisfied for differential operators on bounded domains. We are thus in a similar situation as in finite dimensions. One would hope that one can apply the theorem given in the introduction of \cite{Bates_Lu_Zeng_1998} to $S_0\cap B(0,R)$. However, if one wants to apply this theorem in order to perturb the critical subspace $S_0$ for \eqref{Eq:FastSlow:SlowTime} with $\epsilon=0$ to a slow submanifold $S_{\epsilon}$ for \eqref{Eq:FastSlow:SlowTime} with $\epsilon>0$, one would - among other assumptions - need that
\begin{align}\label{Eq:OperatorNormConvergence}
 \| T_0(t)-T_{\epsilon}(t) \| _{\mathcal{B}(X\times Y)} \to 0\quad (\epsilon\to0).
\end{align}
for some $t>0$. In fact, one just needs
\[
  \| T_0(t)-T_{\epsilon}(t) \|_{C^1(N; X\times Y)}\to 0\quad (\epsilon\to0).
\]
for a suitable neighbourhood $N$ of $S_0\cap B(0,R)$. But since such a neighbourhood already contains a ball in $X\times Y$ around $0$ with small radius, this implies \eqref{Eq:OperatorNormConvergence} by linearity. However, \eqref{Eq:OperatorNormConvergence} is not satisfied if $B$ is an unbounded operator. This can be seen as follows:\\
One can use the variation of constants formula together with a standard version of Gronwall's inequality in order to show that there is a constant $C>0$ such that
\[
	\sup_{\epsilon,t\in[0,1]}\big(\|u^{\epsilon}(t)\|_{X}+\|v^{\epsilon}(t)\|_Y\big)\leq C (\|u_0\|_X+\|v_0\|_Y).
\]
Therefore, if \eqref{Eq:OperatorNormConvergence} holds then we have that
{\allowdisplaybreaks{
\begin{align*}
	0&=\lim_{\epsilon\to0}\sup_{\|(u_0,v_0)^T\|_{X\times Y}=1}\left\|\operatorname{pr}_Y(T_{\epsilon}(1)-T_0(1))(u_0, v_0)^T\right\|_Y\\
	&=\lim_{\epsilon\to0}\sup_{\|(u_0,v_0)^T\|_{X\times Y}=1}\left\|v^{\epsilon}(1)-v_0 \right\|_Y\\
	&=\lim_{\epsilon\to0}\sup_{\|(u_0,v_0)^T\|_{X\times Y}=1}\left\|(T_B(\epsilon)-\operatorname{id}_Y)v_0+\epsilon \int_0^1 T_B(\epsilon(1-s))L_2 u^{\epsilon}(s)\,\txtd s\right\|_Y\\
	&\geq \lim_{\epsilon\to0}\sup_{\|(u_0,v_0)^T\|_{X\times Y}=1}\left(\left\|(T_B(\epsilon)-\operatorname{id}_Y)v_0\right\|_Y\right)\\
	&\qquad\qquad-\lim_{\epsilon\to0}\sup_{\|(u_0,v_0)^T\|_{X\times Y}=1}\epsilon \left\|\int_0^1 T_B(\epsilon(1-s))L_2 u^{\epsilon}(s)\,\txtd s\right\|_Y\\
	&=\lim_{\epsilon\to0}\sup_{\|(u_0,v_0)^T\|_{X\times Y}=1}\left(\left\|(T_B(\epsilon)-\operatorname{id}_Y)v_0\right\|_Y\right).
\end{align*}}}
Hence, we have
\[
 \| T_B(\epsilon )- \operatorname{id}_Y \|_{\mathcal{B}(Y)}\to 0 \quad(\epsilon\to 0),
\]
i.e. the semigroup generated by $B$ is norm-continuous at $t=0$. But this holds if and only if $B$ is a bounded linear operator on $Y$, see for example \cite[Theorem I.3.7]{Engel_Nagel_2000}. Therefore, one can not apply \cite{Bates_Lu_Zeng_1998} directly to fast-slow systems, in which the dynamics of the slow variable are given by a partial differential equation.

\subsection{Problems with the Notion of Normal Hyperbolicity}\label{Subsection:Bates_Difficult:Normal_Hyperbolicity}

One of the central objects in classical Fenichel theory is the notion of a normally hyperbolic invariant manifold. The important properties of such a manifold $M$ are that it is invariant under the given (semi-) flow $(T^t)_{t\geq0}$ on the space $X$ and that for each $m\in M$ it admits a splitting
\[
	X=X_m^c\oplus X_m^s \oplus X_m^u
\]
such that
\begin{enumerate}[(i)]
	\item $X_m^c $ is the tangent space to $M$ at $m$.
	\item The splitting is invariant under the linearized flow $\txtD T^t(m)$.
	\item \label{NHIM} $\txtD T^t(m)\vert_{X_m^u}$ expands, $\txtD T^t(m)\vert_{X_m^s}$ contracts and both do it to a greater degree than $\txtD T^t(m)\vert_{X_m^c}$.
\end{enumerate}
Perturbation results for such normally hyperbolic invariant manifolds in infinite dimensions have been obtained in \cite{Bates_Lu_Zeng_1998}. Therein, Property \eqref{NHIM} includes on a formal level the condition
\begin{align}\label{Eq:Some_Condition_For_Normal_Hyperbolicity}
 \lambda\min\{1,\inf\{\|\txtD T^t(m)x^c\|_{X^c_m}:x^c\in X^c_m,\,|x^c|=1\}\}>\| \txtD T^t(m)|_{X^s_m}\|_{\mathcal{B}(X^s_m)}
\end{align}
for some $\lambda\in(0,1)$. However, if we consider the uncoupled, linear case of a fast-slow system, i.e. \eqref{Eq:FastSlow:SlowTime} with $L_1=0$ and $L_2=0$, then the center direction $X^c_m$ on the critical manifold will be given by
\[
 X^c_m=\{(x,y)\in X\times Y: Ax=0\}\supset\{(x,y)\in X\times Y: x=0\}.
\]
Thus, if $B$ is a standard parabolic operator as the Laplacian $\Delta$ on $L_p(\R^d)$ or the Dirichlet Laplacian $\Delta_D$ on $L_p(\mathcal{O})$ with $\mathcal{O}$ being a smooth domain, then the left hand side of \eqref{Eq:Some_Condition_For_Normal_Hyperbolicity} equals to $0$ so that normal hyperbolicity in the sense of \cite[Page 11]{Bates_Lu_Zeng_1998} can not be satisfied.

\subsection{Problems with the Splitting in Fast and Slow Time}\label{Subsection:Bates_Difficult:Fast_Slow_Times}

In infinite dimensions, one has to be careful with the interpretation of the notion ``fast-slow system''. Many interesting cases can (locally) be written as
\begin{align}\begin{aligned}\label{Eq:Fast_Slow_Explain_Normal_Hyperbolicity}
 \epsilon\partial_t u^{\epsilon}&=Au^{\epsilon}+ f(u^{\epsilon},v^{\epsilon}),\\
 \partial_t v^{\epsilon}&=Bv^{\epsilon}+ g(u^{\epsilon},v^{\epsilon}),\\
 u^{\epsilon}(0)&=u_0,\;v^{\epsilon}(0)=v_0,
 \end{aligned}
\end{align}
where in infinite dimensions the operators $A$ and $B$ are unbounded operators on the Banach spaces $X$ and $Y$, the Lipschitz continuous nonlinearities $f,g$ have Lipschitz constants which are not too large and $u_0,v_0$ are certain initial conditions; note that in many examples one may cut off the nonlinearity to make it Lipschitz due to invariant regions~\cite{Smoller} or due to global dissipation~\cite{Temam,Robinson1}.

Already in finite dimensions, the speed of evolution of the fast variable can only be considered faster than the one of the slow variable if they are related to their norms. Obviously, if $\|v_0\|_Y$ is very large, then $v^{\epsilon}(t)$ may change quickly compared to $u^{\epsilon}(t)$, even if $\epsilon$ is very small. However, in infinite dimensions $\|\cdot\|_X$ and $\|\cdot\|_Y$ may not be suitable for such a comparison for several reasons. First of all, unlike in finite dimensions, not all norms are equivalent and thus, comparing $\|\cdot\|_X$ and $\|\cdot\|_Y$ might not be very meaningful. But even if ones takes $(X,\|\cdot\|_X)=(Y,\|\cdot\|_Y)$, one may run into difficulties. For the sake of argument, we assume for the moment that there is no coupling, i.e. $f=0$ and $g=0$. Since $B$ is unbounded in many interesting cases, we may take $u_0\in D(A)$ with $\|u_0 \|_{X}=1$ and $v_0\in Y$ with $\|v_0\|_Y=1$ such that $\|Bv_0\|_Y> \epsilon^{-1}\|Au_0\|_X$. Then we have
\[
 \| \partial_t u^{\epsilon}(0) \|_{X}=\epsilon^{-1}\|Au_0\|_X<\|Bv_0\|_Y=\| \partial_t v^{\epsilon}(0) \|_{Y}.
\]
Therefore, one could argue that $v^{\epsilon}(t)$ is faster around $t=0$ than $u^{\epsilon}(t)$, even though it is called ``slow variable''. Note that this argument breaks down if one takes $u_0$ and $v_0$ to have graph norms of the same size, i.e. $\|u_0\|_{D(A)}=\|v_0\|_{D(B)}=1$. But then we have to problem the other way round: $\| \partial_t v^{\epsilon}(0) \|_{Y}$ might be smaller than $\| \partial_t u^{\epsilon}(0) \|_{X}$ only because $\|v_0\|_Y$ is much smaller than $\|u_0\|_X$. In order to illustrate this, let us consider an example:

\begin{example}
 We take $X=L_2(\R^d)$, $Y=H^{-2}(\R^d)$, $A=\Delta-1$ with domain $H^2(\R^d)$ and $B=\Delta-1$ with domain $L_2(\R^d)$. Again, we take $f=0$ and $g=0$ so that we obtain the system
 \begin{align*}
 \epsilon\partial_t u^{\epsilon}&=(\Delta-1)u^{\epsilon},\\
 \partial_t v^{\epsilon}&=(\Delta-1)v^{\epsilon},\\
 u^{\epsilon}(0)&=u_0,\;v^{\epsilon}(0)=v_0.
\end{align*}
Now, we take $u_0:=\mathscr{F}^{-1}[\xi\mapsto \frac{1}{1+|\xi|^2}\mathbbm{1}_{[0,1]^d}(\xi)]$ and $v_0:=\mathscr{F}^{-1}[\xi\mapsto \mathbbm{1}_{[0,1]^d}(\xi-\xi_0)]$ for a certain $\xi_0\in\R^d$. Then we have
\begin{align*}
 \|u_0\|_{D(A)}&=\|u_0\|_{L_2(\R^d)} +\| (\Delta-1) u_0\|_{L_2(\R^d)}\\
 &\eqsim \|\mathscr{F}^{-1}(1+|\xi|^2)\mathscr{F}u_0\|_{L_2(\R^d)}=\|\mathbbm{1}_{[0,1]^d}\|_{L_2(\R^d)}=1
\end{align*}
and
\begin{align*}
 \|v_0\|_{D(B)}&=\|v_0\|_{H^{-2}(\R^d)} +\| (\Delta-1) v_0\|_{H^{-2}(\R^d)}\\
 &\eqsim \|v_0\|_{L_2(\R^d)}=\|\mathbbm{1}_{[0,1]^d}(\cdot-\xi_0)\|_{L_2(\R^d)}=1.
\end{align*}
But it holds that
\begin{align*}
 \| u^{\epsilon}(t)\|_{L_{2}(\R^d)}=\|\mathscr{F}^{-1}\txte^{-\epsilon^{-1}(1+|\xi|^2)t}\mathscr{F}u_0\|_{L_2(\R^d)}\geq \txte^{-2\epsilon^{-1}t}\|u_0\|_{L_2(\R^d)}
\end{align*}
and
\begin{align*}
 \| v^{\epsilon}(t)\|_{H^{-2}(\R^d)}=\|\mathscr{F}^{-1}\txte^{-(1+|\xi|^2)t}\mathscr{F}v_0\|_{H^{-2}(\R^d)}\leq \txte^{-|\xi_0|^2 t} \|v_0\|_{H^{-2}(\R^d)}.
\end{align*}
Hence, $v^{\epsilon}(t)$ decays faster in relation to $\|v_0\|_{H^{-2}(\R^d)}$ than $u^{\epsilon}(t)$ in relation to $\|u_0\|_{L_2(\R^d)}$ if $|\xi_0|^2>2\epsilon^{-1}$, even though $\|u_0\|_{D(A)}=\|v_0\|_{D(B)}=1$.
\end{example}

We also want to point out that norms can be a bad indicator of different time scales in a system. Suppose that $B$ generates a unitary group $(\txte^{tB})_{t\in\R}$ on a Hilbert space $Y$ and $A$ generates an exponentially stable $C_0$-semigroup of contractions $(\txte^{tA})_{t\geq0}$ on $X$. Since $(\txte^{tB})_{t\in\R}$ is a family of isometric isomorphisms on $Y$, we obviously have that
\[
 1=\|\txte^{tB}v_0\|_Y>\| \txte^{\epsilon^{-1}tA}u_0\|_X
\]
for all choices of $t>0$, $v_0\in Y$ with $\|v_0\|_Y=1$ and $u_0\in X$ with $\|u_0\|_X=1$. But still, the trajectories of $(\txte^{tB})_{t\in\R}$ can have changes which are much faster than the exponential decay caused by $(\txte^{\epsilon^{-1}tA})_{t\geq0}$ for certain initial values. Take for example $B=\frac{d}{dx}$ on $H^{-1}(\R)$ with domain $L_2(\R)$. The corresponding group is given by the family of shifts $\txte^{tB}v=v(\cdot+t)$. If we take $v_k=\sqrt{k}\mathbbm{1}_{[0,k^{-1}]}$, then we have
\[
 \|v_{k}\|_{L_2(\R)}=1,\quad \| \txte^{k^{-1}B}v_k-v_k \|_{L_2(\R)}=\sqrt{2}.
\]
Thus, no matter how small $|t|$ is, there will always be an initial value $v_0$ with $\|v_0\|_{L_2(\R)}=1$ such that $\txte^{tB}v_0$ and $v_0$ have a distance of $\sqrt{2}$.\\ \text{ }\\
In principle, the fact that small $\epsilon$ does not provide an intuitive splitting in fast and slow time does not necessarily mean that carrying over the results from the finite to the infinite dimensional setting has to cause problems. However, it shows that both cases are different not only from a technical but also from a conceptual point of view. Looking at the above examples one could even discuss whether using the terminology ``fast-slow system'' is the most adequate in infinite dimensions as one cannot immediately spot the scale separation from a standard form but we shall nevertheless still use the finite-dimensional terminology as one can then formally refer to the two evolution equations for $u^\epsilon$ and $v^\epsilon$ more easily. 


\section{General Fast-Slow Systems in Infinite Dimensions}
\label{sec:genfs}

In Section~\ref{Subsection:Bates_Difficult:Normal_Hyperbolicity} we have seen that the classical notion of normal hyperbolicity is very restrictive in infinite dimensions. Unfortunately, it is not known if or how the Lyapunov-Perron method or Hadamard's graph transform can be carried out without this condition and thus, slow manifolds have not been constructed in a general infinite-dimensional setting so far. The main results of this section, Theorem~\ref{Thm:Original_and_Modified_Flow_Close} and Corollary~\ref{Cor:Original_and_Slow_Flow_Close}, show that even without the construction of slow manifolds, one can consider the slow flow as a good approximation of the semiflow generated by the fast-slow system. In order to derive these results, we need a weaker version of normal hyperbolicity. The idea behind this condition is that solutions of the fast equation
\[
	\epsilon\partial_tu^{\epsilon}=Au^{\epsilon}+f(u^{\epsilon},v^{\epsilon})
\]
should decay unless the contribution of the slow variable $v^{\epsilon}$ prevents them from doing so. This could be formulated in terms of conditions on the spectrum of  $A+\txtD_xf(x,y)$ or, as we do it later, by the estimate \eqref{Eq:Weak_Normal_Hyperbolicity}. For finite-dimensional fast-slow systems, requiring the spectrum of $A+\txtD_xf(x,y)$ to have an empty intersection with the imaginary axis is equivalent to normal hyperbolicity of the critical manifold. But in infinite dimensions this is clearly not the case, since Section~\ref{Subsection:Bates_Difficult:Normal_Hyperbolicity} shows that classical normal hyperbolicity crucially depends on the operator in the slow variable.\\
Altogether, one could summarize that in this section we derive weaker results under weaker conditions than classical Fenichel theory. In Section~\ref{sec:slowfinal} we will then introduce a suitable stronger notion of normal hyperbolicity in infinite dimensions which will suffice to construct slow manifolds. However, this stronger notion will be more restrictive again and there are examples in which we are still forced to rely on the results of Section~\ref{sec:genfs}.

\subsection{The Fast Equation}\label{Section:General_Fast_Slow:Fast_Equation}
First, we study the equation
\begin{align}\begin{aligned}\label{Eq:Fast_Equation_Nonlinear_IVP}
 \epsilon\partial_t u^{\epsilon}(t)&=Au^{\epsilon}(t) + f(t,u^{\epsilon}(t))\quad(t\in[0,T]),\\
 u^{\epsilon}(0)&=u_0,
 \end{aligned}
\end{align}
under the following assumptions:
\begin{itemize}
	\item $\epsilon\geq0$, $T>0$ are parameters and $u_0\in X_1:=D(A)$ an initial value which satisfies $0=Au_0+f(0,u_0)$ if $\epsilon=0$.
	\item The operator $A\colon X\supset D(A)\to X$ is a closed linear operator on the Banach space $X$ with $D(A)$ being dense in $X$ and with $0\in\rho(A)$. It generates the $C_0$-semigroup $(\txte^{tA})_{t\geq0}\subset\mathcal{B}(X)$.
	\item We write $(\tilde{X}_{\alpha},A_{\alpha})_{\alpha\in[-1,\infty)}$ for the interpolation-extrapolation scale generated by $(X,A)$ and $(X_{\alpha})_{\alpha\in[-1,\infty)}$ for a scale of Banach spaces such that the norms $\|\cdot\|_{X_\alpha}$ and $\|\cdot\|_{\tilde{X}_\alpha}$ are equivalent. Moreover, we take constants $C_A,M_A>0$, $\omega_A\in\R$ such that
	\[
 \|\txte^{tA}\|_{\mathcal{B}(X_1)}\leq M_A\txte^{\omega_A t},\quad  \|\txte^{tA}\|_{\mathcal{B}(X_{\gamma},X_1)}\leq C_At^{\gamma-1}\txte^{\omega_A t}\quad(t>0),
\]
where $\gamma\in(0,1]$ if $(\txte^{tA})_{t\geq0}\subset\mathcal{B}(X)$ is holomorphic and $\gamma=1$ in the general case.
	\item Take again  $\gamma\in(0,1]$ if $(\txte^{tA})_{t\geq0}\subset\mathcal{B}(X)$ is holomorphic and $\gamma=1$ in the general case. Let $\delta\in[1-\gamma,1]$. The nonlinearity $f\colon [0,\infty)\times X_{\delta} \to X$ is continuous and there is an $L_f>0$ such that
	\begin{align*}
		\| f(t,x_1)-f(t,x_2) \|_{X_{\gamma}}&\leq L_f \|x_1-x_2\|_{X_1},\\
		\| f(\cdot,u_1)-f(\cdot,u_2) \|_{C^1([0,t];X_{\delta-1})}&\leq L_f \|u_1-u_2\|_{C^1([0,t];X_{\delta})},
	\end{align*}
	for all $t\in[0,T]$, $x_1,x_2\in X_1$ and $u_1,u_2\in C^1([0,T];X_{\delta})$. Here we assume that $f(t,x)\in X_{\gamma}$ for $(t,x)\in[0,T]\times X_1$ and $f(\cdot,u)\in C^1([0,T];X_{\delta-1})$ for $u\in C^1([0,T];X_{\delta})$.
	\item We define $\omega_f:=\omega_A+(2C_AL_{f})^{\frac{1}{\gamma}} (\frac{1}{\gamma})^{\frac{1-\gamma}{\gamma}}$ if $\gamma\in(0,1)$ and take $\omega_f>\omega_A+C_AL_F$ if $\gamma=1$. According to Remark~\ref{Rem:Gronwall_Specific} the former definition will not be optimal in most cases, but for the sake of simplicity, we make this choice. However, as the optimal choice for $\gamma=1$ has a nice representation, we explicitely mention this case.
\end{itemize}
We work with these assumptions throughout this subsection.
\begin{remark}
	Formally, one has to distinguish the different operators $A_{\alpha}$ and the corresponding semigroups $(\txte^{tA_{\alpha}})_{t\geq0}$ for different values of $\alpha\in[-1,\infty)$. However, the difference is not essential for us. So we will in our notation just write $A$ and $(\txte^{tA})_{t\geq0}$ no matter on which $X_{\alpha}$ we consider them.
\end{remark}
\begin{proposition} \label{Prop:Nonlinear_IVP_Existence}
\begin{enumerate}[(a)] 
	\item \label{Prop:Nonlinear_IVP_Existence:epsilon=0} Assume that $L_f\|A^{-1}\|_{\mathcal{B}(X_{\delta-1},X_{\delta})}<1$. Then Equation \eqref{Eq:Fast_Equation_Nonlinear_IVP} with $\epsilon=0$ has a unique solution $u^0\in C^1([0,T];X_{\delta})$.
	\item \label{Prop:Nonlinear_IVP_Existence:epsilon>0} Equation \eqref{Eq:Fast_Equation_Nonlinear_IVP} with $\epsilon>0$ has a unique strict solution $u^{\epsilon}$, i.e. a solution $u^{\epsilon}\in C^1([0,\infty);X)\cap C([0,\infty);X_1)$ which satisfies \eqref{Eq:Fast_Equation_Nonlinear_IVP} with $\epsilon>0$ for all $t\in[0,\infty)$.
	\end{enumerate}
\end{proposition}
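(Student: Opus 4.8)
The plan is to prove both parts by contraction-mapping arguments, with an additional regularity bootstrap at the end of~(b).

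\emph{Part~(a).} With $\epsilon=0$ the equation degenerates to the time-indexed algebraic relation $0=Au^0(t)+f(t,u^0(t))$. Since $0\in\rho(A)$ and $A_{\delta-1}\colon X_\delta\to X_{\delta-1}$ is an isomorphism, this is equivalent to the fixed-point identity $u^0(t)=-A^{-1}f(t,u^0(t))$ with $A^{-1}=A_{\delta-1}^{-1}\in\mathcal B(X_{\delta-1},X_\delta)$. I would apply Banach's fixed point theorem to $\Phi\colon u\mapsto -A^{-1}f(\cdot,u)$ on the Banach space $C^1([0,T];X_\delta)$: by hypothesis $u\mapsto f(\cdot,u)$ maps $C^1([0,T];X_\delta)$ into $C^1([0,T];X_{\delta-1})$ with Lipschitz constant $L_f$, and post-composing with the bounded operator $A^{-1}$, which commutes with $\partial_t$, lands back in $C^1([0,T];X_\delta)$ and multiplies the Lipschitz constant by $\|A^{-1}\|_{\mathcal B(X_{\delta-1},X_\delta)}$. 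The standing hypothesis $L_f\|A^{-1}\|_{\mathcal B(X_{\delta-1},X_\delta)}<1$ makes $\Phi$ a contraction, yielding a unique fixed point $u^0\in C^1([0,T];X_\delta)$. Finally, evaluating the identity at $t=0$ shows $u^0(0)$ solves $x=-A^{-1}f(0,x)$, which by assumption is also solved by $u_0$; uniqueness of that time-frozen fixed point gives $u^0(0)=u_0$, so $u^0$ genuinely solves \eqref{Eq:Fast_Equation_Nonlinear_IVP}.

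\emph{Part~(b), mild solution.} For $\epsilon>0$, rewrite \eqref{Eq:Fast_Equation_Nonlinear_IVP} as $\partial_tu^\epsilon=\epsilon^{-1}Au^\epsilon+\epsilon^{-1}f(t,u^\epsilon)$, so that Duhamel's formula reads
\[
 u^\epsilon(t)=\txte^{\epsilon^{-1}tA}u_0+\epsilon^{-1}\int_0^t\txte^{\epsilon^{-1}(t-s)A}f(s,u^\epsilon(s))\,\txtd s .
\]
I would set up the associated Picard operator on $C([0,T'];X_1)$. The free term lies there because $u_0\in X_1$ and $(\txte^{tA})$ restricts to a $C_0$-semigroup on $X_1$ by Theorem~\ref{Thm:Semigroup_in_Scales}; the Duhamel term is estimated with $\|\txte^{\epsilon^{-1}(t-s)A}\|_{\mathcal B(X_\gamma,X_1)}\le C_A(\epsilon^{-1}(t-s))^{\gamma-1}\txte^{\omega_A\epsilon^{-1}(t-s)}$ and $\|f(s,x)\|_{X_\gamma}\le\|f(s,0)\|_{X_\gamma}+L_f\|x\|_{X_1}$, which reduces both the self-map and the contraction estimate to integrals with kernel $\frac{\txte^{\omega_A\epsilon^{-1}(t-s)}}{\epsilon^\gamma(t-s)^{1-\gamma}}$; Lemma~\ref{Lemma:Incomplete_Gamma_1} bounds $\int_0^t$ of this kernel by $t^\gamma/(\gamma\epsilon^\gamma)$, which tends to $0$ as $t\to0$. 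Hence for $T'$ small---depending on $\epsilon,C_A,\omega_A,\gamma$ and on $f$ only through $L_f$---the Picard operator is a contraction on $C([0,T'];X_1)$, and since $L_f$ is global the local solution extends by iteration to a unique mild solution on $[0,\infty)$; alternatively the global a~priori bound on $\|u^\epsilon(t)\|_{X_1}$ follows directly from Lemma~\ref{Lemma:Gronwall_Specific}. Uniqueness of the strict solution then follows from uniqueness of the mild solution (or a direct Gronwall estimate on the difference).

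\emph{Part~(b), strict solution and main obstacle.} It remains to promote the mild solution to $u^\epsilon\in C^1([0,\infty);X)\cap C([0,\infty);X_1)$. Combining continuity of $f$ with the Lipschitz bound shows $s\mapsto f(s,u^\epsilon(s))$ is continuous with values in $X_\gamma$. In the holomorphic case $\gamma\in(0,1)$, an inhomogeneity in $C([0,T];X_\gamma)$ with $\gamma>0$ together with $u_0\in X_1$ makes the mild solution a strict solution of the inhomogeneous abstract Cauchy problem, by the regularity theory for abstract parabolic problems on interpolation-extrapolation scales (cf.\ \cite{Amann_1995}); in the general $C_0$ case one has $\gamma=1$, hence $X_\gamma=X_1$, so the inhomogeneity lies in $C([0,\infty);D(A))$ and the mild solution is strict by the classical closed-operator/Duhamel argument. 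I expect this last bootstrap to be the main obstacle: one has to handle the holomorphic case (exploiting the semigroup smoothing and the membership $f(\cdot,u^\epsilon)\in X_\gamma$ in the correct order) and the merely strongly continuous case $\gamma=1$ (where no smoothing is available) within one scheme, while keeping every constant explicit in $\epsilon$ so that the bounds can be reused in the approximation results of Sections~\ref{sec:genfs}--\ref{sec:slowfinal}. By contrast, the mild-solution step is routine once the incomplete-gamma estimates of Section~\ref{sec:prelim} are in hand.
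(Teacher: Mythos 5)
Your part~(a) is exactly the paper's proof: the contraction $u\mapsto -A^{-1}f(\cdot,u)$ on $C^1([0,T];X_\delta)$ under $L_f\|A^{-1}\|_{\mathcal{B}(X_{\delta-1},X_\delta)}<1$ (your extra remark that $u^0(0)=u_0$ by uniqueness of the time-frozen fixed point is a harmless addition). Your mild-solution step in~(b) is also the paper's argument up to packaging: the paper runs the same Picard operator as a single global contraction on the exponentially weighted space $C_b([0,\infty),\txte^{\epsilon^{-1}\eta t};X_1)$ with $\eta$ large, whereas you contract locally on $C([0,T'];X_1)$ and continue; since $L_f$ is global and uniform in $t$, both routes are fine.

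The divergence is at the strictness step, and there your argument as written has a gap, though a repairable and in fact unnecessary one. You assert that $s\mapsto f(s,u^\epsilon(s))$ is continuous with values in $X_\gamma$ (and, for $\gamma=1$, that the inhomogeneity lies in $C([0,\infty);D(A))$). The standing assumptions only give continuity of $f$ as a map $[0,\infty)\times X_\delta\to X$ together with the Lipschitz bound $\|f(t,x_1)-f(t,x_2)\|_{X_\gamma}\leq L_f\|x_1-x_2\|_{X_1}$; this controls the spatial increment in the $X_\gamma$-norm but says nothing about the increment in $t$, so $X_\gamma$-continuity (respectively graph-norm continuity when $\gamma=1$) of $f(\cdot,u^\epsilon)$ does not follow, and the regularity theorems you invoke are not directly applicable as cited. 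One could repair this by using the ``space-regularity'' variant that only requires boundedness in the intermediate space plus continuity in $X$, but no bootstrap is needed at all: your fixed point already lies in $C([0,\infty);X_1)$, hence $Au^\epsilon,\,f(\cdot,u^\epsilon)\in C([0,\infty);X)$, the mild solution satisfies the integrated identity $u^\epsilon(t)=u_0+\epsilon^{-1}A\int_0^t u^\epsilon(s)\,\txtd s+\epsilon^{-1}\int_0^t f(s,u^\epsilon(s))\,\txtd s$ (this is the cited \cite[Proposition 4.1.5]{Lunardi_1995}), and since $A$ is closed and $u^\epsilon$ is $X_1$-continuous one may pull $A$ inside the integral and apply the fundamental theorem of calculus to get $u^\epsilon\in C^1([0,\infty);X)$ directly. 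This is exactly how the paper concludes, uniformly in $\gamma\in(0,1]$ and with no case distinction between holomorphic and merely strongly continuous semigroups; replacing your bootstrap by this observation makes your proof complete and essentially identical to the paper's.
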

\begin{proof}
\begin{enumerate}[(a)]
	\item Our assumptions imply that 
		\[
			\mathscr{L}\colon C^1([0,T];X_{\delta})\to C^1([0,T];X_{\delta}), u\mapsto -A^{-1}f(\cdot,u)
		\]
		is a contraction. Since $C^1([0,T];X_{\delta})$ is a Banach space, the assertion follows from Banach's fixed point theorem.
	\item For $\eta\in\R$ let $C_b([0,\infty),\txte^{\epsilon^{-1}\eta t};X_1)$ be the space of all $u\in C([0,\infty);X_1)$ such that
	\[
		\|u\|_{C_b([0,\infty),\txte^{\epsilon^{-1}\eta t};X_1)}:=\sup_{t\geq0} \txte^{-\epsilon^{-1}\eta t}\|u(t)\|_{X_1}<\infty.
	\]
	We show that the operator
	\[
		\mathscr{L}(u):= \txte^{\epsilon^{-1}tA}u_0+\epsilon^{-1}\int_0^t \txte^{\epsilon^{-1}(t-s)A}f(s,u(s))\,\txtd s
	\]
	has a unique fixed point in $C_b([0,\infty),\txte^{\eta t};X_1)$ for $\eta$ large enough. By our assumptions it holds for $\eta>\omega_A$ that
	\begin{align*}
		&\quad\|\mathscr{L}(u_1)-\mathscr{L}(u_2)\|_{C_b([0,\infty),\txte^{\epsilon^{-1}\eta t};X_1)}\\
		&=\sup_{t\geq0}\txte^{-\epsilon^{-1}\eta t}\left\| \epsilon^{-1}\int_0^t \txte^{\epsilon^{-1}(t-s)A}(f(s,u_1(s))-f(s,u_2(s))\,\txtd s\right\|_{X_1}\\
		&\leq\sup_{t\geq0}L_fC_A\int_0^t\frac{\txte^{\epsilon^{-1}(t-s)(\omega_A-\eta)}}{(t-s)^{1-\gamma}\epsilon^{\gamma}}\,\txtd s\|u_1-u_2\|_{C_b([0,\infty),\txte^{\epsilon^{-1}\eta t};X_1)}\\
		&\leq \frac{L_f C_A\Gamma(\gamma)}{(\eta-\omega_A)^{\gamma}}\|u_1-u_2\|_{C_b([0,\infty),\txte^{\epsilon^{-1}\eta t};X_1)},
	\end{align*}
	where $\Gamma$ denotes the gamma function. If even $\eta>(L_f C_A\Gamma(\gamma))^{1/\gamma}-\omega_A$, then $\mathscr{L}$ is a contraction. By Banach's fixed point theorem, it follows that $\mathscr{L}$ has a unique fixed point in $C_b([0,\infty),\txte^{\epsilon^{-1}\eta t};X_1)$. Let $u^{\epsilon}$ be this fixed point. Then we have that 
	\[
		u^{\epsilon}(t)=\txte^{\epsilon^{-1}tA}u_0+\epsilon^{-1}\int_{0}^t\txte^{\epsilon^{-1}(t-s)A}f(s,u^{\epsilon}(s))\,\txtd s
	\]
	and which in turn implies that 
	\[
		u^{\epsilon}(t)=u_0+\epsilon^{-1}A\int_0^tu^{\epsilon}(s)\,\txtd s+\epsilon^{-1}\int_{0}^tf(s,u^{\epsilon}(s))\,\txtd s\quad(t\in[0,\infty)),
	\]
	see for example \cite[Proposition 4.1.5]{Lunardi_1995}. Hence, it follows that for all $t\geq0$ we have that 
	\begin{align*}
		\lim_{h\to 0}\frac{u^{\epsilon}(t+h)-u^{\epsilon}(t)}{h}&=\lim_{h\to 0}\frac{1}{h}\left[\int_t^{t+h}\epsilon^{-1}Au^{\epsilon}(s)\,\txtd s+\epsilon^{-1}\int_{t}^{t+h}f(s,u^{\epsilon}(s))\,\txtd s\right]\\
		&=\epsilon^{-1}Au^{\epsilon}(t)+\epsilon^{-1}f(t,u^{\epsilon}(t)),
	\end{align*}
	where to convergence holds in $X$ as $Au^{\epsilon},f(\cdot,u^{\epsilon})\in C([0,\infty);X)$.
	This shows the assertion.
\end{enumerate}
\end{proof}

\begin{remark}\label{Rem:2nd_Lipschitz_unnecessary}
	Note that in the proof of Proposition~\ref{Prop:Nonlinear_IVP_Existence}~\eqref{Prop:Nonlinear_IVP_Existence:epsilon>0} we did not use the estimate 
	\[
		\| f(\cdot,u_1)-f(\cdot,u_2) \|_{C^1([0,T];X_{\delta-1})}\leq L_f \|u_1-u_2\|_{C^1([0,T];X_{\delta})}\;\;\;(u_1,u_2\in C^1([0,T];X_{\delta})),
	\]
	which we assumed for $f$ to hold.
\end{remark}

\begin{proposition}\label{Prop:A_Priori_Estimate_Fast_Equation} Consider the situation of Proposition \ref{Prop:Nonlinear_IVP_Existence}.
\begin{enumerate}[(a)]
\item \label{Prop:A_Priori_Estimate_Fast_Equation:epsilon=0} Suppose that $L_f\|A^{-1}\|_{\mathcal{B}(X_{\delta-1},X_{\delta})}<1$. Let $\epsilon=0$ and let $u^0$ be the solution of \eqref{Eq:Fast_Equation_Nonlinear_IVP} from Proposition~\ref{Prop:Nonlinear_IVP_Existence}~(\ref{Prop:Nonlinear_IVP_Existence:epsilon=0}). Then we have the estimate
\begin{align*}
	\|u^0\|_{C^1([0,T];X_\delta)}\leq \frac{\|A^{-1}\|_{\mathcal{B}(X_{\delta-1},X_{\delta})}}{1-L_f\|A^{-1}\|_{\mathcal{B}(X_{\delta-1},X_{\delta})}}\|f(\cdot,0)\|_{C^1([0,T];X_{\delta-1})}.
\end{align*}
\item \label{Prop:A_Priori_Estimate_Fast_Equation:epsilon>0}Let $\epsilon>0$ and $\eta>\omega_A+C_AL_f(\frac{\gamma}{2L_fC_A})^{\frac{\gamma-1}{\gamma}}$. Then for all $t\geq0$ we have the estimate
\begin{align*}
 \|u^{\epsilon}(t)&\|_{X_1}\leq 2M_A\txte^{\epsilon^{-1}\omega_f t}\|u_0\|_{X_1}\\
 &+2C_A\left(\frac{\txte^\gamma}{\gamma^{1-\gamma}}+\Gamma(\gamma)\left|\frac{\eta-\omega_A}{\eta-\omega_f}\right|^{1-\gamma}\right)\frac{\|\txte^{\epsilon^{-1}\eta(t-\,\cdot\,)}f(\,\cdot\,,0)\|_{L_{\infty}([0,t];X_{\gamma})}}{(\eta-\omega_f)^\gamma},
\end{align*}
where $u^{\epsilon}$ denotes the solution of \eqref{Eq:Fast_Equation_Nonlinear_IVP} from Proposition~\ref{Prop:Nonlinear_IVP_Existence}~(\ref{Prop:Nonlinear_IVP_Existence:epsilon>0}).
\end{enumerate}
\end{proposition}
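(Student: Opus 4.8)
\textbf{Proof proposal for Proposition~\ref{Prop:A_Priori_Estimate_Fast_Equation}.}

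The plan is to treat the two parts separately, since part~(\ref{Prop:A_Priori_Estimate_Fast_Equation:epsilon=0}) is just a fixed-point estimate and part~(\ref{Prop:A_Priori_Estimate_Fast_Equation:epsilon>0}) is where the real work is. For part~(a), I would recall from the proof of Proposition~\ref{Prop:Nonlinear_IVP_Existence}~(\ref{Prop:Nonlinear_IVP_Existence:epsilon=0}) that $u^0$ is the fixed point of the contraction $\mathscr{L}(u)=-A^{-1}f(\cdot,u)$ on $C^1([0,T];X_\delta)$. Writing $u^0=\mathscr{L}(u^0)$, adding and subtracting $\mathscr{L}(0)=-A^{-1}f(\cdot,0)$, and using the Lipschitz bound $\|\mathscr{L}(u^0)-\mathscr{L}(0)\|_{C^1([0,T];X_\delta)}\leq L_f\|A^{-1}\|_{\mathcal{B}(X_{\delta-1},X_\delta)}\|u^0\|_{C^1([0,T];X_\delta)}$ together with $\|\mathscr{L}(0)\|_{C^1([0,T];X_\delta)}\leq\|A^{-1}\|_{\mathcal{B}(X_{\delta-1},X_\delta)}\|f(\cdot,0)\|_{C^1([0,T];X_{\delta-1})}$, the triangle inequality gives $\|u^0\|\leq\|A^{-1}\|\,\|f(\cdot,0)\|+L_f\|A^{-1}\|\,\|u^0\|$, and solving for $\|u^0\|$ yields the claimed bound since $L_f\|A^{-1}\|<1$.

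For part~(b), I would start from the mild-solution identity $u^\epsilon(t)=\txte^{\epsilon^{-1}tA}u_0+\epsilon^{-1}\int_0^t\txte^{\epsilon^{-1}(t-s)A}f(s,u^\epsilon(s))\,\txtd s$ established in Proposition~\ref{Prop:Nonlinear_IVP_Existence}~(\ref{Prop:Nonlinear_IVP_Existence:epsilon>0}). Taking $X_1$-norms, splitting $f(s,u^\epsilon(s))=\bigl(f(s,u^\epsilon(s))-f(s,0)\bigr)+f(s,0)$, and applying the semigroup bounds $\|\txte^{tA}\|_{\mathcal{B}(X_1)}\leq M_A\txte^{\omega_A t}$ and $\|\txte^{tA}\|_{\mathcal{B}(X_\gamma,X_1)}\leq C_A t^{\gamma-1}\txte^{\omega_A t}$ together with the Lipschitz estimate $\|f(s,u^\epsilon(s))-f(s,0)\|_{X_\gamma}\leq L_f\|u^\epsilon(s)\|_{X_1}$, one obtains an integral inequality of the form
\[
 \|u^\epsilon(t)\|_{X_1}\leq c(t)+L_fC_A\int_0^t\frac{\txte^{\epsilon^{-1}\omega_A(t-s)}}{\epsilon^{\gamma}(t-s)^{1-\gamma}}\|u^\epsilon(s)\|_{X_1}\,\txtd s,
\]
where $c(t):=M_A\txte^{\epsilon^{-1}\omega_A t}\|u_0\|_{X_1}+C_A\int_0^t\frac{\txte^{\epsilon^{-1}\omega_A(t-s)}}{\epsilon^{\gamma}(t-s)^{1-\gamma}}\|f(s,0)\|_{X_\gamma}\,\txtd s$. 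The next step is to put $c$ into the shape required by Lemma~\ref{Lemma:Gronwall_Specific}: one checks that $c$ is continuous with locally integrable derivative and that $[t\mapsto\txte^{-\epsilon^{-1}\omega_A t}c(t)]$ is non-decreasing (the first summand is constant after the weight is removed, and the integral term is manifestly non-decreasing in $t$ after pulling out $\txte^{\epsilon^{-1}\omega_A t}$). Applying Lemma~\ref{Lemma:Gronwall_Specific} with $x=\omega_A$, $N=L_fC_A$, $p=2$ produces $\tilde x=\omega_A+2L_fC_A\bigl(\tfrac{2}{\gamma}\bigr)^{\frac{1-\gamma}{\gamma}}$; I would note that with $N=L_fC_A$ the exponent $pN^{1/\gamma}(p'/\gamma)^{(1-\gamma)/\gamma}$ becomes $2(L_fC_A)^{1/\gamma}(2/\gamma)^{(1-\gamma)/\gamma}$, which matches $\omega_f-\omega_A$ after comparing with the definition $\omega_f=\omega_A+(2C_AL_f)^{1/\gamma}(1/\gamma)^{(1-\gamma)/\gamma}$ up to the harmless factor absorbed into the constant, so effectively $\tilde x=\omega_f$ with the displayed constants.

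The final step is to evaluate the two terms coming out of Lemma~\ref{Lemma:Gronwall_Specific}, namely $2c(0)\txte^{\epsilon^{-1}\omega_f t}$ and $2\int_0^t(c'(s)-\epsilon^{-1}\omega_A c(s))\txte^{\epsilon^{-1}\omega_f(t-s)}\,\txtd s$. Here $c(0)=M_A\|u_0\|_{X_1}$, which gives the first term on the right-hand side of the assertion. For the second term, the key simplification is that $c'(s)-\epsilon^{-1}\omega_A c(s)$ kills the contribution of the pure-semigroup part $M_A\txte^{\epsilon^{-1}\omega_A t}\|u_0\|_{X_1}$ entirely, leaving only the piece generated by differentiating the Duhamel integral; after bounding $\|f(s,0)\|_{X_\gamma}$ by $\|\txte^{\epsilon^{-1}\eta(t-\cdot)}f(\cdot,0)\|_{L_\infty([0,t];X_\gamma)}$ times $\txte^{-\epsilon^{-1}\eta(t-s)}$ and carrying out the resulting iterated integrals, one is precisely in the setting of Corollary~\ref{Cor:Incomplete_Gamma_1} (with $\omega=\eta-\omega_A$, $\tilde\omega=\eta-\omega_f$, noting $\omega_A<\omega_f$ forces the required ordering), which delivers the factor $\bigl(\tfrac{\txte^\gamma}{\gamma^{1-\gamma}}+\Gamma(\gamma)|\tfrac{\eta-\omega_A}{\eta-\omega_f}|^{1-\gamma}\bigr)(\eta-\omega_f)^{-\gamma}$. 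Collecting the constants gives exactly the stated inequality. I expect the main obstacle to be the bookkeeping in this last step: verifying that the differentiation $c'-\epsilon^{-1}\omega_A c$ really does collapse the $u_0$-term and leaves an integral of precisely the form covered by Corollary~\ref{Cor:Incomplete_Gamma_1}, and checking that the hypothesis $\eta>\omega_A+C_AL_f(\tfrac{\gamma}{2L_fC_A})^{(\gamma-1)/\gamma}$ guarantees $\eta>\omega_f$ so that all the incomplete-gamma estimates apply with the correct sign.
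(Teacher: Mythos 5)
Part (a) of your proposal is correct and is exactly the paper's argument (fixed-point identity $u^0=-A^{-1}f(\cdot,u^0)$, add and subtract $f(\cdot,0)$, absorb the Lipschitz term). For part (b) the skeleton also matches the paper: mild solution, splitting off $f(s,0)$, Lemma~\ref{Lemma:Gronwall_Specific} with $p=2$ (your computation that $\tilde x=\omega_f$ is in fact exact, no ``harmless factor'' is needed), the observation that $c'-\epsilon^{-1}\omega_Ac$ annihilates the $u_0$-term, and Corollary~\ref{Cor:Incomplete_Gamma_1} at the end. However, there is a genuine gap in how you feed the data into Lemma~\ref{Lemma:Gronwall_Specific}. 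You keep $\|f(s,0)\|_{X_\gamma}$ inside the convolution, i.e. $c(t)=M_A\txte^{\epsilon^{-1}\omega_A t}\|u_0\|_{X_1}+C_A\int_0^t\frac{\txte^{\epsilon^{-1}\omega_A(t-s)}}{\epsilon^\gamma(t-s)^{1-\gamma}}\|f(s,0)\|_{X_\gamma}\,\txtd s$, and assert that $t\mapsto\txte^{-\epsilon^{-1}\omega_A t}c(t)$ is ``manifestly non-decreasing''. It is not: for $\gamma<1$ the kernel $(t-s)^{\gamma-1}$ \emph{decreases} in $t$ for each fixed $s$, and if $\|f(\cdot,0)\|_{X_\gamma}$ is concentrated near $s=0$ the weighted integral behaves like $t^{\gamma-1}$ and decreases. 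Moreover with $\|f(s,0)\|_{X_\gamma}$ left inside, the continuity of $c$ and local integrability of $c'$ required by the lemma are not available from the standing hypotheses (only continuity of $f$ into $X$ is assumed). The paper avoids both problems by first bounding $\|f(s,0)\|_{X_\gamma}$ by the constant $\|f(\cdot,0)\|_{L_\infty([0,t_0];X_\gamma)}$ with a fixed $t_0\geq t$, so that the convolution collapses (after the substitution $s\mapsto t-s$) to $\int_0^t\txte^{\epsilon^{-1}\omega_A s}\epsilon^{-\gamma}s^{\gamma-1}\,\txtd s$, whose weighted monotonicity and smoothness are immediate; you need this maneuver \emph{before} invoking the Gronwall lemma, not after.

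A second, related issue is the general-$\eta$ case. The paper proves the case $\eta=0$ and then reduces general $\eta$ to it via the exponential rescaling $u^\epsilon_\eta(t)=\txte^{-\epsilon^{-1}\eta t}u^\epsilon(t)$, which replaces $A$ by $A-\eta$ and the nonlinearity by $\txte^{-\epsilon^{-1}\eta t}f(t,\txte^{\epsilon^{-1}\eta t}\,\cdot\,)$ with the same Lipschitz constant; the weighted norm $\|\txte^{\epsilon^{-1}\eta(t-\cdot)}f(\cdot,0)\|_{L_\infty}$ then appears automatically. Your plan to insert the weight by hand after the Gronwall step is not coherent as stated, because the sup you want to pull out depends on the final time $t$ while $c$ must be a fixed function on $[0,T]$; if you instead pull out the weighted sup with a fixed reference time $t_0$ before applying the lemma, your route becomes essentially the paper's transformation written out explicitly. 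Finally, your identification of the parameters in Corollary~\ref{Cor:Incomplete_Gamma_1} has the signs flipped: you need $\omega=\omega_A-\eta$ and $\tilde\omega=\omega_f-\eta$ (both negative when $\eta>\omega_f$), not $\eta-\omega_A$ and $\eta-\omega_f$, since the corollary requires $\omega<\tilde\omega<0$.
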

\begin{proof}
\begin{enumerate}[(a)]
 \item The assertion follows from 
 	\begin{align*}
 		\|u^0&\|_{C^1([0,T];X_\delta)}=\|A^{-1}f(\cdot,u^0)\|_{C^1([0,T];X_\delta)}\\
 		&\leq \|A^{-1}\|_{\mathcal{B}(X_{\delta-1},X_{\delta})} \|f(\cdot,u^0)\|_{C^1([0,T];X_{\delta-1})}\\
 		&\leq\|A^{-1}\|_{\mathcal{B}(X_{\delta-1},X_{\delta})}\big(\|f(\cdot,u^0)-f(\cdot,0)\|_{C^1([0,T];X_{\delta-1})}+\|f(\cdot,0)\|_{C^1([0,T];X_{\delta-1})}\big)\\
 		&\leq\|A^{-1}\|_{\mathcal{B}(X_{\delta-1},X_{\delta})}\big(L_f\|u^0\|_{C^1([0,T];X_{\delta})}+\|f(\cdot,0)\|_{C^1([0,T];X_{\delta-1})}\big).
 	\end{align*}
 \item In a first step we assume that $\omega_A+C_AL_f(\frac{\gamma}{2L_fC_A})^{\frac{\gamma-1}{\gamma}}<\eta=0$. For the solution of \eqref{Eq:Fast_Equation_Nonlinear_IVP} we have the implicit solution formula
 \begin{align*}
  u^{\epsilon}(t)&= \txte^{\epsilon^{-1}tA}u_0+\epsilon^{-1}\int_0^t \txte^{\epsilon^{-1}(t-s)A}f(s,0)\,\txtd s\\
  &\quad+\epsilon^{-1}\int_0^t \txte^{\epsilon^{-1}(t-s)A}(f(s,u^{\epsilon}(s))-f(s,0))\,\txtd s.
 \end{align*}
Therefore, we obtain
{\allowdisplaybreaks{
\begin{align*}
    \|u^{\epsilon}(t)\|_{X_1}&\leq \|\txte^{\epsilon^{-1}tA} \|_{\mathcal{B}(X_1)}  \|u_0\|_{X_1}+\epsilon^{-1}\int_0^t \|\txte^{\epsilon^{-1}(t-s)A}\|_{\mathcal{B}(X_{\gamma},X_1)}\|f(s,0)\|_{X_{\gamma}}\,\txtd s\\
    &\quad+ L_f\epsilon^{-1}\int_0^t \|\txte^{\epsilon^{-1}(t-s)A}\|_{\mathcal{B}(X_{\gamma},X_1)}\|u^{\epsilon}(s)\|_{X_1}\,\txtd s\\
    &\leq M_A\txte^{\epsilon^{-1}\omega_A t}  \|u_0\|_{X_1} + C_A\int_{0}^t\frac{\txte^{\epsilon^{-1}\omega_A(t-s)}}{\epsilon^{\gamma}(t-s)^{1-\gamma}}\,\txtd s\|f(\,\cdot\,,0)\|_{L_{\infty}([0,t];X_{\gamma})}\\
    &\qquad+ C_AL_f\int_0^t \frac{\txte^{\epsilon^{-1}\omega_A(t-s)}}{\epsilon^{\gamma}(t-s)^{1-\gamma}}\|u^{\epsilon}(s)\|_{X_1}\,\txtd s\\
        &= M_A\txte^{\epsilon^{-1}\omega_A t}  \|u_0\|_{X_1} + C_A\int_{0}^t\frac{\txte^{\epsilon^{-1}\omega_A s}}{\epsilon^{\gamma}s^{1-\gamma}}\,\txtd s\|f(\,\cdot\,,0)\|_{L_{\infty}([0,t];X_{\gamma})}\\
    &\qquad+ C_AL_f\int_0^t \frac{\txte^{\epsilon^{-1}\omega_A(t-s)}}{\epsilon^{\gamma}(t-s)^{1-\gamma}}\|u^{\epsilon}(s)\|_{X_1}\,\txtd s.
\end{align*}}}
Now we choose $t_0\geq t$ and apply Lemma~\ref{Lemma:Gronwall_Specific} with $p=2$ together with Corollary~\ref{Cor:Incomplete_Gamma_1}. If $\gamma=1$, then we apply Lemma~\ref{Lemma:Gronwall_Specific} with $p$ close to $1$. Note that
\[
	t\mapsto \txte^{-\epsilon^{-1}\omega_A t}\int_{0}^t\frac{\txte^{\epsilon^{-1}\omega_A s}}{\epsilon^{\gamma}s^{1-\gamma}}\,\txtd s
\]
is non-decreasing since $\omega_A<0$. We get
\begin{align*}
 \|u^{\epsilon}(t)&\|_{X_1}\leq 2M_A\txte^{\epsilon^{-1}\omega_f t}\|u_0\|_{X_1}\\
 &+2C_A\left(\frac{\txte^\gamma}{\gamma^{1-\gamma}}+\Gamma(\gamma)\left|\frac{\omega_A}{\omega_f}\right|^{1-\gamma}\right)\frac{\|f(\,\cdot\,,0)\|_{L_{\infty}([0,t_0];X_{\gamma})}}{|\omega_f|^\gamma},
\end{align*}
Taking $t_0=t$ yields the assertion for $\omega_f=\omega_A+C_AL_f(\frac{\gamma}{2L_fC_A})^{\frac{\gamma-1}{\gamma}}<0$. For arbitrary $\omega_A+C_AL_f(\frac{\gamma}{2L_fC_A})^{\frac{\gamma-1}{\gamma}}<\eta$, we use the transformation $u_{\eta}^{\epsilon}(t):=\txte^{-\epsilon^{-1}\eta t}u^{\epsilon}(t)$. Then $u_{\eta}^{\epsilon}$ satisfies
\begin{align*}
 \epsilon\partial_t u^{\epsilon}_{\eta}(t)&=(A-\eta)u^{\epsilon}_{\eta}(t) + \txte^{-\epsilon^{-1}\eta t}f(t,\txte^{\epsilon^{-1}\eta t}u^{\epsilon}_{\eta}(t))\quad(t\geq0),\\
 u^{\epsilon}_{\eta}(0)&=u_0.
\end{align*}
Our previous argument thus implies
\begin{align*}
 \|u^{\epsilon}_{\eta}(t)&\|_{X_1}\leq 2M_A\txte^{\epsilon^{-1}(\omega_f-\eta) t}\|u_0\|_{X_1}\\
 &+2C_A\left(\frac{\txte^\gamma}{\gamma^{1-\gamma}}+\Gamma(\gamma)\left|\frac{\eta-\omega_A}{\eta-\omega_f}\right|^{1-\gamma}\right)\frac{\|\txte^{-\epsilon^{-1}\eta(\,\cdot\,)}f(\,\cdot\,,0)\|_{L_{\infty}([0,t];X_{\gamma})}}{(\eta-\omega_f)^\gamma}.
\end{align*}
Multiplying with $\txte^{\epsilon^{-1}\eta t}$ again yields the assertion.
\end{enumerate}
\end{proof}

\begin{proposition}\label{Prop:A_Priori_Different_Nonlinearities}
	Let $\tilde{f}\colon X_{\delta}\to X$ satisfy the same assumptions as $f$ and let $\tilde{u}^{\epsilon}$ be the solution of \eqref{Eq:Fast_Equation_Nonlinear_IVP} for $\epsilon>0$ with $f$ being replaced by $\tilde{f}$. Let further $\eta>\omega_A+C_AL_f(\frac{\gamma}{2L_fC_A})^{\frac{\gamma-1}{\gamma}}$. Then we have the estimate
	\begin{align*}
		\|u^{\epsilon}(t)-\tilde{u}^{\epsilon}(t)\|_{X_1}\leq &2C_A\left(\frac{\txte^{\gamma}}{\gamma^{1-\gamma}}+\Gamma(\gamma)\bigg|\frac{\eta-\omega_A}{\eta-\omega_f}\bigg|^{1-\gamma}\right)\\
		&\cdot\frac{\sup\limits_{0\leq s\leq t, x\in X_1} \txte^{\epsilon^{-1}\eta(t-s)}\|f(s,\txte^{\epsilon^{-1}\eta s}x)-\tilde{f}(s,\txte^{\epsilon^{-1}\eta s}x) \|_{X_\gamma}}{(\eta-\omega_f)^{\gamma}}.
	\end{align*}
\end{proposition}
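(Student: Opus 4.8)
The plan is to reduce the estimate to the a priori bound of Proposition~\ref{Prop:A_Priori_Estimate_Fast_Equation}~(b) by viewing $u^{\epsilon}-\tilde{u}^{\epsilon}$ as the strict solution of a fast equation of the form \eqref{Eq:Fast_Equation_Nonlinear_IVP} with zero initial datum and a shifted nonlinearity. Concretely, fix $\epsilon>0$ and set
\[
\hat f(t,x):=f\big(t,x+\tilde{u}^{\epsilon}(t)\big)-\tilde{f}\big(t,\tilde{u}^{\epsilon}(t)\big),
\]
where $\tilde{u}^{\epsilon}$ is the solution for $\tilde{f}$ from Proposition~\ref{Prop:Nonlinear_IVP_Existence}~(b). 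Using only $\tilde{u}^{\epsilon}\in C([0,\infty);X_1)$ and the standing assumptions on $f,\tilde{f}$, one checks that $\hat f\colon[0,\infty)\times X_{\delta}\to X$ is continuous, maps $[0,T]\times X_1$ into $X_{\gamma}$, and inherits the first Lipschitz bound with the \emph{same} constant $L_f$, because the shift by $\tilde{u}^{\epsilon}(t)$ cancels in $\hat f(t,x_1)-\hat f(t,x_2)=f(t,x_1+\tilde{u}^{\epsilon}(t))-f(t,x_2+\tilde{u}^{\epsilon}(t))$. By Remark~\ref{Rem:2nd_Lipschitz_unnecessary} these are exactly the properties used in the proofs of Proposition~\ref{Prop:Nonlinear_IVP_Existence}~(b) and Proposition~\ref{Prop:A_Priori_Estimate_Fast_Equation}~(b); the second Lipschitz estimate, which $\hat f$ need not satisfy since $\tilde{u}^{\epsilon}$ is only a strict solution, plays no role there.

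Next I would verify that $d^{\epsilon}:=u^{\epsilon}-\tilde{u}^{\epsilon}$ is the strict solution of $\epsilon\partial_t d^{\epsilon}=Ad^{\epsilon}+\hat f(t,d^{\epsilon})$ with $d^{\epsilon}(0)=0$: subtracting the equations for $u^{\epsilon}$ and $\tilde{u}^{\epsilon}$ and writing $f(t,u^{\epsilon}(t))=f(t,d^{\epsilon}(t)+\tilde{u}^{\epsilon}(t))$ yields this equation, $d^{\epsilon}\in C^1([0,\infty);X)\cap C([0,\infty);X_1)$ as a difference of two such functions, and $d^{\epsilon}(0)=u_0-u_0=0$; by the uniqueness part of Proposition~\ref{Prop:Nonlinear_IVP_Existence}~(b), $d^{\epsilon}$ is the solution to which Proposition~\ref{Prop:A_Priori_Estimate_Fast_Equation}~(b) applies. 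Since $\hat f$ carries the same $L_f$, the hypothesis $\eta>\omega_A+C_AL_f(\frac{\gamma}{2L_fC_A})^{\frac{\gamma-1}{\gamma}}$ transfers verbatim, and that proposition — applied with nonlinearity $\hat f$ and initial value $0$, so that the summand $2M_A\txte^{\epsilon^{-1}\omega_f t}\|d^{\epsilon}(0)\|_{X_1}$ vanishes — produces exactly the asserted bound, except with $\|\txte^{\epsilon^{-1}\eta(t-\,\cdot\,)}\hat f(\,\cdot\,,0)\|_{L_{\infty}([0,t];X_{\gamma})}$ in the numerator in place of the stated supremum.

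Finally I would identify this quantity: $\hat f(s,0)=f(s,\tilde{u}^{\epsilon}(s))-\tilde{f}(s,\tilde{u}^{\epsilon}(s))$ with $\tilde{u}^{\epsilon}(s)\in X_1$, so $\|\hat f(s,0)\|_{X_{\gamma}}\le\sup_{x\in X_1}\|f(s,x)-\tilde{f}(s,x)\|_{X_{\gamma}}$, and since $\txte^{\epsilon^{-1}\eta s}$ is a positive scalar the map $x\mapsto\txte^{\epsilon^{-1}\eta s}x$ is a bijection of $X_1$, whence this supremum equals $\sup_{x\in X_1}\|f(s,\txte^{\epsilon^{-1}\eta s}x)-\tilde{f}(s,\txte^{\epsilon^{-1}\eta s}x)\|_{X_{\gamma}}$; multiplying by $\txte^{\epsilon^{-1}\eta(t-s)}$ and taking the supremum over $s\in[0,t]$ gives the claim. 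The essential content is the soft reduction, so there is no hard analytic core; the point needing care is to confirm that $\hat f$ has the continuity and mapping properties required to feed into Proposition~\ref{Prop:A_Priori_Estimate_Fast_Equation}~(b) even though $\tilde{u}^{\epsilon}$ is merely a strict (not classical) solution, together with the uniqueness step pinning down $d^{\epsilon}$ — this is the main obstacle, and it is routine.
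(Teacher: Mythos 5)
Your proof is correct and is essentially the paper's own argument in repackaged form: the paper subtracts the two mild-solution formulas, splits off the term $f(s,\tilde{u}^{\epsilon}(s))-\tilde{f}(s,\tilde{u}^{\epsilon}(s))$, and applies Lemma~\ref{Lemma:Gronwall_Specific} together with Corollary~\ref{Cor:Incomplete_Gamma_1} (treating general $\eta$ by the same rescaling), which is precisely the computation your reduction to Proposition~\ref{Prop:A_Priori_Estimate_Fast_Equation}~(b) with the shifted nonlinearity $\hat{f}(t,x)=f(t,x+\tilde{u}^{\epsilon}(t))-\tilde{f}(t,\tilde{u}^{\epsilon}(t))$ and zero initial datum reuses as a black box. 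The only point to note is that Remark~\ref{Rem:2nd_Lipschitz_unnecessary} literally speaks of Proposition~\ref{Prop:Nonlinear_IVP_Existence}~(b), but a glance at the proof of Proposition~\ref{Prop:A_Priori_Estimate_Fast_Equation}~(b) confirms it too uses only the first Lipschitz bound and the mild formulation, so your application to $\hat{f}$ is legitimate.
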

\begin{proof}
	We only treat the case $\eta=0$. For the general case, one can use the same transformation as in the proof of Proposition~\ref{Prop:A_Priori_Estimate_Fast_Equation}~\eqref{Prop:A_Priori_Estimate_Fast_Equation:epsilon>0}. Variation of constants yields
	{\allowdisplaybreaks{
	 \begin{align*}
  \| u^{\epsilon}(t)-\tilde{u}^{\epsilon}(t) \|_{X_1}&\leq\left\| \epsilon^{-1} \int_0^t \txte^{\epsilon^{-1}(t-s)A} (f(s,\tilde{u}^{\epsilon}(s))-\tilde{f}(s,\tilde{u}^{\epsilon}(s)))\,\txtd s \right\|_{X_1}\\
  &\qquad +\left\| \epsilon^{-1} \int_0^t \txte^{\epsilon^{-1}(t-s)A} (f(s,u^{\epsilon}(s))-f(s,\tilde{u}^{\epsilon}(s)))\,\txtd s \right\|_{X_1}\\
  &\leq C_A\int_0^t \frac{\txte^{-\epsilon^{-1}\omega_A(t-s)}}{\epsilon^{\gamma}(t-s)^{1-\gamma}}\,\txtd s\sup_{0\leq r\leq t_0, x\in X_1} \| f(r,x)-\tilde{f}(r,x) \|_{X_\gamma}\\
  &\qquad +  C_AL_f\int_0^t \frac{\txte^{\epsilon^{-1}\omega_A(t-s)}}{\epsilon^{\gamma}(t-s)^{1-\gamma}}\| u^{\epsilon}(s)-\tilde{u}^{\epsilon}(s) \|_{X_1}\,\txtd s\\
  &\leq C_A\int_0^t \frac{\txte^{-\epsilon^{-1}\omega_A s}}{\epsilon^{\gamma}s^{1-\gamma}}\,\txtd s\sup_{0\leq r\leq t_0, x\in X_1} \| f(r,x)-\tilde{f}(r,x) \|_{X_\gamma}\\
  &\qquad +  C_AL_f\int_0^t \frac{\txte^{\epsilon^{-1}\omega_A(t-s)}}{\epsilon^{\gamma}(t-s)^{1-\gamma}}\| u^{\epsilon}(s)-\tilde{u}^{\epsilon}(s) \|_{X_1}\,\txtd s,
 \end{align*}}}
 where $t_0\geq t$. Applying Lemma~\ref{Lemma:Gronwall_Specific} with $p=2$ (or $p$ close to $1$ if $\gamma=1$ together with Corollary~\ref{Cor:Incomplete_Gamma_1} and taking $t_0=t$ yields the assertion.
\end{proof}

\subsection{A Modified Fast Equation}

Under the assumptions of Section~\ref{Section:General_Fast_Slow:Fast_Equation}, we now consider a modified fast equation
\begin{align}
    \begin{aligned}\label{Eq:Modified_Fast}
 \epsilon\partial_tu^{\epsilon,0}(t)&=Au^{\epsilon,0}(t)+ f(t,u^{\epsilon,0}(t))-\epsilon\partial_tA^{-1}f(t,u^0(t)),\\
 u^{\epsilon,0}(0)&=u_0.
 \end{aligned}
\end{align}
where $u^0$ denotes the solution of \eqref{Eq:Fast_Equation_Nonlinear_IVP} from Proposition~\ref{Prop:Nonlinear_IVP_Existence}~(\ref{Prop:Nonlinear_IVP_Existence:epsilon=0}). Since we work with $u^0$, we assume that $\|A^{-1}\|_{\mathcal{B}(X_{\delta-1},X_\delta)}L_f<1$ in this subsection. Even though it is not necessary for all the results, we will assume $\omega_A<\omega_f<0$ from now on.

\begin{lemma}\label{Lemma:Modified_Fast_Equation}
For all $u_0\in X_1$ and all $\epsilon>0$ there is a unique strict solution $$u^{\epsilon,0}\in C^{1}([0,\infty);X)\cap C([0,\infty);X_1)$$ of \eqref{Eq:Modified_Fast}.
\end{lemma}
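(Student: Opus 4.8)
The plan is to reduce equation~\eqref{Eq:Modified_Fast} to a genuine fixed-point problem of the same type already solved in Proposition~\ref{Prop:Nonlinear_IVP_Existence}~(\ref{Prop:Nonlinear_IVP_Existence:epsilon>0}). The only new feature compared to~\eqref{Eq:Fast_Equation_Nonlinear_IVP} is the extra inhomogeneity $-\epsilon\partial_tA^{-1}f(t,u^0(t))$. By the regularity of $u^0$ from Proposition~\ref{Prop:Nonlinear_IVP_Existence}~(\ref{Prop:Nonlinear_IVP_Existence:epsilon=0}) (namely $u^0\in C^1([0,T];X_\delta)$) together with the second Lipschitz/mapping assumption on $f$, the map $t\mapsto f(t,u^0(t))$ lies in $C^1([0,T];X_{\delta-1})$, hence $t\mapsto A^{-1}f(t,u^0(t))$ lies in $C^1([0,T];X_\delta)$ and in particular its time derivative $\psi(t):=\partial_tA^{-1}f(t,u^0(t))$ is a continuous $X_\delta$-valued (a fortiori $X$-valued, since $\delta\geq 1-\gamma\geq 0$ gives $X_\delta\hookrightarrow X$) function on every $[0,T]$. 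So~\eqref{Eq:Modified_Fast} is just~\eqref{Eq:Fast_Equation_Nonlinear_IVP} with the genuinely Lipschitz nonlinearity $f$ replaced by the forced nonlinearity $\tilde f(t,x):=f(t,x)-\epsilon\psi(t)$, which satisfies the same Lipschitz estimate in $x$ (the forcing term is $x$-independent) and is still continuous in $(t,x)$.

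First I would fix $T>0$ and record the mild (variation-of-constants) formulation on $[0,T]$:
\[
 u^{\epsilon,0}(t)=\txte^{\epsilon^{-1}tA}u_0+\epsilon^{-1}\int_0^t\txte^{\epsilon^{-1}(t-s)A}\bigl(f(s,u^{\epsilon,0}(s))-\epsilon\psi(s)\bigr)\,\txtd s .
\]
Then I would run the contraction argument of Proposition~\ref{Prop:Nonlinear_IVP_Existence}~(\ref{Prop:Nonlinear_IVP_Existence:epsilon>0}) verbatim in the weighted space $C_b([0,\infty),\txte^{\epsilon^{-1}\eta t};X_1)$: the forcing term contributes $\epsilon^{-1}\int_0^t\txte^{\epsilon^{-1}(t-s)A}(-\epsilon\psi(s))\,\txtd s=-\int_0^t\txte^{\epsilon^{-1}(t-s)A}\psi(s)\,\txtd s$, which is a fixed continuous $X_1$-valued function (note $\txte^{tA}$ maps $X_\delta$, hence $X$, into $X_1$ with the holomorphic smoothing estimate from the assumptions, and the resulting integral is finite and continuous; if one is worried about $X_1$-valuedness near $t=0$ one simply works on the larger scale $X_\gamma$ or $X$ first and bootstraps, exactly as in the cited Proposition). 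The contraction constant is unchanged because it only sees the Lipschitz part $f(s,\cdot)$; choosing $\eta$ large (e.g. $\eta>(L_fC_A\Gamma(\gamma))^{1/\gamma}-\omega_A$, as before) makes $\mathscr{L}$ a strict contraction, and Banach's fixed point theorem yields a unique mild solution on $[0,\infty)$, with uniqueness on each $[0,T]$ following from the local Lipschitz estimate. Finally, I would upgrade the mild solution to a strict solution exactly as in the proof of Proposition~\ref{Prop:Nonlinear_IVP_Existence}~(\ref{Prop:Nonlinear_IVP_Existence:epsilon>0}): invoking \cite[Proposition 4.1.5]{Lunardi_1995} to pass to the integrated form $u^{\epsilon,0}(t)=u_0+\epsilon^{-1}A\int_0^tu^{\epsilon,0}+\epsilon^{-1}\int_0^t(f(s,u^{\epsilon,0}(s))-\epsilon\psi(s))\,\txtd s$, then differentiating and using that $Au^{\epsilon,0}$, $f(\cdot,u^{\epsilon,0})$ and $\psi$ are all in $C([0,\infty);X)$ to conclude $u^{\epsilon,0}\in C^1([0,\infty);X)\cap C([0,\infty);X_1)$ solves~\eqref{Eq:Modified_Fast} pointwise.

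The only genuinely non-routine point — the "hard part", though it is mild — is checking that $\psi(t)=\partial_tA^{-1}f(t,u^0(t))$ is well defined and sufficiently regular, i.e. that one may legitimately differentiate $A^{-1}f(\cdot,u^0(\cdot))$ in $t$ and land in a space on which the semigroup acts with the right smoothing. This is where the second assumption on $f$ (the one flagged as unused in Remark~\ref{Rem:2nd_Lipschitz_unnecessary} for part~(\ref{Prop:Nonlinear_IVP_Existence:epsilon>0})) and the $C^1([0,T];X_\delta)$-regularity of $u^0$ enter essentially: together they give $f(\cdot,u^0)\in C^1([0,T];X_{\delta-1})$, and since $A^{-1}\in\mathcal{B}(X_{\delta-1},X_\delta)$ commutes with $\partial_t$ on $C^1$-curves, $\psi\in C([0,T];X_\delta)$. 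Everything else is a word-for-word repetition of the already-established existence scheme with an added continuous forcing term, so I would keep the write-up short and simply point to the proof of Proposition~\ref{Prop:Nonlinear_IVP_Existence}.
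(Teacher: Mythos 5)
Your proposal is correct and follows essentially the same route as the paper: the paper likewise defines the modified nonlinearity $f_{\epsilon}(t,x)=f(t,x)-\epsilon\partial_tA^{-1}f(t,u^0(t))$, uses $u^0\in C^1([0,T];X_{\delta})$ together with the second mapping/Lipschitz assumption on $f$ to see that the forcing term lies in $C([0,T];X_{\delta})$, observes that the Lipschitz estimate in $x$ is unchanged, and then invokes Proposition~\ref{Prop:Nonlinear_IVP_Existence}~(\ref{Prop:Nonlinear_IVP_Existence:epsilon>0}) via Remark~\ref{Rem:2nd_Lipschitz_unnecessary}. The only difference is that you re-run the contraction and Lunardi upgrade explicitly where the paper simply cites that proposition, so the content is the same.
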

\begin{proof}
 Let
\[
 f_{\epsilon}\colon [0,T]\times X_{\delta} \to X,\;(t,x)\mapsto f(t,x)-\epsilon\partial_t A^{-1}f(t,u^0(t)).
\]
Since $u^0\in C^1([0,T];X_{\delta})$ by Proposition~\ref{Prop:Nonlinear_IVP_Existence}~\eqref{Prop:Nonlinear_IVP_Existence:epsilon=0} and since $f$ maps $C^1([0,T];X_{\delta})$ to $C^1([0,T];X_{\delta-1})$, it follows that $\partial_t A^{-1}f(\cdot,u^0)\in C([0,T];X_{\delta})$ so that $f_{\epsilon}$ is well-defined. Moreover, we have
\[
	\| f_{\epsilon}(t,x_1)-f_{\epsilon}(t,x_2) \|_{X_\gamma} = \| f(t,x_1)-f(t,x_2) \|_{X_\gamma}\leq L_f \| x_1-x_2 \|_{X_1}
\]
for all $(t,x_1),(t,x_2)\in[0,T]\times X_1$. By Remark \ref{Rem:2nd_Lipschitz_unnecessary} this suffices to apply Proposition \ref{Prop:Nonlinear_IVP_Existence} with $f$ being replaced by $f_{\epsilon}$.
\end{proof}

\begin{proposition}\label{Prop:Modified_Fast_and_Time_Independent}
 Let $u^{\epsilon,0}$ be the solution of \eqref{Eq:Modified_Fast} with $\epsilon>0$ and the $u^0$ solution of \eqref{Eq:Fast_Equation_Nonlinear_IVP} with $\epsilon=0$. Then we have the estimate
\[
 \| u^{\epsilon,0}(t)-u^0(t) \|_{X_1}\leq 2 M_A \txte^{\epsilon^{-1}\omega_ft} \|u_0-u^0(0)\|_{X_1}.
\]
\end{proposition}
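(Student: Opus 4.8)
The plan is to show that the difference $w^{\epsilon}:=u^{\epsilon,0}-u^{0}$ solves a fast equation of the form \eqref{Eq:Fast_Equation_Nonlinear_IVP} whose nonlinearity vanishes at the origin, and then to feed this into the a priori bound of Proposition~\ref{Prop:A_Priori_Estimate_Fast_Equation}.

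First I would rewrite \eqref{Eq:Modified_Fast} in terms of $w^{\epsilon}$. By Proposition~\ref{Prop:Nonlinear_IVP_Existence}~(a) the function $u^{0}$ is the fixed point of $u\mapsto-A^{-1}f(\cdot,u)$ in $C^{1}([0,T];X_{\delta})$, so $u^{0}=-A^{-1}f(\cdot,u^{0})$ holds in $C^{1}([0,T];X_{\delta})$. Differentiating gives $\epsilon\partial_{t}u^{0}=-\epsilon\partial_{t}A^{-1}f(\cdot,u^{0})$; moreover, since $f\colon[0,T]\times X_{\delta}\to X$ one has $f(t,u^{0}(t))\in X$, hence $u^{0}(t)=-A^{-1}f(t,u^{0}(t))\in X_{1}$ and $Au^{0}(t)=-f(t,u^{0}(t))$, so in particular $u^{0}\in C([0,T];X_{1})$ and the quantity $\|u^{\epsilon,0}(t)-u^{0}(t)\|_{X_{1}}$ is meaningful. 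Substituting $-\epsilon\partial_{t}A^{-1}f(t,u^{0}(t))=\epsilon\partial_{t}u^{0}(t)$ into \eqref{Eq:Modified_Fast}, then inserting $u^{\epsilon,0}=w^{\epsilon}+u^{0}$ and using $Au^{0}=-f(\cdot,u^{0})$, one finds that $w^{\epsilon}\in C^{1}([0,\infty);X)\cap C([0,\infty);X_{1})$ satisfies
\[
\epsilon\partial_{t}w^{\epsilon}(t)=Aw^{\epsilon}(t)+g(t,w^{\epsilon}(t)),\qquad w^{\epsilon}(0)=u_{0}-u^{0}(0),
\]
with $g(t,x):=f(t,x+u^{0}(t))-f(t,u^{0}(t))$. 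Using $u^{0}(t)\in X_{1}$ and $u^{0}\in C^{1}([0,T];X_{\delta})$ one checks that $g$ satisfies all standing assumptions of Section~\ref{Section:General_Fast_Slow:Fast_Equation} with the same constants $\gamma,\delta,L_{f}$; in particular $g(\cdot,0)\equiv0$, and by uniqueness of strict solutions $w^{\epsilon}$ is precisely the solution of Proposition~\ref{Prop:Nonlinear_IVP_Existence}~(b) for the nonlinearity $g$.

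The estimate is now immediate. Applying Proposition~\ref{Prop:A_Priori_Estimate_Fast_Equation}~(b) to $w^{\epsilon}$ with $f$ replaced by $g$, $u_{0}$ replaced by $u_{0}-u^{0}(0)$, and, say, $\eta=0$ (admissible since $\omega_{A}<\omega_{f}<0$), the inhomogeneous second summand of that estimate vanishes because $g(\cdot,0)\equiv0$, and what survives is exactly
\[
\|w^{\epsilon}(t)\|_{X_{1}}\leq 2M_{A}\txte^{\epsilon^{-1}\omega_{f}t}\|u_{0}-u^{0}(0)\|_{X_{1}},
\]
which is the claim. Equivalently, one may argue directly from the variation-of-constants formula for $w^{\epsilon}$: the semigroup bounds and the Lipschitz bound for $g$ give $\|w^{\epsilon}(t)\|_{X_{1}}\leq M_{A}\txte^{\epsilon^{-1}\omega_{A}t}\|w^{\epsilon}(0)\|_{X_{1}}+C_{A}L_{f}\int_{0}^{t}\frac{\txte^{\epsilon^{-1}\omega_{A}(t-s)}}{\epsilon^{\gamma}(t-s)^{1-\gamma}}\|w^{\epsilon}(s)\|_{X_{1}}\,\txtd s$, and Lemma~\ref{Lemma:Gronwall_Specific} with $p=2$ (or $p$ close to $1$ if $\gamma=1$) and $c(s)=M_{A}\txte^{\epsilon^{-1}\omega_{A}s}\|w^{\epsilon}(0)\|_{X_{1}}$ finishes it, since then the Gronwall exponent $\tilde{x}$ equals $\omega_{f}$, the monotonicity hypothesis holds trivially, and the forcing term $c'(s)-\epsilon^{-1}\omega_{A}c(s)$ vanishes identically.

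The one genuinely delicate point is the first step: making the passage from \eqref{Eq:Modified_Fast} to the equation for $w^{\epsilon}$ rigorous, i.e. justifying $\partial_{t}A^{-1}f(t,u^{0}(t))=-\partial_{t}u^{0}(t)$ and $Au^{0}(t)=-f(t,u^{0}(t))$ in the appropriate spaces of the interpolation-extrapolation scale and confirming the regularity $w^{\epsilon}\in C^{1}([0,\infty);X)\cap C([0,\infty);X_{1})$. All of this follows from Proposition~\ref{Prop:Nonlinear_IVP_Existence}~(a), the mapping property $f\colon C^{1}([0,T];X_{\delta})\to C^{1}([0,T];X_{\delta-1})$, and $A^{-1}\in\mathcal{B}(X_{\delta-1},X_{\delta})\cap\mathcal{B}(X,X_{1})$, but it should be written out carefully; once the equation for $w^{\epsilon}$ is in place the rest reduces to estimates already established.
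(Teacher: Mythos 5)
Your proposal is correct and in substance identical to the paper's argument: the paper integrates by parts in the variation-of-constants formula (using $u^0=-A^{-1}f(\cdot,u^0)$ and $Au^0=-f(\cdot,u^0)$) to arrive at exactly the identity $u^{\epsilon,0}(t)-u^0(t)=\txte^{\epsilon^{-1}tA}(u_0-u^0(0))+\epsilon^{-1}\int_0^t\txte^{\epsilon^{-1}(t-s)A}[f(s,u^{\epsilon,0}(s))-f(s,u^0(s))]\,\txtd s$ and then applies Lemma~\ref{Lemma:Gronwall_Specific}, which is precisely your ``equivalent direct argument''. Your primary packaging --- shifting to $w^{\epsilon}=u^{\epsilon,0}-u^0$ with nonlinearity $g(t,x)=f(t,x+u^0(t))-f(t,u^0(t))$ vanishing at $0$ and invoking Proposition~\ref{Prop:A_Priori_Estimate_Fast_Equation}~\eqref{Prop:A_Priori_Estimate_Fast_Equation:epsilon>0} --- is only a cosmetic repackaging of the same estimate, and the delicate points you flag (that $u^0(t)\in X_1$, $Au^0=-f(\cdot,u^0)$, and $\tilde{x}=\omega_f$ for $p=2$) are all correctly resolved.
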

\begin{proof}
 Using variation of constants and integration by parts yields
 \begin{align*}
  &\quad\| u^{\epsilon,0}(t)-u^0(t) \|_{X_1} \\
  &  \leq \left\|\txte^{\epsilon^{-1}tA}u_0+\int_0^t\txte^{\epsilon^{-1}A(t-s)}\big[\epsilon^{-1}f(s,u^{\epsilon,0}(s))-\partial_sA^{-1}f(s,u^0(s))\big]\,\txtd s - u^0(t) \right\|_{X_1}\\
  &=\left\|\txte^{\epsilon^{-1}tA}(u_0-u^0(0))+\epsilon^{-1}\int_0^t\txte^{\epsilon^{-1}A(t-s)}\big[f(s,u^{\epsilon,0}(s))-f(s,u^0(s))\big]\,\txtd s  \right\|_{X_1}\\
  &\leq M_A\txte^{\epsilon^{-1}\omega_A t}\|u_0-u^0(0)\|_{X_1}+C_AL_{f}\int_0^t\frac{\txte^{\epsilon^{-1}\omega_A(t-s)}}{\epsilon^{\gamma}(t-s)^{1-\gamma}}\| u^{\epsilon,0}(s)-u^0(s) \|_{X_1}\,\txtd s
 \end{align*}
 Now, the assertion follows from Lemma \ref{Lemma:Gronwall_Specific}.
\end{proof}

\begin{proposition}\label{Prop:Fast_and_Modified_Fast}
 Suppose that $C_A$ is chosen such that additionally to the assumptions of Section~\ref{Section:General_Fast_Slow:Fast_Equation} we also have
\[
\|\txte^{tA}\|_{\mathcal{B}(X_{\delta},X_1)}\leq C_At^{\delta-1}\txte^{\omega_A t}\quad(t>0).
\]
Let $u^{\epsilon}$ be the solution of \eqref{Eq:Fast_Equation_Nonlinear_IVP} and $u^{\epsilon,0}$ the one of \eqref{Eq:Modified_Fast} for $\epsilon>0$. Then we have the estimate
\begin{align*}
	\|u^{\epsilon}(t)-u^{\epsilon,0}(t)\|_{X_1}&\leq \left(\frac{\txte^{\delta}}{\delta^{1-\delta}}+\Gamma(\delta)\bigg|\frac{\omega_A}{\omega_f}\bigg|^{1-\delta}\right)\frac{C_A \|A^{-1}\|_{\mathcal{B}(X_{\delta-1},X_\delta)}}{(1-L_f\|A^{-1}\|_{\mathcal{B}(X_{\delta-1},X_\delta)})}\\
	&\qquad\cdot\,\frac{\epsilon}{|\omega_f|^{\delta}}\|f(t,0)\|_{C^1_b([0,t];X_{\delta-1})}.
\end{align*}
\end{proposition}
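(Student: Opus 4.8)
The plan is to subtract the variation-of-constants representations of $u^{\epsilon}$ and $u^{\epsilon,0}$ and run a Gronwall argument on the difference $w:=u^{\epsilon}-u^{\epsilon,0}$. Since both solutions start at $u_0$, the homogeneous terms $\txte^{\epsilon^{-1}tA}u_0$ cancel, so $w(0)=0$ and
\[
 w(t)=\epsilon^{-1}\int_0^t\txte^{\epsilon^{-1}(t-s)A}\big(f(s,u^{\epsilon}(s))-f(s,u^{\epsilon,0}(s))\big)\,\txtd s+\int_0^t\txte^{\epsilon^{-1}(t-s)A}\partial_sA^{-1}f(s,u^0(s))\,\txtd s.
\]
The first integral is the ``self-term'' to be absorbed by the Gronwall inequality; the second is the genuine $\cO(\epsilon)$ source, and it is exactly this term that forces the extra smoothing hypothesis $\|\txte^{tA}\|_{\mathcal{B}(X_{\delta},X_1)}\leq C_At^{\delta-1}\txte^{\omega_At}$, because $\partial_tA^{-1}f(t,u^0(t))$ is only known to lie in $X_{\delta}$ (as was observed in the proof of Lemma~\ref{Lemma:Modified_Fast_Equation}), not in $X_1$.

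First I would estimate the source integral in $X_1$. Using the new smoothing bound together with $\|\partial_sA^{-1}f(s,u^0(s))\|_{X_{\delta}}\leq\|A^{-1}\|_{\mathcal{B}(X_{\delta-1},X_{\delta})}\|f(\cdot,u^0)\|_{C^1([0,s];X_{\delta-1})}$, and the bound $\|f(\cdot,u^0)\|_{C^1([0,s];X_{\delta-1})}\leq(1-L_f\|A^{-1}\|_{\mathcal{B}(X_{\delta-1},X_{\delta})})^{-1}\|f(\cdot,0)\|_{C^1([0,s];X_{\delta-1})}$ obtained by combining the Lipschitz assumption with Proposition~\ref{Prop:A_Priori_Estimate_Fast_Equation}~\eqref{Prop:A_Priori_Estimate_Fast_Equation:epsilon=0}, and pulling the norm (non-decreasing in $s$) out at a fixed $t_0\geq t$, one bounds the source integral by
\[
 c(t):=\frac{C_A\|A^{-1}\|_{\mathcal{B}(X_{\delta-1},X_{\delta})}}{1-L_f\|A^{-1}\|_{\mathcal{B}(X_{\delta-1},X_{\delta})}}\,\|f(\cdot,0)\|_{C^1([0,t_0];X_{\delta-1})}\,\epsilon\int_0^t\frac{\txte^{\epsilon^{-1}\omega_A(t-s)}}{\epsilon^{\delta}(t-s)^{1-\delta}}\,\txtd s.
\]
Here careful bookkeeping of the powers of $\epsilon$ is the crux: the $\epsilon^{-1}$ from the rescaled semigroup, the $\epsilon$ carried by the source, and the integrable singularity $(t-s)^{\delta-1}$ combine — after the substitution turning the kernel into an incomplete Gamma function, cf. Lemma~\ref{Lemma:Incomplete_Gamma_1} — into the stated net $\cO(\epsilon)$. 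One also notes $c(0)=0$ and, since $\omega_A<0$, that $t\mapsto\txte^{-\epsilon^{-1}\omega_At}c(t)$ is non-decreasing.

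Next I would bound the self-term by the Lipschitz/smoothing estimate $C_AL_f\int_0^t\frac{\txte^{\epsilon^{-1}\omega_A(t-s)}}{\epsilon^{\gamma}(t-s)^{1-\gamma}}\|w(s)\|_{X_1}\,\txtd s$, so that on $[0,t_0]$ one has $\|w(t)\|_{X_1}\leq c(t)+C_AL_f\int_0^t\frac{\txte^{\epsilon^{-1}\omega_A(t-s)}}{\epsilon^{\gamma}(t-s)^{1-\gamma}}\|w(s)\|_{X_1}\,\txtd s$. Applying Lemma~\ref{Lemma:Gronwall_Specific} with $x=\omega_A$ and $N=C_AL_f$ (with $p$ chosen, as in Remark~\ref{Rem:Gronwall_Specific}, so that the resulting exponent is exactly $\omega_f$) and using $c(0)=0$ reduces the estimate to controlling $\int_0^t\big(c'(s)-\epsilon^{-1}\omega_Ac(s)\big)\txte^{\epsilon^{-1}\omega_f(t-s)}\,\txtd s$. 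A short differentiation shows that $c'(s)-\epsilon^{-1}\omega_Ac(s)$ is a constant multiple of $\epsilon\big(\tfrac{\txte^{\epsilon^{-1}\omega_As}}{\epsilon^{\delta}s^{1-\delta}}+\epsilon^{-1}|\omega_A|\int_0^s\tfrac{\txte^{\epsilon^{-1}\omega_Ar}}{\epsilon^{\delta}r^{1-\delta}}\,\txtd r\big)$, and the bracketed expression is exactly the integrand handled by Corollary~\ref{Cor:Incomplete_Gamma_1} (with $\omega=\omega_A$, $\tilde\omega=\omega_f$, and $\gamma$ there taken to be $\delta$), which contributes the factor $\big(\tfrac{\txte^{\delta}}{\delta^{1-\delta}}+\Gamma(\delta)|\omega_A/\omega_f|^{1-\delta}\big)|\omega_f|^{-\delta}$. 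Setting $t_0=t$ then gives the asserted estimate.

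The main obstacle is the source-term step: one must keep the singular kernel $(t-s)^{\delta-1}$ intact rather than crudely estimating it, since sending it through the Gronwall inequality carelessly would forfeit the $\cO(\epsilon)$ gain (leaving $\cO(\epsilon^{1-\delta})$, or the wrong powers of $\omega_A$ and $\omega_f$). The incomplete-Gamma estimates of Section~\ref{sec:prelim} are tailored precisely to convolve two such singular exponential kernels without degrading the $\epsilon$-order, and the key realization is that the output of Lemma~\ref{Lemma:Gronwall_Specific} reassembles exactly into the integrand of Corollary~\ref{Cor:Incomplete_Gamma_1}, so that the constant emerges in the closed form claimed.
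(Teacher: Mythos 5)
Your proposal is correct and follows essentially the same route as the paper's own proof: subtract the variation-of-constants formulas (the homogeneous terms cancel), bound the source term via the extra smoothing estimate $\|\txte^{tA}\|_{\mathcal{B}(X_{\delta},X_1)}\leq C_At^{\delta-1}\txte^{\omega_A t}$ together with $\|A^{-1}\|_{\mathcal{B}(X_{\delta-1},X_\delta)}$ and Proposition~\ref{Prop:A_Priori_Estimate_Fast_Equation}~\eqref{Prop:A_Priori_Estimate_Fast_Equation:epsilon=0}, then absorb the Lipschitz self-term by Lemma~\ref{Lemma:Gronwall_Specific} and evaluate the resulting $c'(s)-\epsilon^{-1}\omega_A c(s)$ term with Corollary~\ref{Cor:Incomplete_Gamma_1}, finally taking $t_0=t$. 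Your bookkeeping of the $\epsilon$-powers and of the constant $\big(\tfrac{\txte^{\delta}}{\delta^{1-\delta}}+\Gamma(\delta)|\omega_A/\omega_f|^{1-\delta}\big)|\omega_f|^{-\delta}$ matches the paper's computation.
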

\begin{proof}
 Using variation of constants and choosing $t_0\geq t$ yields that
 \begin{align*}
  \|u^{\epsilon}(t)&-u^{\epsilon,0}(t)\|_{X_1}=\left\|\epsilon^{-1}\int_0^t\txte^{\epsilon^{-1}(t-s)A}[f(s,u^{\epsilon}(s))-f(s,u^{\epsilon,0}(s))]\,\txtd s\right.\\
  &\qquad\qquad\left.-\int_0^t \txte^{\epsilon^{-1}(t-s)A}\partial_sA^{-1} f(s,u^0(s))\,\txtd s\right\|_{X_1}\\
  &\leq  C_A\epsilon \|A^{-1}\|_{\mathcal{B}(X_{\delta-1},X_\delta)}\|\partial_tf(\,\cdot,u^0)\|_{L_{\infty}([0,t_0];X_{\delta-1})} \int_0^t\frac{\txte^{\epsilon^{-1}\omega_A(t-s)}}{\epsilon^{\delta}(t-s)^{1-\delta}}\,\txtd s\\
  &\qquad\qquad+CL_f\int_0^t \frac{\txte^{-\epsilon^{-1}\omega_A(t-s)}}{\epsilon^{\gamma}(t-s)^{1-\gamma}}\|u^{\epsilon}(s)-u^{\epsilon,0}(s)\|_{X_1}\,\txtd s
 \end{align*}
Thus, a combination of Lemma~\ref{Lemma:Gronwall_Specific} and Corollary \ref{Cor:Incomplete_Gamma_1} shows that
\begin{align*}
	\|u^{\epsilon}(t)-u^{\epsilon,0}(t)\|_{X_1}&\leq \left(\frac{\txte^{\delta}}{\delta^{1-\delta}}+\Gamma(\delta)\bigg|\frac{\omega_A}{\omega_f}\bigg|^{1-\delta}\right)\frac{C_A\epsilon \|A^{-1}\|_{\mathcal{B}(X_{\delta-1},X_\delta)}}{\omega_f^{\delta}}\\
&\qquad\qquad\cdot\|\partial_tf(\,\cdot,u^0)\|_{L_{\infty}([0,t_0];X_{\delta-1})}
\end{align*}
 Moreover, it follows from Proposition~\ref{Prop:A_Priori_Estimate_Fast_Equation}~\eqref{Prop:A_Priori_Estimate_Fast_Equation:epsilon=0} that
\begin{align*}
 \|\partial_tf(\cdot,u^{0})&\|_{L_{\infty}([0,t_0];X_{\delta-1})}\leq \|f(\cdot,u^{0})\|_{C^1_b([0,t_0];X_{\delta-1})}\\
 &\leq \|f(\cdot,0) \|_{C^1_b([0,t_0];X_{\delta-1})}+ L_f\| u^0\|_{C^1_b([0,t_0];X_{\delta})}\\
 &\leq \frac{1}{1-\|A^{-1}\|_{\mathcal{B}(X_{\delta-1},X_{\delta})}L_f}\|f(t,0)\|_{C^1_b([0,t_0];X_{\delta-1})}
\end{align*}
so that
\begin{align*}
	\|u^{\epsilon}(t)-u^{\epsilon,0}(t)\|_{X_1}&\leq \left(\frac{\txte^{\delta}}{\delta^{1-\delta}}+\Gamma(\delta)\bigg|\frac{\omega}{\omega_f}\bigg|^{1-\delta}\right)\frac{C_A \|A^{-1}\|_{\mathcal{B}(X_{\delta-1},X_\delta)}}{(1-L_f\|A^{-1}\|_{\mathcal{B}(X_{\delta-1},X_\delta)})}\\
	&\qquad\cdot\,\frac{\epsilon}{|\omega_f|^{\delta}}\|f(t,0)\|_{C^1_b([0,t];X_{\delta-1})}.
\end{align*}
\end{proof}

\subsection{Well-posedness of the Full System} \label{Section:General_Fast_Slow:Full_System}

Now we consider the nonlinear fast-slow system
\begin{align}
\begin{aligned}\label{Eq:Nonlinear_Fast_Slow_System}
 \epsilon\partial_t u^{\epsilon}(t) &= Au^{\epsilon}(t)+ f(u^\epsilon(t),v^{\epsilon}(t)),\\
 \partial_t v^{\epsilon}(t) &= Bv^{\epsilon}(t)+g(u^\epsilon(t),v^{\epsilon}(t)),\\
 u^{\epsilon}(0)&=u_0,\quad v^{\epsilon}(0)=v_0.
 \end{aligned}\quad(t\in[0,T])
\end{align}
We assume that
\begin{enumerate}[(i)]
 \item $X,Y$ are Banach spaces, $\epsilon\geq0$, $T>0$ are parameters and $u_0\in X_1=D(A)$, $v_1\in Y_1=D(B)$ are initial values. If $\epsilon=0$, then $u_0$ has to satisfy $0=Au_0+f(u_0,v_0)$.
 \item The closed linear operator $A\colon X\supset D(A)\to X$ generates an exponentially stable $C_0$-semigroup $(\txte^{tA})_{t\geq0}\subset\mathcal{B}(X)$. The closed linear operator $B\colon Y\supset D(B)\to Y$ is the generator of a $C_0$-semigroup $(\txte^{tB})_{t\geq0}\subset\mathcal{B}(Y)$.
 \item The interpolation-extrapolation scales generated by $(X,A)$ and $(Y,B)$ are -- up to equivalence of norms for each fixed $\alpha\in[-1,\infty)$ -- given by $(X_{\alpha})_{\alpha\in[-1,\infty)}$ and $(Y_{\alpha})_{\alpha\in[-1,\infty)}$. If $0\notin\rho(B)$, then $(Y_{\alpha})_{\alpha\in[-1,\infty)}$ shall be equivalent to the interpolation-extrapolation scale generated by $B-\lambda$ for some $\lambda\in\rho(B)$. 
 \item Let $\gamma_X\in(0,1]$ if $(\txte^{tA})_{t\geq0}\subset\mathcal{B}(X)$ is holomorphic and $\gamma_X=1$ otherwise. In addition, we choose $\delta_X\in[1-\gamma_X,1]$. Let further $\delta_Y\in(0,1]$ if $(\txte^{tB})_{t\geq0}\subset\mathcal{B}(Y)$ is holomorphic and $\delta_Y=1$ otherwise. The nonlinearities $f\colon X_{\delta_X}\times Y_{1-\delta_X}\to X$ and $g\colon X_{1}\times Y_{1}\to Y_{\delta_Y}$ are continuous and there are constants $L_f,L_g>0$ such that with
 \begin{align*}
 	\|f(x_1,y_1)-f(x_2,y_2) \|_{\gamma_X}&\leq L_f\big(\|x_1-x_2\|_{X_1}+\|y_1-y_2\|_{Y_1}\big),\\
 	\|f(u_1,v_1)-f(u_2,v_2)\|_{C^1([0,t];X_{\delta_X-1})}&\leq L_f\big(\|u_1-u_2\|_{C^1([0,t];X_{\delta_X})}\\
 	&\qquad\qquad+\|v_1-v_2\|_{C^1([0,t];Y)}\big),\\
 	\|g(x_1,y_1)-g(x_2,y_2) \|_{\delta_Y}&\leq L_g\big(\|x_1-x_2\|_{X_1}+\|y_1-y_2\|_{Y_1}\big)
 \end{align*}
 for all $x_1,x_2\in X_1$, $y_1,y_2\in Y_1$, $u_1,u_2\in C^1([0,t];X_{\delta_X})$ and all $v_1,v_2\in C^1([0,t];Y)\cap C([0,t];Y_{1-\delta_X})$. Here, we assume that
 \begin{align*}
 f(x,y)\in X_{\gamma_X},\;g(x,y)\in Y_{\delta_Y}\quad\text{if}\quad(x,y)\in X_1\times Y_1
 \end{align*}
 as well as
 \begin{align*}
 f(u,v)\in C^1([0,t];X_{\delta_X-1})\quad\text{if}\quad(u,v)&\in C^1([0,t];X_{\delta_X}\times Y)\;\\
 &\text{and}\;v\in C([0,t];Y_{1-\delta_X}).
 \end{align*}
 \item We assume that $f(0,0)=0$ and $g(0,0)=0$.
 \item We choose constants $M_A,M_B,C_A,C_B>0$, $\omega_A<0$ and $\omega_B\in\R$ such that
 \begin{align*}
 	\|\txte^{tA}\|_{\mathcal{B}(X_1)}\leq &M_A \txte^{\omega_A t},\quad \|\txte^{tA}\|_{\mathcal{B}(X_{\gamma_X},X_1)}\leq C_At^{\gamma_X-1} \txte^{\omega_A t},\\
 	& \|\txte^{tA}\|_{\mathcal{B}(X_{\delta_X},X_1)}\leq C_At^{\delta_X-1} \txte^{\omega_A t}
 \end{align*}
 and
 \begin{align*}
  	\|\txte^{tB}\|_{\mathcal{B}(Y_1)}\leq M_B \txte^{\omega_B t},\quad \|\txte^{tB}\|_{\mathcal{B}(Y_{\delta_Y},Y_1)}\leq C_Bt^{\delta_Y-1} \txte^{\omega_B t}
 \end{align*}
 hold for all $t>0$.
 \item Again we define $\omega_f:=\omega_A+(2C_AL_{f})^{\frac{1}{\gamma_X}} (\frac{1}{\gamma_X})^{\frac{1-\gamma_X}{\gamma_X}}$ if $\gamma_X\in(0,1)$ and take $\omega_f>\omega_A+C_AL_F$ if $\gamma_X=1$. Even though it is not necessary for all the results, we will assume 
 \begin{align}\label{Eq:Weak_Normal_Hyperbolicity}\begin{aligned}
 	&\omega_f<0,\\L_f\max\{\|A^{-1}\|_{\mathcal{B}(X_{\gamma_X},X_{1})}&,\|A^{-1}\|_{\mathcal{B}(X_{\delta_X-1},X_{\delta_X})}\}<1
 	\end{aligned}
 \end{align}in the following. Note that $A^{-1}$ exists as a consequence of the Hille-Yosida theorem, since $A$ generates an exponentially stable $C_0$-semigroup. Recall that as described at the beginning of Section~\ref{sec:genfs} this is a weak version of normal hyperbolicity, as it ensures that solutions of the fast equation would decay exponentially if there was no influence of the slow variable $v^{\epsilon}$ in the fast equation.
\end{enumerate}

Note that assumption (v) can in practice very frequently be ensured locally by just moving the point of interest on the critical manifold via a coordinate transformation to the origin and using Taylor expansion, so it is not really a restriction. We work with all the above assumptions for the rest of this paper. Since we also assume global Lipschitz conditions on the nonlinearities, we obtain the following well-posedness results:

\begin{proposition}\label{Prop:Fast_Slow_Well-Posed}
	\begin{enumerate}[(a)]
		\item\label{Prop:Fast_Slow_Well-Posed:epsilon=0} Let $\epsilon=0$. Then \eqref{Eq:Nonlinear_Fast_Slow_System} has a unique strict solution 
		\[
			(u^0,v^0)\in C^1([0,T];X\times Y)\cap C([0,T];X_1\times Y_1).
		\]
		\item\label{Prop:Fast_Slow_Well-Posed:epsilon>0} Let $\epsilon>0$. Then \eqref{Eq:Nonlinear_Fast_Slow_System} has a unique strict solution 
		\[
			(u^{\epsilon},v^{\epsilon})\in C^1([0,T];X\times Y)\cap C([0,T];X_1\times Y_1).
		\]
	\end{enumerate}
\end{proposition}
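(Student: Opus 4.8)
The plan is to recast \eqref{Eq:Nonlinear_Fast_Slow_System} as a fixed-point problem for mild solutions, solve it by Banach's fixed point theorem in a suitably weighted space of continuous functions, and then promote the mild solution to a strict one exactly as in the proof of Proposition~\ref{Prop:Nonlinear_IVP_Existence}. The two regimes $\epsilon>0$ and $\epsilon=0$ require different fixed-point spaces.

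\emph{The case $\epsilon>0$.} I would use the coupled mild formulation
\begin{align*}
 u^{\epsilon}(t)&=\txte^{\epsilon^{-1}tA}u_0+\epsilon^{-1}\int_0^t\txte^{\epsilon^{-1}(t-s)A}f(u^{\epsilon}(s),v^{\epsilon}(s))\,\txtd s,\\
 v^{\epsilon}(t)&=\txte^{tB}v_0+\int_0^t\txte^{(t-s)B}g(u^{\epsilon}(s),v^{\epsilon}(s))\,\txtd s,
\end{align*}
and set up a contraction argument on the space of $(u,v)\in C([0,T];X_1\times Y_1)$ with finite weighted norm $\sup_{t}\txte^{-\epsilon^{-1}\eta t}(\|u(t)\|_{X_1}+\|v(t)\|_{Y_1})$, mirroring the proof of Proposition~\ref{Prop:Nonlinear_IVP_Existence}~\eqref{Prop:Nonlinear_IVP_Existence:epsilon>0}. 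That the right-hand side maps this space into itself uses Theorem~\ref{Thm:Semigroup_in_Scales}: the semigroups restrict to $X_1$ and $Y_1$, the kernels $(t-s)^{\gamma_X-1}$ and $(t-s)^{\delta_Y-1}$ are integrable, and (by assumption (iv)) $f$ is Lipschitz from $X_1\times Y_1$ into $X_{\gamma_X}$ while $g$ is Lipschitz into $Y_{\delta_Y}$, so the convolutions of $\txte^{\epsilon^{-1}(t-s)A}\in\mathcal{B}(X_{\gamma_X},X_1)$ with $f(u,v)$ and of $\txte^{(t-s)B}\in\mathcal{B}(Y_{\delta_Y},Y_1)$ with $g(u,v)$ produce continuous $X_1$- resp.\ $Y_1$-valued functions. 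The Lipschitz hypotheses together with Lemma~\ref{Lemma:Incomplete_Gamma_1} bound the contraction constant by $C_AL_f\Gamma(\gamma_X)(\eta-\omega_A)^{-\gamma_X}+C_BL_g\Gamma(\delta_Y)(\epsilon^{-1}\eta-\omega_B)^{-\delta_Y}$, which is $<1$ once $\eta$ is chosen large enough (depending on $\epsilon$); hence there is a unique mild solution $(u^{\epsilon},v^{\epsilon})$. Since then $Au^{\epsilon},f(u^{\epsilon},v^{\epsilon})\in C([0,T];X)$ and $Bv^{\epsilon},g(u^{\epsilon},v^{\epsilon})\in C([0,T];Y)$, the mild solution satisfies the integrated equations $u^{\epsilon}(t)=u_0+\epsilon^{-1}A\int_0^tu^{\epsilon}+\epsilon^{-1}\int_0^tf(u^{\epsilon},v^{\epsilon})$ and $v^{\epsilon}(t)=v_0+B\int_0^tv^{\epsilon}+\int_0^tg(u^{\epsilon},v^{\epsilon})$ (cf.\ \cite[Proposition~4.1.5]{Lunardi_1995}); differentiating in $t$ yields the strict solution in $C^1([0,T];X\times Y)\cap C([0,T];X_1\times Y_1)$. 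Uniqueness among strict solutions is clear since every strict solution is mild.

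\emph{The case $\epsilon=0$.} Now the first equation is algebraic. By the first line of \eqref{Eq:Weak_Normal_Hyperbolicity}, for each $v\in Y_1$ the map $x\mapsto-A^{-1}f(x,v)$ is a contraction on $X_1$; its unique fixed point $h^0(v)\in X_1$ defines a Lipschitz map $h^0\colon Y_1\to X_1$ (Lipschitz constant $\le L_f\|A^{-1}\|_{\mathcal{B}(X_{\gamma_X},X_{1})}(1-L_f\|A^{-1}\|_{\mathcal{B}(X_{\gamma_X},X_{1})})^{-1}$). Setting $u^0=h^0(v^0)$ reduces the system to the semilinear slow equation $\partial_tv^0=Bv^0+\tilde g(v^0)$, $v^0(0)=v_0$, with $\tilde g:=g(h^0(\cdot),\cdot)\colon Y_1\to Y_{\delta_Y}$ globally Lipschitz. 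Running the argument of Proposition~\ref{Prop:Nonlinear_IVP_Existence}~\eqref{Prop:Nonlinear_IVP_Existence:epsilon>0} with $(B,\tilde g)$ in place of $(A,f)$ and $\epsilon=1$ --- the semigroup bounds on $(\txte^{tB})_{t\ge0}$ from assumption (vi) taking over the role played there by $0\in\rho(A)$ --- yields a unique strict solution $v^0\in C^1([0,T];Y)\cap C([0,T];Y_1)$, so in particular $v^0\in C([0,T];Y_{1-\delta_X})$. Then $u^0=h^0(v^0)\in C([0,T];X_1)$ by Lipschitz continuity of $h^0$. For the $C^1$-in-time regularity of $u^0$ I would \emph{not} differentiate $h^0$ but argue via the second Lipschitz hypothesis on $f$: since $L_f\|A^{-1}\|_{\mathcal{B}(X_{\delta_X-1},X_{\delta_X})}<1$, the map $u\mapsto-A^{-1}f(u,v^0)$ is a contraction of $C^1([0,T];X_{\delta_X})$, and its fixed point agrees pointwise with $u^0$ (the fixed point of $x\mapsto-A^{-1}f(x,v^0(t))$ being unique already in $X_{\delta_X}\supseteq X_1$); hence $u^0\in C^1([0,T];X_{\delta_X})$ and therefore $u^0\in C^1([0,T];X)$. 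The compatibility condition $0=Au_0+f(u_0,v_0)$ means exactly $u_0=h^0(v_0)$, so $(u^0,v^0)$ is a strict solution of the claimed regularity; uniqueness follows because any strict solution $(\hat u,\hat v)$ satisfies $\hat u(t)=h^0(\hat v(t))$ pointwise, which forces $\hat v$ to solve the reduced equation, whence $\hat v=v^0$ and $\hat u=u^0$.

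\emph{Expected main obstacle.} The only genuinely delicate point is the regularity of the parametrisation $u^0=h^0(v^0)$ in the case $\epsilon=0$: the constraint solver $h^0$ is merely Lipschitz, so $C^1$-in-time regularity of $u^0$ cannot be obtained by a chain rule and has to be extracted from a separate fixed-point argument in $C^1([0,T];X_{\delta_X})$ --- which is precisely what the second, $C^1$-type Lipschitz hypothesis on $f$ and the auxiliary exponent $\delta_X$ are there to enable. Everything else is bookkeeping with the interpolation--extrapolation scales (Theorem~\ref{Thm:Semigroup_in_Scales}) and the incomplete-Gamma estimates of Section~\ref{sec:prelim}.
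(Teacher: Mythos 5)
Your proposal is correct and follows essentially the same route as the paper: for $\epsilon>0$ a weighted-norm contraction for the coupled mild formulation followed by promotion to a strict solution, and for $\epsilon=0$ the construction of $h^0$ by Banach's fixed point theorem, reduction to the slow equation solved via Proposition~\ref{Prop:Nonlinear_IVP_Existence}~\eqref{Prop:Nonlinear_IVP_Existence:epsilon>0} (with Remark~\ref{Rem:2nd_Lipschitz_unnecessary}), and the $C^1$-in-time regularity of $u^0$ obtained from the contraction in $C^1([0,T];X_{\delta_X})$, which is exactly Proposition~\ref{Prop:Nonlinear_IVP_Existence}~\eqref{Prop:Nonlinear_IVP_Existence:epsilon=0}. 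Only a labeling slip: the contraction defining $h^0$ uses the second condition in \eqref{Eq:Weak_Normal_Hyperbolicity}, namely $L_f\|A^{-1}\|_{\mathcal{B}(X_{\gamma_X},X_1)}<1$, not the first line $\omega_f<0$.
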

\begin{proof}
	\begin{enumerate}[(a)]
		\item Let $y\in Y_{1}$. By assumption, it holds that 
		\[
			f_y\colon X_{\delta_X}\to X, x\mapsto f_y(x):=f(x,y)
		\]
		is continuous and satisfies
		\[
			\|f_y(x_1)-f_y(x_2)\|_{X_{\gamma_X}}=\|f(x_1,y)-f(x_2,y)\|_{X_{\gamma_X}}\leq L_f\|x_1-x_2\|_{X_1}.
		\]
		Since we assume $\|A^{-1}\|_{\mathcal{B}(X_{\gamma_X},X_1)} L_f<1$ it follows from Banach's fixed point theorem that there is a unique solution $x\in X_{1}$ of 
		\[
			0= Ax+f_y(x).
		\]
		In the following we write $h^0(y)$ for this solution. Given $y_1,y_2\in Y_{1}$ it holds that
		\begin{align*}
			\|h^0(y_1)-h^0(y_2)\|_{X_1}&=\|A^{-1}f(h^0(y_1),y_1)-A^{-1}f(h^0(y_2),y_2)\|_{X_1}\\
			&\leq L_f\|A^{-1}\|_{\mathcal{B}(X_{\gamma_X},X_1)}\big(\|h^0(y_1)-h^0(y_2)\|_{X_{1}}+\|y_1-y_2\|_{Y_1}\big)
		\end{align*}
		and thus
		\begin{align*}
			\|h^0(y_1)-h^0(y_2)\|_{X_1}&\leq \frac{1}{1-L_f\|A^{-1}\|_{\mathcal{B}(X_{\gamma_X},X_1)}}\|y_1-y_2\|_{Y_1}.
		\end{align*}
		Therefore, the mapping
		\[
			Y_{1}\to Y_{\delta_Y},\,y\mapsto g(h^0(y),y)
		\]
		is continuous. Moreover, we have the estimate
		\begin{align*}
			\|g(h^0(y_1),y_1)-g(h^0(y_2),y_2)\|_{Y_{\delta_Y}}\leq \left(\frac{L_g}{1-L_f\|A^{-1}\|_{\mathcal{B}(X_{\gamma_X},X_1)}}+L_g\right)\|y_1-y_2\|_{Y_1}.
		\end{align*}
		Therefore, it follows from Proposition~\ref{Prop:Nonlinear_IVP_Existence}~\eqref{Prop:Nonlinear_IVP_Existence:epsilon>0} together with Remark~\ref{Rem:2nd_Lipschitz_unnecessary} with $\delta=1$ and $\gamma=\delta_Y$ that there is a unique strict solution
		\[
			v^0\in C^1([0,T];Y)\cap C([0,T];Y_1)
		\]
		of the equation 
		\[
			\partial_tv^0(t)=Bv^0(t)+g(h^0(v^0(t)),v^0(t)),\quad v^0(0)=v_0.
		\]
		Now we take $u^0(t):=h^0(v^0(t))$, i.e. we have that
		\[
		u^0(t)=A^{-1}f(u^0(t),v^0(t)).
		\]
		Proposition~\ref{Prop:Nonlinear_IVP_Existence}~\eqref{Prop:Nonlinear_IVP_Existence:epsilon=0} shows that $u^0\in C^1([0,T];X_{\delta_X})\subset  C^1([0,T];X)$. Moreover, since $h^0\colon Y_1\to X_1$ is Lipschitz continuous, it follows that $u^0\in C([0,T];X_1)$. Altogether, it follows that
		\[
			(u^0,v^0)=(h^0(v^0),v^0)\in C^1([0,T];X\times Y)\cap C([0,T];X_1\times Y_1)
		\]
		is the unique solution of \eqref{Eq:Nonlinear_Fast_Slow_System} with $\epsilon=0$.
		\item The proof is similar to the one of Proposition~\ref{Prop:Nonlinear_IVP_Existence}~\eqref{Prop:Nonlinear_IVP_Existence:epsilon>0}. This time, for some $\eta\in\R$ we consider the space $C_b([0,\infty),\txte^{\eta t};X_1\times Y_1)$ of all $(u,v)\in C([0,\infty);X_1\times Y_1)$ such that
		\[
			\|(u,v)\|_{C_b([0,\infty),\txte^{\eta t};X_1\times Y_1)}:=\sup_{t\geq0} \txte^{-\eta t}\big(\|u(t)\|_{X_1}+\|v(t)\|_{Y_1}\big)<\infty.
		\]
		On this space, we define the operator $\mathscr{L}$ by
		\[
			[\mathscr{L}(u,v)](t):=\begin{pmatrix}
				\txte^{\epsilon^{-1}tA}u_0+\epsilon^{-1}\int_0^t \txte^{\epsilon^{-1}(t-s)A} f(u(s),v(s))\,\txtd s\\
				\txte^{tB}u_0+\int_0^t \txte^{(t-s)B} g(u(s),v(s))\,\txtd s.
			\end{pmatrix}
		\]
		We show that this operator is a contraction on $C_b([0,\infty),\txte^{\eta t};X_1\times Y_1)$ if $\eta$ is large enough. We have that
		\begin{align*}
			&\quad \sup_{t\geq0} \txte^{-\eta t}\epsilon^{-1}\left\|\int_0^t \txte^{\epsilon^{-1}(t-s)A} \big[f(u_1(s),v_1(s))-f(u_2(s),v_2(s))\big]\,\txtd s\right\|\\
			&\leq L_fC_A \sup_{t\geq0}\int_0^t\frac{\txte^{(t-s)(\epsilon^{-1}\omega_A-\eta)}}{\epsilon^{\gamma_{X}}(t-s)^{1-\gamma_X}}\,\txtd s \|(u_1,v_1)-(u_2,v_2)\|_{C_b([0,\infty),\txte^{\eta t};X_1\times Y_1)}\\
			&\leq\frac{L_f C_A\Gamma(\gamma_X)}{(\epsilon\eta-\omega_A)^{\gamma_X}}\|(u_1,v_1)-(u_2,v_2)\|_{C_b([0,\infty),\txte^{\eta t};X_1\times Y_1)}.
		\end{align*}
		Similarly, we have that
		\begin{align*}
			&\quad \sup_{t\geq0} \txte^{-\eta t}\left\|\int_0^t \txte^{(t-s)B} \big[g(u_1(s),v_1(s))-g(u_2(s),v_2(s))\big]\,\txtd s\right\|\\
			&\leq L_gC_B \sup_{t\geq0}\int_0^t\frac{\txte^{(t-s)(\omega_B-\eta)}}{\epsilon^{\delta_{Y}}(t-s)^{1-\delta_Y}}\,\txtd s \|(u_1,v_1)-(u_2,v_2)\|_{C_b([0,\infty),\txte^{\eta t};X_1\times Y_1)}\\
			&\leq\frac{L_g C_B\Gamma(\delta_Y)}{(\eta-\omega_B)^{\delta_Y}}\|(u_1,v_1)-(u_2,v_2)\|_{C_b([0,\infty),\txte^{\eta t};X_1\times Y_1)}.
		\end{align*}
		Therefore, we have that
		\begin{align*}
		&\|[\mathscr{L}(u,v)](t)\|_{C_b([0,\infty),\txte^{\eta t};X_1\times Y_1)}\\
		&\;\;\leq\left(\frac{L_f C_A\Gamma(\gamma_X)}{(\epsilon\eta-\omega_A)^{\gamma_X}}+\frac{L_g C_B\Gamma(\delta_Y)}{(\eta-\omega_B)^{\delta_Y}}\right)\|(u_1,v_1)-(u_2,v_2)\|_{C_b([0,\infty),\txte^{\eta t};X_1\times Y_1)}.
		\end{align*}
		In particular, if $\eta$ is large enough then $\mathscr{L}$ is a contraction. Thus, there is a unique fixed point $(u^{\epsilon},v^{\epsilon})\in C_b([0,\infty),\txte^{\eta t};X_1\times Y_1)$. By the same line of arguments as in the proof of Proposition~\ref{Prop:Nonlinear_IVP_Existence}~\eqref{Prop:Nonlinear_IVP_Existence:epsilon>0} it now follows that 
			\[
			(u^{\epsilon},v^{\epsilon})\in C^1([0,T];X\times Y)\cap C([0,T];X_1\times Y_1).
		\]
		and that it solves \eqref{Eq:Nonlinear_Fast_Slow_System} with $\epsilon>0$.
	\end{enumerate}
\end{proof}

\begin{remark}\label{Rem:Remarks_on_Flows}
	\begin{enumerate}[(a)]
		\item\label{Rem:Remarks_on_Flows:Critical_Manifold} In the proof of Propisition \eqref{Prop:Fast_Slow_Well-Posed} we introduced the mapping
		\[
			h^0\colon Y_1\to X_1,\;y\mapsto h^0(y),
		\]
		where $h^0(y)$ is the unique solution of 
		\[
			0=Ah^0(y)+f(h^0(y),y).
		\]
		In particular, this mapping describes the critical manifold $S_0$ over $Y_1$ by
		\[
			S_0:=\{(h^0(y),y):y\in Y_1\}\subset X\times Y.
		\]
		Note that Proposition~\ref{Prop:Nonlinear_IVP_Existence}~\eqref{Prop:Nonlinear_IVP_Existence:epsilon=0} shows that if $v^0\in C^1([0,T];Y)\cap C([0,T];Y_{1-\delta_X})$, then $h(v^0)\in C^1([0,T];X_{\delta_X})$.
		\item Since \eqref{Eq:Nonlinear_Fast_Slow_System} is autonomous, the solutions $(u^0,v^0)$ and $(u^{\epsilon},v^{\epsilon})$ are given by semiflows, i.e. continuous mappings
		\[
			T_{\epsilon}\colon [0,T]\times X_1\times Y_1\to X_1\times Y_1,\quad T_{0}\colon [0,T]\times S_0\to S_0.
		\]
		We write
		\[
			\begin{pmatrix}u^{\epsilon}(t)\\v^{\epsilon}(t)\end{pmatrix}=T_{\epsilon}(t)\begin{pmatrix}u_0\\v_0\end{pmatrix},\quad \begin{pmatrix}u^{0}(t)\\v^{0}(t)\end{pmatrix}=T_{0}(t)\begin{pmatrix}h^0(v_0)\\v_0\end{pmatrix}.
		\]
	\end{enumerate}
\end{remark}

\subsection{Extended Slow Flow}
One of our aims is to show that the semiflow of the fast-slow system $(T_{\epsilon}(t))_{t\geq0}$ behaves similarly to the slow flow $(T_{0}(t))_{t\geq0}$. However, the slow flow is only defined on the critical manifold $S_0$ while $(T_{\epsilon}(t))_{t\geq0}$ is defined on $X_1\times Y_1$. Thus, we will compare $(T_{\epsilon}(t))_{t\geq0}$ to an extension $(T_{\epsilon,0}(t))_{t\geq0}$ of the slow flow to $X_1\times Y_1$. This extension will approach the slow flow at an exponential rate and on the critical manifold it will coincide with the slow flow. This extended flow will be generated by the equation
\begin{align}
\begin{aligned}\label{Eq:Nonlinear_Slow_Subsystem_Extended}
 \epsilon\partial_t u^{\epsilon,0}(t) &= Au^{\epsilon,0}(t)+ f(u^{\epsilon,0}(t),v^{0}(t))-\epsilon \partial_tA^{-1}f(h^0(v^0(t)),v^{0}(t)),\\
\partial_t v^{0}(t) &= Bv^{0}(t)+g(h^0(v^0(t)),v^{0}(t)),\\
 u^{\epsilon,0}(0)&=u_0,\quad v^{0}(0)=v_0.
 \end{aligned}
\end{align}
In this equation, the slow variable satisfies the equation of the slow subsystem. The fast variable however satisfies the equation of the fast-slow system with an additional drift in the direction of the slow flow.

\begin{proposition}
 There is a unique solution 
 \[
 (u^{\epsilon,0},v^0)\in C^1([0,T];X\times Y)\cap C([0,T];X_1\times Y_1)
 \]
 of \eqref{Eq:Nonlinear_Slow_Subsystem_Extended} given by a semiflow $(T_{\epsilon,0}(t))_{t\geq0}$ on $X_1\times Y_1$. The critical manifold $S_0$ is invariant under $T_{\epsilon,0}(t)$ for all $t\geq0$. Moreover, the restriction of $(T_{\epsilon,0}(t))_{t\geq0}$ to the critical manifold coincides with the slow flow, i.e. $(T_{\epsilon,0}(t)\vert_{S_0})_{t\geq0}=(T_{0}(t))_{t\geq0}$.
\end{proposition}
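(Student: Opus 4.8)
The plan is to exploit the triangular (skew-product) structure of \eqref{Eq:Nonlinear_Slow_Subsystem_Extended}: the $v^0$-equation decouples from the $u^{\epsilon,0}$-equation, which after substitution of $v^0$ becomes a modified fast equation of the type \eqref{Eq:Modified_Fast}. First I would solve the second line of \eqref{Eq:Nonlinear_Slow_Subsystem_Extended} together with $v^0(0)=v_0$; this is precisely the slow-subsystem equation treated in the proof of Proposition~\ref{Prop:Fast_Slow_Well-Posed}~\eqref{Prop:Fast_Slow_Well-Posed:epsilon=0}, so there is a unique $v^0\in C^1([0,T];Y)\cap C([0,T];Y_1)$, and moreover $u^0:=h^0(v^0)\in C^1([0,T];X_{\delta_X})\cap C([0,T];X_1)$ by Remark~\ref{Rem:Remarks_on_Flows}~\eqref{Rem:Remarks_on_Flows:Critical_Manifold}. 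With $v^0$ now fixed, set $\tilde f(t,x):=f(x,v^0(t))$. Since $v^0(t)\in Y_1\hookrightarrow Y_{1-\delta_X}$ and $v^0\in C^1([0,T];Y)$, the nonlinearity $\tilde f$ inherits from assumption~(iv) all hypotheses imposed on the nonlinearity in Section~\ref{Section:General_Fast_Slow:Fast_Equation} with $\gamma=\gamma_X$ and $\delta=\delta_X$; in particular $f(\,\cdot\,,v^0)$ maps $C^1([0,T];X_{\delta_X})$ into $C^1([0,T];X_{\delta_X-1})$, so the drift $\epsilon\partial_t A^{-1}f(h^0(v^0),v^0)$ is a well-defined continuous $X$-valued function. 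The $\epsilon=0$ solution of \eqref{Eq:Fast_Equation_Nonlinear_IVP} associated with $\tilde f$ is exactly $u^0=h^0(v^0)$, because $0=Ah^0(v^0(t))+f(h^0(v^0(t)),v^0(t))=Au^0(t)+\tilde f(t,u^0(t))$; hence the first line of \eqref{Eq:Nonlinear_Slow_Subsystem_Extended} is literally the modified fast equation \eqref{Eq:Modified_Fast} for $\tilde f$. Lemma~\ref{Lemma:Modified_Fast_Equation}, whose standing assumptions $\|A^{-1}\|_{\mathcal{B}(X_{\delta_X-1},X_{\delta_X})}L_f<1$ and $\omega_A<\omega_f<0$ are contained in \eqref{Eq:Weak_Normal_Hyperbolicity}, then provides a unique strict solution $u^{\epsilon,0}\in C^1([0,\infty);X)\cap C([0,\infty);X_1)$. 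Combining the two steps yields existence and uniqueness of $(u^{\epsilon,0},v^0)$ in $C^1([0,T];X\times Y)\cap C([0,T];X_1\times Y_1)$, and since the right-hand side of \eqref{Eq:Nonlinear_Slow_Subsystem_Extended} does not depend explicitly on $t$, the solution map $(u_0,v_0)\mapsto(u^{\epsilon,0}(t),v^0(t))$ is a semiflow $T_{\epsilon,0}(t)$ on $X_1\times Y_1$, exactly as argued for the other flows in Remark~\ref{Rem:Remarks_on_Flows}~(b).

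For the invariance of $S_0$ and its identification with the slow flow I would verify that whenever $(u_0,v_0)\in S_0$, i.e.\ $u_0=h^0(v_0)$, one has $u^{\epsilon,0}\equiv u^0=h^0(v^0)$. Indeed, $u^0(t)=h^0(v^0(t))$ satisfies $Au^0(t)+f(u^0(t),v^0(t))=0$ by definition of $h^0$, whence $\epsilon\partial_t u^0(t)=-\epsilon\partial_t A^{-1}f(h^0(v^0(t)),v^0(t))=Au^0(t)+f(u^0(t),v^0(t))-\epsilon\partial_t A^{-1}f(h^0(v^0(t)),v^0(t))$, together with $u^0(0)=h^0(v_0)=u_0$. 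Thus $u^0$ solves the first line of \eqref{Eq:Nonlinear_Slow_Subsystem_Extended}, and by the uniqueness established above $u^{\epsilon,0}=u^0$. Consequently $T_{\epsilon,0}(t)(h^0(v_0),v_0)=(h^0(v^0(t)),v^0(t))\in S_0$ for all $t\geq0$, which is at once the invariance of $S_0$ and, recalling the description of $T_0$ in Remark~\ref{Rem:Remarks_on_Flows}~(b) and Proposition~\ref{Prop:Fast_Slow_Well-Posed}~\eqref{Prop:Fast_Slow_Well-Posed:epsilon=0}, the claimed identity $T_{\epsilon,0}(t)\vert_{S_0}=T_0(t)$.

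The only genuinely delicate point is the bookkeeping in the first paragraph: one must make sure that $t\mapsto f(h^0(v^0(t)),v^0(t))$ is $C^1$ into $X_{\delta_X-1}$ and that $\tilde f$ meets the regularity requirements of Section~\ref{Section:General_Fast_Slow:Fast_Equation}, so that Lemma~\ref{Lemma:Modified_Fast_Equation} is genuinely applicable; this rests on the mapping property of $f$ in assumption~(iv) together with $h^0(v^0)\in C^1([0,T];X_{\delta_X})$ from the proof of Proposition~\ref{Prop:Fast_Slow_Well-Posed}~\eqref{Prop:Fast_Slow_Well-Posed:epsilon=0}. Once this is in place, the dynamical content --- invariance of $S_0$ and coincidence with the slow flow --- reduces to the short computation above, and continuity of $T_{\epsilon,0}(t)$ in the initial datum follows routinely from the Gronwall-type a~priori estimates of Section~\ref{sec:genfs}.
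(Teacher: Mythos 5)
Your proposal is correct and follows essentially the same route as the paper: solve the decoupled $v^0$-equation via the argument of Proposition~\ref{Prop:Fast_Slow_Well-Posed}~(a), then treat the $u^{\epsilon,0}$-equation as a modified fast equation with the time-dependent nonlinearity $f(\cdot,v^0(t))$, and obtain invariance of $S_0$ and the identity with $T_0$ from uniqueness, since $h^0(v^0(\cdot))$ solves the first line when the initial datum lies on $S_0$. The only cosmetic difference is that you invoke Lemma~\ref{Lemma:Modified_Fast_Equation} directly, whereas the paper reruns its proof by applying Proposition~\ref{Prop:Nonlinear_IVP_Existence}~(b) together with Remark~\ref{Rem:2nd_Lipschitz_unnecessary} to $f_{\epsilon,v^0}$; these are the same argument.
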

\begin{proof}
In the proof of Proposition~\ref{Prop:Fast_Slow_Well-Posed}~\eqref{Prop:Fast_Slow_Well-Posed:epsilon=0} it was shown that there is a unique solution
		\[
			v^0\in C^1([0,T];Y)\cap C([0,T];Y_1)
		\]
		of the equation 
		\[
			\partial_tv^0(t)=Bv^0(t)+g(h^0(v^0(t)),v^0(t)),\quad v^0(0)=v_0
		\]
for all $v_0\in Y_1$. We define
		\[
			f_{\epsilon,v^0}\colon [0,T]\times X_{\delta_X}\to X,\, x\mapsto f(x,v^0(t))-\epsilon\partial_t 		A^{-1} f(h(v^0(t)), v^0(t)).
		\]
Since $v^0\in C^1([0,T];Y)\cap C([0,T];Y_1)$, it follows from Remark~\ref{Rem:Remarks_on_Flows}~\eqref{Rem:Remarks_on_Flows:Critical_Manifold} that
		\[
			[0,T]\times X_{\delta_X}\to X,\, (t,x)\mapsto A^{-1}\partial_t f(h^0(v^0(t)), v^0(t)) 
		\]
and therefore also $f_{\epsilon,v^0}$ is continuous. Moreover, we have the estimate
		\[
			\| f_{\epsilon,v^0}(t,x_1)-f_{\epsilon,v^0}(t,x_2) \|_{X_{\gamma_X}}=\|f(x_1,v^0(t))-f(x_2,v^0(t))\|_{X_{\gamma_X}}\leq L_f \|x_1-x_2\|_{X_1}.
		\]
Now Proposition~\ref{Prop:Nonlinear_IVP_Existence}~\eqref{Prop:Nonlinear_IVP_Existence:epsilon>0} together with Remark~\ref{Rem:2nd_Lipschitz_unnecessary} shows that there is a unique solution $u^{\epsilon,0}\in C^1([0,T];X)\cap C([0,T];X_1)$ of
\[
	\epsilon\partial_t u^{\epsilon,0}(t) = Au^{\epsilon,0}(t)+ f(u^{\epsilon,0}(t),v^{0}(t))-\epsilon \partial_tA^{-1}f(h^0(v^0(t)),v^{0}(t)),\quad u^{\epsilon,0}(0)=u_0.
\]
The desired solution is given by $(u^{\epsilon,0},v^0)$. Since \eqref{Eq:Nonlinear_Slow_Subsystem_Extended} is autonomous, the solution is given by a semiflow $(T_{\epsilon,0}(t))_{t\geq0}$. Note that if $(u_0,v_0)\in S_0$, then the slow flow with initial value $v_0$ solves \eqref{Eq:Nonlinear_Slow_Subsystem_Extended}. Therefore, the critical manifold is invariant under $T_{\epsilon,0}(t)$ for all $t\geq0$ and $(T_{\epsilon,0}(t))_{t\geq0}$ coincides with $(T_{0}(t))_{t\geq0}$ on the critical manifold.
\end{proof}

\begin{proposition}\label{Prop:Slow_Flow_Attracts_Extension}
For all $t\geq0$ it holds that
	\[
		\left\|T_{\epsilon,0}(t)\begin{pmatrix}u_0\\ v_0 \end{pmatrix}-T_{0}(t)\begin{pmatrix}h^0(v_0)\\ v_0 \end{pmatrix}\right\|_{X_1\times Y_1}\leq 2M_A\txte^{\epsilon^{-1}\omega_f t}\|u_0-h^0(v_0)\|_{X_1}.
	\]
\end{proposition}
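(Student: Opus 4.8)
The plan is to reduce the statement to Proposition~\ref{Prop:Modified_Fast_and_Time_Independent}. The first observation is that the two flows have \emph{identical} slow components: by construction both $T_{\epsilon,0}(t)(u_0,v_0)^T$ and $T_0(t)(h^0(v_0),v_0)^T$ have second component $v^0(t)$, the unique solution of $\partial_t v^0 = Bv^0 + g(h^0(v^0),v^0)$ with $v^0(0)=v_0$. Hence the $X_1\times Y_1$-norm of the difference collapses to $\|u^{\epsilon,0}(t)-h^0(v^0(t))\|_{X_1}$, where $u^{\epsilon,0}$ solves the fast equation in \eqref{Eq:Nonlinear_Slow_Subsystem_Extended} and $u^0(t):=h^0(v^0(t))$ is the fast component of the slow flow.

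Next I would freeze the slow solution and introduce the time-dependent nonlinearity $f_{v^0}(t,x):=f(x,v^0(t))$, to be read in the sense of Section~\ref{Section:General_Fast_Slow:Fast_Equation}. Its hypotheses hold with the same constants: the estimate $\|f_{v^0}(t,x_1)-f_{v^0}(t,x_2)\|_{X_{\gamma_X}}\leq L_f\|x_1-x_2\|_{X_1}$ is immediate, and the required $C^1$-Lipschitz bound follows from assumption (iv) of Section~\ref{Section:General_Fast_Slow:Full_System} with both slow arguments equal to $v^0$ (so the $\|v_1-v_2\|$ term drops); the well-definedness and continuity of the relevant $C^1$-valued maps, as well as of the drift term $\epsilon\partial_t A^{-1}f(h^0(v^0),v^0)$, were already verified in the proof of the preceding proposition, using $v^0\in C^1([0,T];Y)\cap C([0,T];Y_1)\hookrightarrow C([0,T];Y_{1-\delta_X})$ together with Remark~\ref{Rem:Remarks_on_Flows}~\eqref{Rem:Remarks_on_Flows:Critical_Manifold}. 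With this choice, the $\epsilon=0$ solution of \eqref{Eq:Fast_Equation_Nonlinear_IVP} associated with $f_{v^0}$ is exactly $u^0(t)=h^0(v^0(t))$ (the unique solution of $0=Ax+f(x,v^0(t))$, which exists since $L_f\|A^{-1}\|_{\mathcal{B}(X_{\delta_X-1},X_{\delta_X})}<1$ by \eqref{Eq:Weak_Normal_Hyperbolicity}), and the fast equation in \eqref{Eq:Nonlinear_Slow_Subsystem_Extended} is precisely the modified fast equation \eqref{Eq:Modified_Fast} for $f_{v^0}$.

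Then I would simply invoke Proposition~\ref{Prop:Modified_Fast_and_Time_Independent}, which yields $\|u^{\epsilon,0}(t)-u^0(t)\|_{X_1}\leq 2M_A\txte^{\epsilon^{-1}\omega_f t}\|u_0-u^0(0)\|_{X_1}$; since $u^0(0)=h^0(v_0)$ this is exactly the asserted bound. (The standing hypothesis $\omega_A<\omega_f<0$ used there is in force by \eqref{Eq:Weak_Normal_Hyperbolicity} together with $\omega_A<0$.) I do not expect a genuine obstacle: the only point requiring care is checking that the time-dependent-nonlinearity framework of Section~\ref{Section:General_Fast_Slow:Fast_Equation} legitimately applies to $f_{v^0}$, and that bookkeeping was already carried out when $(T_{\epsilon,0}(t))_{t\geq0}$ was constructed, so the present statement is a short corollary of that construction and of Proposition~\ref{Prop:Modified_Fast_and_Time_Independent}.
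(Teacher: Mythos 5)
Your proposal is correct and follows essentially the same route as the paper: observe that both flows share the slow component $v^0$, reduce to the fast components, and apply Proposition~\ref{Prop:Modified_Fast_and_Time_Independent} with $u^0(t)=h^0(v^0(t))$ and $u^0(0)=h^0(v_0)$. The extra verification you spell out (that the time-dependent nonlinearity $f_{v^0}$ fits the framework of Section~\ref{Section:General_Fast_Slow:Fast_Equation}) is exactly the bookkeeping the paper performed when constructing $(T_{\epsilon,0}(t))_{t\geq0}$, so it is the same argument, just made explicit.
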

\begin{proof}
	Since the second components of $T_{\epsilon,0}(t)(u_0,v_0)^T$ and $T_{0}(t)(h^0(v_0),v_0)^T$ are equal, we only have to estimate $\|u^{\epsilon,0}(t)-u^0(t)\|_{X_1}$. But it was shown in Proposition~\ref{Prop:Modified_Fast_and_Time_Independent} that 
		\[
		\|u^{\epsilon,0}(t)-u^0(t)\|_{X_1}\leq 2M_A\txte^{\epsilon^{-1}\omega_f t}\|u_0-h^0(v_0)\|_{X_1}.
	\]
	This shows the assertion.
\end{proof}

\subsection{Approximation by the Slow Flow}

\begin{theorem}\label{Thm:Original_and_Modified_Flow_Close}
 There are constants $C,c>0$ such that
 \[
  \left\| T_{\epsilon}(t)\begin{pmatrix}u_0 \\ v_0 \end{pmatrix}- T_{\epsilon,0}(t)\begin{pmatrix}u_0 \\ v_0 \end{pmatrix}\right\|_{X_1\times Y_1}\leq C \txte^{(\omega_B+c)t} \big(\epsilon \|v_0\|_{Y_1}+\epsilon^{\delta_Y}\| u_0-h^0(v_0)\|_{X_1}\big)
 \]
holds for all $(u_0,v_0)^T\in X_1\times Y_1$, all $t\geq0$ and all $\epsilon\in(0,1]$.
\end{theorem}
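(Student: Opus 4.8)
The plan is to throw the fast part onto the propositions already proven and reduce the whole estimate to a bound for $V(t):=v^\epsilon(t)-v^0(t)$. Introduce the auxiliary $\hat u^\epsilon$ solving the fast equation \eqref{Eq:Fast_Equation_Nonlinear_IVP} with the (time–dependent) nonlinearity $(t,x)\mapsto f(x,v^0(t))$ and initial value $u_0$; one checks it satisfies the hypotheses of Section~\ref{Section:General_Fast_Slow:Fast_Equation}, so $(\hat u^\epsilon,v^0)$ is a legitimate decoupled comparison pair. I would write
\[
\|u^\epsilon(t)-u^{\epsilon,0}(t)\|_{X_1}\le\|u^\epsilon(t)-\hat u^\epsilon(t)\|_{X_1}+\|\hat u^\epsilon(t)-u^{\epsilon,0}(t)\|_{X_1},
\]
bounding the second summand by Proposition~\ref{Prop:Fast_and_Modified_Fast} (with $f$ replaced by $f(\cdot,v^0(\cdot))$ and the reference $u^0$ replaced by $h^0(v^0)$, which is admissible since the initial data cancel in the proof of that proposition and $h^0(v^0)$ satisfies the $\epsilon=0$ fixed–point identity) and the first by Proposition~\ref{Prop:A_Priori_Different_Nonlinearities}, which yields $\|u^\epsilon(t)-\hat u^\epsilon(t)\|_{X_1}\le C|\omega_f|^{-\gamma_X}L_f\sup_{0\le s\le t}\|V(s)\|_{Y_1}$. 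The same triangle inequality, together with Proposition~\ref{Prop:Slow_Flow_Attracts_Extension} for $\|u^{\epsilon,0}(s)-u^0(s)\|_{X_1}\le 2M_A\txte^{\epsilon^{-1}\omega_f s}\|u_0-h^0(v_0)\|_{X_1}$ and $u^0=h^0(v^0)$, controls the quantity $\|u^\epsilon(s)-h^0(v^0(s))\|_{X_1}$ that appears in the slow estimate below.

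\textbf{A priori bounds.} Before the main estimate I would record exponential–in–$t$ bounds. A generalized Gronwall argument (Lemma~\ref{Lemma:Gronwall_Specific} with $\epsilon=1$, $\gamma=\delta_Y$) applied to the reduced equation for $v^0$, using $g(0,0)=0$, $h^0(0)=0$ and the Lipschitz bound for $h^0$, gives $\|v^0(t)\|_{Y_1}\le C\txte^{(\omega_B+c)t}\|v_0\|_{Y_1}$; differentiating the equation and using $\|Bv^0\|_Y\lesssim\|v^0\|_{Y_1}$ (from assumption (iii)) gives the same for $\|v^0\|_{C^1_b([0,t];Y)}$, and then the $C^1$–Lipschitz hypothesis on $f$ together with $h^0(v^0)=-A^{-1}f(h^0(v^0),v^0)$ and $L_f\|A^{-1}\|_{\mathcal B(X_{\delta_X-1},X_{\delta_X})}<1$ gives $\|f(h^0(v^0(\cdot)),v^0(\cdot))\|_{C^1_b([0,t];X_{\delta_X-1})}\le C\txte^{(\omega_B+c)t}\|v_0\|_{Y_1}$; this feeds the factor in Proposition~\ref{Prop:Fast_and_Modified_Fast} and produces the $\epsilon\|v_0\|_{Y_1}$ term.

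\textbf{Gronwall for the slow component.} Writing $V$ by variation of constants, $V(t)=\int_0^t\txte^{(t-s)B}\big(g(u^\epsilon(s),v^\epsilon(s))-g(h^0(v^0(s)),v^0(s))\big)\,\txtd s$, using $\|\txte^{tB}\|_{\mathcal B(Y_{\delta_Y},Y_1)}\le C_Bt^{\delta_Y-1}\txte^{\omega_Bt}$ and the Lipschitz bound on $g$ gives
\[
\|V(t)\|_{Y_1}\le C_BL_g\int_0^t\frac{\txte^{\omega_B(t-s)}}{(t-s)^{1-\delta_Y}}\big(\|u^\epsilon(s)-h^0(v^0(s))\|_{X_1}+\|V(s)\|_{Y_1}\big)\,\txtd s.
\]
Inserting the bounds from the first paragraph, the contribution of $\txte^{\epsilon^{-1}\omega_f s}\|u_0-h^0(v_0)\|_{X_1}$ is handled by $\int_0^t\frac{\txte^{\omega_B(t-s)}}{(t-s)^{1-\delta_Y}}\txte^{\epsilon^{-1}\omega_fs}\,\txtd s\le C\txte^{\max\{\omega_B,0\}t}\epsilon^{\delta_Y}$ (split off the $\omega_B$–exponential and apply Lemma~\ref{Lemma:Not_Incomplete_Gamma}), which is exactly the $\epsilon^{\delta_Y}$ in the claim; the $\epsilon\|v_0\|_{Y_1}$ terms are kept. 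Multiplying through by $\txte^{-(\omega_B+c)t}$ and passing to $\theta(t):=\sup_{0\le s\le t}\txte^{-(\omega_B+c)s}\|V(s)\|_{Y_1}$, the surviving kernel is $\txte^{-c(t-s)}(t-s)^{\delta_Y-1}$ with mass $\Gamma(\delta_Y)c^{-\delta_Y}$, which, by choosing $c$ large (also large enough that $\max\{\omega_B,0\}\le\omega_B+c$ and that the $v^0$–a priori rate is respected), can be made $<\tfrac12\,[C_BL_g(1+CL_f|\omega_f|^{-\gamma_X})]^{-1}$. Hence $\theta(t)\le\tfrac12\theta(t)+C(\epsilon\|v_0\|_{Y_1}+\epsilon^{\delta_Y}\|u_0-h^0(v_0)\|_{X_1})$, so $\|V(t)\|_{Y_1}\le C\txte^{(\omega_B+c)t}(\epsilon\|v_0\|_{Y_1}+\epsilon^{\delta_Y}\|u_0-h^0(v_0)\|_{X_1})$; feeding this back into the first paragraph bounds $\|u^\epsilon(t)-u^{\epsilon,0}(t)\|_{X_1}$ by the same right–hand side, and adding the two gives the assertion (uniformly for $\epsilon\in(0,1]$, since every constant is $\epsilon$–independent).

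\textbf{Main obstacle.} The genuine difficulty is that the naive combined estimate couples two Volterra kernels of opposite character — the fast $A$–kernel with the $\epsilon^{-1}$ time compression and the slow $B$–kernel — and the coupled Gronwall Lemma~\ref{Lemma:Gronwall_Specific_Sum} would then demand a smallness condition $C_AL_f\Gamma(\gamma_X)(\epsilon\omega_B-\omega_A)^{-\gamma_X}<1$ that is \emph{not} implied by \eqref{Eq:Weak_Normal_Hyperbolicity}. The point of routing the fast kernel through Propositions~\ref{Prop:Fast_and_Modified_Fast}, \ref{Prop:A_Priori_Different_Nonlinearities} and \ref{Prop:Slow_Flow_Attracts_Extension} (where it is absorbed via Lemma~\ref{Lemma:Gronwall_Specific}, which carries \emph{no} smallness restriction) is precisely to be left with only the slow $B$–kernel, whose mass can then be shrunk at the sole cost of an extra factor $\txte^{ct}$ — exactly the freedom the statement builds in through the constant $c$. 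Everything else — extracting the sharp powers $\epsilon$ and $\epsilon^{\delta_Y}$, and verifying the monotonicity needed for the supremum argument — is bookkeeping with the incomplete–Gamma estimates of Section~\ref{sec:prelim}.
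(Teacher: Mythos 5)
Your proposal is correct and follows essentially the same route as the paper's proof: the same auxiliary solution $\tilde{u}^{\epsilon}$ (your $\hat u^{\epsilon}$), the same use of Propositions~\ref{Prop:A_Priori_Different_Nonlinearities}, \ref{Prop:Fast_and_Modified_Fast}, \ref{Prop:A_Priori_Estimate_Fast_Equation}~(b) and \ref{Prop:Modified_Fast_and_Time_Independent} to reduce everything to the slow-variable inequality \eqref{Eq:Slow_Variables_Distance}, the same extraction of $\epsilon^{\delta_Y}$ from the $\txte^{\epsilon^{-1}\omega_f s}$ forcing via Lemma~\ref{Lemma:Not_Incomplete_Gamma}, and the same feedback into the fast component. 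The only differences are cosmetic: you close the final estimate with a hand-rolled weighted-supremum argument and a large shift $c$ (possibly depending on $\omega_B$), whereas the paper invokes Lemma~\ref{Lemma:Gronwall_Specific} and keeps $C,c$ independent of $\omega_B$, which is immaterial for the statement as given.
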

\begin{remark}
	Before we turn to the proof we briefly give a rough idea on how large $C$ and $c$ have to be. Actually, we have all the ingredients to explicitly give formulas for these constants and we could also give them by keeping track of the constants in the proof of Theorem~\ref{Thm:Original_and_Modified_Flow_Close}. However, these formulas would be quite involved and probably not sharp. Thus, we refrain from giving precise constants here.\\
	The constant $C>0$ should not be very large unless $\delta_Y,\gamma_X$ or $\omega_f$ are close to $0$. If either of these values tends to $0$, then $C$ will tend to $\infty$. $C$ is basically constructed from the constants which were explicitely computed in Proposition~\ref{Prop:A_Priori_Estimate_Fast_Equation}~\eqref{Prop:A_Priori_Estimate_Fast_Equation:epsilon>0} (with $\epsilon=1$ and $\gamma=\delta_Y$), Proposition~\ref{Prop:Fast_and_Modified_Fast} and Proposition~\ref{Prop:Modified_Fast_and_Time_Independent}.\\
	For $c$ we are a little bit more precise, even though our rough estimate for $c$ can probably still be improved: The constant $c$ can be taken to be
	\begin{align*}
	c&=1+2[L_gC_B(2+C_1L_f)]^{\frac{1}{\delta_Y}}\big(\frac{2}{\delta_Y}\big)^{\tfrac{1-\delta_Y}{\delta_Y}}\quad&\text{if}\quad\delta_Y\in(0,1),\\
		c&>1+L_gC_B(2+C_1L_f)\quad&\text{if}\quad\delta_Y=1,
	\end{align*}
	where $C_1$ is given by
	\[
		C_1=2C_A\left(\frac{\txte^{\gamma_X}}{\gamma_X^{1-\gamma_X}}+\Gamma(\gamma_X)\bigg|\frac{\omega_A}{\omega_f}\bigg|^{1-\gamma_X}\right)\frac{1}{|\omega_f|^{\gamma_X}}.
	\]
\end{remark}
\begin{proof}[Proof of Theorem~\ref{Thm:Original_and_Modified_Flow_Close}]
 In this proof, we use the notation
 \[
  \begin{pmatrix}u^{\epsilon}(t) \\ v^{\epsilon}(t) \end{pmatrix}=T_{\epsilon}(t)\begin{pmatrix}u_0 \\ v_0 \end{pmatrix},\quad\begin{pmatrix}u^{\epsilon,0}(t) \\ v^{0}(t) \end{pmatrix}=T_{\epsilon,0}(t)\begin{pmatrix}u_0 \\ v_0 \end{pmatrix}.
 \]
Variation of constants shows that
\begin{align*}
 v^{\epsilon}(t)&=\txte^{tB}v_0+\int_0^t \txte^{(t-s)B} g(u^{\epsilon}(s),v^{\epsilon}(s))\,\txtd s\\
 v^{0}(t)&=\txte^{tB}v_0+\int_0^t \txte^{(t-s)B} g(h^0(v^0(s)),v^{0}(s))\,\txtd s.
\end{align*}
Therefore, we have that
	\begin{align}\begin{aligned}\label{Eq:Slow_Variables_Distance}
		&\qquad\|v^{\epsilon}(t)-v^{0}(t)\|_{Y_1}\\
		&\leq L_gC_{B}\int_0^t \frac{\txte^{(t-s)\omega_B}}{(t-s)^{1-\delta_Y}}\big(\|u^{\epsilon}(s)-h^0(v^0(s))\|_{X_1}+\|v^{\epsilon}(t)-v^{0}(t)\|_{Y_1}\big)\,\txtd s.
		\end{aligned}
	\end{align}
The aim is to apply Gronwall's inequality. But before we do this, we first estimate the term $\|u^{\epsilon}(s)-h^0(v^0(s))\|_{X_1}$. Let $\tilde{u}^{\epsilon}$ be the unique strict solution of 
\begin{align*}
 \epsilon\partial_t\tilde{u}^{\epsilon}&=A\tilde{u}^{\epsilon}+f(\tilde{u}^{\epsilon},v^0),\\
 \tilde{u}^{\epsilon}(0)&=u_0,
\end{align*}
which exists by Proposition~\ref{Prop:Nonlinear_IVP_Existence}~\eqref{Prop:Fast_Slow_Well-Posed:epsilon>0}. By the triangle inequality, we have
\begin{align*}
	&\qquad\|u^{\epsilon}(s)-h^0(v^0(s))\|_{X_1}\\
	&\leq \|u^{\epsilon}(s)-\tilde{u}^{\epsilon}(s)\|_{X_1}+\|\tilde{u}^{\epsilon}(s)-u^{\epsilon,0}(s)\|_{X_1}+\|u^{\epsilon,0}(s)-h^0(v^0(s))\|_{X_1}
\end{align*}
Using Proposition \ref{Prop:A_Priori_Different_Nonlinearities} with $\eta=0$ we obtain that there is a constant $C_1>0$ such that
\begin{align*}
	\|u^{\epsilon}(s)-\tilde{u}^{\epsilon}(s)\|_{X_1}&\leq C_1 \sup_{0\leq r\leq s, x\in X_1}\|f(x,v^{\epsilon}(r))-f(x,v^{0}(r))\|_{X_{\gamma}}\\
	&\leq C_1L_f\|v^{\epsilon}(t)-v^{0}(t)\|_{Y_1}.
\end{align*}
Proposition~\ref{Prop:Fast_and_Modified_Fast} and Proposition~\ref{Prop:A_Priori_Estimate_Fast_Equation}~\eqref{Prop:A_Priori_Estimate_Fast_Equation:epsilon>0} show that there are constants $C_2,\tilde{C}_2\geq0$ such that
\begin{align*}
	\|\tilde{u}^{\epsilon}(s)-u^{\epsilon,0}(s)\|_{X_1}&\leq \tilde{C}_2\epsilon \|f(0,v^{0}) \|_{C^1([0,s];X_{\delta_X-1})}\\
	&\leq \tilde{C}_2 L_f\epsilon \|v^{0}\|_{C^1([0,s];Y)}\\
	&\leq C_2\epsilon \txte^{\omega_g s} \|v_0\|_{Y_1},
\end{align*}
where
\begin{align*}
\begin{aligned}
	\omega_g&=\omega_B+(2C_BL_g)^{1/\delta_Y}\big(\tfrac{1}{\delta_Y}\big)^{\frac{1-\delta_Y}{\delta_Y}}\quad\text{if }\delta_Y\in(0,1),\\
	\omega_g&>\omega_B+C_BL_g\quad\text{if }\delta_Y=1.
	\end{aligned}
\end{align*}
Moreover, Proposition~\ref{Prop:Modified_Fast_and_Time_Independent} yields
\begin{align*}
	\|u^{\epsilon,0}(s)-h^0(v^0(s))\|_{X_1}\leq 2M_A\txte^{\epsilon^{-1}\omega_f s}\|u_0-h^0(v_0)\|_{X_1}.
\end{align*}
By combining the previous four estimates with \eqref{Eq:Slow_Variables_Distance}, we obtain that there is a constant $C>0$ not depending on $\omega_B,u_0,v_0$ and $\epsilon$ such that
\begin{align*}
	\|v^{\epsilon}(t)-v^{0}(t)\|_{Y_1}&\leq C\int_0^t\frac{\txte^{(t-s)\omega_B}}{(t-s)^{1-\delta_Y}}(\epsilon \txte^{\omega_g s} \|v_0\|_{Y_1}+\txte^{\epsilon^{-1}\omega_f s}\|u_0-h^0(v_0)\|_{X_1})\,\txtd s\\
	&\qquad+L_gC_B(1+C_1L_f)\int_0^t \frac{\txte^{(t-s)\omega_B}}{(t-s)^{1-\delta_Y}}\|v^{\epsilon}(s)-v^{0}(s)\|_{Y_1}\,\txtd s\\
		&\leq C\txte^{\omega_gt}\int_0^t\frac{1}{(t-s)^{1-\delta_Y}}(\epsilon\|v_0\|_{Y_1}+\txte^{(\epsilon^{-1}\omega_f -\omega_g)s}\|u_0-h^0(v_0)\|_{X_1})\,\txtd s\\
	&\qquad+L_gC_B(1+C_1L_f)\int_0^t \frac{\txte^{(t-s)\omega_B}}{(t-s)^{1-\delta_Y}}\|v^{\epsilon}(s)-v^{0}(s)\|_{Y_1}\,\txtd s\\
	&\leq C\txte^{\omega_gt}\bigg(\frac{ t^{\delta_Y}}{\delta_Y} \epsilon\|v_0\|_{Y_1}+\frac{e+\delta_Y}{\delta_Y(\epsilon\omega_g-\omega_f)^{\delta_Y}}\epsilon^{\delta_Y}\| u_0-h^0(v_0)\|_{X_1}\bigg)\\
		&\qquad\qquad+L_gC_B(1+C_1L_f)\int_0^t \frac{\txte^{(t-s)\omega_B}}{(t-s)^{1-\delta_Y}}\|v^{\epsilon}(s)-v^{0}(s)\|_{Y_1}\,\txtd s\\
		&\leq C\txte^{(\omega_g+1)t}\bigg(\epsilon\|v_0\|_{Y_1}+\epsilon^{\delta_Y}\| u_0-h^0(v_0)\|_{X_1}\bigg)\\
	&\qquad\qquad+L_gC_B(1+C_1L_f)\int_0^t \frac{\txte^{(t-s)(\omega_g+1)}}{(t-s)^{1-\delta_Y}}\|v^{\epsilon}(s)-v^{0}(s)\|_{Y_1}\,\txtd s,
\end{align*}
where we used Lemma~\ref{Lemma:Not_Incomplete_Gamma}. Thus, Lemma~\ref{Lemma:Gronwall_Specific} shows that there is a constant $C>0$ not depending on $\omega_B$, $u_0,v_0$ and $\epsilon$ such that
\begin{align*}
	\|v^{\epsilon}(t)-v^{0}(t)\|_{Y_1}\leq C\txte^{(\omega_B+c)t}\big(\epsilon \|v_0\|_{Y_1}+\epsilon^{\delta_Y}\| u_0-h^0(v_0)\|_{X_1}\big)\quad(t\geq0),
\end{align*}
where $c=1+2[L_gC_B(2+C_1L_f)]^{\frac{1}{\delta_Y}}\big(\frac{2}{\delta_Y}\big)^{\frac{1-\delta_Y}{\delta_Y}}$ if $\delta_Y\in(0,1)$ and $c>1+L_gC_B(2+C_1L_f)$ if $\delta_Y=1$.
Using this estimate for the for the slow variable, Proposition \ref{Prop:A_Priori_Different_Nonlinearities} and Proposition \ref{Prop:Fast_and_Modified_Fast} we also obtain for the fast variable
\begin{align*}
 \|u^{\epsilon}(t)-u^{\epsilon,0}(t)\|_{X_1}&\leq \|u^{\epsilon}(t)-\tilde{u}^{\epsilon}(t)\|_{X_1}+\|\tilde{u}^{\epsilon}(t)-u^{\epsilon,0}(t)\|_{X_1}\\
 &\leq C \txte^{(\omega_B+c)t}\big(\epsilon \|v_0\|_{Y_1}+\epsilon^{\delta_Y}\| u_0-h^0(v_0)\|_{X_1}\big).
\end{align*}
Altogether, we obtain the assertion.
\end{proof}

\begin{corollary}\label{Cor:Original_and_Slow_Flow_Close}
  There are constants $C,c>0$ such that
 \begin{align*}
  &\left\| T_{\epsilon}(t)\begin{pmatrix}u_0 \\ v_0 \end{pmatrix}- T_{0}(t)\begin{pmatrix} h^0(v_0) \\ v_0 \end{pmatrix}\right\|_{X_1\times Y_1}\\
  &\qquad\qquad\leq C \big(\epsilon \txte^{(\omega_B+c)t}\|v_0\|_{Y_1}+(\epsilon^{\delta_Y}\txte^{(\omega_B+c)t}+\txte^{\epsilon^{-1}\omega_f t})\| u_0-h^0(v_0)\|_{X_1}\big)
 \end{align*}
holds for all $(u_0,v_0)^T\in X_1\times Y_1$, all $t\in[0,T]$ and all $\epsilon\in(0,1]$.
\end{corollary}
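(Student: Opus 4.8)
The plan is to obtain the estimate as an immediate consequence of Theorem~\ref{Thm:Original_and_Modified_Flow_Close} and Proposition~\ref{Prop:Slow_Flow_Attracts_Extension}, using the extended slow flow $(T_{\epsilon,0}(t))_{t\geq0}$ as an intermediate object between the full semiflow $(T_{\epsilon}(t))_{t\geq0}$ and the slow flow $(T_{0}(t))_{t\geq0}$. First I would fix $(u_0,v_0)^T\in X_1\times Y_1$, $t\in[0,T]$ and $\epsilon\in(0,1]$ and decompose
\[
 T_{\epsilon}(t)\begin{pmatrix}u_0\\v_0\end{pmatrix}-T_{0}(t)\begin{pmatrix}h^0(v_0)\\v_0\end{pmatrix}=\left(T_{\epsilon}(t)\begin{pmatrix}u_0\\v_0\end{pmatrix}-T_{\epsilon,0}(t)\begin{pmatrix}u_0\\v_0\end{pmatrix}\right)+\left(T_{\epsilon,0}(t)\begin{pmatrix}u_0\\v_0\end{pmatrix}-T_{0}(t)\begin{pmatrix}h^0(v_0)\\v_0\end{pmatrix}\right),
\]
and apply $\|\cdot\|_{X_1\times Y_1}$ together with the triangle inequality.

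For the first summand, Theorem~\ref{Thm:Original_and_Modified_Flow_Close} provides constants $C_1,c_1>0$, independent of $(u_0,v_0)$ and $\epsilon$, with
\[
 \left\|T_{\epsilon}(t)\begin{pmatrix}u_0\\v_0\end{pmatrix}-T_{\epsilon,0}(t)\begin{pmatrix}u_0\\v_0\end{pmatrix}\right\|_{X_1\times Y_1}\leq C_1\txte^{(\omega_B+c_1)t}\big(\epsilon\|v_0\|_{Y_1}+\epsilon^{\delta_Y}\|u_0-h^0(v_0)\|_{X_1}\big).
\]
For the second summand, Proposition~\ref{Prop:Slow_Flow_Attracts_Extension} gives directly
\[
 \left\|T_{\epsilon,0}(t)\begin{pmatrix}u_0\\v_0\end{pmatrix}-T_{0}(t)\begin{pmatrix}h^0(v_0)\\v_0\end{pmatrix}\right\|_{X_1\times Y_1}\leq 2M_A\txte^{\epsilon^{-1}\omega_f t}\|u_0-h^0(v_0)\|_{X_1}.
\]
Adding the two bounds and setting $C:=\max\{C_1,2M_A\}$, $c:=c_1$, then regrouping the $\|v_0\|_{Y_1}$ and $\|u_0-h^0(v_0)\|_{X_1}$ contributions yields exactly the asserted inequality.

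There is no real obstacle here; the statement is purely a bookkeeping consequence of the two previous results. The only points worth noting are that the restriction $t\in[0,T]$ is inherited from the common domain of definition of the flows in Remark~\ref{Rem:Remarks_on_Flows}, and that, since $\omega_f<0$ by assumption~\eqref{Eq:Weak_Normal_Hyperbolicity}, the factor $\txte^{\epsilon^{-1}\omega_f t}$ is bounded by $1$ (indeed exponentially decaying in $t/\epsilon$), so keeping it as a separate term in the estimate is harmless and in fact captures the fast exponential attraction of the $u$-component toward the critical manifold.
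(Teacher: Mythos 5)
Your proposal is correct and matches the paper's own argument: the corollary is proved exactly by inserting the extended slow flow $T_{\epsilon,0}(t)$, applying the triangle inequality, and combining Theorem~\ref{Thm:Original_and_Modified_Flow_Close} with Proposition~\ref{Prop:Slow_Flow_Attracts_Extension}. Your bookkeeping of the constants and the remark that $\omega_f<0$ makes the extra term harmless are consistent with the paper.
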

\begin{proof}
	This is a combination of Proposition~\ref{Prop:Slow_Flow_Attracts_Extension}  and Theorem~\ref{Thm:Original_and_Modified_Flow_Close}.
\end{proof}

\section{Slow Manifolds}
\label{sec:slowfinal}

Under additional assumptions on the operator $B$ in the equation of the slow variable, we now prove the existence of a family of slow manifolds $S_{\epsilon,\zeta}$. Unlike in finite dimensions, this family will depend on two parameters. While $\epsilon$ plays the same role as in the finite-dimensional setting, the parameter $\zeta$ is new. As explained in Section~\ref{Section:Bates_Difficult} there might be parts of the slow dynamics which decay faster than other parts in the fast equation evolve. Our idea is to find a certain splitting of the slow variable in a fast and a slow part. The fast part of the slow variable will then be treated together with the fast variable, while the slow manifolds are constructed as graphs over the slow part. The parameter $\zeta$ determines which parts of the slow variables are considered as fast and which parts are considered as slow. In the language of normally hyperbolic invariant manifolds one could say that the stable direction will consist of the fast variable and the fast part of the slow variable, and the center direction will consist of the slow part of the slow variable. The finite dimensional situation will also be recovered as a special case: The family of slow manifolds $S_{\epsilon,\zeta}$ will then not depend on $\zeta$ so that one could omit it in the notation and obtain a family $S_{\epsilon}$ as usual in finite dimensions. More generally, if $B$ generates a $C_0$-group, then the family of slow manifolds will not depend on $\zeta$. We will give applications of our techniques to systems of fast-slow partial differential equations in Section~\ref{sec:examples}. In the next subsection, we make our assumptions more precise.

\subsection{Our approach on how to resolve the issues of Section~\ref{Section:Bates_Difficult}} \label{Section:Bates_Difficult:Our_Approach}
For the problems explained in Section \ref{Subsection:Bates_Difficult:Normal_Hyperbolicity} and Section \ref{Subsection:Bates_Difficult:Fast_Slow_Times}, we assume that for each small $\zeta>0$ we have a splitting of the slow variable space
\[
 Y=Y_{F}^{\zeta}\oplus Y_{S}^{\zeta}
\]
in a fast part $Y_{F}^{\zeta}$ and a slow part $Y_{S}^{\zeta}$ such that
\begin{enumerate}[(i)]
 \item The spaces $Y_{F}^{\zeta}$ and $Y_{S}^{\zeta}$ are closed in $Y$ and the projections $\operatorname{pr}_{Y_F^{\zeta}}$ and $\operatorname{pr}_{Y_S^{\zeta}}$ commute with $B$ on $Y_{1}$.
 \item The space $Y_{F}^{\zeta}\cap Y_1$ is a closed subspace of $Y_1$ and will be endowed with the norm $\|\cdot\|_{Y_1}$.
 \item The space $Y_{S}^{\zeta}\cap Y_1$ is a closed subspace of $Y_1$ and will be endowed with the norm $\|\cdot\|_{Y_1}$. Moreover, the nonlinearity $g$ satisfies
 	\begin{align*}
 		&\|\operatorname{pr}_{Y_S^{\zeta}}[g(x,y_{F},y_{S})-g(\tilde{x},\tilde{y}_{F},\tilde{y}_{S})]\|_{Y_1}\\
 		&\qquad\qquad\leq L_g \zeta^{\delta_Y-1}\big(\|x-\tilde{x}\|_{X_1}+\|y_F-\tilde{y}_F\|_{Y_1}+\|y_S-\tilde{y}_S\|_{Y_1}\big).
 	\end{align*}
 \item The realization of $B$ in $Y_S^{\zeta}$, i.e.
 \[
  B_{Y_S^{\zeta}}\colon Y_S^{\zeta}\supset D(B_{Y_S^{\epsilon}})\to Y_S^{\zeta},\; v\mapsto Bv
 \]
 with 
 \[
 	D(B_{Y_S^{\zeta}}):=\{v_0\in Y_S^{\zeta}\cap D(B):Bv_0\in Y_S^{\zeta} \}
 \]
 generates a $C_0$-group $(\txte^{tB_{Y_S^{\zeta}}})_{t\in\R}\subset\mathcal{B}((Y_S^{\zeta},\|\cdot\|_{Y}))$ which satisfies $\txte^{tB_{Y_S^{\zeta}}}=\txte^{tB}$ on $Y_S^{\zeta}$ for $t\geq0$.
 \item The realization of $B$ in $Y_F^{\zeta}$, i.e.
 \[
  B_{Y_F^{\zeta}}\colon Y_F^{\zeta}\supset D(B_{Y_F^{\epsilon}})\to Y_F^{\zeta},\; v\mapsto Bv
 \]
 with 
 \[
 	D(B_{Y_F^{\zeta}}):=\{v_0\in Y_F^{\zeta}:Bv_0\in Y_F^{\zeta} \}
 \]
 has $0$ in its resolvent set.
 \item The space $Y_F^{\zeta}$ contains the parts of $Y_1$ that decay under the semigroup $(\txte^{tB})_{t\geq0}$ almost as fast as the space $X_1$ under $(\txte^{\zeta^{-1}tA})_{t\geq0}$. The space $Y_S^{\zeta}$ contains the parts of $Y_1$ which do not decay or which only decay slowly under the semigroup $(\txte^{tB})_{t\geq0}$ compared to $X_1$ under $(\txte^{\zeta^{-1}tA})_{t\geq0}$. More precisely, there are constants $C_B,M_B>0$ such that for all $\zeta>0$ small enough there are constants $0\leq N_F^{\zeta}< N_S^{\zeta}$ such that for all $t\geq0$, $y_F\in Y_F^{\zeta}$ and $y_S\in Y_S^{\zeta}$ we have the estimates
 \begin{align*}
  \|\txte^{tB}y_F\|_{Y_1}&\leq  C_Bt^{\delta_Y-1}\txte^{(N_F^{\zeta}+\zeta^{-1}\omega_A)t} \|y_F\|_{Y_{\delta_Y}},\\
   \|\txte^{-tB}y_S\|_{Y_1}&\leq M_B \txte^{-(N_S^{\zeta}+\zeta^{-1}\omega_A) t} \|y_S\|_{Y_1}.
 \end{align*}
 \item We have the estimate
 \begin{align}\label{Eq:Gap_Condition}
 	\frac{2^{\gamma_X}L_fC_A\Gamma(\gamma_X)}{\big(2(\epsilon\zeta^{-1}-1)\omega_A+\epsilon(N_S^{\zeta}+N_F^{\zeta})\big)^{\gamma_X}}+\frac{2^{\delta_Y}L_gC_B\Gamma(\delta_Y)}{(N_S^{\zeta}-N_F^{\zeta})^{\delta_Y}}+\frac{2\zeta^{\delta_Y-1}L_gM_B\Gamma(\delta_Y)}{N_S^{\zeta}-N_F^{\zeta}}<1,
 \end{align}
 which will be needed for an application of Banach's fixed point theorem.
\end{enumerate}
These conditions might seem very restrictive at first. However, in many applications it is possible to find such a decomposition. In many cases, it can be obtained by using Riesz projections corresponding to $B$. This can for example be done if $B$ is a parabolic operator on a bounded domain. If $B$ generates a group, then it will even suffice to take $Y_F^{\zeta}=\{0\}$ and $Y_S^{\zeta}=Y$ for small $\epsilon$. In particular, one can always find such a decomposition if the equation for the slow variable is given by an ordinary differential equation.\\
Besides the parameters $\epsilon$ and $\zeta$, the quantity $N^{\zeta}_S-N^{\zeta}_F$ also plays a certain role. It measures how far one can seperate the decay properties of the fast and the slow part in the slow variable. In many situations this number corresponds to size of spectral gaps in the real part of the spectrum of $B$ as one approaches $-\infty$. For example, if $B$ is the Laplace operator $\Delta$ on $L_2([0,2\pi])$ with Dirichlet boundary conditions, then the eigenvalues are of the form $-k^{2}$. The gaps between two consecutive different eigenvalues will then be given by $2k+1$, i.e. it will behave almost like the square root of the size of the eigenvalues times a constant. In such a situation, $N^{\zeta}_S-N^{\zeta}_F$ will behave like $C\zeta^{-\frac{1}{2}}$ as $\zeta\to0$. If $B$ generates a group, then it will hold that $N^{\zeta}_S-N^{\zeta}_F$ behaves  like $\zeta^{-1}$.\\
We use this splitting to rewrite the fast-slow system \eqref{Eq:Nonlinear_Fast_Slow_System} as
\begin{align}
\begin{aligned}\label{Eq:Nonlinear_Fast_Slow_System:Splitting}
 \epsilon\partial_t u^{\epsilon}(t) &= Au^{\epsilon}(t)+ f(u^\epsilon(t),v^{\epsilon}_F(t),v^{\epsilon}_S(t)),\\
 \partial_t v^{\epsilon}_F(t) &= Bv_F^{\epsilon}(t)+\operatorname{pr}_{Y_F^{\zeta}}g(u^\epsilon(t),v^{\epsilon}_F(t),v^{\epsilon}_S(t)),\\
  \partial_t v^{\epsilon}_S(t) &= Bv_S^{\epsilon}(t)+\operatorname{pr}_{Y_S^{\zeta}}g(u^\epsilon(t),v^{\epsilon}_F(t),v^{\epsilon}_S(t)),\\
 u^{\epsilon}(0)&=u_0,\quad v^{\epsilon}_F(0)=\operatorname{pr}_{Y_F^{\zeta}}v_0,\quad v^{\epsilon}_S(0)=\operatorname{pr}_{Y_S^{\zeta}}v_0,
 \end{aligned}\quad(t\in[0,T])
\end{align}
with an abuse of notation: Actually, $f$ and $g$ only depend on two variables, but we use the convention $f(u^\epsilon(t),v^{\epsilon}_F(t),v^{\epsilon}_S(t)):=f(u^\epsilon(t),v^{\epsilon}_F(t)+v^{\epsilon}_S(t))$ as well as $g(u^\epsilon(t),v^{\epsilon}_F(t),v^{\epsilon}_S(t)):=g(u^\epsilon(t),v^{\epsilon}_F(t)+v^{\epsilon}_S(t))$.\\
We should point out that, as already mentioned at the beginning of Section~\ref{sec:genfs}, there are also certain situations in which the space of the slow variable does not admit such a splitting. The main example we have in mind is if $B$ is a parabolic operator such as the Laplacian $\Delta$ on the whole space $\R^n$. If it is considered on $L_p(\R^n)$, then there are no gaps in the spectrum and it will not be possible to find the constants $0\leq N_F^{\zeta}< N_S^{\zeta}$. In such a situation, we will not be able to construct slow manifolds. If $B$ is a parabolic operator on a bounded domain in dimension $n\geq2$, then it admits such a splitting, but the spectral gaps will usually not grow as $\zeta\to0$. In this case, \eqref{Eq:Gap_Condition} will usually not be satisfied. Nonetheless, we can still use the results of Section~\ref{sec:genfs} in both situations to justify that one may reduce the fast-slow system to the slow subsystem.

\subsection{Existence of Slow Manifolds}

Now we want to construct a family of slow manifolds $S_{\epsilon,\zeta}$ which are given as graphs of certain functions
\[
	h^{\epsilon,\zeta}\colon (Y_S^{\zeta}\cap Y_1)\to X_1\times (Y_F^{\zeta}\cap Y_1),
\]
over the slow part of the slow variable, i.e.  we have that
\[
	S_{\epsilon,\zeta}:=\{(h^{\epsilon,\zeta}(v_0),v_0):v_0\in Y_S^{\zeta}\cap Y_1 \}.
\]
 In the following, we write $h^{\epsilon,\zeta}_{X_1}$ for the first and $h^{\epsilon,\zeta}_{Y_F^{\zeta}}$ for the second component.
We use the Lyapunov-Perron method for the construction of slow manifolds, i.e. we construct fixed points of the operator
\begin{align*}
	\mathscr{L}_{v_0,\epsilon,\zeta}\colon &C_{\eta}\to C_{\eta},\\
	&\quad\begin{pmatrix}u\\v_F\\v_S \end{pmatrix}\mapsto \left[t\mapsto\begin{pmatrix} 
		\epsilon^{-1} \int_{-\infty}^t \txte^{\epsilon^{-1}(t-s)A} f(u(s),v_F(s),v_S(s))\,\txtd s \\
		\int_{-\infty}^t \txte^{(t-s)B} \operatorname{pr}_{Y_F^{\zeta}}g(u(s),v_F(s),v_S(s))\,\txtd s\\
		\txte^{tB}v_0+\int_{0}^t \txte^{(t-s)B} \operatorname{pr}_{Y_S^{\zeta}}g(u(s),v_F(s),v_S(s))\,\txtd s
	\end{pmatrix}\right],
\end{align*}
where $v_0\in Y_S^{\zeta}$ and $C_{\eta}:=C((-\infty,0],\txte^{\eta t}; X_1\times (Y_F^{\zeta}\cap Y_1)\times (Y_S^{\zeta}\cap Y_1))$ for 
$$\eta:=\zeta^{-1}\omega_A+\frac{N_S^{\zeta}+N_F^{\zeta}}{2}$$
is the space of all $(u,v_F,v_S)\in C((-\infty,0];X_1\times (Y_F^{\zeta}\cap Y_1)\times (Y_S^{\zeta}\cap Y_1))$ such that
\[
	\|(u,v_F,v_S)\|_{C_{\eta}}:=\sup_{t\leq0 } \txte^{-\eta t} \big(\|u(t)\|_{X_1}+\|v_F(t)\|_{Y_1}+\|v_S(t)\|_{Y_1}\big)<\infty.
\]
Then we obtain the function $h^{\epsilon,\zeta}$ which describes the family of slow manifolds $S_{\epsilon,\zeta}$  by
\[
	h^{\epsilon,\zeta}\colon (Y_S^{\zeta}\cap Y_1)\to X_1\times (Y_F^{\zeta}\cap Y_1),\,v_0\mapsto (u^{v_0}(0),v_F^{v_0}(0))^T,
\]
i.e. $h^{\epsilon,\zeta}$ gives the first two components of the fixed point $(u^{v_0},v_F^{v_0},v_S^{v_0})^T$ of $\mathscr{L}_{v_0,\epsilon,\zeta}$ evaluated at $t=0$.

\begin{proposition}\label{Prop:Slow_Manifold_Existence}
	Let $v_0\in Y_S^{\zeta}\cap Y_1$. Then $\mathscr{L}_{v_0,\epsilon,\zeta}$ has a unique fixed point in $C_{\eta}$.
\end{proposition}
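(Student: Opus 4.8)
The plan is to show that $\mathscr{L}_{v_0,\epsilon,\zeta}$ is a contraction on the Banach space $C_{\eta}$ and then invoke Banach's fixed point theorem; the whole content is a careful estimate of $\|\mathscr{L}_{v_0,\epsilon,\zeta}(u_1,v_{F,1},v_{S,1}) - \mathscr{L}_{v_0,\epsilon,\zeta}(u_2,v_{F,2},v_{S,2})\|_{C_{\eta}}$ in terms of $\|(u_1,v_{F,1},v_{S,1})-(u_2,v_{F,2},v_{S,2})\|_{C_{\eta}}$, with the contraction constant being exactly the left-hand side of the gap condition \eqref{Eq:Gap_Condition}. First I would check that $\mathscr{L}_{v_0,\epsilon,\zeta}$ maps $C_{\eta}$ into itself: this requires the three integrals to converge and to have the right exponential weight at $t\to-\infty$, which follows from the semigroup bounds in assumption (vi) of Section~\ref{Section:Bates_Difficult:Our_Approach} together with the choice $\eta = \zeta^{-1}\omega_A + \tfrac{1}{2}(N_S^\zeta + N_F^\zeta)$, which sits strictly between the decay rate $\zeta^{-1}\omega_A + N_F^\zeta$ available for the fast block and the growth rate $\zeta^{-1}\omega_A + N_S^\zeta$ of $\txte^{-tB_{Y_S^\zeta}}$. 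The term $\txte^{tB}v_0$ is harmless since $v_0\in Y_S^\zeta\cap Y_1$ and $\txte^{tB}=\txte^{tB_{Y_S^\zeta}}$ there, so for $t\le 0$ it is bounded by a multiple of $\txte^{(\zeta^{-1}\omega_A + N_S^\zeta)t}\|v_0\|_{Y_1}\le \txte^{\eta t}\|v_0\|_{Y_1}$, hence lies in $C_{\eta}$.

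For the contraction estimate I would treat the three components separately and then add. For the $u$-component, after multiplying by $\txte^{-\eta t}$ and using $\|f(u_1,v_{F,1},v_{S,1})-f(u_2,v_{F,2},v_{S,2})\|_{\gamma_X}\le L_f(\|u_1-u_2\|_{X_1}+\|v_{F,1}-v_{F,2}\|_{Y_1}+\|v_{S,1}-v_{S,2}\|_{Y_1})$ together with the holomorphic semigroup bound $\|\txte^{\epsilon^{-1}(t-s)A}\|_{\mathcal{B}(X_{\gamma_X},X_1)}\le C_A(\epsilon^{-1}(t-s))^{\gamma_X-1}\txte^{\epsilon^{-1}\omega_A(t-s)}$, the factor that appears is $L_f C_A \epsilon^{-1}\int_{-\infty}^t (\epsilon^{-1}(t-s))^{\gamma_X-1}\txte^{(\epsilon^{-1}\omega_A - \eta)(t-s)}\,\txtd s$; substituting and recognising a full Gamma integral gives $L_f C_A \Gamma(\gamma_X)/(\eta - \epsilon^{-1}\omega_A)^{\gamma_X}$, and since $\eta - \epsilon^{-1}\omega_A = \zeta^{-1}\omega_A - \epsilon^{-1}\omega_A + \tfrac12(N_S^\zeta+N_F^\zeta) = \epsilon^{-1}\big((\epsilon\zeta^{-1}-1)\omega_A + \tfrac{\epsilon}{2}(N_S^\zeta+N_F^\zeta)\big)$, this matches the first summand of \eqref{Eq:Gap_Condition} after the factor $2^{\gamma_X}$ is absorbed (it comes in because the $C_\eta$-norm bundles the three coordinates, so one loses at most a factor $3$, which with a bit of care, or by replacing $\Gamma(\gamma_X)$ estimates sharply, is the $2^{\gamma_X}$ written there — in fact one should double-check the precise bookkeeping here). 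For the $v_F$-component I would use the bound on $\txte^{tB}y_F$ from (vi) giving $\|\txte^{(t-s)B}\|_{\mathcal B(Y_{\delta_Y},Y_1)}\le C_B(t-s)^{\delta_Y-1}\txte^{(N_F^\zeta+\zeta^{-1}\omega_A)(t-s)}$ together with $\|g(\cdots)-g(\cdots)\|_{\delta_Y}\le L_g(\cdots)$, and the same Gamma-integral computation produces $L_g C_B\Gamma(\delta_Y)/(N_S^\zeta - N_F^\zeta)^{\delta_Y}$ (using $\eta - N_F^\zeta - \zeta^{-1}\omega_A = \tfrac12(N_S^\zeta - N_F^\zeta)$), the second summand. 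For the $v_S$-component, which is the $\int_0^t$ integral run backwards from $t\le 0$, i.e. $-\int_t^0$, I would use the \emph{backward} bound $\|\txte^{-(s-t)B_{Y_S^\zeta}}y_S\|_{Y_1}\le M_B\txte^{-(N_S^\zeta+\zeta^{-1}\omega_A)(s-t)}\|y_S\|_{Y_1}$ from (vi) and the $Y_S^\zeta$-specific Lipschitz bound from (iii), $\|\operatorname{pr}_{Y_S^\zeta}[g(\cdots)-g(\cdots)]\|_{Y_1}\le L_g\zeta^{\delta_Y-1}(\cdots)$; the resulting integral is $L_g M_B\zeta^{\delta_Y-1}\int_t^0 \txte^{-(N_S^\zeta+\zeta^{-1}\omega_A - \eta)(s-t)}\,\txtd s \le L_g M_B\zeta^{\delta_Y-1}/(N_S^\zeta - N_F^\zeta) \cdot 2$, producing the third summand (note $N_S^\zeta + \zeta^{-1}\omega_A - \eta = \tfrac12(N_S^\zeta - N_F^\zeta)$, which is where the factor $2$ comes from).

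Adding the three estimates, the Lipschitz constant of $\mathscr{L}_{v_0,\epsilon,\zeta}$ on $C_{\eta}$ is bounded by the left-hand side of \eqref{Eq:Gap_Condition}, which is $<1$ by assumption (vii); hence $\mathscr{L}_{v_0,\epsilon,\zeta}$ is a contraction and has a unique fixed point in $C_{\eta}$. The main obstacle, and the step that needs the most care, is the precise bookkeeping of the constants: making sure that the weight $\txte^{-\eta t}$ can be pulled through each convolution so that a clean Gamma integral (to which Lemma~\ref{Lemma:Incomplete_Gamma_1} or a direct substitution applies) emerges, that the cross-terms between the three coordinates are absorbed into the powers $2^{\gamma_X}$, $2^{\delta_Y}$, $2$ written in \eqref{Eq:Gap_Condition} rather than producing an extra factor $3$, and that for the non-holomorphic case ($\gamma_X=1$ or $\delta_Y=1$) the same estimates go through with the convention that the singular kernel is just the exponential. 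Everything else — well-definedness, self-mapping, measurability/continuity of the integrands — is routine given the standing assumptions and the mild-solution machinery already set up in Section~\ref{sec:genfs}.
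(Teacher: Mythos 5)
Your proposal is correct and is essentially the paper's own proof: you show that $\mathscr{L}_{v_0,\epsilon,\zeta}$ is a contraction on $C_{\eta}$ by estimating the three components separately against the weighted norm, summing, and identifying the contraction constant with the left-hand side of \eqref{Eq:Gap_Condition}, after which Banach's fixed point theorem gives the unique fixed point. The only bookkeeping to repair is the point you yourself flagged: the first convolution evaluates to $L_fC_A\Gamma(\gamma_X)/(\epsilon\eta-\omega_A)^{\gamma_X}$ (the factor $\epsilon^{-\gamma_X}$ must be kept), and the powers $2^{\gamma_X}$, $2^{\delta_Y}$, $2$ in \eqref{Eq:Gap_Condition} arise purely from substituting $\eta=\zeta^{-1}\omega_A+\tfrac{1}{2}(N_S^{\zeta}+N_F^{\zeta})$ into the denominators $\epsilon\eta-\omega_A$, $\eta-\zeta^{-1}\omega_A-N_F^{\zeta}$ and $N_S^{\zeta}+\zeta^{-1}\omega_A-\eta$, not from any coordinate-bundling loss --- no factor $3$ ever appears, since both the Lipschitz hypotheses and the $C_{\eta}$-norm are formulated with the sum of the three component norms.
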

\begin{proof}
	We show that $\mathscr{L}_{v_0,\epsilon,\zeta}$ is a contraction on $C_{\eta}$. So let $(u,v_F,v_S),(\tilde{u},\tilde{v}_F,\tilde{v}_S)\in C_{\eta}$. Since showing that $\mathscr{L}_{v_0,\epsilon,\zeta}$ maps $C_{\eta}$ into $C_{\eta}$ and showing that $\mathscr{L}_{v_0,\epsilon,\zeta}$ is a contraction on $C_{\eta}$ works in a similar way, we only show the latter. For the first component, we have that
	\begin{align*}
		&\quad\sup_{t\leq0} \txte^{-\eta t}\|\operatorname{pr}_{X_1}\big(\mathscr{L}_{v_0,\epsilon,\zeta}(u(t),v_F(t),v_S(t))^T-\mathscr{L}_{v_0,\epsilon,\zeta}(\tilde{u}(t),\tilde{v}_F(t),\tilde{v}_S(t))^T\big)\|_{X_1}\\
		&\leq L_fC_A\int_{-\infty}^t\frac{\txte^{(t-s)(\epsilon^{-1}\omega_A-\eta)}}{\epsilon^{\gamma_X}(t-s)^{1-\gamma_X}}\,\txtd s\|(u-\tilde{u},v_F-\tilde{v}_F,v_S-\tilde{v}_S) \|_{C_{\eta}}\\
		&= \frac{L_fC_A\Gamma(\gamma_X)}{(\epsilon \eta-\omega_A)^{\gamma_X}}\|(u-\tilde{u},v_F-\tilde{v}_F,v_S-\tilde{v}_S) \|_{C_{\eta}}\\
		&= \frac{2^{\gamma_X}L_fC_A\Gamma(\gamma_X)}{\big(2(\epsilon\zeta^{-1}-1)\omega_A+\epsilon(N_S^{\zeta}+N_F^{\zeta})\big)^{\gamma_X}}\|(u-\tilde{u},v_F-\tilde{v}_F,v_S-\tilde{v}_S) \|_{C_{\eta}}.
	\end{align*}
 For the second component, we have that
	\begin{align*}
		&\quad\sup_{t\leq0} \txte^{-\eta t}\|\operatorname{pr}_{Y_F^{\zeta}}\big(\mathscr{L}_{v_0,\epsilon,\zeta}(u(t),v_F(t),v_S(t))^T-\mathscr{L}_{v_0,\epsilon,\zeta}(\tilde{u}(t),\tilde{v}_F(t),\tilde{v}_S(t))^T\big)\|_{Y_1}\\
		&\leq L_gC_B\int_{-\infty}^t\frac{\txte^{(t-s)(\zeta^{-1}\omega_A+N_{F}^{\zeta}-\eta)}}{(t-s)^{1-\delta_Y}}\,\txtd s\|(u-\tilde{u},v_F-\tilde{v}_F,v_S-\tilde{v}_S) \|_{C_{\eta}}\\
		&= \frac{L_gC_B\Gamma(\delta_Y)}{(\eta-\zeta^{-1}\omega_A- N_{F}^{\zeta})^{\delta_Y}}\|(u-\tilde{u},v_F-\tilde{v}_F,v_S-\tilde{v}_S) \|_{C_{\eta}}\\
		&=\frac{2^{\delta_Y}L_gC_B\Gamma(\delta_Y)}{(N_S^{\zeta}-N_F^{\zeta})^{\delta_Y}}\|(u-\tilde{u},v_F-\tilde{v}_F,v_S-\tilde{v}_S) \|_{C_{\eta}}.
	\end{align*}
	Finally, the third component satisfies
	\begin{align*}
				&\quad\sup_{t\leq0} \txte^{-\eta t}\|\operatorname{pr}_{Y_S^{\zeta}}\big(\mathscr{L}_{v_0,\epsilon,\zeta}(u(t),v_F(t),v_S(t))^T-\mathscr{L}_{v_0,\epsilon,\zeta}(\tilde{u}(t),\tilde{v}_F(t),\tilde{v}_S(t))^T\big)\|_{Y_1}\\
				&\leq L_gC_B\int_{0}^t\zeta^{\delta_Y-1}\txte^{(t-s)(\zeta^{-1}\omega_A+N_{S}^{\epsilon}-\eta)}\,\txtd s\|(u-\tilde{u},v_F-\tilde{v}_F,v_S-\tilde{v}_S) \|_{C_{\eta}}\\
		&\leq \frac{\zeta^{\delta_Y-1}L_gM_B\Gamma(\delta_Y)}{\zeta^{-1}\omega_A+N_{S}^{\zeta}-\eta}\|(u-\tilde{u},v_F-\tilde{v}_F,v_S-\tilde{v}_S) \|_{C_{\eta}}\\
		&=\frac{2\zeta^{\delta_Y-1}L_gM_B\Gamma(\delta_Y)}{N_S^{\zeta}-N_F^{\zeta}}\|(u-\tilde{u},v_F-\tilde{v}_F,v_S-\tilde{v}_S) \|_{C_{\eta}}.
	\end{align*}
	Thus, if \eqref{Eq:Gap_Condition} is satisfied, then $\mathscr{L}_{v_0,\epsilon,\zeta}$ is a contraction. Hence, it has a unique fixed point in this case.
\end{proof}

\begin{proposition}
	Consider the situation of Proposition~\ref{Prop:Slow_Manifold_Existence} and let $(u^{v_0},v_F^{v_0},v_S^{v_0})^T$ be the unique fixed point of $\mathscr{L}_{v_0,\epsilon,\zeta}$. The mapping 
	\[
		h^{\epsilon,\zeta}\colon (Y_S^{\zeta}\cap Y_1)\to X_1\times (Y_F^{\epsilon}\cap Y_1),\,v_0\mapsto (u^{v_0}(0),v_F^{v_0}(0))^T
	\]
	is Lipschitz continuous.
\end{proposition}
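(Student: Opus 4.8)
The plan is to run the standard Lyapunov--Perron stability argument: the operator $\mathscr{L}_{v_0,\epsilon,\zeta}$ depends on the parameter $v_0\in Y_S^\zeta\cap Y_1$ only through the single additive term $\txte^{tB}v_0$ in its third component, and it is a uniform contraction on $C_\eta$. Write $\kappa\in(0,1)$ for the contraction constant, namely the left-hand side of~\eqref{Eq:Gap_Condition}, which by the computations in the proof of Proposition~\ref{Prop:Slow_Manifold_Existence} satisfies $\|\mathscr{L}_{v_0,\epsilon,\zeta}(w)-\mathscr{L}_{v_0,\epsilon,\zeta}(\tilde w)\|_{C_\eta}\leq\kappa\|w-\tilde w\|_{C_\eta}$ for all $w,\tilde w\in C_\eta$, uniformly in $v_0$.

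Given $v_0,\tilde v_0\in Y_S^\zeta\cap Y_1$, let $w:=(u^{v_0},v_F^{v_0},v_S^{v_0})^T$ and $\tilde w:=(u^{\tilde v_0},v_F^{\tilde v_0},v_S^{\tilde v_0})^T$ be the corresponding fixed points. First I would write
\[
 w-\tilde w=\big(\mathscr{L}_{v_0,\epsilon,\zeta}(w)-\mathscr{L}_{v_0,\epsilon,\zeta}(\tilde w)\big)+\big(\mathscr{L}_{v_0,\epsilon,\zeta}(\tilde w)-\mathscr{L}_{\tilde v_0,\epsilon,\zeta}(\tilde w)\big),
\]
bound the first bracket by $\kappa\|w-\tilde w\|_{C_\eta}$, and observe that the second bracket equals $(0,0,\txte^{tB}(v_0-\tilde v_0))^T$, since the terms $f$, $\operatorname{pr}_{Y_F^\zeta}g$ and $\operatorname{pr}_{Y_S^\zeta}g$ evaluated along $\tilde w$ carry no dependence on the parameter. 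Here $\txte^{tB}$ on $Y_S^\zeta$ is the group $\txte^{tB_{Y_S^\zeta}}$ from assumption~(iv), and from the second estimate in assumption~(vi) rewritten for $t\leq 0$ via the group property one gets $\|\txte^{tB}(v_0-\tilde v_0)\|_{Y_1}\leq M_B\txte^{(N_S^\zeta+\zeta^{-1}\omega_A)t}\|v_0-\tilde v_0\|_{Y_1}$. Since $\eta=\zeta^{-1}\omega_A+\tfrac{N_S^\zeta+N_F^\zeta}{2}$, the weighted exponent $N_S^\zeta+\zeta^{-1}\omega_A-\eta=\tfrac{N_S^\zeta-N_F^\zeta}{2}>0$ is positive, hence $\sup_{t\leq 0}\txte^{-\eta t}\|\txte^{tB}(v_0-\tilde v_0)\|_{Y_1}\leq M_B\|v_0-\tilde v_0\|_{Y_1}$, so the second bracket has $C_\eta$-norm at most $M_B\|v_0-\tilde v_0\|_{Y_1}$.

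Combining the two estimates yields $\|w-\tilde w\|_{C_\eta}\leq\kappa\|w-\tilde w\|_{C_\eta}+M_B\|v_0-\tilde v_0\|_{Y_1}$, hence $\|w-\tilde w\|_{C_\eta}\leq\tfrac{M_B}{1-\kappa}\|v_0-\tilde v_0\|_{Y_1}$. Evaluating at $t=0$, where the weight $\txte^{-\eta t}$ equals $1$, gives
\[
 \|h^{\epsilon,\zeta}(v_0)-h^{\epsilon,\zeta}(\tilde v_0)\|_{X_1\times(Y_F^\zeta\cap Y_1)}=\|u^{v_0}(0)-u^{\tilde v_0}(0)\|_{X_1}+\|v_F^{v_0}(0)-v_F^{\tilde v_0}(0)\|_{Y_1}\leq\|w-\tilde w\|_{C_\eta},
\]
so $h^{\epsilon,\zeta}$ is Lipschitz with constant $\tfrac{M_B}{1-\kappa}$.

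The argument is essentially routine; the only points needing a little care are verifying that $\mathscr{L}_{v_0,\epsilon,\zeta}(\tilde w)$ actually lies in $C_\eta$ so that the decomposition above is legitimate --- this uses the same group estimate and is precisely the ``$\mathscr{L}_{v_0,\epsilon,\zeta}$ maps $C_\eta$ into $C_\eta$'' step that was only sketched in Proposition~\ref{Prop:Slow_Manifold_Existence} --- and correctly converting assumption~(vi), stated for $\txte^{-tB}$ with $t\geq 0$, into the bound for $\txte^{tB}$ with $t\leq 0$ on $Y_S^\zeta$. I do not anticipate a genuine obstacle here.
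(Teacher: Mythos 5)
Your proposal is correct and takes essentially the same route as the paper: the paper's proof likewise reuses the contraction estimates from Proposition~\ref{Prop:Slow_Manifold_Existence} together with the bound $\sup_{t\leq0}\txte^{-\eta t}\|\txte^{tB}(v_0-\tilde v_0)\|_{Y_1}\leq M_B\|v_0-\tilde v_0\|_{Y_1}$, arrives at the same inequality $\|w-\tilde w\|_{C_\eta}\leq \kappa\|w-\tilde w\|_{C_\eta}+M_B\|v_0-\tilde v_0\|_{Y_1}$ (written out componentwise rather than via your abstract uniform-contraction/parameter decomposition), and concludes by dividing by $1-\kappa$ and evaluating at $t=0$.
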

\begin{proof}
Let $v_0,\tilde{v}_0\in Y_S^{\zeta}\cap Y_1$ and let $(u,v_F,v_S)\in C_{\eta}$ and $(\tilde{u},\tilde{v}_F,\tilde{v}_S)\in C_{\eta}$ be the fixed points of $\mathscr{L}_{v_0,\epsilon,\zeta}$ and $\mathscr{L}_{\tilde{v}_0,\epsilon,\zeta}$, respectively. As in the proof of Proposition~\ref{Prop:Slow_Manifold_Existence} it follows that
	\begin{align*}
		\sup_{t\leq0} \txte^{-\eta t}\|u(t)-\tilde{u}(t)\|_{X_1}&< \frac{2^{\gamma_X}L_fC_A\Gamma(\gamma_X)\|(u-\tilde{u},v_F-\tilde{v}_F,v_S-\tilde{v}_S) \|_{C_{\eta}}}{\big(2(\epsilon\zeta^{-1}-1)\omega_A+\epsilon(N_S^{\zeta}+N_F^{\zeta})\big)^{\gamma_X}},\\
		\sup_{t\leq0} \txte^{-\eta t}\|v_F(t)-\tilde{v}_F(t)\|_{Y_1}&\leq \frac{2^{\delta_Y}L_gC_B\Gamma(\delta_Y)\|(u-\tilde{u},v_F-\tilde{v}_F,v_S-\tilde{v}_S) \|_{C_{\eta}}}{(N_S^{\zeta}-N_F^{\zeta})^{\delta_Y}},\\
		\sup_{t\leq0} \txte^{-\eta t}\|v_S(t)-\tilde{v}_S(t)\|_{X_1}&\leq M_B\|v_0-\tilde{v}_0\|_{Y_1}\\
		&\quad+\frac{2\zeta^{\delta_Y-1}L_gM_B\Gamma(\delta_Y)\|(u-\tilde{u},v_F-\tilde{v}_F,v_S-\tilde{v}_S) \|_{C_{\eta}}}{N_S^{\zeta}-N_F^{\zeta}}.
	\end{align*}
	Thus, if
	\[
		L:=\tfrac{2^{\gamma_X}L_fC_A\Gamma(\gamma_X)}{\big(2(\epsilon\zeta^{-1}-1)\omega_A+\epsilon(N_S^{\zeta}+N_F^{\zeta})\big)^{\gamma_X}}+\tfrac{2^{\delta_Y}L_gC_B\Gamma(\delta_Y)}{(N_S^{\zeta}-N_F^{\zeta})^{\delta_Y}}+\tfrac{2\zeta^{\delta_Y-1}L_gM_B\Gamma(\delta_Y)}{N_S^{\zeta}-N_F^{\zeta}}<1
	\]
	then we may sum up the three estimates, substract $L\|(u-\tilde{u},v_F-\tilde{v}_F,v_S-\tilde{v}_S) \|_{C_{\eta}}$ and divide by $1-L$. This gives the Lipschitz continuity. 
\end{proof}

\subsection{Distance to the Critical Manifold}\label{Sec:Slow_Manifolds:Distance_Critical_Manifold}
\begin{proposition}\label{Prop:Distance_Critical_Manifold}
	Consider the situation of Proposition~\ref{Prop:Slow_Manifold_Existence} and choose $c\in(0,1)$. There is a constant $C>0$ such that for all $\epsilon,\zeta>0$ small enough which satisfy $\epsilon<c\zeta\frac{(L_fC_A\Gamma(\gamma_X))^{1/\gamma_X}+w_A}{w_A}$ and all $v_0\in Y^{\zeta}_S$ it holds that
	\[
		\left\| \begin{pmatrix} h^{\epsilon,\zeta}_{X_1}(v_0)-h^0(v_0) \\
		 h^{\epsilon,\zeta}_{Y^{\zeta}_F}(v_0)\end{pmatrix} \right\|_{X_1\times Y_1}\leq C\left(\epsilon +\frac{1}{(N_S^{\zeta}-N_F^{\zeta})^{\delta_y}}\right)\|v_0\|_{Y_1}
	\]
\end{proposition}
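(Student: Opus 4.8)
The plan is to compare the unique fixed point $(u^{v_0},v_F^{v_0},v_S^{v_0})$ of $\mathscr{L}_{v_0,\epsilon,\zeta}$ with the ``lifted reduced profile'' $s\mapsto(h^0(v_S^{v_0}(s)),0,v_S^{v_0}(s))$, which sits on the critical manifold and uses the \emph{same} slow part $v_S^{v_0}$ in both slots. Two preliminary facts drive everything. First, for every constant $w\in X$ one has $\epsilon^{-1}\int_{-\infty}^{0}\txte^{-\epsilon^{-1}sA}w\,\txtd s=-A^{-1}w$ (substitute $r=-\epsilon^{-1}s$ and use exponential stability of $(\txte^{tA})_{t\ge0}$), hence $h^0(y)=-A^{-1}f(h^0(y),y)=\epsilon^{-1}\int_{-\infty}^{0}\txte^{-\epsilon^{-1}sA}f(h^0(y),y)\,\txtd s$. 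Second, since $f(0,0)=g(0,0)=0$, the triple $(0,0,0)$ is the fixed point for $v_0=0$, and the contraction estimates from the proof of Proposition~\ref{Prop:Slow_Manifold_Existence} (together with the extra $M_B\|v_0\|_{Y_1}$ term from the Lipschitz proposition) upgrade to $\|(u^{v_0},v_F^{v_0},v_S^{v_0})\|_{C_{\eta}}\le\tfrac{M_B}{1-L}\|v_0\|_{Y_1}$; in particular $\|v_S^{v_0}(s)\|_{Y_1}\le C\txte^{\eta s}\|v_0\|_{Y_1}$ and, using the Lipschitz bounds for $h^0$ and $g$ and $g(0,0)=0$, also $\|g(h^0(v_S^{v_0}(s)),v_S^{v_0}(s))\|_{Y_{\delta_Y}}\le C\txte^{\eta s}\|v_0\|_{Y_1}$ for $s\le0$. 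Since $v_S^{v_0}(0)=v_0$, the quantity to be bounded is $\phi(0)$ with $\phi(s):=\|u^{v_0}(s)-h^0(v_S^{v_0}(s))\|_{X_1}+\|v_F^{v_0}(s)\|_{Y_1}$.

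For the $Y_F^{\zeta}$–component I evaluate the second line of $\mathscr{L}_{v_0,\epsilon,\zeta}$ at $t=0$ and split
\[
 v_F^{v_0}(0)=\int_{-\infty}^{0}\txte^{-sB}\operatorname{pr}_{Y_F^{\zeta}}\!\big[g(u^{v_0}(s),v^{v_0}(s))-g(h^0(v_S^{v_0}(s)),v_S^{v_0}(s))\big]\txtd s+\int_{-\infty}^{0}\txte^{-sB}\operatorname{pr}_{Y_F^{\zeta}}g(h^0(v_S^{v_0}(s)),v_S^{v_0}(s))\,\txtd s,
\]
where $v^{v_0}=v_F^{v_0}+v_S^{v_0}$. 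In the second integral I combine the kernel bound $\|\txte^{-sB}y_F\|_{Y_1}\le C_B(-s)^{\delta_Y-1}\txte^{(N_F^{\zeta}+\zeta^{-1}\omega_A)(-s)}\|y_F\|_{Y_{\delta_Y}}$ of assumption~(vi) with the bound on $\|g(h^0(v_S^{v_0}(s)),v_S^{v_0}(s))\|_{Y_{\delta_Y}}$ above; since $\eta-N_F^{\zeta}-\zeta^{-1}\omega_A=\tfrac12(N_S^{\zeta}-N_F^{\zeta})$, the $s$–integral reduces to $\int_0^{\infty}r^{\delta_Y-1}\txte^{-\frac{N_S^{\zeta}-N_F^{\zeta}}{2}r}\txtd r=\Gamma(\delta_Y)\big(\tfrac{2}{N_S^{\zeta}-N_F^{\zeta}}\big)^{\delta_Y}$, producing the $(N_S^{\zeta}-N_F^{\zeta})^{-\delta_Y}\|v_0\|_{Y_1}$ contribution. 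The first integral is bounded, via the Lipschitz estimate for $g$ and Lemma~\ref{Lemma:Incomplete_Gamma_1}, by a multiple of $\sup_{s\le0}\txte^{-\eta s}\phi(s)$ with coefficient $<1$ by \eqref{Eq:Gap_Condition}, and is carried into the absorption step below.

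For the $X_1$–component I use the first preliminary fact with $w=f(h^0(v_S^{v_0}(s)),v_S^{v_0}(s))$ and $v_S^{v_0}(0)=v_0$ to write
\[
 u^{v_0}(0)-h^0(v_0)=\epsilon^{-1}\!\int_{-\infty}^{0}\!\txte^{-\epsilon^{-1}sA}\big[f(u^{v_0}(s),v^{v_0}(s))-f(h^0(v_S^{v_0}(s)),v_S^{v_0}(s))\big]\txtd s+\epsilon^{-1}\!\int_{-\infty}^{0}\!\txte^{-\epsilon^{-1}sA}\Phi(s)\,\txtd s,
\]
with $\Phi(s):=f(h^0(v_S^{v_0}(s)),v_S^{v_0}(s))-f(h^0(v_0),v_0)$ and $\Phi(0)=0$. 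The first integral is controlled by the Lipschitz bound for $f$ and Lemma~\ref{Lemma:Incomplete_Gamma_1}, again by a multiple of $\sup_{s\le0}\txte^{-\eta s}\phi(s)$ with coefficient $\tfrac{2^{\gamma_X}L_fC_A\Gamma(\gamma_X)}{(2(\epsilon\zeta^{-1}-1)\omega_A+\epsilon(N_S^{\zeta}+N_F^{\zeta}))^{\gamma_X}}$, which the hypothesis $\epsilon<c\zeta\tfrac{(L_fC_A\Gamma(\gamma_X))^{1/\gamma_X}+\omega_A}{\omega_A}$ keeps strictly below $1$. The second integral is the $\mathcal{O}(\epsilon)$ term: writing $\txte^{-\epsilon^{-1}sA}=-\epsilon A^{-1}\partial_s\txte^{-\epsilon^{-1}sA}$ and integrating by parts (boundary terms vanish, at $s=0$ because $\Phi(0)=0$ and at $s\to-\infty$ because for $\epsilon$ small the decay $\txte^{-\epsilon^{-1}\omega_A s}$ of the semigroup beats the $\txte^{\eta s}$–growth of $\Phi$) gives $A^{-1}\int_{-\infty}^{0}\txte^{-\epsilon^{-1}sA}\partial_s[f(h^0(v_S^{v_0}(s)),v_S^{v_0}(s))]\,\txtd s$; then $A^{-1}\colon X_{\delta_X-1}\to X_{\delta_X}$, the holomorphic smoothing bound $\|\txte^{tA}\|_{\mathcal{B}(X_{\delta_X},X_1)}\le C_At^{\delta_X-1}\txte^{\omega_A t}$, and the scaling $\int_0^{\infty}(\epsilon^{-1}r)^{\delta_X-1}\txte^{-\epsilon^{-1}|\omega_A|r}\txtd r=\epsilon^{1-\delta_X}(\epsilon/|\omega_A|)^{\delta_X}\Gamma(\delta_X)$ yield a factor $\epsilon$, times $\|\partial_s[f(h^0(v_S^{v_0}),v_S^{v_0})]\|_{X_{\delta_X-1}}\le C\|v_0\|_{Y_1}$ coming from the $C^1$–in–time Lipschitz hypothesis on $f$, the regularity $h^0(v_S^{v_0})\in C^1$ of Remark~\ref{Rem:Remarks_on_Flows}, and the bound on $\partial_t v_S^{v_0}=Bv_S^{v_0}+\operatorname{pr}_{Y_S^{\zeta}}g(\dots)$. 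Collecting the two ``first integrals'' gives $\sup_{s\le0}\txte^{-\eta s}\phi(s)\le\kappa\sup_{s\le0}\txte^{-\eta s}\phi(s)+C(\epsilon+(N_S^{\zeta}-N_F^{\zeta})^{-\delta_Y})\|v_0\|_{Y_1}$ with $\kappa<1$; absorbing and evaluating at $s=0$ gives the claim.

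The main obstacle is the bookkeeping on the infinite interval $(-\infty,0]$: one must check that every integrand growing like $\txte^{\eta s}$ as $s\to-\infty$ (the size forced by membership in $C_{\eta}$) is strictly dominated by the relevant semigroup kernel — $\txte^{\epsilon^{-1}\omega_A(-s)}$ for the fast block, $\txte^{(N_F^{\zeta}+\zeta^{-1}\omega_A)(-s)}$ for $Y_F^{\zeta}$, and the group bound $\txte^{-(N_S^{\zeta}+\zeta^{-1}\omega_A)(-s)}$ for $Y_S^{\zeta}$ — also inside the integration–by–parts boundary term, and that the $C^1$–in–time control of $v_S^{v_0}$, which a priori only lives in $Y$ (not $Y_1$), still suffices to invoke the $C^1([0,t];X_{\delta_X-1})$–Lipschitz hypothesis on $f$. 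Tracking the resulting inequalities between $\eta,\omega_A,\zeta,\epsilon$ and $N_S^{\zeta},N_F^{\zeta}$ is precisely where the explicit smallness condition on $\epsilon/\zeta$ and the gap condition \eqref{Eq:Gap_Condition} are used.
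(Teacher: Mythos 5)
Your argument is correct, and it reaches the estimate by a route that differs from the paper's in its organisation. The paper proceeds in two decoupled steps: it first bounds $\|h^{\epsilon,\zeta}_{Y_F^{\zeta}}(\bar v_S(t))\|_{Y_1}$ directly from the fixed-point formula and the Lipschitz ($C_\eta$) bound, see \eqref{Eq:Slow_Manifold_Estimate_Second_Component}, and then treats $\|h^{\epsilon,\zeta}_{X_1}(\bar v_S(t))-h^0(\bar v_S(t))\|_{X_1}$ by introducing an intermediate time $t_0$, integrating by parts on $[t_0,t]$ (which produces a boundary term $\txte^{\epsilon^{-1}(t-t_0)A}A^{-1}f(h^0(\bar v_S(t_0)),0,\bar v_S(t_0))$), applying the singular Gronwall inequality of Lemma~\ref{Lemma:Gronwall_Specific}, and finally letting $t_0\to-\infty$; the hypothesis $\epsilon<c\zeta\frac{(L_fC_A\Gamma(\gamma_X))^{1/\gamma_X}+\omega_A}{\omega_A}$ is used there to guarantee $\eta>c\epsilon^{-1}\omega_f$, so that the $\txte^{\eta t_0+\epsilon^{-1}\omega_f(t-t_0)}$ terms vanish in the limit and the factor $\tfrac{\eta-\epsilon^{-1}\omega_A}{\eta-\epsilon^{-1}\omega_f}$ stays bounded. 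You instead couple both components into the single weighted quantity $\sup_{s\le0}\txte^{-\eta s}\phi(s)$ and close by absorption, the contraction constant being exactly the first two summands of \eqref{Eq:Gap_Condition}; the representation $h^0(v_0)=\epsilon^{-1}\int_{-\infty}^0\txte^{-\epsilon^{-1}sA}f(h^0(v_0),v_0)\,\txtd s$ lets you avoid the cutoff time $t_0$ altogether, since the "boundary term'' is spread over the whole half-line and the remaining boundary contribution vanishes because $\Phi(0)=0$ and $\epsilon\eta-\omega_A>0$. The essential mechanisms are the same as in the paper: the $\mathcal{O}(\epsilon)$ term comes from the identical integration-by-parts trick combined with $A^{-1}\colon X_{\delta_X-1}\to X_{\delta_X}$ and the $t^{\delta_X-1}$ smoothing bound, and the $(N_S^{\zeta}-N_F^{\zeta})^{-\delta_Y}$ term from the $Y_F^{\zeta}$ kernel bound with $\eta-\zeta^{-1}\omega_A-N_F^{\zeta}=\tfrac12(N_S^{\zeta}-N_F^{\zeta})$. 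Your approach is somewhat cleaner (no Gronwall lemma, no limit $t_0\to-\infty$), at the price that the hypothesis on $\epsilon/\zeta$ enters only implicitly, namely to keep $\epsilon\zeta^{-1}$ away from $1$ so that $\epsilon\eta-\omega_A\gtrsim|\omega_A|$ uniformly; the paper's version makes the role of that hypothesis more explicit. One small imprecision: the bound $\|\partial_s[f(h^0(v_S^{v_0}),v_S^{v_0})]\|_{X_{\delta_X-1}}\le C\|v_0\|_{Y_1}$ cannot hold uniformly on $(-\infty,0]$ — the correct statement is the weighted bound $\le C\txte^{\eta s}\|v_0\|_{Y_1}$ — so the scaling integral should carry the kernel $\txte^{(\epsilon^{-1}\omega_A-\eta)r}$ rather than $\txte^{-\epsilon^{-1}|\omega_A|r}$, replacing $|\omega_A|^{\delta_X}$ by $(\epsilon\eta-\omega_A)^{\delta_X}$ in the denominator; this is harmless under your standing assumption $\epsilon\zeta^{-1}<c<1$, and you flag the issue yourself in the closing paragraph, but it should be written with the weight in the final version.
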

\begin{proof}
	Let $(\bar{u},\bar{v}_F,\bar{v}_S)\in C_{\eta}$ be the unique fixed point of $\mathscr{L}_{v_0,\epsilon,\zeta}$, i.e. $(\bar{u},\bar{v}_F,\bar{v}_S)=(h^{\epsilon,\zeta}_{X_1}(\bar{v}_S),h^{\epsilon,\zeta}_{Y^{\zeta}_F}(\bar{v}_S),\bar{v}_S)$. Since $(\bar{u},\bar{v}_F,\bar{v}_S)$ solves \eqref{Eq:Nonlinear_Fast_Slow_System:Splitting} on $(-\infty,0]$ we have that $\bar{v}_S\in C^1((-\infty,0],\txte^{\eta t}; Y)$ and
	\[
		\sup_{t\leq0} \txte^{-\eta t}\big(\|\bar{v}_S(t)\|_{Y_1}+\|\partial_t\bar{v}_S(t)\|_Y\big)\leq L(\|A\|_{\mathcal{B}(X_1,X)}+L_f)\|v_0\|_{Y_1}.
	\]
	Moreover, we have that
	\begin{align}\begin{aligned}\label{Eq:Slow_Manifold_Estimate_Second_Component}
		\|h^{\epsilon,\zeta}_{Y^{\zeta}_F}(\bar{v}_S(t))\|_{Y_1}&=\left\|\int_{-\infty}^t\txte^{(t-s)B}\operatorname{pr}_{Y_{F}^{\zeta}}g(h^{\epsilon,\zeta}_{X_1}(\bar{v}_S(s)),h^{\epsilon,\zeta}_{Y^{\zeta}_F}(\bar{v}_S(s)),\bar{v}_S(s))\,\txtd s \right\|_{Y_1}\\
		&\leq L_gC_B\txte^{\eta t} \|(h^{\epsilon,\zeta}_{X_1}(\bar{v}_S),h^{\epsilon,\zeta}_{Y^{\zeta}_F}(\bar{v}_S),\bar{v}_S)\|_{C_\eta}\int_{-\infty}^t \frac{\txte^{(t-s)(\zeta^{-1}\omega_A+N_{F}^{\zeta}-\eta)}}{(t-s)^{1-\delta_Y}}\,\txtd s \\
		&\leq \frac{L L_g C_B\Gamma(\delta_Y)\txte^{\eta t}}{(\eta-\zeta^{-1}\omega_A-N_f^{\zeta})^{\delta_Y}} \|v_0 \|_{Y_1}.
	\end{aligned}\end{align}
	Furthermore, integration by parts shows that for $t_0\leq t\leq0$ it holds that
	\begin{align*}
		 h^{\epsilon,\zeta}_{X_1}(\bar{v}_S(t))-h^0(\bar{v}_S(t))&=\epsilon^{-1}\int_{-\infty}^{t}\txte^{\epsilon^{-1}(t-s)A}f(h^{\epsilon,\zeta}_{X_1}(\bar{v}_S(s)),h^{\epsilon,\zeta}_{Y^{\zeta}_F}(\bar{v}_S(s)),\bar{v}_S(s))\,\txtd s\\
		 &\qquad+A^{-1}f(h^0(\bar{v}_S(t)),0,\bar{v}_S(t))\\
		 &=\epsilon^{-1}\int_{-\infty}^{t_0}\txte^{\epsilon^{-1}(t-s)A}f(h^{\epsilon,\zeta}_{X_1}(\bar{v}_S(s)),h^{\epsilon,\zeta}_{Y^{\zeta}_F}(\bar{v}_S(s)),\bar{v}_S(s))\,\txtd s\\
		 &\quad+\txte^{\epsilon^{-1}(t-t_0)A}A^{-1}f(h^0(\bar{v}_S(t_0)),0,\bar{v}_S(t_0))\\
		 &\quad+\int_{t_0}^t\txte^{\epsilon^{-1}(t-s)A}A^{-1}\partial_sf(h^0(\bar{v}_S(s)),0,\bar{v}_S(s))\,\txtd s\\
		 &\quad+\epsilon^{-1}\int_{t_0}^t\txte^{\epsilon^{-1}(t-s)A}\big[f(h^{\epsilon,\zeta}_{X_1}(\bar{v}_S(s)),h^{\epsilon,\zeta}_{Y^{\zeta}_F}(\bar{v}_S(s)),\bar{v}_S(s))\\
		 &\qquad\qquad\qquad-f(h^0(\bar{v}_S(s)),0,\bar{v}_S(s))\big]\,\txtd s.
	\end{align*}
	Therefore, we obtain
	{\allowdisplaybreaks{
	\begin{align*}
		&\quad\| h^{\epsilon,\zeta}_{X_1}(\bar{v}_S(t))-h^0(\bar{v}_S(t)) \|_{X_1}\\
		\leq& L_fC_A \txte^{\epsilon^{-1}\omega_A(t-t_0)+\eta t_0} \|(h^{\epsilon,\zeta}_{X_1}(\bar{v}_S),h^{\epsilon,\zeta}_{Y^{\zeta}_F}(\bar{v}_S),\bar{v}_S)\|_{C_\eta} \int_{-\infty}^{t_0} \frac{\txte^{(\epsilon^{-1}\omega_A-\eta)(t_0-s)}}{\epsilon^{\gamma_X}(t-s)^{1-\gamma_X}}\,\txtd s\\
		&+L_fM_A\txte^{\epsilon^{-1}\omega_A(t-t_0)+\eta t_0}\|A^{-1}\|_{\mathcal{B}(X_{\gamma_X},X_1)}\| (h^0(\bar{v}_S),0,\bar{v}_S)\|_{C_{\eta}}\\
		&+ \epsilon \txte^{\eta t}L_f C_A \|A^{-1}\|_{\mathcal{B}(X_{\delta_X-1},X_{\delta_X})}\int_{t_0}^t\frac{\txte^{(\epsilon^{-1}\omega_A-\eta)(t-s)}}{\epsilon^{\delta_X}(t-s)^{1-\delta_X}}\,\txtd s\, \\
		&\qquad\qquad\cdot\,\sup_{s\leq0}\big(\txte^{-\eta s}(\|\bar{v}_S(s)\|_Y+\|\partial_s\bar{v}_S(s)\|_Y)\big)\\
		&+\frac{L L_g C_B\Gamma(\delta_Y)}{(\eta-\zeta^{-1}\omega_A-N_f^{\zeta})^{\delta_Y}}\txte^{\eta t}\int_{t_0}^t\frac{\txte^{(\epsilon^{-1}\omega_A-\eta)(t-s)}}{\epsilon^{\gamma_X}(t-s)^{1-\gamma_X}}\,\txtd s \|v_0\|_{Y_1}\\
		& + L_fC_A\int_{t_0}^t\frac{\txte^{\epsilon^{-1}\omega_A(t-s)}}{\epsilon^{\gamma_X}(t-s)^{1-\gamma_X}}\| h^{\epsilon,\zeta}_{X_1}(\bar{v}_S(s))-h^0(\bar{v}_S(s)) \|_{X_1}\,\txtd s\\
		\leq& C\|v_0\|_{Y_1}\bigg(\frac{1}{(\epsilon\eta-\omega_A)^{\gamma_X}}+1\bigg)\txte^{(\eta-\epsilon^{-1}\omega_A)t_0}\txte^{\epsilon^{-1}\omega_At}\\
		&+C\|v_0\|_{Y_1}\bigg(\frac{\epsilon}{(\epsilon\eta-\omega_A)^{\delta_X}}+\frac{1}{(\epsilon\eta-\omega_A)^{\gamma_X}(\eta-\zeta^{-1}\omega_A-N_f^{\zeta})^{\delta_y}}\bigg)\txte^{\eta t}\\
		& + L_fC_A\int_{t_0}^t\frac{\txte^{\epsilon^{-1}\omega_A(t-s)}}{\epsilon^{\gamma_X}(t-s)^{1-\gamma_X}}\| h^{\epsilon,\zeta}_{X_1}(\bar{v}_S(s))-h^0(\bar{v}_S(s)) \|_{X_1}\,\txtd s
	\end{align*}}}
	Now, Lemma~\ref{Lemma:Gronwall_Specific} applied to
	\[
		v(r):=\| h^{\epsilon,\zeta}_{X_1}(\bar{v}_S(r+t_0))-h^0(\bar{v}_S(r+t_0)) \|_{X_1}\quad(r\in[0,t-t_0])
	\]
	yields that
	\begin{align*}
	&\quad\frac{1}{C\|v_0\|_{Y_1}}\| h^{\epsilon,\zeta}_{X_1}(\bar{v}_S(t))-h^0(\bar{v}_S(t)) \|_{X_1}\\
	&\leq \left(\big(\tfrac{1}{(\epsilon\eta-\omega_A)^{\gamma_X}}+1\big)+\tfrac{\epsilon}{(\epsilon\eta-\omega_A)^{\delta_X}} +\tfrac{1}{(\epsilon\eta-\omega_A)^{\gamma_X}(\eta-\zeta^{-1}\omega_A-N_f^{\zeta})^{\delta_y}}\right)\txte^{\eta t_0+\epsilon^{-1}\omega_f(t-t_0)}\\
	&+\left(\tfrac{\epsilon}{(\epsilon\eta-\omega_A)^{\delta_X}} +\tfrac{1}{(\epsilon\eta-\omega_A)^{\gamma_X}(\eta-\zeta^{-1}\omega_A-N_f^{\zeta})^{\delta_y}}\right)\int_{t_0}^{t} (\eta-\epsilon^{-1}\omega_A)\txte^{\eta s}\txte^{\epsilon^{-1}\omega_f(t-s)}\,\txtd s\\
	&=\left(\big(\tfrac{1}{(\epsilon\eta-\omega_A)^{\gamma_X}}+1\big)+\tfrac{\epsilon}{(\epsilon\eta-\omega_A)^{\delta_X}} +\tfrac{1}{(\epsilon\eta-\omega_A)^{\gamma_X}(\eta-\zeta^{-1}\omega_A-N_f^{\zeta})^{\delta_y}}\right)\txte^{\eta t_0+\epsilon^{-1}\omega_f(t-t_0)}\\
	&+\left(\tfrac{\epsilon}{(\epsilon\eta-\omega_A)^{\delta_X}} +\tfrac{1}{(\epsilon\eta-\omega_A)^{\gamma_X}(\eta-\zeta^{-1}\omega_A-N_f^{\zeta})^{\delta_y}}\right)\tfrac{\eta-\epsilon^{-1}\omega_A}{\eta-\epsilon^{-1}\omega_f}(\txte^{t\eta}-\txte^{\eta t_0+\epsilon^{-1}\omega_f(t-t_0)})
	\end{align*}
	Note that since $\eta>\zeta^{-1}\omega_A$, it follows from $\epsilon<c\zeta\frac{(L_fC_A\Gamma(\gamma_X))^{1/\gamma_X}+w_A}{w_A}$ that
	\[
		\eta>\zeta^{-1}\omega_A>c\epsilon^{-1}((L_fC_A\Gamma(\gamma_X))^{1/\gamma_X}+w_A)>c\epsilon^{-1}\omega_f.
	\]
	Hence, choosing $t=0$ and letting $t_0\to-\infty$ shows that
	\begin{align*}
		\| h^{\epsilon,\zeta}_{X_1}(v_0)-h^0(v_0) \|_{X_1}&\leq C\left(\tfrac{\epsilon}{(\epsilon\eta-\omega_A)^{\delta_X}} +\tfrac{1}{(\epsilon\eta-\omega_A)^{\gamma_X}(\eta-\zeta^{-1}\omega_A-N_f^{\zeta})^{\delta_y}}\right)\\
	&\qquad\qquad\qquad\cdot\,\tfrac{\eta-\epsilon^{-1}\omega_A}{\eta-\epsilon^{-1}\omega_f}\|v_0\|_{Y_1}.
	\end{align*}
	Since $\eta=\zeta^{-1}\omega_A+\frac{N_S^{\zeta}+N_F^{\zeta}}{2}$, it follows that
	\begin{align*}
		\| h^{\epsilon,\zeta}_{X_1}(v_0)-h^0(v_0) \|_{X_1}\leq C\left(\epsilon +\frac{1}{(N_S^{\zeta}-N_F^{\zeta})^{\delta_y}}\right)\|v_0\|_{Y_1}
	\end{align*}
	for some constant $C>0$. Moreover, \eqref{Eq:Slow_Manifold_Estimate_Second_Component} turns into 
		\begin{align*}
	\|h^{\epsilon,\zeta}_{Y^{\zeta}_F}(\bar{v}_S(t))\|_{Y_1}\leq C\frac{1}{(N_S^{\zeta}-N_F^{\zeta})^{\delta_y}} \|v_0 \|_{Y_1}.
	\end{align*}
	Altogether, we obtain the assertion.
\end{proof}

\subsection{Differentiability of the Slow Manifolds}\label{Sec:Slow_Manifolds:Differentiability}
Now, we suppose that the nonlinearities $f\colon X_1\times Y_1\to X_{\gamma_X}$ and $g\colon X_1 \times Y_1\to Y_{\delta_Y}$ are continuously differentiable such that
\begin{align}\label{Eq:Nonlinearities_Differentiable}
	\|\txtD f(x,y)\|_{\mathcal{B}(X_1\times Y_1,X_{\gamma_X})}\leq L_f,\quad \|\txtD g(x,y)\|_{\mathcal{B}(X_1\times Y_1,Y_{\delta_Y})}\leq L_g.
\end{align}
The aim is to show that
\[
	(Y_{S}^{\zeta},\|\cdot\|_{Y_1})\to (X_1,\|\cdot\|_{X_1})\times (Y_{F}^{\zeta},\|\cdot\|_{Y_{\delta_Y}}),\,v_0\mapsto (h_{X_1}^{\epsilon,\zeta}(v_0),h_{Y_F^{\zeta}}^{\epsilon,\zeta}(v_0))
\]
is differentiable.

\begin{proposition}\label{Prop:Slow_Manifolds:Differentiability}
	Under the general assumptions in this section and the differentiability assumptions in this subsection, the slow manifold $S_{\epsilon,\zeta}$ is differentiable.
\end{proposition}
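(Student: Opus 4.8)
The plan is to show that the Lyapunov--Perron fixed point depends in a continuously differentiable way on the parameter $v_0$ and then to read off differentiability of $h^{\epsilon,\zeta}$. The starting observation is that the operator from Proposition~\ref{Prop:Slow_Manifold_Existence} splits as
\[
	\mathscr{L}_{v_0,\epsilon,\zeta}(u,v_F,v_S)=\mathscr{T}(u,v_F,v_S)+\mathscr{S}v_0,
\]
where $\mathscr{S}v_0:=(0,0,[t\mapsto\txte^{tB}v_0])^T$ is bounded linear in $v_0$ and the nonlinear part $\mathscr{T}$ (the first two components together with the integral term of the third) does not depend on $v_0$ at all. Hence, by the standard parametrised contraction principle (an implicit function theorem for uniform contractions), it suffices to prove that $\mathscr{T}\colon C_{\eta}\to C_{\eta}$ is continuously Fréchet differentiable and that $\id-\txtD\mathscr{T}$ is boundedly invertible at the fixed point.

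First I would identify the candidate derivative: for $\Phi_0=(u_0,v_{F,0},v_{S,0})\in C_{\eta}$ it should be
\[
	\big[\txtD\mathscr{T}(\Phi_0)\Psi\big](t)=\begin{pmatrix}\epsilon^{-1}\int_{-\infty}^t\txte^{\epsilon^{-1}(t-s)A}\txtD f(\Phi_0(s))\Psi(s)\,\txtd s\\ \int_{-\infty}^t\txte^{(t-s)B}\operatorname{pr}_{Y_F^{\zeta}}\txtD g(\Phi_0(s))\Psi(s)\,\txtd s\\ \int_{0}^t\txte^{(t-s)B}\operatorname{pr}_{Y_S^{\zeta}}\txtD g(\Phi_0(s))\Psi(s)\,\txtd s\end{pmatrix}.
\]
Since $\txtD f$ and $\txtD g$ obey the operator-norm bounds \eqref{Eq:Nonlinearities_Differentiable} by $L_f$ and $L_g$, the computation that produced the three contraction estimates in the proof of Proposition~\ref{Prop:Slow_Manifold_Existence} applies verbatim and shows that $\|\txtD\mathscr{T}(\Phi_0)\|_{\mathcal{B}(C_{\eta})}$ is bounded by the left-hand side of the gap condition \eqref{Eq:Gap_Condition}, hence by a constant $\kappa<1$ that is uniform in $\Phi_0,\epsilon,\zeta$. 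In particular $\id-\txtD\mathscr{T}(\Phi_0)$ is boundedly invertible via a Neumann series with norm $\le(1-\kappa)^{-1}$. Granting also that $\Phi_0\mapsto\txtD\mathscr{T}(\Phi_0)$ is continuous into $\mathcal{B}(C_{\eta})$, the parametrised contraction principle applied to the $C^1$ map $(v_0,\Phi)\mapsto\mathscr{T}(\Phi)+\mathscr{S}v_0$ yields that $v_0\mapsto(u^{v_0},v_F^{v_0},v_S^{v_0})$ is a $C^1$ map $(Y_S^{\zeta}\cap Y_1,\|\cdot\|_{Y_1})\to C_{\eta}$, its derivative in a direction $w$ being the unique fixed point $\Psi^{v_0,w}$ of the linear equation $\Psi=\txtD\mathscr{T}(u^{v_0},v_F^{v_0},v_S^{v_0})\Psi+\mathscr{S}w$.

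The assertion then follows by composing with the evaluation-and-projection map $C_{\eta}\to X_1\times(Y_F^{\zeta}\cap Y_1)$, $(u,v_F,v_S)\mapsto(u(0),v_F(0))$, which is bounded linear, and with the continuous inclusion $(Y_F^{\zeta}\cap Y_1,\|\cdot\|_{Y_1})\hookrightarrow(Y_F^{\zeta},\|\cdot\|_{Y_{\delta_Y}})$ in the second slot: this gives that $v_0\mapsto(h^{\epsilon,\zeta}_{X_1}(v_0),h^{\epsilon,\zeta}_{Y_F^{\zeta}}(v_0))$ is continuously differentiable as a map into $X_1\times(Y_F^{\zeta},\|\cdot\|_{Y_{\delta_Y}})$, with derivative $w\mapsto(\Psi^{v_0,w}_{X_1}(0),\Psi^{v_0,w}_{Y_F^{\zeta}}(0))$.

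The main obstacle is precisely the step taken for granted above, namely the Fréchet differentiability with continuous derivative of the superposition part $\mathscr{T}$ on the exponentially weighted space $C_{\eta}$ of functions on the half-line $(-\infty,0]$; upgrading the pointwise $C^1$-regularity of $f$ and $g$ to differentiability of the induced Nemytskii-type operator in a weighted sup-norm is the classical delicate point in Lyapunov--Perron arguments. Concretely, after writing $f(\Phi_0(s)+\Psi(s))-f(\Phi_0(s))-\txtD f(\Phi_0(s))\Psi(s)=\int_0^1[\txtD f(\Phi_0(s)+\tau\Psi(s))-\txtD f(\Phi_0(s))]\Psi(s)\,\txtd\tau$ and the analogous formula for $g$, and inserting these into the convolution integrals, one is left with controlling $\sup_{s\le0,\tau\in[0,1]}\|\txtD f(\Phi_0(s)+\tau\Psi(s))-\txtD f(\Phi_0(s))\|$ (and its $g$-counterpart) as $\|\Psi\|_{C_{\eta}}\to0$. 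This is handled by using that $\txtD f,\txtD g$ are globally bounded (so $\mathscr{T}$ is globally Lipschitz, hence well defined on all of $C_{\eta}$), by invoking the a priori bounds on the fixed-point trajectory established in the proof of Proposition~\ref{Prop:Distance_Critical_Manifold}, and by splitting the time axis into a compact window, where uniform continuity of $\txtD f,\txtD g$ on the (compact) range of the trajectory applies, and a tail, which is made negligible by combining the weight with the decay of the convolution kernels quantified by the incomplete-gamma estimates of Section~\ref{sec:prelim}. All estimates are uniform in $v_0$, which is exactly what the parametrised contraction principle requires.
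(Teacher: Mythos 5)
Your overall architecture is the same as the paper's: the candidate linearization is exactly the operator the paper calls $T$ (built from $\txtD f$, $\txtD g$ along the fixed point), its norm bound below $1$ comes verbatim from the contraction estimates of Proposition~\ref{Prop:Slow_Manifold_Existence} and \eqref{Eq:Gap_Condition}, one inverts $\id-T$ by a Neumann series, and differentiability of $h^{\epsilon,\zeta}$ is read off by evaluating at $t=0$. The genuinely different step is your reduction to the parametrised contraction principle, and this is where there is a real gap: that reduction requires $\mathscr{T}$ to be Fr\'echet differentiable, with continuous derivative, as a self-map of $C_{\eta}$, and for nonlinear $f,g$ this is in general \emph{false} on an exponentially weighted sup-space over $(-\infty,0]$. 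The obstruction is precisely the tail you propose to discard. Take increments $\Psi$ of fixed unweighted amplitude $1$ concentrated near a time $-T$ with $T$ large: then $\|\Psi\|_{C_{\eta}}\approx \txte^{\eta T}\to0$ (recall $\eta<0$), while the Taylor remainder of $f$ at $\Phi_0(-T)$ with a unit-size increment is generically of unit size; evaluating the convolution at times $t$ near $-T$ and multiplying by the weight $\txte^{-\eta t}\approx\txte^{\eta T}$ produces a remainder of order $\txte^{\eta T}\approx\|\Psi\|_{C_{\eta}}$, not $o(\|\Psi\|_{C_{\eta}})$. Your tail argument ("negligible by combining the weight with the decay of the convolution kernels") fails exactly here, because when the evaluation time $t$ itself lies in the tail the kernel decay provides no smallness, and $\Psi(s)$ need not be small in the unweighted norm for $s\ll0$ even though $\|\Psi\|_{C_{\eta}}$ is tiny.

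The paper's proof is organized to avoid asserting this stronger statement: it never claims smoothness of the substitution operator on $C_{\eta}$, but expands only along the two actual Lyapunov--Perron trajectories $U(\cdot,v_0)$ and $U(\cdot,\tilde v_0)$ and shows that the specific remainder $I(\tilde v_0,v_0)$ is $o(\|\tilde v_0-v_0\|_{Y_1})$ in $C_{\eta}$. The extra input that makes the far tail harmless there (and which has no analogue for arbitrary increments $\Psi\in C_{\eta}$) is that the difference of two fixed-point trajectories is controlled in \emph{stronger} weights: the contraction and Lipschitz estimates can be run with any admissible exponent $\eta'\in(\zeta^{-1}\omega_A+N_F^{\zeta},\,\zeta^{-1}\omega_A+N_S^{\zeta})$, and choosing $\eta'>\eta$ gives $\txte^{-\eta s}\|U(s,\tilde v_0)-U(s,v_0)\|\lesssim \txte^{(\eta'-\eta)s}\|\tilde v_0-v_0\|_{Y_1}$, which is uniformly small for $s\le -N$; this is what makes the tail contribution $o(\|\tilde v_0-v_0\|_{Y_1})$ for every evaluation time $t\le0$, including $t\le-N$. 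So either adopt the paper's pointwise expansion along fixed points (your window argument via compactness of the trajectory's range on $[-N,0]$ is fine and plays the role of the paper's dominated-convergence step), or, if you want to keep the implicit-function-theorem packaging, you must upgrade it to a two-weight formulation — differentiability of $\mathscr{T}$ only as a map from $C_{\eta'}$ into $C_{\eta}$ with $\eta'>\eta$, combined with a fibre-contraction/scale-of-Banach-spaces theorem in the spirit of Vanderbauwhede--van Gils — which is a genuine additional layer that your sketch does not supply.
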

\begin{proof}
	Given $v_0\in Y_S^{\zeta}$ we write $U(\,\cdot\,,v_0):=(u(\cdot,v_0),v_F(\cdot,v_0),v_S(\cdot,v_0))\in C_{\eta}$ for the fixed point of $\mathscr{L}_{v_0,\epsilon,\zeta}$. Fix $v_0,\tilde{v}_0\in Y_S^{\zeta}$. Effectively, any classical approach to show smoothness~\cite{Fenichel1,HirschPughShub,WigginsIM} is based around estimates, which show that the derivative exists as the best local linear approximation of the graph of the manifold. We follow this strategy and write
	\begin{align*}
		U(\,\cdot\,\tilde{v_0})-U(\,\cdot\,v_0)-T[U(\,\cdot\,\tilde{v_0})-U(\,\cdot\,v_0)]=\begin{pmatrix} 0 \\ 0 \\ \txte^{B(\cdot)}(\tilde{v}_0-v_0) \end{pmatrix}+ I(\tilde{v}_0,v_0),
	\end{align*}
	where
	\begin{align*}
		T\colon C_{\eta}\to C_{\eta},\,z\mapsto \left[t\mapsto \begin{pmatrix} \epsilon^{-1}\int_{-\infty}^t \txte^{\epsilon^{-1}(t-s)A}\txtD f(U(s,v_0))z(s)\,\txtd s \\ \int_{-\infty}^t \txte^{(t-s)B}\operatorname{pr}_{Y_F^{\zeta}}\txtD g(U(s,v_0))z(s)\,\txtd s \\ 0 \end{pmatrix}\right]
	\end{align*}
	 and $I(\tilde{v}_0,v_0)=(I_1,I_2,I_3)^T(\tilde{v}_0,v_0)$ where
	 \begin{align*}
	 	I_1(\tilde{v}_0,v_0)&=\bigg[t\mapsto \epsilon^{-1}\int_{-\infty}^t \txte^{\epsilon^{-1}(t-s)A}\big(f(U(s,\tilde{v}_0))-f(U(s,v_0))\\
	 	&\qquad\qquad\qquad-\txtD f(U(s,v_0))[U(s,\tilde{v_0})-U(s,v_0)]\big)\,\txtd s\bigg],\\
	 	I_2(\tilde{v}_0,v_0)&=\bigg[t\mapsto\int_{-\infty}^t \txte^{(t-s)B}\operatorname{pr}_{Y_F^{\zeta}}\big(g(U(s,\tilde{v}_0))-g(U(s,v_0))\\
	 	&\qquad\qquad\qquad-\txtD g(U(s,v_0))[U(s,\tilde{v_0})-U(s,v_0)]\big)\,\txtd s\bigg],\\
	 	I_3(\tilde{v}_0,v_0)&=0.
	 \end{align*}
	The aim is to show that $\|T\|_{\mathcal{B}(C_{\eta})}<1$ and that $$\|I(\tilde{v_0},v_0)\|_{X_1\times (Y_{F}^{\zeta}\cap Y_1)\times Y_{S}^{\zeta}}=o(\|\tilde{v}_0-v_0\|_{Y_1}) \quad\text{as}\quad \tilde{v}_0\to v_0.$$ Then we have
	\[
		U(0,\tilde{v_0})-U(0,v_0)=(1-T)^{-1}\begin{pmatrix} 0 \\ 0 \\ \txte^{B(\cdot)}(\tilde{v}_0-v_0) \end{pmatrix}+o(\|\tilde{v}_0-v_0\|_{Y_1})
	\]
	as $\tilde{v}_0\to v_0$ so that $U(0,\cdot\,)=(h_X^{\epsilon,\zeta},h_{Y_F^{\zeta}}^{\epsilon,\zeta},\operatorname{id}_{Y_S^{\zeta}})$ is differentiable. The fact that $\|T\|_{\mathcal{B}(X_1\times (Y_{F}^{\zeta}\cap Y_1)\times Y_{S}^{\zeta})}<1$ follows from the same computation as the one for showing that $\mathscr{L}_{v_0,\epsilon,\zeta}$ is a contraction in Proposition~\ref{Prop:Slow_Manifold_Existence}. Concerning $I$ one can treat both its components similarly. Hence, we only carry out the usual argument for the first component. By our assumptions on $f$, for all $\sigma>0$ there is an $N>0$ such that
	\begin{align*}
	&\quad\txte^{-\eta t} \bigg\|\epsilon^{-1}\int_{-\infty}^{\min\{-N,t\}} \txte^{\epsilon^{-1}(t-s)A}\big(f(U(s,\tilde{v}_0))-f(U(s,v_0))\\
	&\qquad\qquad\qquad\qquad-\txtD f(U(s,v_0))[U(s,\tilde{v_0})-U(s,v_0)]\big)\,\txtd s \bigg\|_{X_1}\\
	&\leq 2L_fC_A\|U(\cdot,\tilde{v_0})-U(\cdot,v_0)\|_{C_\eta} \int_{-\infty}^{\min\{-N,t\}}\frac{\txte^{(\epsilon^{-1}\omega_A-\eta)(t-s)}}{\epsilon^{\gamma}_X(t-s)^{1-\gamma_X}}\,\txtd s\leq \frac{\sigma}{2}\|\tilde{v}_0-v_0\|_{Y_1}
	\end{align*}
	for all $t\leq0$. Having fixed such an $N>0$, we obtain that
	\begin{align*}
	&\quad\txte^{-\eta t}\bigg\|\epsilon^{-1}\int_{\min\{-N,t\}}^{t} \txte^{\epsilon^{-1}(t-s)A}\big(f(U(s,\tilde{v}_0))-f(U(s,v_0))\\
	&\qquad\qquad\qquad-\txtD f(U(s,v_0))[U(s,\tilde{v_0})-U(s,v_0)]\big)\,\txtd s \bigg\|_{X_1}\\
	&\leq C_A\|U(\cdot,\tilde{v_0})-U(\cdot,v_0)\|_{C_\eta} \int_{\min\{-N,t\}}^{t}\frac{\txte^{(\epsilon^{-1}\omega_A-\eta)(t-s)}}{\epsilon^{\gamma}_X(t-s)^{1-\gamma_X}}\\
	&\int_0^1\big\| \txtD f\big(r U(s,\tilde{v}_0)-(1-r)U(s,v_0)\big)-\txtD f(U(s,v_0))\big\|_{\mathcal{B}(X_1\times (Y_F^{\zeta}\cap Y_1)\times Y_S^{\zeta}, X_{\gamma_X})}\,\txtd r\,\txtd s\\
	&\leq C\|\tilde{v_0}-v_0\|_{Y_1} \int_{\min\{-N,t\}}^{t}\frac{\txte^{(\epsilon^{-1}\omega_A-\eta)(t-s)}}{\epsilon^{\gamma}_X(t-s)^{1-\gamma_X}}\\
	&\int_0^1 \big\| \txtD f\big(r U(s,\tilde{v}_0)-(1-r)U(s,v_0)\big)-\txtD f(U(s,v_0))\big\|_{\mathcal{B}(X_1\times (Y_F^{\zeta}\cap Y_1)\times Y_S^{\zeta}, X_{\gamma_X})}\,\txtd r\,\txtd s.
	\end{align*}
	By dominated convergence and the continuity of the integrand, it follows that the integral is smaller than $\frac{\sigma}{2C}$ if $\tilde{v}_0$ is close enough to $v_0$. Thus, for all $\sigma>0$ there is a $\tilde{\sigma}>0$ such that for all $\tilde{v}_0\in Y_S^{\zeta}$ with $\|\tilde{v}_0-v_0\|_{Y_1}<\tilde{\sigma}$ and all $t\leq 0$ it holds that
	\begin{align*}
		&e^{-\eta t}\bigg\|\epsilon^{-1}\int_{-\infty}^{t} \txte^{\epsilon^{-1}(t-s)A}\big(f(U(s,\tilde{v}_0))-f(U(s,v_0))\\
		&\qquad\qquad-\txtD f(U(s,v_0))[U(0,\tilde{v_0})-U(0,v_0)]\big)\,\txtd s \bigg\|_{X_1}<\sigma\|\tilde{v}_0-v_0\|_{Y_1}.
	\end{align*}
	A similar computation can be carried out for the second component of $I$. Thus, we have that
	\[
	\|I(\tilde{v_0},v_0)\|_{C_{\eta}}=o(\|\tilde{v}_0-v_0\|_{Y_1})\quad\text{as}\quad \tilde{v_0}\to v_0
	\]
	which shows the differentiability of the slow manifolds.
\end{proof}

\subsection{Attraction of Trajectories}
Consider the situation of Proposition~\ref{Prop:Slow_Manifolds:Differentiability} and let $(h_{X_1}^{\epsilon,\zeta}(v_0),h_{Y_F^{\zeta}}^{\epsilon,\zeta}(v_0),v_0)\in S_{\epsilon,\zeta}$. Let $(u,v_F,v_S)$ be the solution of \eqref{Eq:Nonlinear_Fast_Slow_System:Splitting} with initial value $(h_{X_1}^{\epsilon,\zeta}(v_0),h_{Y_F^{\zeta}}^{\epsilon,\zeta}(v_0),v_0)$ and let $(u^{\epsilon},v_F^{\epsilon},v_S^{\epsilon})$ be the solution of \eqref{Eq:Nonlinear_Fast_Slow_System:Splitting} with initial value $(u_0,v_{0,F},v_{0,S})$. Since $(u,v_F,v_S)$ is a strict solution, it holds that
\[
	\partial_tu(t)=\epsilon^{-1}Au(t)+\epsilon^{-1}f(u(t),v_F(t),v_S(t))\quad(t\geq0).
\]
On the other hand, since $S_{\epsilon,\zeta}$ is invariant and since it is differentiable, it holds that $u(t)=h_{X_1}^{\epsilon,\zeta}(v_S(t))$ and therefore
\begin{align*}
	\partial_tu(t)&=\partial_th_{X_1}^{\epsilon,\zeta}(v_S(t))=\big(\txtD h_{X_1}^{\epsilon,\zeta}(v_S(t))\big)[\partial_tv_S(t)]\\
	&=\big(\txtD h_{X_1}^{\epsilon,\zeta}(v_S(t))\big)[Bv_S(t)+\operatorname{pr}_{Y_S^{\zeta}}g(u(t),v_F(t),v_S(t))]\quad(t\geq0).
\end{align*}
Combining both equations for $t=0$ and using $$(u(t),v_F(t))=(h_{X_1}^{\epsilon,\zeta}(v_S(t)),h_{Y_{F}^{\zeta}}^{\epsilon,\zeta}(v_S(t)))$$ yields that
\begin{align}
\begin{aligned}\label{Eq:Slow_Manifold_Representation_1}
	h_{X_1}^{\epsilon,\zeta}(v_0)&=\epsilon A^{-1}\big(\txtD h_{X_1}^{\epsilon,\zeta}(v_0)\big)[Bv_0+\operatorname{pr}_{Y_S^{\zeta}}g(h_{X_1}^{\epsilon,\zeta}(v_0),h_{Y_{F}^{\zeta}}^{\epsilon,\zeta}(v_0),v_0)]\\
	&\qquad-A^{-1}f(h_{X_1}^{\epsilon,\zeta}(v_0),h_{Y_{F}^{\zeta}}^{\epsilon,\zeta}(v_0),v_0).
	\end{aligned}
\end{align}
Similarly, it holds that
\begin{align}\begin{aligned}\label{Eq:Slow_Manifold_Representation_2}
	h_{Y_F^{\zeta}}^{\epsilon,\zeta}(v_0)&=B_{Y_F^{\zeta}}^{-1}\big(\txtD h_{Y_F^{\zeta}}^{\epsilon,\zeta}(v_0)\big)[Bv_0+\operatorname{pr}_{Y_S^{\zeta}}g(h_{X_1}^{\epsilon,\zeta}(v_0),h_{Y_{F}^{\zeta}}^{\epsilon,\zeta}(v_0),v_0)]\\
	&\qquad-B_{Y_F^{\zeta}}^{-1}f(h_{X_1}^{\epsilon,\zeta}(v_0),h_{Y_{F}^{\zeta}}^{\epsilon,\zeta}(v_0),v_0).
	\end{aligned}
\end{align}
Note that \eqref{Eq:Slow_Manifold_Representation_1} and \eqref{Eq:Slow_Manifold_Representation_2} hold for arbitrary $v_0\in Y_S^{\zeta}$. In particular, they also hold for $v_0=v_S^{\epsilon}(t)$.
In addition, the differentiability of $h_{X_1}^{\epsilon,\zeta}$ and $h_{Y_F^{\zeta}}^{\epsilon,\zeta}$ shows that
\begin{align}
	\partial_t h_{X_1}^{\epsilon,\zeta}(v_S^{\epsilon}(t))=\big(\txtD h_{X_1}^{\epsilon,\zeta}(v_S^{\epsilon}(t))\big)[Bv_S^{\epsilon}(t)+\operatorname{pr}_{Y_S^{\zeta}}g(u^{\epsilon}(t),v_F^{\epsilon}(t),v_S^{\epsilon}(t))], \label{Eq:Slow_Manifold:Some_Derivative_1} \\
	\partial_t h_{Y_F^{\zeta}}^{\epsilon,\zeta}(v_S^{\epsilon}(t))=\big(\txtD h_{Y_F^{\zeta}}^{\epsilon,\zeta}(v_S^{\epsilon}(t))\big)[Bv_S^{\epsilon}(t)+\operatorname{pr}_{Y_S^{\zeta}}g(u^{\epsilon}(t),v_F^{\epsilon}(t),v_S^{\epsilon}(t))]. \label{Eq:Slow_Manifold:Some_Derivative_2}
\end{align}
\begin{proposition}\label{Prop:Slow_Manifold:Attraction}
	Consider the situation of Proposition~\ref{Prop:Slow_Manifold_Existence} together with the assumptions of this subsection. If $\zeta$ and $\epsilon$ are small enough, then there are constants $C,c>0$ we have the estimate
	\begin{align*}
		\left\|\begin{pmatrix} u^{\epsilon}(t)-h^{\epsilon,\zeta}_{X_1}(v^{\epsilon}_S(t))\\
						v^{\epsilon}_F(t)-h^{\epsilon,\zeta}_{Y_F^{\zeta}}(v^{\epsilon}_S(t))\end{pmatrix}\right\|_{X_1\times Y_1}\leq C\txte^{-ct}\left\|\begin{pmatrix} u_0-h^{\epsilon,\zeta}_{X_1}(v_{0,S})\\
						v_{0,F}-h^{\epsilon,\zeta}_{Y_F^{\zeta}}(v_{0,S})\end{pmatrix}\right\|_{X_1\times Y_1},
	\end{align*}
	i.e. solutions of \eqref{Eq:Nonlinear_Fast_Slow_System} approach the solutions on the slow manifold at an exponential rate.
\end{proposition}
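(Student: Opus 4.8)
The plan is to set up a closed system of integral inequalities for the three error quantities
\[
 e_u(t):=\|u^{\epsilon}(t)-h^{\epsilon,\zeta}_{X_1}(v^{\epsilon}_S(t))\|_{X_1},\quad e_F(t):=\|v^{\epsilon}_F(t)-h^{\epsilon,\zeta}_{Y_F^{\zeta}}(v^{\epsilon}_S(t))\|_{Y_1},\quad e_S(t):=\|v^{\epsilon}_S(t)-\bar v_S(t)\|_{Y_1},
\]
where $\bar v_S$ is the slow component of the genuine trajectory on $S_{\epsilon,\zeta}$ starting at $v_{0,S}$; at the end one sees $e_S$ can be absorbed. First I would use the strict-solution property together with the fixed-point (Lyapunov--Perron) characterization of $h^{\epsilon,\zeta}$: equations \eqref{Eq:Slow_Manifold_Representation_1}--\eqref{Eq:Slow_Manifold:Some_Derivative_2} express $h^{\epsilon,\zeta}_{X_1}(v^{\epsilon}_S(t))$ and $h^{\epsilon,\zeta}_{Y_F^{\zeta}}(v^{\epsilon}_S(t))$ as solving, along the flow, the \emph{same} evolution equations as $u^{\epsilon}$ and $v^{\epsilon}_F$ but with $v^{\epsilon}_F,v^{\epsilon}_S$ replaced by $h^{\epsilon,\zeta}_{Y_F^{\zeta}}(v^{\epsilon}_S),v^{\epsilon}_S$; subtracting and applying variation of constants on $[0,t]$ turns the $A$-part into $\epsilon^{-1}\int_0^t\txte^{\epsilon^{-1}(t-s)A}[\,\cdots\,]\,\txtd s$ plus the decaying initial term $\txte^{\epsilon^{-1}tA}(u_0-h^{\epsilon,\zeta}_{X_1}(v_{0,S}))$, and similarly the $Y_F^{\zeta}$-part uses the decay estimate $\|\txte^{tB}y_F\|_{Y_1}\le C_Bt^{\delta_Y-1}\txte^{(N_F^{\zeta}+\zeta^{-1}\omega_A)t}\|y_F\|_{Y_{\delta_Y}}$ from assumption (vi). Crucially, $h^{\epsilon,\zeta}_{Y_F^{\zeta}}(v^{\epsilon}_S)$ does \emph{not} carry a forward initial-value term, since its Lyapunov--Perron representation is an integral over $(-\infty,t]$ that I can rewrite as the $[0,t]$ piece plus a term $\txte^{(t-t_1)B}h^{\epsilon,\zeta}_{Y_F^{\zeta}}(v^{\epsilon}_S(0))$-type correction carrying its own exponential decay.

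Next I would bound the nonlinear differences: by the global Lipschitz bounds on $f$ and $g$ (and the $\zeta^{\delta_Y-1}$-Lipschitz bound on $\operatorname{pr}_{Y_S^{\zeta}}g$ from assumption (iii)) the integrands are controlled by $L_f(e_u+e_F+e_S)$ resp.\ $L_g(e_u+e_F+e_S)$, with the $\operatorname{pr}_{Y_S^{\zeta}}$ part additionally controlling $e_S$ itself via the $C_0$-group bound $\|\txte^{-tB_{Y_S^{\zeta}}}y_S\|_{Y_1}\le M_B\txte^{-(N_S^{\zeta}+\zeta^{-1}\omega_A)t}\|y_S\|_{Y_1}$ of assumption (vi). Folding everything into $\txte^{-\eta t}$-weighted sups with $\eta=\zeta^{-1}\omega_A+\tfrac{N_S^{\zeta}+N_F^{\zeta}}{2}$ reproduces exactly the three contraction constants computed in Proposition~\ref{Prop:Slow_Manifold_Existence}, whose sum is $<1$ by the gap condition \eqref{Eq:Gap_Condition}; this is what lets me close the loop. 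Concretely, after the weighting one gets an inequality of the form $\Theta(t)\le K\txte^{-\eta t}\txte^{\epsilon^{-1}\omega_f t}\|(u_0-h^{\epsilon,\zeta}_{X_1}(v_{0,S}),v_{0,F}-h^{\epsilon,\zeta}_{Y_F^{\zeta}}(v_{0,S}))\|+q\,\Theta(t)$ with $q<1$, where $\Theta$ is the $\txte^{-\eta t}$-weighted sup of $e_u+e_F+e_S$ over $[0,t]$, so $\Theta(t)\le (1-q)^{-1}K\txte^{-\eta t}\txte^{\epsilon^{-1}\omega_f t}\|\cdots\|$. Undoing the weight and noting $\epsilon^{-1}\omega_f<\eta$ for $\epsilon$ small (since $\omega_f<0$ while $\eta$ may be made positive or only mildly negative by the gap, exactly as in Proposition~\ref{Prop:Distance_Critical_Manifold}) gives $e_u(t)+e_F(t)\le C\txte^{-ct}\|(u_0-h^{\epsilon,\zeta}_{X_1}(v_{0,S}),v_{0,F}-h^{\epsilon,\zeta}_{Y_F^{\zeta}}(v_{0,S}))\|_{X_1\times Y_1}$ with $c=-\eta>0$ or $c=\epsilon^{-1}|\omega_f|$ whichever is smaller, which is the claim (the $e_S$ contribution is dominated by $e_u+e_F$ through the same fixed-point bound).

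One technical wrinkle worth flagging: to run the Gronwall-type step on $[0,t]$ rather than $(-\infty,t]$ one must handle the ``tail'' of the Lyapunov--Perron integral for the $h^{\epsilon,\zeta}$ components cleanly, i.e.\ split $\int_{-\infty}^t=\int_{-\infty}^{0}+\int_0^t$ and show the $\int_{-\infty}^0$ piece evaluated along $v^{\epsilon}_S$ contributes only the value $h^{\epsilon,\zeta}(v^{\epsilon}_S(0))=h^{\epsilon,\zeta}(v_{0,S})$ propagated by the semigroup, which is already inside the initial error. The main obstacle, as in the preceding propositions, is purely the bookkeeping: one needs $\delta_Y$-singular kernels $t^{\delta_Y-1}$ and $\gamma_X$-singular kernels on the $[0,t]$ integrals, so the right Gronwall lemma is Lemma~\ref{Lemma:Gronwall_Specific_Sum} (two singular kernels) applied with $c(t)$ proportional to $\txte^{\epsilon^{-1}\omega_f t}$ times the initial error, whose hypothesis $0<\tfrac{N\Gamma(\gamma)}{(\epsilon y-x)^{\gamma}}<1$ is again exactly \eqref{Eq:Gap_Condition}; verifying the monotonicity hypothesis on $c$ and tracking that the resulting $\tilde y$ stays negative for small $\epsilon,\zeta$ is the only place care is required. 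Everything else is a repetition of the estimates already performed in Propositions~\ref{Prop:Slow_Manifold_Existence} and~\ref{Prop:Distance_Critical_Manifold}.
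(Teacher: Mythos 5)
Your overall skeleton --- compare the trajectory with $h^{\epsilon,\zeta}$ evaluated at the \emph{actual} slow component $v_S^{\epsilon}(t)$, write variation-of-constants formulas for the errors, use the Lipschitz bounds and the two-kernel Gronwall estimate of Lemma~\ref{Lemma:Gronwall_Specific_Sum} --- is the right one, but two of your key steps fail as described. First, the third error $e_S(t)=\|v_S^{\epsilon}(t)-\bar v_S(t)\|_{Y_1}$ (distance to an on-manifold trajectory) should not appear at all, and your mechanism for absorbing it does not work: the bound $\|\txte^{-tB_{Y_S^{\zeta}}}y_S\|_{Y_1}\leq M_B\txte^{-(N_S^{\zeta}+\zeta^{-1}\omega_A)t}\|y_S\|_{Y_1}$ of assumption (vi) controls the group on $Y_S^{\zeta}$ \emph{backward} in time; forward in time $Y_S^{\zeta}$ is the center direction and is not contractive, so the three contraction constants of Proposition~\ref{Prop:Slow_Manifold_Existence} (computed for weighted sups on $(-\infty,0]$) are not reproduced on $[0,t]$, and $e_S$ cannot be ``dominated by $e_u+e_F$ through the same fixed-point bound''. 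The paper's proof never needs $e_S$: because $u^{\epsilon}$ is compared with $h^{\epsilon,\zeta}_{X_1}(v_S^{\epsilon}(t))$ and $v_F^{\epsilon}$ with $h^{\epsilon,\zeta}_{Y_F^{\zeta}}(v_S^{\epsilon}(t))$, every difference of $f$ and of the projections of $g$ that arises has the \emph{same} third argument $v_S^{\epsilon}(s)$, so only $\varphi=e_u+e_F$ enters, and Lemma~\ref{Lemma:Gronwall_Specific_Sum} closes the loop with the two kernels $\epsilon^{-\gamma_X}(t-s)^{\gamma_X-1}\txte^{\epsilon^{-1}\omega_A(t-s)}$ and $(t-s)^{\delta_Y-1}\txte^{(\zeta^{-1}\omega_A+N_F^{\zeta})(t-s)}$.

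Second, your treatment of the ``tail'' is not valid: $t\mapsto h^{\epsilon,\zeta}(v_S^{\epsilon}(t))$ is \emph{not} a fixed point of the Lyapunov--Perron operator along the off-manifold history $v_S^{\epsilon}(\cdot)$, so you cannot split $\int_{-\infty}^{t}=\int_{-\infty}^{0}+\int_{0}^{t}$ and claim that the first piece is just $h^{\epsilon,\zeta}(v_{0,S})$ propagated by the semigroup; the fixed-point representation of $h^{\epsilon,\zeta}(v_S^{\epsilon}(t))$ is anchored to the backward on-manifold solution through $v_S^{\epsilon}(t)$, which has nothing to do with the segment $v_S^{\epsilon}\vert_{[0,t]}$. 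What actually makes the Duhamel formula for the error close is the differentiability of the manifold: the pointwise invariance identities \eqref{Eq:Slow_Manifold_Representation_1}--\eqref{Eq:Slow_Manifold_Representation_2}, valid for \emph{every} $v_0\in Y_S^{\zeta}$ and hence at $v_0=v_S^{\epsilon}(s)$, combined with the chain rule \eqref{Eq:Slow_Manifold:Some_Derivative_1}--\eqref{Eq:Slow_Manifold:Some_Derivative_2} and an integration by parts of $\int_0^t\partial_s\big(\txte^{\epsilon^{-1}(t-s)A}h^{\epsilon,\zeta}_{X_1}(v_S^{\epsilon}(s))\big)\,\txtd s$. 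You cite these identities but then do not use them for this purpose, and without that step (or a substitute) the representation of the error you need is not established, so as written the argument has a gap. Your final rate bookkeeping is also off --- the decay cannot exceed the $Y_F^{\zeta}$-rate determined by $\zeta^{-1}\omega_A+N_F^{\zeta}$, rather than $\epsilon^{-1}|\omega_f|$ --- but that is secondary to the two structural issues above.
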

\begin{proof}
It holds that
{\allowdisplaybreaks{
\begin{align*}
	u^{\epsilon}(t)-h^{\epsilon,\zeta}_{X_1}(v^{\epsilon}_S(t))&=\txte^{\epsilon^{-1}t A}\big(u_0-h^{\epsilon,\zeta}_{X_1}(v_{0,S})\big)+\txte^{\epsilon^{-1}t A}h^{\epsilon,\zeta}_{X_1}(v_{0,S})-h^{\epsilon,\zeta}_{X_1}(v^{\epsilon}_S(t))\\
&\quad+\epsilon^{-1}\int_0^t\txte^{\epsilon^{-1}(t-s)A}f(u^{\epsilon}(s),v_F^{\epsilon}(s),v_S^{\epsilon}(s))\,\txtd s\\
	&=\txte^{\epsilon^{-1}t A}\big(u_0-h^{\epsilon,\zeta}_{X_1}(v_{0,S})\big)-\int_0^t\partial_s\big(\txte^{\epsilon^{-1}(t-s)A}h^{\epsilon,\zeta}_{X_1}(v^{\epsilon}_S(s))\big)\,\txtd s\\
	&\quad+\epsilon^{-1}\int_0^t\txte^{\epsilon^{-1}(t-s)A}f(u^{\epsilon}(s),v_F^{\epsilon}(s),v_S^{\epsilon}(s))\,\txtd s\\
	&=\txte^{\epsilon^{-1}t A}\big(u_0-h^{\epsilon,\zeta}_{X_1}(v_{0,S})\big)+\int_0^t\txte^{\epsilon^{-1}(t-s)A}\epsilon^{-1}Ah^{\epsilon,\zeta}_{X_1}(v^{\epsilon}_S(s))\,\txtd s\\
	&\quad-\int_0^t\txte^{\epsilon^{-1}(t-s)A}\partial_s[h^{\epsilon,\zeta}_{X_1}(v^{\epsilon}_S(s))]\,\txtd s\\
	&\quad+\epsilon^{-1}\int_0^t\txte^{\epsilon^{-1}(t-s)A}f(u^{\epsilon}(s),v_F^{\epsilon}(s),v_S^{\epsilon}(s))\,\txtd s
\end{align*}}}
Combining this with \eqref{Eq:Slow_Manifold_Representation_1} and \eqref{Eq:Slow_Manifold:Some_Derivative_1} yields
\begin{align*}
	u^{\epsilon}&(t)-h^{\epsilon,\zeta}_{X_1}(v^{\epsilon}_S(t))=\txte^{\epsilon^{-1}t A}\big(u_0-h^{\epsilon,\zeta}_{X_1}(v_{0,S})\big)\\
&+\int_0^t\txte^{\epsilon^{-1}(t-s)A}\big(\txtD h_{X_1}^{\epsilon,\zeta}(v_S^{\epsilon}(s))\big)\big[\operatorname{pr}_{Y_S^{\zeta}}g(u^{\epsilon}(s),v_F^{\epsilon}(s),v_S^{\epsilon}(s))\\
&\qquad\qquad\qquad-\operatorname{pr}_{Y_S^{\zeta}}g(h^{\epsilon,\zeta}_{X_1}(v^{\epsilon}_S(s)),h^{\epsilon,\zeta}_{Y_F^{\zeta}}(v^{\epsilon}_S(s)),v_S^{\epsilon}(s))\big]\,\txtd s\\
	&+\epsilon^{-1}\int_0^t\txte^{\epsilon^{-1}(t-s)A}\big[f(u^{\epsilon}(s),v_F^{\epsilon}(s),v_S^{\epsilon}(s))-f(h^{\epsilon,\zeta}_{X_1}(v^{\epsilon}_S(s)),h^{\epsilon,\zeta}_{Y_F^{\zeta}}(v^{\epsilon}_S(s)),v_S^{\epsilon}(s))\big]\,\txtd s.
\end{align*}
Similarly, it holds that
\begin{align*}
	v_F^{\epsilon}(t)&-h^{\epsilon,\zeta}_{Y_F^{\zeta}}(v^{\epsilon}_S(t))=\txte^{tB}\big(v_{0,F}-h^{\epsilon,\zeta}_{Y_F^{\zeta}}(v_{0,S})\big)\\
&+\int_0^t\txte^{(t-s)B}\big(\txtD h_{Y_F^{\zeta}}^{\epsilon,\zeta}(v_S^{\epsilon}(s))\big)\big[\operatorname{pr}_{Y_S^{\zeta}}g(u^{\epsilon}(s),v_F^{\epsilon}(s),v_S^{\epsilon}(s))\\
&\qquad\qquad\qquad-\operatorname{pr}_{Y_S^{\zeta}}g(h^{\epsilon,\zeta}_{X_1}(v^{\epsilon}_S(s)),h^{\epsilon,\zeta}_{Y_F^{\zeta}}(v^{\epsilon}_S(s)),v_S^{\epsilon}(s))\big]\,\txtd s\\
	&+\int_0^t\txte^{(t-s)B}\big[f(u^{\epsilon}(s),v_F^{\epsilon}(s),v_S^{\epsilon}(s))-f(h^{\epsilon,\zeta}_{X_1}(v^{\epsilon}_S(s)),h^{\epsilon,\zeta}_{Y_F^{\zeta}}(v^{\epsilon}_S(s)),v_S^{\epsilon}(s))\big]\,\txtd s.
\end{align*}
Thus, if we define $$\varphi(t):=\|u^{\epsilon}(t)-h^{\epsilon,\zeta}_{X_1}(v^{\epsilon}_S(t))\|_{X_1}+\|v^{\epsilon}_F(t)-h^{\epsilon,\zeta}_{Y_F^{\zeta}}(v^{\epsilon}_S(t))\|_{Y_1},$$
then we obtain
\begin{align*}
	\varphi(t)\leq& M_A\txte^{\epsilon^{-1}\omega_A t} \| u_0-h^{\epsilon,\zeta}_{X_1}(v_{0,S})\|_{X_1}+ M_B\txte^{(\zeta^{-1}\omega_A+N_F^{\zeta}) t}\|v_{0,F}-h^{\epsilon,\zeta}_{Y_F^{\zeta}}(v_{0,S})\|_{Y_1}\\
	&+C_AL_f(\epsilon L_g+1)\int_0^t\frac{\txte^{\epsilon^{-1}(t-s)\omega_A}}{\epsilon^{\gamma_X}(t-s)^{1-\gamma_X}}\varphi(s)\,\txtd s\\
	&+C_B(L_g^2+L_f)\int_0^t\frac{\txte^{(t-s)(\zeta^{-1}\omega_A+N_F^{\zeta})}}{(t-s)^{1-\delta_y}}\varphi(s)\,\txtd s.
\end{align*}
Now we can apply Lemma~\ref{Lemma:Gronwall_Specific_Sum} and obtain that there are constants $C,c>0$ such that
\[
	\varphi(t)\leq C\txte^{-ct}\left\|\begin{pmatrix} u_0-h^{\epsilon,\zeta}_{X_1}(v_{0,S})\\
						v_{0,F}-h^{\epsilon,\zeta}_{Y_F^{\zeta}}(v_{0,S})\end{pmatrix}\right\|_{X_1\times Y_1}.
\]
This is the assertion.
\end{proof}

\subsection{An Approximation of the Slow Flow}
In Section~\ref{Sec:Slow_Manifolds:Distance_Critical_Manifold} we measured the distance of the slow manifolds to the subset $S_{0,\zeta}$ of the critical manifold given by
\[
	S_{0,\zeta}:=\{ (h^0(v_0),v_0)\in S_0 : \operatorname{pr}_{Y_F^{\zeta}}v_0=0 \}
\]
In many cases $S_{0,\zeta}$ will not be invariant under the slow flow. Thus, one might wonder how meaningful the result in Section~\ref{Sec:Slow_Manifolds:Distance_Critical_Manifold} is. However, our aim is not to reduce the fast-slow system \eqref{Eq:Nonlinear_Fast_Slow_System} with $\epsilon>0$ to the slow subsystem \eqref{Eq:Nonlinear_Fast_Slow_System} with $\epsilon=0$, but to the reduced slow subsystem:
\begin{align}
	\begin{aligned}\label{Eq:Reduced_Slow_Subsystem}
		0&=Au^{0}_{\zeta}(t)+f(u^{0}_{\zeta}(t),v^0_{\zeta}(t)),\\
		0&=\operatorname{pr}_{Y_F^{\zeta}}v^0_{\zeta}(t),\\
		\partial_tv^0_{\zeta}(t)&=Bv^0_{\zeta}(t)+\operatorname{pr}_{Y_S^{\zeta}}g(u^{0}_{\zeta}(t),v^0_{\zeta}(t)),\\
		v^0_{\zeta}(0)&=\operatorname{pr}_{Y_S^{\zeta}}v_0.
	\end{aligned}
\end{align}
Obviously, $S_{0,\zeta}$ is invariant under the reduced slow flow generated by \eqref{Eq:Reduced_Slow_Subsystem}.

\begin{proposition}\label{Prop:Reduced_Slow_Flow}
	For all $T>0$ there is a constant $C>0$ such that for all $t\in[0,T]$ and all $\zeta>0$ small enough it holds that
	\begin{align*}
		\|v^0(t)-v^0_{\zeta}(t)\|_{Y_1}\leq C\left(\|\operatorname{pr}_{Y_F^{\zeta}}v_0\|_{Y_1}+\frac{\|v_0\|_{Y_1}}{(\omega_B-\zeta^{-1}\omega_A-N_F^{\zeta})^{\delta_Y}}\right).
	\end{align*}
\end{proposition}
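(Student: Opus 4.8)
The plan is to compare $v^{0}$ with $v^{0}_{\zeta}$ by decomposing the slow variable along the $\zeta$-splitting and absorbing the fast part into a small error term. As preliminaries I would record the following: the reduced slow subsystem~\eqref{Eq:Reduced_Slow_Subsystem} is well posed on $[0,T]$ by the fixed-point argument of Proposition~\ref{Prop:Fast_Slow_Well-Posed}; its solution stays in $Y^{\zeta}_{S}$ (the initial datum lies there, $\operatorname{pr}_{Y^{\zeta}_{F}}$ commutes with $B$, and the nonlinearity is projected onto $Y^{\zeta}_{S}$, so $\operatorname{pr}_{Y^{\zeta}_{F}}v^{0}_{\zeta}$ solves the homogeneous problem with zero datum) and $u^{0}_{\zeta}=h^{0}(v^{0}_{\zeta})$; the map $[y\mapsto g(h^{0}(y),y)]\colon Y_{1}\to Y_{\delta_{Y}}$ is globally Lipschitz with a $\zeta$-independent constant and vanishes at $0$, so Proposition~\ref{Prop:A_Priori_Estimate_Fast_Equation}~\eqref{Prop:A_Priori_Estimate_Fast_Equation:epsilon>0} (with $\epsilon=1$, $\gamma=\delta_{Y}$) gives $\sup_{t\in[0,T]}\|v^{0}(t)\|_{Y_{1}}\leq C_{T}\|v_{0}\|_{Y_{1}}$ with $C_{T}$ independent of $\zeta$; and the spectral projections $\operatorname{pr}_{Y^{\zeta}_{F}},\operatorname{pr}_{Y^{\zeta}_{S}}$ are bounded on $Y$, hence (commuting with $B$) on $Y_{1}$ and $Y_{\delta_{Y}}$, with norms uniform in $\zeta$ for the Riesz/orthogonal projections arising in the applications. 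Writing $v^{0}=v^{0}_{F}+v^{0}_{S}$ with $v^{0}_{F}:=\operatorname{pr}_{Y^{\zeta}_{F}}v^{0}$, $v^{0}_{S}:=\operatorname{pr}_{Y^{\zeta}_{S}}v^{0}$, the projections commute with $(\txte^{tB})_{t\geq0}$, so $v^{0}_{F}$ and $v^{0}_{S}$ satisfy the projected mild equations; in particular $v^{0}_{S}$ and $v^{0}_{\zeta}$ obey the same integral equation in $Y^{\zeta}_{S}$ with the same initial value, differing only in whether the nonlinearity is evaluated along $v^{0}$ or along $v^{0}_{\zeta}$. Since $v^{0}-v^{0}_{\zeta}=v^{0}_{F}+(v^{0}_{S}-v^{0}_{\zeta})$, it suffices to bound $\sup_{[0,T]}\|v^{0}_{F}\|_{Y_{1}}$ and then run a Gronwall estimate for $v^{0}_{S}-v^{0}_{\zeta}$.

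For the fast part, from $v^{0}_{F}(t)=\txte^{tB}\operatorname{pr}_{Y^{\zeta}_{F}}v_{0}+\int_{0}^{t}\txte^{(t-s)B}\operatorname{pr}_{Y^{\zeta}_{F}}g(h^{0}(v^{0}(s)),v^{0}(s))\,\txtd s$ I would bound the first term by $M_{B}\txte^{\max\{0,\omega_{B}\}T}\|\operatorname{pr}_{Y^{\zeta}_{F}}v_{0}\|_{Y_{1}}$, using the \emph{non-singular} $\mathcal{B}(Y_{1})$-bound for $(\txte^{tB})_{t\geq0}$ (this is where one must avoid the $t^{\delta_{Y}-1}$-singularity, so that the bound stays finite as $t\to0^{+}$), and the Duhamel term by the fast-decay estimate $\|\txte^{\tau B}y_{F}\|_{Y_{1}}\leq C_{B}\tau^{\delta_{Y}-1}\txte^{(N^{\zeta}_{F}+\zeta^{-1}\omega_{A})\tau}\|y_{F}\|_{Y_{\delta_{Y}}}$ together with $\|\operatorname{pr}_{Y^{\zeta}_{F}}g(h^{0}(v^{0}(s)),v^{0}(s))\|_{Y_{\delta_{Y}}}\leq CC_{T}\|v_{0}\|_{Y_{1}}$. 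Since $N^{\zeta}_{F}+\zeta^{-1}\omega_{A}<0$ for $\zeta$ small, Lemma~\ref{Lemma:Incomplete_Gamma_1} yields $\int_{0}^{t}\tau^{\delta_{Y}-1}\txte^{(N^{\zeta}_{F}+\zeta^{-1}\omega_{A})\tau}\,\txtd\tau\leq\Gamma(\delta_{Y})\,(-N^{\zeta}_{F}-\zeta^{-1}\omega_{A})^{-\delta_{Y}}$, and for $\zeta$ small this denominator is comparable to $(\omega_{B}-\zeta^{-1}\omega_{A}-N^{\zeta}_{F})^{\delta_{Y}}$; hence $\sup_{[0,T]}\|v^{0}_{F}\|_{Y_{1}}\leq C\big(\|\operatorname{pr}_{Y^{\zeta}_{F}}v_{0}\|_{Y_{1}}+\|v_{0}\|_{Y_{1}}(\omega_{B}-\zeta^{-1}\omega_{A}-N^{\zeta}_{F})^{-\delta_{Y}}\big)$ with $C$ independent of $\zeta$ and $t$.

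For the comparison one has $v^{0}_{S}(t)-v^{0}_{\zeta}(t)=\int_{0}^{t}\txte^{(t-s)B}\operatorname{pr}_{Y^{\zeta}_{S}}\big[g(h^{0}(v^{0}(s)),v^{0}(s))-g(h^{0}(v^{0}_{\zeta}(s)),v^{0}_{\zeta}(s))\big]\,\txtd s$, and here lies the main obstacle: assumption~(iii) controls $\operatorname{pr}_{Y^{\zeta}_{S}}$ of such a nonlinearity difference in the $Y_{1}$-norm only up to the factor $\zeta^{\delta_{Y}-1}$, which would make the Gronwall constant blow up as $\zeta\to0$. I would circumvent this by never estimating the integrand in $Y_{1}$: use $\txte^{(t-s)B}\operatorname{pr}_{Y^{\zeta}_{S}}=\operatorname{pr}_{Y^{\zeta}_{S}}\txte^{(t-s)B}$, keep the nonlinearity difference in $Y_{\delta_{Y}}$ (its norm there is $\leq C\|v^{0}(s)-v^{0}_{\zeta}(s)\|_{Y_{1}}$ with $C$ independent of $\zeta$, by the Lipschitz bounds on $g$ and $h^{0}$), apply the parabolic smoothing $\|\txte^{\tau B}\|_{\mathcal{B}(Y_{\delta_{Y}},Y_{1})}\leq C_{B}\tau^{\delta_{Y}-1}\txte^{\omega_{B}\tau}$, and only then use that $\operatorname{pr}_{Y^{\zeta}_{S}}$ acts on $Y_{1}$ with $\zeta$-uniform norm. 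Together with $\|v^{0}(s)-v^{0}_{\zeta}(s)\|_{Y_{1}}\leq\|v^{0}_{F}(s)\|_{Y_{1}}+\|v^{0}_{S}(s)-v^{0}_{\zeta}(s)\|_{Y_{1}}$ this produces a weakly singular integral inequality for $t\mapsto\|v^{0}_{S}(t)-v^{0}_{\zeta}(t)\|_{Y_{1}}$ with forcing $C\sup_{[0,T]}\|v^{0}_{F}\|_{Y_{1}}$ and a $\zeta$-independent kernel constant, and Lemma~\ref{Lemma:Gronwall_Specific} (with $\epsilon=1$, $x=\omega_{B}$, $\gamma=\delta_{Y}$; after replacing $\txte^{\omega_{B}\tau}$ by $\txte^{\omega_{B}T}$ on $[0,T]$ when $\omega_{B}>0$) gives $\sup_{[0,T]}\|v^{0}_{S}-v^{0}_{\zeta}\|_{Y_{1}}\leq C_{T}\sup_{[0,T]}\|v^{0}_{F}\|_{Y_{1}}$.

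Combining the last two paragraphs, $\|v^{0}(t)-v^{0}_{\zeta}(t)\|_{Y_{1}}\leq\|v^{0}_{F}(t)\|_{Y_{1}}+\|v^{0}_{S}(t)-v^{0}_{\zeta}(t)\|_{Y_{1}}\leq C_{T}\sup_{[0,T]}\|v^{0}_{F}\|_{Y_{1}}$, which is bounded by the asserted right-hand side. The only genuinely delicate point is the bypass of the $\zeta^{\delta_{Y}-1}$ loss of assumption~(iii) in the $Y^{\zeta}_{S}$-estimate (achieved by routing through the parabolic smoothing $Y_{\delta_{Y}}\to Y_{1}$ and the uniform $Y_{1}$-boundedness of the spectral projection); the rest is the mild formulation combined with the interpolation-extrapolation semigroup estimates and the Gronwall-type lemmas of Section~\ref{sec:prelim}.
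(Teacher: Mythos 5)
Your proposal is correct and follows essentially the same route as the paper's proof: variation of constants, splitting the Duhamel contribution according to $Y=Y_F^{\zeta}\oplus Y_S^{\zeta}$, the fast-decay bound on $Y_F^{\zeta}$ to produce the factor $(\omega_B-\zeta^{-1}\omega_A-N_F^{\zeta})^{-\delta_Y}$, and the Lipschitz bounds on $g$ and $h^0$ combined with the $\mathcal{B}(Y_{\delta_Y},Y_1)$ smoothing (rather than assumption~(iii)) to get a $\zeta$-uniform kernel for Lemma~\ref{Lemma:Gronwall_Specific} --- exactly as in the paper, which runs a single Gronwall inequality for $\|v^0-v^0_{\zeta}\|_{Y_1}$ with the nonlinearity split inside the integral. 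Your only deviation is organizational: you first bound $\operatorname{pr}_{Y_F^{\zeta}}v^0$ separately and then compare $\operatorname{pr}_{Y_S^{\zeta}}v^0$ with $v^0_{\zeta}$, which yields the same estimate with the same ingredients.
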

\begin{proof}
	Variation of constants shows that
	\begin{align*}
		\|v^0(t&)-v^0_{\zeta}(t)\|_{Y_1}\leq M_B\txte^{(\zeta^{-1}\omega_A+N_F^{\zeta})t}\|\operatorname{pr}_{Y_F^{\zeta}}v_0\|_{Y_1}\\
		&\quad+L_gC_B\int_0^t\frac{\txte^{(\zeta^{-1}\omega_A+N_F^{\zeta})(t-s)}}{(t-s)^{\delta_Y}}\left(\|(h^0(v^0(s))\|_{X_1}+\|v^0(s)\|_{Y_1}\right)\,\txtd s\\
		&\quad+L_gC_B\int_0^t\frac{\txte^{\omega_B(t-s)}}{(t-s)^{\delta_Y}}\left(\|(h^0(v^0(s))-h^0(v^0_{\zeta}(s))\|_{X_1}+\|v^0(s)-v^0_{\zeta}(s)\|_{Y_1}\right)\,\txtd s\\
		&\leq C\txte^{\omega_B t}\left(\|\operatorname{pr}_{Y_F^{\zeta}}v_0\|_{Y_1}+\frac{\|v_0\|_{Y_1}}{(\omega_B-\zeta^{-1}\omega_A-N_F^{\zeta})^{\delta_Y}}\right)\\
		&\quad+\tfrac{L_F\|A^{-1}\|_{\mathcal{B}(X_{\delta_X-1},X_{\delta_X})}L_gC_B}{1-L_F\|A^{-1}\|_{\mathcal{B}(X_{\delta_X-1},X_{\delta_X}}}\int_0^t\frac{\txte^{\omega_B(t-s)}}{(t-s)^{\delta_Y}}\|v^0(s)-v^0_{\zeta}(s)\|_{Y_1}\,\txtd s
	\end{align*}
	Now the assertion follows from Lemma~\ref{Lemma:Gronwall_Specific}.
\end{proof}

\begin{corollary}\label{Cor:Convergence_of_Flows}
	Consider the situation of Proposition~\ref{Prop:Slow_Manifold:Attraction} For all $T>0$ there is a constant $C>0$ such that for all $t\in[0,T]$ and all $\epsilon, \zeta>0$ satisfying the usual assumptions  it holds that
	\begin{align*}
		\left\|\begin{pmatrix}u^{\epsilon}(t)-h^0(v^{0}_{\zeta}(t))\\ v^{\epsilon}(t)-v^{0}_{\zeta}(t) \end{pmatrix}\right\|_{Y_1}
		&\leq C\bigg(\|\operatorname{pr}_{Y_F^{\zeta}}v_0\|_{Y_1}+\big(\epsilon+\tfrac{1}{(\omega_B-\zeta^{-1}\omega_A-N_F^{\zeta})^{\delta_Y}}\big)\|v_0\|_{Y_1}\\
		&\qquad\qquad+(\epsilon^{\delta_Y}+\txte^{\epsilon^{-1}\omega_f t})\|u_0-h^0(v_0)\|_{X_1}\bigg).
	\end{align*}
	In particular, for initial values on the slow manifold it holds that
		\begin{align*}
		\left\|\begin{pmatrix}u^{\epsilon}(t)-h^0(v^{0}_{\zeta}(t))\\ v^{\epsilon}(t)-v^{0}_{\zeta}(t) \end{pmatrix}\right\|_{Y_1}\leq C\left(\epsilon+\tfrac{1}{(\omega_B-\zeta^{-1}\omega_A-N_F^{\zeta})^{\delta_Y}}+\tfrac{1}{(N_S^{\zeta}-N_F^{\zeta})^{\delta_Y}}\right)\|v_0\|_{Y_1}.
	\end{align*}
\end{corollary}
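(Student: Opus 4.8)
The plan is to derive the estimate entirely by triangle‑inequality bookkeeping from results already available: the comparison of the full flow with the (unreduced) slow flow in Corollary~\ref{Cor:Original_and_Slow_Flow_Close}, the comparison of the unreduced slow flow with the reduced slow flow in Proposition~\ref{Prop:Reduced_Slow_Flow}, and---for the second assertion---the distance estimate of Proposition~\ref{Prop:Distance_Critical_Manifold}. Since $t$ ranges over the compact interval $[0,T]$, every factor $\txte^{(\omega_B+c)t}$ occurring in Corollary~\ref{Cor:Original_and_Slow_Flow_Close} and Proposition~\ref{Prop:Reduced_Slow_Flow} is bounded by a constant depending only on $T$, which is what lets the exponential growth be absorbed into the final constant $C$.

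First I would write, with $(u^0,v^0)$ the solution of the unreduced slow subsystem so that $u^0(t)=h^0(v^0(t))$, the decompositions
\[
 u^{\epsilon}(t)-h^0(v^0_{\zeta}(t))=\big(u^{\epsilon}(t)-h^0(v^0(t))\big)+\big(h^0(v^0(t))-h^0(v^0_{\zeta}(t))\big),
\]
and correspondingly $v^{\epsilon}(t)-v^0_{\zeta}(t)=\big(v^{\epsilon}(t)-v^0(t)\big)+\big(v^0(t)-v^0_{\zeta}(t)\big)$, where $v^{\epsilon}=v^{\epsilon}_F+v^{\epsilon}_S$ recombines the solution of the split system. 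The pair of first summands is precisely $T_{\epsilon}(t)(u_0,v_0)^T-T_0(t)(h^0(v_0),v_0)^T$, so Corollary~\ref{Cor:Original_and_Slow_Flow_Close} bounds it by $C\big(\epsilon\txte^{(\omega_B+c)t}\|v_0\|_{Y_1}+(\epsilon^{\delta_Y}\txte^{(\omega_B+c)t}+\txte^{\epsilon^{-1}\omega_f t})\|u_0-h^0(v_0)\|_{X_1}\big)$. For $\|v^0(t)-v^0_{\zeta}(t)\|_{Y_1}$ I would invoke Proposition~\ref{Prop:Reduced_Slow_Flow} directly, and for $\|h^0(v^0(t))-h^0(v^0_{\zeta}(t))\|_{X_1}$ I would use the Lipschitz continuity of $h^0\colon Y_1\to X_1$ (established in the proof of Proposition~\ref{Prop:Fast_Slow_Well-Posed}~\eqref{Prop:Fast_Slow_Well-Posed:epsilon=0}) to reduce it to the same quantity. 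Since $\omega_B-\zeta^{-1}\omega_A-N_F^{\zeta}>0$, bounding the exponential prefactors on $[0,T]$ and collecting the four contributions yields the first displayed inequality of the corollary.

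For the ``in particular'' statement, assume $(u_0,v_0)\in S_{\epsilon,\zeta}$ and set $v_{0,S}:=\operatorname{pr}_{Y_S^{\zeta}}v_0$, so that $u_0=h^{\epsilon,\zeta}_{X_1}(v_{0,S})$ and $\operatorname{pr}_{Y_F^{\zeta}}v_0=h^{\epsilon,\zeta}_{Y_F^{\zeta}}(v_{0,S})$. Proposition~\ref{Prop:Distance_Critical_Manifold} then bounds $\|\operatorname{pr}_{Y_F^{\zeta}}v_0\|_{Y_1}$ by $C(N_S^{\zeta}-N_F^{\zeta})^{-\delta_Y}\|v_{0,S}\|_{Y_1}$; writing $u_0-h^0(v_0)=\big(h^{\epsilon,\zeta}_{X_1}(v_{0,S})-h^0(v_{0,S})\big)+\big(h^0(v_{0,S})-h^0(v_0)\big)$ and applying Proposition~\ref{Prop:Distance_Critical_Manifold} to the first term, together with the Lipschitz continuity of $h^0$ and the previous bound on the second, gives $\|u_0-h^0(v_0)\|_{X_1}\leq C\big(\epsilon+(N_S^{\zeta}-N_F^{\zeta})^{-\delta_Y}\big)\|v_{0,S}\|_{Y_1}$; and the same two estimates yield $\|v_0\|_{Y_1}\eqsim\|v_{0,S}\|_{Y_1}$ for $\zeta$ small. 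Feeding these into the first inequality and using $\txte^{\epsilon^{-1}\omega_f t}\leq1$ together with $\epsilon^{\delta_Y}\leq1$ to dominate the coefficient of $\|u_0-h^0(v_0)\|_{X_1}$ by an absolute constant collapses everything to $C\big(\epsilon+(\omega_B-\zeta^{-1}\omega_A-N_F^{\zeta})^{-\delta_Y}+(N_S^{\zeta}-N_F^{\zeta})^{-\delta_Y}\big)\|v_0\|_{Y_1}$.

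I do not expect a genuine obstacle, since the whole argument is organizational; the points that need care are: identifying the two components of the slow flow $T_0$ so that Corollary~\ref{Cor:Original_and_Slow_Flow_Close} simultaneously controls $u^{\epsilon}-h^0(v^0)$ and $v^{\epsilon}-v^0$; checking that the projection norms and the constants $C_B,M_B$ are uniform in the small parameter $\zeta$ so that the implicit constants do not degenerate; and, in the second part, confirming that the Lipschitz constant of $h^0$ and the constant of Proposition~\ref{Prop:Distance_Critical_Manifold} remain bounded under the standing assumptions, so that the three displayed terms really are the dominant ones.
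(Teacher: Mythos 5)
Your proposal is correct and follows essentially the same route as the paper: the first estimate is obtained by combining Corollary~\ref{Cor:Original_and_Slow_Flow_Close} with Proposition~\ref{Prop:Reduced_Slow_Flow} (your explicit use of the Lipschitz continuity of $h^0$ just spells out what the paper leaves implicit), and the second follows from the first together with Proposition~\ref{Prop:Distance_Critical_Manifold} and the triangle inequality, exactly as in the paper's proof.
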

\begin{proof}
	The first estimate is a combination of  Corollary~\ref{Cor:Original_and_Slow_Flow_Close} and Proposition~\ref{Prop:Reduced_Slow_Flow}. For the second estimate, we use the first estimate together with Proposition~\ref{Prop:Distance_Critical_Manifold} and the triangle inequality
	\begin{align*}
	\|\operatorname{pr}_{Y_F^{\zeta}}v_0\|_{Y_1}&\leq \|\operatorname{pr}_{Y_F^{\zeta}}v_0-h^{\epsilon,\zeta}_{Y_F^{\zeta}}(v_0)\|_{Y_1}+\|h^{\epsilon,\zeta}_{Y_F^{\zeta}}(v_0)\|_{Y_1},\\
	\|u_0-h^0(v_0)\|_{X_1}&\leq \|u_0-h^{\epsilon,\zeta}_{X_1}(v_0)\|_{X_1}+\|h^{\epsilon,\zeta}_{X_1}(v_0)-h^0(v_0)\|_{X_1}.
	\end{align*}
\end{proof}

\begin{remark}
\begin{enumerate}[(a)]
	\item Note that we do not need the existence of slow manifolds for the first estimate in Corollary~\ref{Cor:Convergence_of_Flows}.
	\item If the initial values are not on the slow manifold, then it looks like the term $\|\operatorname{pr}_{Y_F^{\zeta}}v_0\|_{Y_1}$ might prevent the trajectories of semiflow generated by the fast-slow system from converging to the ones of the reduced slow flow as $\epsilon,\zeta\to0$. However, sometimes it holds that $\|\operatorname{pr}_{Y_F^{\zeta}}v_0\|_{Y_1}\to 0$ as $\zeta\to0$ uniformly in $v_0$ running through certain sets. For example, if one takes $v_0$ from a bounded set in $Y_2$, then this will hold in many situations.
\end{enumerate}
\end{remark}

\section{Three Examples}
\label{sec:examples}

\subsection{The Spatial Stommel Model}
Now we apply our methods to a version of Stommel's box model for oceanic circulation in the North Atlantic (see \cite{Stommel_1961} and \cite[(6.2.4)]{Berglund_Gentz_2006}) in which we add diffusion in both variables. These equations are then given by
\begin{align}
	\begin{aligned}\label{Eq:Stommel}
	\epsilon \partial_t u^{\epsilon}&=\Delta u^{\epsilon}-u^{\epsilon}+1-\epsilon u^{\epsilon}[1+\eta^2((u^{\epsilon})^2-(w^{\epsilon})^2)],\\
	\partial_t w^{\epsilon}&= \Delta w^{\epsilon}+\mu- w^{\epsilon}[1+\eta^2((u^{\epsilon})^2-(w^{\epsilon})^2)],\\
	u^{\epsilon}(0)&=u_0,\quad w^{\epsilon}(0)=v_0,
	\end{aligned}
\end{align}
where $\mu,\eta>0$ are certain parameters. We study this system with periodic boundary conditions, i.e. on the $\mathbb{T}$, and partly also in $\mathbb{T}^n$ and $\R^n$, but we will carry out our arguments on the torus. For this example, we work with toroidal Bessel potential spaces $H^{s}_2(\mathbb{T}^n)$. The space $H^s_2(\mathbb{T}^n)$ with $s\geq0$ is defined as the space of all $f\in L_2(\mathbb{T}^n)$ such that
		\[
			\|f\|_{	H^s_2(\mathbb{T}^n) }:= \left\|x\mapsto  \sum_{k\in\Z^n} (1+|k|^2)^{\frac{s}{2}}\hat{f}(k)\txte^{ikx}\right\|_{L_2(\mathbb{T}^n)}<\infty,
		\]
				 where $\hat{f}(k)$ denotes the $k$-th Fourier coefficient. Let us formulate our main results for the diffusive Stommel model.
\begin{theorem}\label{Thm:Stommel:Approximation_Slow_Flow}
	Let $\mathbb{E}\in\{\mathbb{T},\mathbb{R}\}$, i.e. let $\mathbb{E}$ either be the torus or the real line. Let further $s\geq0$ and $\delta_Y\in(\tfrac{1}{2},1)$ such that $2s+4(1-\delta_Y)>n$ and let $T>0$ be fixed. We write $(u^{\epsilon},w^{\epsilon})$ for the strict solution of \eqref{Eq:Stommel} with $\epsilon>0$ and $(u^{0},w^{0})$ for corresponding slow flow. Then for all $R>0$ there are constants $\epsilon_0>0$ and $C,c>0$ such that that for all $\epsilon\in(0,\epsilon_0]$, $u_0\in H^{s+2}_2(\mathbb{E}^n)$ with $\|u_0\|_{H^{s+2}_2(\mathbb{E}^n)}\leq R$ and $v_0\in H^{s+2+2(1-\delta_Y)}_2(\mathbb{E}^n)$ with $\|u_0\|_{H^{s+2+2(1-\delta_Y)}_2(\mathbb{E}^n)}\leq R$ it holds that
	\[
		\sup_{0\leq t \leq T(R)}\big(\|u^{\epsilon}(t)-u^0(t)\|_{H^{s+2}_2(\mathbb{E}^n)}+\|w^{\epsilon}(t)-v^0(t)\|_{H^{s+2+2(1-\delta_Y)}_2(\mathbb{E}^n)}\big)\leq C(\epsilon^{\delta_Y}+\txte^{-c\epsilon^{-1}t}),
	\]
	where $T(R)$ is defined by
	\begin{align*}
		T(R):=\inf\big\{t\in[0,T]: \max\{&\|u^0(t)\|_{H^{s+2}_2(\mathbb{E}^n)},\|w^0(t)\|_{H^{s+2+2(1-\delta_Y)}_2(\mathbb{E}^n)},\\
		&\|u^{\epsilon}(t)\|_{H^{s+2}_2(\mathbb{E}^n)},\|w^{\epsilon}(t)\|_{H^{s+2+2(1-\delta_Y)}_2(\mathbb{E}^n)} \}> R\big\}.
	\end{align*}
\end{theorem}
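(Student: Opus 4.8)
The plan is to feed \eqref{Eq:Stommel} into the abstract machinery of Section~\ref{sec:genfs} and then read off the conclusion from Corollary~\ref{Cor:Original_and_Slow_Flow_Close}; the only steps that are not a direct translation are (a) a localisation that tames the cubic nonlinearity and (b) the verification, via Sobolev multiplication estimates, that the localised system meets assumptions (i)--(vii) of Section~\ref{Section:General_Fast_Slow:Full_System}. For the functional setting I would take $X:=H^s_2(\mathbb{E}^n)$, $Y:=H^{s+2(1-\delta_Y)}_2(\mathbb{E}^n)$ and $A=B:=\Delta-1$, so that $X_1=D(A)=H^{s+2}_2(\mathbb{E}^n)$ and $Y_1=D(B)=H^{s+2+2(1-\delta_Y)}_2(\mathbb{E}^n)$, which is exactly the regularity of the initial data in the statement. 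On Bessel potential spaces $\Delta-1$ generates an exponentially stable holomorphic $C_0$-semigroup with growth bound $\omega_A=\omega_B=-1<0$ and the required smoothing estimates, so (ii), (iii), (vi) hold with $\delta_Y\in(\tfrac12,1)$ and $\gamma_X,\delta_X$ chosen compatibly with the multiplication estimates below. Since $0=(\Delta-1)u+1$ is solved only by $u\equiv1$, the fibre $h^0$ of the critical manifold is constant, and after the affine change of variables $\tilde u:=u-1$ (so that in the $\mathbb{R}^n$-case one works in the affine space $1+H^{s+2}_2$) the system takes the standard form $\epsilon\partial_t\tilde u=A\tilde u+f^{\epsilon}(\tilde u,w)$, $\partial_tw=Bw+g(\tilde u,w)$ with $f^{\epsilon}(\tilde u,w)=-\epsilon(\tilde u+1)\bigl[1+\eta^2((\tilde u+1)^2-w^2)\bigr]$ and $g(\tilde u,w)=\mu-\eta^2w\bigl((\tilde u+1)^2-w^2\bigr)$; the factor $\epsilon$ in front of $f^{\epsilon}$ will be essential.

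Next I would fix $R>0$ and let $\chi\in C^{\infty}_c([0,\infty);[0,1])$ equal $1$ on $[0,2R]$ and $0$ on $[3R,\infty)$, and replace $f^{\epsilon},g$ by $f^{\epsilon}_R(\tilde u,w):=\chi\bigl(\|\tilde u\|_{X_1}+\|w\|_{Y_1}\bigr)f^{\epsilon}(\tilde u,w)$ and the analogous $g_R$. Because $2s+4(1-\delta_Y)>n$ forces $H^{s+2(1-\delta_Y)}_2(\mathbb{E}^n)\hookrightarrow L_{\infty}(\mathbb{E}^n)$ and makes the relevant products continuous on the Bessel scale (via $H^{\sigma_1}_2\cdot H^{\sigma_2}_2\hookrightarrow H^{\min\{\sigma_1,\sigma_2\}}_2$ once the smaller index exceeds $n/2$), the truncated polynomials satisfy the mapping and Lipschitz requirements of assumption (iv), including the $C^1$-in-time bounds, which here are immediate since $h^0$ is constant. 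Moreover the truncation multiplies only by a bounded cutoff, so $L_{f^{\epsilon}_R}\le C(R)\,\epsilon$ while $L_{g_R}\le C(R)$. Hence $\omega_f=\omega_A+(2C_AL_{f^{\epsilon}_R})^{1/\gamma_X}(\tfrac1{\gamma_X})^{\frac{1-\gamma_X}{\gamma_X}}\to\omega_A=-1$ as $\epsilon\to0$, so there is $\epsilon_0=\epsilon_0(R)\in(0,1]$ with $\omega_f<0$ and $L_{f^{\epsilon}_R}\max\{\|A^{-1}\|_{\mathcal{B}(X_{\gamma_X},X_1)},\|A^{-1}\|_{\mathcal{B}(X_{\delta_X-1},X_{\delta_X})}\}<1$ for all $\epsilon\le\epsilon_0$; that is, \eqref{Eq:Weak_Normal_Hyperbolicity} holds, and all of (i)--(vii) are in force for the truncated system.

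Then Proposition~\ref{Prop:Fast_Slow_Well-Posed} gives global strict solutions of the truncated system, and Corollary~\ref{Cor:Original_and_Slow_Flow_Close} yields, for $\epsilon\le\epsilon_0$ and $t\in[0,T]$,
\[
 \left\|T_{\epsilon}(t)\begin{pmatrix}\tilde u_0\\ v_0\end{pmatrix}-T_0(t)\begin{pmatrix}0\\ v_0\end{pmatrix}\right\|_{X_1\times Y_1}\le C\bigl(\epsilon\,\txte^{(\omega_B+c_0)t}\|v_0\|_{Y_1}+(\epsilon^{\delta_Y}\txte^{(\omega_B+c_0)t}+\txte^{\epsilon^{-1}\omega_f t})\|\tilde u_0\|_{X_1}\bigr),
\]
where $c_0$ denotes the constant $c$ from that corollary. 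On $[0,T]$ the factor $\txte^{(\omega_B+c_0)t}$ is bounded by $\txte^{(\omega_B+c_0)T}$; using $\|\tilde u_0\|_{X_1}\le R$, $\|v_0\|_{Y_1}\le R$, $\epsilon\le\epsilon^{\delta_Y}$ (as $\delta_Y<1$, $\epsilon\le1$) and $\omega_f<0$, the right-hand side is $\le C(R,T)\bigl(\epsilon^{\delta_Y}+\txte^{-c\epsilon^{-1}t}\bigr)$ with $c:=-\omega_f>0$. Finally, as long as the norms of the true Stommel solutions $(u^{\epsilon},w^{\epsilon})$ and $(u^0,w^0)$ stay below $R$ — which by definition of $T(R)$ holds on $[0,T(R)]$ — the truncated nonlinearities coincide with the originals, so by uniqueness and a standard continuation argument the solutions of the truncated system equal the true ones on $[0,T(R)]$. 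Translating back from $\tilde u$ to $u$ and recalling $u^0\equiv h^0\equiv1$ gives the claimed estimate on $[0,T(R)]$.

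The main obstacle I expect is the second step: verifying that the truncated cubic nonlinearities satisfy assumption (iv) on the chosen Bessel scale — this is precisely where the dimensional restriction $2s+4(1-\delta_Y)>n$ is used, through the multiplication estimates above — together with the bookkeeping that keeps the factor $\epsilon$ inside $L_{f^{\epsilon}_R}$ so that $\omega_f<0$ (hence $\cS_{\epsilon,\zeta}$-free, weak-hyperbolicity applicability) is retained uniformly for $\epsilon\le\epsilon_0(R)$, and the mild-solution/uniqueness argument that identifies truncated and true flows up to the stopping time $T(R)$; everything else is a direct application of Section~\ref{sec:genfs}.
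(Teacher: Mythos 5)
Your overall strategy (recast \eqref{Eq:Stommel} in the framework of Section~\ref{Section:General_Fast_Slow:Full_System} with $X=H^s_2$, $Y=H^{s+2(1-\delta_Y)}_2$, verify assumption (iv) via the multiplication property guaranteed by $2s+4(1-\delta_Y)>n$, cut off to get global Lipschitz bounds, apply Corollary~\ref{Cor:Original_and_Slow_Flow_Close}, and un-truncate up to the stopping time $T(R)$) is exactly the paper's, but there are two genuine gaps in how you reduce the Stommel model to that framework. First, you keep the small term as an explicitly $\epsilon$-dependent nonlinearity $f^{\epsilon}$, and then apply Corollary~\ref{Cor:Original_and_Slow_Flow_Close} to the truncated system. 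The corollary then compares $T_{\epsilon}$ with the slow flow \emph{of that system}, whose critical manifold fibre $h^{0,\epsilon}$ solves $(\Delta-1)\tilde u=\epsilon(\tilde u+1)[1+\eta^2((\tilde u+1)^2-w^2)]$ (with cutoffs) and is therefore $O(\epsilon)$ but not $0$; the associated reduced $w$-dynamics uses $g(h^{0,\epsilon}(w),w)$, not $g(0,w)$. Your closing step ``recalling $u^0\equiv h^0\equiv1$'' is thus false for the system you actually fed into the corollary: what you obtain is closeness to an $\epsilon$-dependent slow flow, and you still owe an $O(\epsilon)$ comparison (fibres differ by $O(\epsilon)$ uniformly on $\{\|w\|_{Y_1}\le R\}$, reduced vector fields differ by $O(\epsilon)$, then Gronwall on $[0,T]$) between that flow and the genuine Stommel slow flow $(1,w^0)$. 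This is fixable but missing; the paper sidesteps it structurally by encoding $\epsilon$ through a dummy variable $z_1$ with $z_1(0)=\sqrt{\epsilon}$, $\partial_t z_1=0$, so that the abstract nonlinearity is $\epsilon$-independent, and then handles the residual initial-data discrepancy by a triangle-inequality argument.

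Second, assumption (v) of Section~\ref{Section:General_Fast_Slow:Full_System} fails for your $g$: after your shift, $g(\tilde u,w)=\mu-\eta^2 w((\tilde u+1)^2-w^2)$ has $g(0,0)=\mu\neq0$, and your norm-cutoff does not repair this. The hypotheses of Proposition~\ref{Prop:Fast_Slow_Well-Posed}, Theorem~\ref{Thm:Original_and_Modified_Flow_Close} and Corollary~\ref{Cor:Original_and_Slow_Flow_Close}, as stated, are therefore not met; the a priori bounds (e.g.\ the step $\|f(0,v^0)\|\le L_f\|v^0\|$ and the bound on $\|v^0\|_{C^1}$ in terms of $\|v_0\|_{Y_1}$) would acquire inhomogeneous constant terms that you would have to track, or you must remove the constants beforehand. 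This is precisely what the paper's dummy variables $z_2=M$, $z_3=\mu$ (with the $\tfrac1M z_2$ device also keeping $L_f$ small) are for. Moreover, in the case $\mathbb{E}=\R$ the constant $\mu$ does not even lie in $Y_{\delta_Y}=H^{s+2}_2(\R^n)$, so without some such modification your $g$ fails the mapping requirement of assumption (iv) there. Both points are repairable (add dummy variables or shift by a steady state of the slow equation, and insert the extra slow-flow comparison), but as written the proposal asserts the conclusion of Corollary~\ref{Cor:Original_and_Slow_Flow_Close} for a system that does not satisfy its hypotheses and identifies the wrong slow flow in the final step.
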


\begin{theorem}\label{Thm:Stommel:Slow_Manifolds}
	Let $n=1$, $s\geq0$ and $\delta_Y\in(\tfrac{1}{2},1)$ such that $2s+4(1-\delta_Y)>1$ and let $T>0$ be fixed. Then for all $R>0$ there are $\zeta_0>0$ and a family of finite-dimensional slow manifolds $S_{\epsilon,\zeta}\subset H^{s+2}_2(\mathbb{T})\times H^{s+2+2(1-\delta_Y)}_2(\mathbb{T}) $ with $0<\zeta\leq\zeta_0$ and $0<\epsilon\leq c\frac{(L_fC_A\Gamma(\gamma_X))^{1/\gamma_X}+\omega_A}{\omega_A}\zeta$ for constants $\omega_f,\omega_A$ which we define later and some $c\in(0,1)$ such that the following assertions hold:
	\begin{enumerate}[(a)]
		\item For each $\zeta\in(0,\zeta_0]$ there is a splitting
			\[
				H^{s+2(1-\delta_Y)}_2(\mathbb{T})=Y_F^{\zeta}\oplus Y_S^{\zeta},
			\]
			where $Y_S^{\zeta}$ is the projection of $H^{s+2(1-\delta_Y)}_2(\mathbb{T})$ to the $k$-th Fourier modes with $|k|$ being smaller than a certain number $k(\zeta)$ depending on $\zeta$. $Y_F^{\zeta}$ is to projection to the remaining Fourier modes.
		\item Let $B_{Y_S^{\zeta}}(0,R)$ be defined as
		\[
				B_{Y_S^{\zeta}}(0,R):=\{ f\in Y_S^{\zeta}: \|f\|_{H^{s+2+2(1-\delta_Y)}_2(\mathbb{T})}< R\}	.
		\]
		Then $S_{\epsilon,\zeta}$ is given as the graph of a differentiable mapping
			\begin{align*}
				h^{\epsilon,\zeta}&\colon (B_{Y_S^{\zeta}}(0,R),\|\cdot\|_{H^{s+2+2(1-\delta_Y)}_2(\mathbb{T})})\\
				&\qquad\qquad\to H^{s+2}_2(\mathbb{T})\times (Y_F^{\zeta}\cap H^{s+2+2(1-\delta_Y)}_2(\mathbb{T}),\|\cdot\|_{H^{s+2+2(1-\delta_Y)}_2(\mathbb{T})}).
			\end{align*}
		\item $S_{\epsilon,\zeta}$ is locally invariant under the semiflow generated by \eqref{Eq:Stommel}, i.e. the semiflow can only leave $S_{\epsilon,\zeta}$ through its boundary.
		\item Let 
			\[
				S_{0,\zeta}:=\{ (u,w)\in S_0: w\in B_{Y_S^{\zeta}}(0,R) \}
			\]
			be the submanifold of the critical manifold which consists of all points whose slow components are elements of $B_{Y_S^{\zeta}}(0,R)$. Then there is a constant $C>0$ depending on $R$ such that
			\[
				\operatorname{dist}(S_{\epsilon,\zeta},S_{0,\zeta})\leq C(\epsilon+\zeta^{\delta_Y/2})\leq C\zeta^{\delta_Y/2}.
			\]
		\item \label{Thm:Stommel:Slow_Manifolds:Flow_Convergence} Suppose that $\|u_0\|_{H^{s+2}_2(\mathbb{T})}\leq R$, $\|v_0\|_{H^{s+2+2(1-\delta_Y)}_2(\mathbb{T})}\leq R$, $\|h^0(v_0)\|_{H^{s+2}_2(\mathbb{T})}\leq R$ and let $(u^0_{\zeta},w^0_{\zeta})$ be the solution of the truncated slow subsystem of the diffusive Stommel model given by 
		\begin{align}
	\begin{aligned}\label{Eq:Stommel_truncated}
	0&=\Delta u^0_{\zeta}-u^0_{\zeta}+1,\\
	\partial_t w^0_{\zeta}&= \operatorname{pr}_{Y_S^{\zeta}}\left[\Delta w^0_{\zeta}+\mu- w^{\epsilon}[1+\eta^2((u^{0}_{\zeta})^2-(w^0_{\zeta})^2)]\right],\\
	u^{0}_{\zeta}&=h^0(\operatorname{pr}_{Y_S^{\zeta}}v_0),\quad w^0_{\zeta}(0)=\operatorname{pr}_{Y_S^{\zeta}}v_0.
	\end{aligned}
\end{align}
Assume that $(u_0,v_0)\in S_{\epsilon,\zeta}$. Then for each $T>0$ there is a constant $C>0$ such that
\[
		\sup_{0\leq t \leq T(R)}\big(\|u^{\epsilon}(t)-u^0_{\zeta}(t)\|_{H^{s+2}_p(\mathbb{T})}+\|w^{\epsilon}(t)-w^{0}_{\zeta}(t)\|_{H^{s+2+2(1-\delta_Y)}_p(\mathbb{T})}\big)\leq C\zeta^{\delta_Y/2},
	\]
	where $T(R)$ is defined by
	\begin{align*}
		T(R):=\inf\big\{t\in[0,T]: \max\{&\|u^0_{\zeta}(t)\|_{H^{s+2}_p(\mathbb{T})},\|w^0_{\zeta}(t)\|_{H^{s+2+2(1-\delta_Y)}_p(\mathbb{T})},\\
		&\|u^{\epsilon}(t)\|_{H^{s+2}_p(\mathbb{T})}, \|w^{\epsilon}(t)\|_{H^{s+2+2(1-\delta_Y)}_p(\mathbb{T})} \}> R\big\}.
	\end{align*}
	\end{enumerate}
\end{theorem}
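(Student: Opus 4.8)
The plan is to cast \eqref{Eq:Stommel} with $n=1$ and $\mathbb{E}=\mathbb{T}$ into the abstract framework of Section~\ref{Section:General_Fast_Slow:Full_System}, to produce the decomposition required in Section~\ref{Section:Bates_Difficult:Our_Approach} by a Fourier cut, and then to read off (a)--(e) from Proposition~\ref{Prop:Slow_Manifold_Existence}, the subsequent proposition on Lipschitz continuity of $h^{\epsilon,\zeta}$, Proposition~\ref{Prop:Slow_Manifolds:Differentiability}, Proposition~\ref{Prop:Distance_Critical_Manifold}, Proposition~\ref{Prop:Slow_Manifold:Attraction} and Corollary~\ref{Cor:Convergence_of_Flows}. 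For the functional-analytic setup I would take $X:=H^s_2(\mathbb{T})$, $Y:=H^{s+2(1-\delta_Y)}_2(\mathbb{T})$ and $A=B:=\Delta-1$. Both operators are sectorial and generate exponentially stable holomorphic $C_0$-semigroups with $\|\txte^{tA}\|_{\mathcal{B}(X_1)}\leq\txte^{-t}$ and likewise for $B$, so $\omega_A=\omega_B=-1$ is admissible and one may take $\gamma_X\in(0,1)$, $\delta_X=1$, $\delta_Y\in(\tfrac12,1)$; the interpolation--extrapolation scales are, up to equivalence of norms, the shifted Bessel potential scales, so that $X_1=H^{s+2}_2(\mathbb{T})$, $Y_1=H^{s+2+2(1-\delta_Y)}_2(\mathbb{T})$ and $Y_{\delta_Y}=H^{s+2}_2(\mathbb{T})$, matching the spaces in the statement. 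Since $(1-\Delta)^{-1}$ maps constants to constants, $h^0$ is the constant $1$ and the critical manifold is flat; I would then localize and translate: fix an equilibrium of the full system (which exists and is $\epsilon$-close to $(1,\bar w_0)$ by the implicit function theorem), shift it to the origin so that $f(0,0)=g(0,0)=0$, and multiply the polynomial nonlinearities by a smooth cut-off equal to $1$ on $\{\|u\|_{X_1}+\|w\|_{Y_1}\leq\kappa R\}$ and supported in $\{\leq 2\kappa R\}$. The resulting nonlinearities still carry the factor $\epsilon$ in the fast equation, and the Sobolev multiplication estimates on $\mathbb{T}$ --- in particular that $H^{s+2(1-\delta_Y)}_2(\mathbb{T})$ is a Banach algebra, which is exactly where $2s+4(1-\delta_Y)>n=1$ is used --- give the mapping properties, the global Lipschitz bounds, and the differentiability bounds \eqref{Eq:Nonlinearities_Differentiable} with $L_f=O(\epsilon)$ and $L_g=O(1)$ (constants depending on $R$ and $\eta$). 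This verifies all standing hypotheses of Section~\ref{Section:General_Fast_Slow:Full_System}; note $\omega_f=\omega_A+O(\epsilon^{1/\gamma_X})<0$ for $\epsilon$ small.

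Next I would build the splitting. On $\mathbb{T}$, $B=\Delta-1$ has eigenvalues $-1-k^2$ with eigenfunctions $\txte^{ikx}$; fix $\rho\in(0,1)$, set $k(\zeta):=\lfloor\sqrt{\rho}\,\zeta^{-1/2}\rfloor$, let $Y_S^{\zeta}$ be the span of the modes with $|k|\leq k(\zeta)$ (finite-dimensional) and $Y_F^{\zeta}$ the closed span of the remaining modes, with the associated Fourier projections. These projections commute with $B$ and are bounded on every scale space, $B_{Y_F^{\zeta}}$ has $0$ in its resolvent (its spectrum lies in $(-\infty,-1-(k(\zeta)+1)^2]$) and $B_{Y_S^{\zeta}}$ generates a group, so conditions (i)--(v) hold. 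The semigroup bounds (vi) hold with $N_F^{\zeta}=\zeta^{-1}-1-(k(\zeta)+1)^2\geq 0$ and $N_S^{\zeta}=\zeta^{-1}-1-k(\zeta)^2$, so that $0\leq N_F^{\zeta}<N_S^{\zeta}$, $N_S^{\zeta}-N_F^{\zeta}=2k(\zeta)+1\sim\zeta^{-1/2}$ and $N_S^{\zeta}+N_F^{\zeta}\sim 2(1-\rho)\zeta^{-1}$. For the gap condition \eqref{Eq:Gap_Condition}, the middle term is $O((N_S^{\zeta}-N_F^{\zeta})^{-\delta_Y})=O(\zeta^{\delta_Y/2})\to 0$; the last term is $O(\zeta^{\delta_Y-1}/(N_S^{\zeta}-N_F^{\zeta}))=O(\zeta^{\delta_Y-1/2})\to 0$, which is precisely where $\delta_Y>\tfrac12$ is needed; and the first term has denominator $2(\epsilon\zeta^{-1}-1)\omega_A+\epsilon(N_S^{\zeta}+N_F^{\zeta})$, which stays bounded below by a positive constant once $\epsilon/\zeta$ is below the threshold $c\frac{(L_fC_A\Gamma(\gamma_X))^{1/\gamma_X}+\omega_A}{\omega_A}$ in the statement, so it is $O(L_f)=O(\epsilon)\to 0$. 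Hence \eqref{Eq:Gap_Condition} --- and with it all of (i)--(vii) --- holds for all sufficiently small $\zeta$ and all admissible $\epsilon$.

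It then remains to invoke the abstract machinery. Proposition~\ref{Prop:Slow_Manifold_Existence} and the Lipschitz proposition give the Lipschitz map $h^{\epsilon,\zeta}$ on $B_{Y_S^{\zeta}}(0,R)$ and hence $S_{\epsilon,\zeta}$ as a graph over the finite-dimensional $Y_S^{\zeta}$ --- this is (a) and the existence part of (b) --- and Proposition~\ref{Prop:Slow_Manifolds:Differentiability}, applicable by \eqref{Eq:Nonlinearities_Differentiable}, upgrades it to differentiability, giving (b). Local invariance of $S_{\epsilon,\zeta}$ under the semiflow of the cut-off system is part of the Lyapunov--Perron construction, and since the cut-off system agrees with \eqref{Eq:Stommel} on $\{\|u\|_{X_1}+\|w\|_{Y_1}\leq\kappa R\}$, the portion of $S_{\epsilon,\zeta}$ over $B_{Y_S^{\zeta}}(0,R)$ can be left by the original semiflow only through its boundary, which is (c). Proposition~\ref{Prop:Distance_Critical_Manifold} yields $\operatorname{dist}(S_{\epsilon,\zeta},S_{0,\zeta})\leq C(\epsilon+(N_S^{\zeta}-N_F^{\zeta})^{-\delta_Y})R\leq C(\epsilon+\zeta^{\delta_Y/2})$, and since $\epsilon=O(\zeta)=o(\zeta^{\delta_Y/2})$ this is $\leq C\zeta^{\delta_Y/2}$, which is (d). Finally, for the present $A,B,f,g$ the reduced slow subsystem \eqref{Eq:Reduced_Slow_Subsystem} is exactly \eqref{Eq:Stommel_truncated}, so Corollary~\ref{Cor:Convergence_of_Flows} (together with Proposition~\ref{Prop:Slow_Manifold:Attraction} for initial data on the manifold) bounds the left-hand side of (e) by $C(\epsilon+(\omega_B-\zeta^{-1}\omega_A-N_F^{\zeta})^{-\delta_Y}+(N_S^{\zeta}-N_F^{\zeta})^{-\delta_Y})\|v_0\|_{Y_1}$; since $\omega_B-\zeta^{-1}\omega_A-N_F^{\zeta}\sim\rho\zeta^{-1}$ this is $O(\epsilon+\zeta^{\delta_Y}+\zeta^{\delta_Y/2})\cdot R$, hence $\leq C\zeta^{\delta_Y/2}$ on $[0,T(R)]$, where the cut-off and original trajectories coincide.

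The main obstacle is the simultaneous verification of (i)--(vii), and within it the gap condition \eqref{Eq:Gap_Condition}: one has to choose the cut level $k(\zeta)$ so that all three summands shrink at once while keeping $\epsilon/\zeta$ under the threshold, and this works only because (1) the spectral gaps of $\Delta$ on $\mathbb{T}$ near the eigenvalue $-\lambda$ grow like $\sqrt{\lambda}$, which both forces the final rate $\zeta^{\delta_Y/2}$ and makes the hypothesis $\delta_Y>\tfrac12$ unavoidable, and (2) the explicit factor $\epsilon$ in the fast equation of \eqref{Eq:Stommel} makes $L_f$ small and thereby tames the first term of \eqref{Eq:Gap_Condition}, which the spectral gaps alone leave untouched. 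The remaining, more routine, difficulty is the book-keeping of Bessel-potential multiplication estimates needed to fit the polynomial nonlinearities (and their time derivatives along $C^1$-curves) into the abstract Lipschitz hypotheses, which is the role of the constraint $2s+4(1-\delta_Y)>n$.
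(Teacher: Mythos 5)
Your overall strategy is the one the paper itself follows: recast \eqref{Eq:Stommel} in the abstract framework of Section~\ref{Section:General_Fast_Slow:Full_System}, build the splitting of Section~\ref{Section:Bates_Difficult:Our_Approach} by a Fourier cut at $k(\zeta)\sim\zeta^{-1/2}$ so that $N_S^{\zeta}-N_F^{\zeta}\gtrsim\zeta^{-1/2}$, check the gap condition \eqref{Eq:Gap_Condition} (with $\delta_Y>\tfrac12$ entering exactly where you say), and then read off (a)--(e) from Proposition~\ref{Prop:Slow_Manifold_Existence}, the Lipschitz statement, Proposition~\ref{Prop:Slow_Manifolds:Differentiability}, Proposition~\ref{Prop:Distance_Critical_Manifold} and Corollary~\ref{Cor:Convergence_of_Flows}. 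Where you genuinely deviate is the normalization $f(0,0)=g(0,0)=0$ and the smallness of $L_f$: the paper introduces dummy slow variables with frozen values $(\sqrt{\epsilon},M,\mu)$, so the abstract nonlinearities are $\epsilon$-independent and $L_f$ is made small via the cutoff $\psi_\sigma$ and a large $M$, whereas you shift coordinates to an $\epsilon$-dependent equilibrium and keep the explicit factor $\epsilon$ inside $f$, obtaining $L_f=O(\epsilon)$. This buys you a cleaner verification of \eqref{Eq:Gap_Condition} and avoids the dummy variables, and it is compatible with the abstract results since all smallness conditions only improve as $L_f\to0$.

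Two points in your route need repair, though. First, because you retain $\epsilon$ inside $f$, the abstract critical manifold $h^0$ and the reduced slow subsystem \eqref{Eq:Reduced_Slow_Subsystem} are themselves $\epsilon$-dependent (the algebraic constraint is $0=\Delta u-u+1-\epsilon u[1+\eta^2(u^2-w^2)]$, not $0=\Delta u-u+1$), so your claim that \eqref{Eq:Reduced_Slow_Subsystem} ``is exactly'' \eqref{Eq:Stommel_truncated} --- and, likewise, that Proposition~\ref{Prop:Distance_Critical_Manifold} directly measures the distance to the $S_{0,\zeta}$ of the statement --- is not literally correct; you need an additional $O(\epsilon)$ comparison between the $\epsilon$-dependent and the true ($\epsilon=0$) critical manifold and the corresponding slow flows (a routine Gronwall argument on $[0,T]$), harmless for the rate $\zeta^{\delta_Y/2}$ but necessary to state. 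The paper faces the analogous mismatch (dummy initial value $\sqrt{\epsilon}$ versus $0$) and bridges it explicitly by a well-posedness/triangle-inequality remark. Second, your shift relies on an equilibrium of the full system ``by the implicit function theorem'' without any nondegeneracy discussion; for \eqref{Eq:Stommel} one can take a spatially constant equilibrium continued from a simple root of the cubic $\mu=w(1+\eta^2)-\eta^2w^3$, which always exists, but as written the step is unjustified --- and the dummy-variable device shows one never needs an equilibrium at all. Finally, condition (iii) of Section~\ref{Section:Bates_Difficult:Our_Approach}, i.e.\ the Plancherel bound $\|\operatorname{pr}_{Y_S^{\zeta}}\|_{\mathcal{B}(Y_{\delta_Y},Y_1)}\leq\zeta^{\delta_Y-1}$, should be verified explicitly; it is not subsumed in ``the projections are bounded on every scale space''.
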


\begin{remark}
\begin{enumerate}[(a)]
	\item In both theorems the condition $2s+4(1-\delta_Y))>n$ is not essential, but cutoff techniques would get more tedious without this assumption. This condition has the advantage that the nonlinearities are already well-defined and locally Lipschitz continuous in the spaces we work with later on without having to cut them off. Cutoff techniques are then only required to turn local Lipschitz continuity into global Lipschitz continuity.
	\item In Theorem~\ref{Thm:Stommel:Approximation_Slow_Flow} we may allow $\R^n$ or $\mathbb{T}^n$ as underlying domains, as its proof only uses the results of Section~\ref{sec:genfs}, which do not require spectral gaps in the slow variable. Both cases can essentially be derived in the same way and thus, we only explain how to derive Theorem~\ref{Thm:Stommel:Approximation_Slow_Flow}  for $\mathbb{E}=\mathbb{T}$. For $\mathbb{E}=\R$ we would have to replace the integers $\Z$ in the Fourier image by the real numbers $\R$, Fourier series by inverse Fourier transforms and Fourier coefficients by the Fourier transform.
	\item In Theorem~\ref{Thm:Stommel:Slow_Manifolds} we can not replace $\mathbb{T}$ by $\R$, $\R^n$ or $\mathbb{T}^n$ with $n\geq2$, as there are no or only small spectral gaps. Hence, we would not be able to construct slow manifolds or we would not obtain the same convergence results for small $\zeta$.
	\item We could also work in $H^s_p$ with $p\neq2$. But then the proofs would be more complicated since we would have to use Fourier multiplier theorems instead of just Plancherel's theorem.
	\end{enumerate}
\end{remark}
Now we show how our general theory can be applied to derive Theorem~\ref{Thm:Stommel:Approximation_Slow_Flow} and Theorem~\ref{Thm:Stommel:Slow_Manifolds}.

In order to remove constants in the nonlinear terms, we introduce the dummy variable $\tilde{w}$ which takes values in $\R^3$ and satisfies
\[
	\partial_t\tilde{w}^{\epsilon}=0,\quad \tilde{w}^{\epsilon}(0)=(\sqrt{\epsilon},M,\mu)
\]
for some $M>0$ that we choose later. We make the following choices:
\begin{itemize}
	\item The fast variable is given by $u^{\epsilon}$. The slow variable is given by $v^{\epsilon}=(w^{\epsilon},\tilde{w}^{\epsilon}_1,\tilde{w}^{\epsilon}_2,\tilde{w}^{\epsilon}_3)$. As underlying spaces we choose $X=H^{s}_2(\mathbb{T}^n)$ and $Y=H^{s+2(1-\delta_Y)}_2(\mathbb{T}^n)\times\R^3$ such that $2s+4(1-\delta_Y))>n$.
	\item The linear operator in the fast variable is given by
		\[
			A:H^s_2(\mathbb{T}^n)\supset H^{s+2}_2(\mathbb{T}^n)\to H^s_2(\mathbb{T}^n),\,u\mapsto \Delta u-u.
		\]
The linear operator in the slow variable is given by
		\begin{align*}
			B&:H^{s+2(1-\delta_Y)}_2(\mathbb{T}^n)\times \R^3\supset H^{s+2+2(1-\delta_Y)}_2(\mathbb{T}^n)\times\R^3\to H^{s+2(1-\delta_Y)}_2(\mathbb{T}^n)\times\R^3,\\
			&\qquad\qquad(v,z_1,z_2,z_3)^T\mapsto (\Delta v_1, -z_1,-z_2,-z_3)^T
		\end{align*}	
		for some $\delta_Y\in(\tfrac{1}{2},1)$; we compensate the terms $z_j\mapsto -z_j$ from the linear part by inserting maps $z_j\mapsto z_j$ in the nonlinear part defined below.
	\item The Banach scales are given by $$X_{\alpha}=H^{s+2\alpha}_2(\mathbb{T}^n)\quad\text{and}\quad Y_{\alpha}=H^{s+2(1-\delta_Y)+2\alpha}_2(\mathbb{T}^n)\times\R^3.$$
	\item We have already chosen $\delta_Y\in(\tfrac{1}{2},1)$. Moreover, we take  $\gamma_X=1-\delta_Y$ and $\delta_X=1$. Thus, we have to define continuous nonlinearities
	\begin{align*}
		f\colon X_1\times Y\to X,\quad g\colon X_1\times Y_{1}\to Y_{\delta_Y}
	\end{align*}
	satisfying the Lipschitz conditions
	 \begin{align*}
 	\|f(x_1,y_1)-f(x_2,y_2) \|_{X_{1-\delta_Y}}&\leq L_f\big(\|x_1-x_2\|_{X_1}+\|y_1-y_2\|_{Y_1}\big),\\
 	\|f(u_1,v_1)-f(u_2,v_2)\|_{C^1([0,t];X)}&\leq L_f\big(\|u_1-u_2\|_{C^1([0,t];X_{1})}\\
 	&\qquad\qquad+\|v_1-v_2\|_{C^1([0,t];Y)}\big),\\
 	\|g(x_1,y_1)-g(x_2,y_2) \|_{Y_{\delta_Y}}&\leq L_g\big(\|x_1-x_2\|_{X_1}+\|y_1-y_2\|_{Y_1}\big)
 \end{align*}
 With our choices of spaces this translates into 
 	\begin{align}
 	\begin{aligned}\label{Eq:Stommel:Nonlinearity_Condition1}
		f&\colon H^{s+2}_2(\mathbb{T}^n)\times H^{s+2(1-\delta_Y)}_2(\mathbb{T}^n)\times\R^3\to H^{s}_2(\mathbb{T}^n),\\
		g&\colon H^{s+2}_2(\mathbb{T}^n)\times H^{s+2+2(1-\delta_Y)}_2(\mathbb{T}^n)\times\R^3\to H^{s+2}_2(\mathbb{T}^n)\times \R^3
		\end{aligned}
	\end{align}
	and 
		 \begin{align}
		 \begin{aligned}\label{Eq:Stommel:Nonlinearity_Condition2}
 	&\|f(x_1,y_1)-f(x_2,y_2) \|_{H^{s+2(1-\delta_Y)}_2(\mathbb{T}^n)}\\
 	&\qquad\qquad\qquad\leq L_f\big(\|x_1-x_2\|_{H^{s+2}_2(\mathbb{T}^n)}+\|y_1-y_2\|_{H^{s+2+2(1-\delta_Y)}_2(\mathbb{T}^n)\times\R^3}\big),\\
 	&\|f(u_1,v_1)-f(u_2,v_2)\|_{C^1([0,t];H^{s}_2(\mathbb{T}^n))}\\&\qquad\qquad\qquad\leq L_f\big(\|u_1-u_2\|_{C^1([0,t];H^{s+2}_2(\mathbb{T}^n)))}+\|v_1-v_2\|_{C^1([0,t];H^{s+2(1-\delta_Y)}_2(\mathbb{T}^n)\times\R^3))}\big),\\
 	&\|g(x_1,y_1)-g(x_2,y_2) \|_{H^{s+2}_2(\mathbb{T}^n)\times\R^3}\\&\qquad\qquad\qquad\leq L_g\big(\|x_1-x_2\|_{H^{s+2}_2(\mathbb{T}^n)}+\|y_1-y_2\|_{H^{s+2+2(1-\delta_Y)}_2(\mathbb{T}^n)\times\R^3}\big).
 	\end{aligned}
 \end{align}
 Note that if 
 \begin{align}
 	\begin{aligned}\label{Eq:Stommel:Nonlinearity_Condition3}
		f&\colon H^{s+2}_2(\mathbb{T}^n)\times H^{s+2(1-\delta_Y)}_2(\mathbb{T}^n)\times\R^3\to H^{s+2(1-\delta_Y)}_2(\mathbb{T}^n),\\
		g&\colon H^{s+2}_2(\mathbb{T}^n)\times H^{s+2+2(1-\delta_Y)}_2(\mathbb{T}^n)\times\R^3\to H^{s+2}_2(\mathbb{T}^n)\times \R^3
		\end{aligned}
	\end{align}
	are differentiable with
	\begin{align}
	\begin{aligned}\label{Eq:Stommel:Nonlinearity_Condition4}
		\|\txtD f(x,y)\|_{\mathcal{B}(H^{s+2}_2(\mathbb{T}^n)\times H^{s+2(1-\delta_Y)}_2(\mathbb{T}^n)\times\R^3,H^{s+2(1-\delta_Y)}_2(\mathbb{T}^n))}\leq L_f,\\
		\|\txtD g(x,y)\|_{\mathcal{B}(H^{s+2}_2(\mathbb{T}^n)\times H^{s+2+2(1-\delta_Y)}_2(\mathbb{T}^n)\times\R^3,H^{s+2}_2(\mathbb{T}^n)\times \R^3))}\leq L_g,
		\end{aligned}
	\end{align}
	then both \eqref{Eq:Stommel:Nonlinearity_Condition1} and \eqref{Eq:Stommel:Nonlinearity_Condition2} as well as \eqref{Eq:Nonlinearities_Differentiable} are satisfied.
	Since $H^{s}_2(\mathbb{T}^n)$ is a multiplication algebra whenever $2s>n$, it follows that the nonlinearities 
	\begin{align*}
		\tilde{f} &\colon H^{s+2}_2(\mathbb{T}^n)\times H^{s2(1-\delta_Y)}_2(\mathbb{T}^n)\times\R^3\to H^{s+2(1-\delta_Y)}_2(\mathbb{T}^n),\\
		&\qquad\qquad (x,y,z_1,z_2,z_3)\mapsto \tfrac{1}{M}z_2+z_1^2 x[1+\eta^2(x^2-y^2)],\\
		\tilde{g}&\colon H^{s+2}_2(\mathbb{T}^n)\times H^{s+2+2(1-\delta_Y)}_2(\mathbb{T}^n)\times\R^3\to H^{s+2}_2(\mathbb{T}^n)\times \R^3\\
		&\qquad\qquad (x,y,z_1,z_2,z_3)\mapsto \big(z_3-y[1+\eta^2(x^2-y^2)],z_1,z_2,z_3\big),
	\end{align*}
	are well-defined and satisfy \eqref{Eq:Stommel:Nonlinearity_Condition4} locally. In order to obtain these properties globally, we use cutoff techniques. Let $R>0$ be arbitrary and choose $C^1$-functions 
	\begin{align*}
		\chi_1\colon H^{s+2(1-\delta_Y)}_2(\mathbb{T}^n)\to [0,1],\quad
		\chi_2\colon H^{s+2}_2(\mathbb{T}^n)\to [0,1],\quad
		\chi_3\colon H^{s+2+2(1-\delta_Y)}_2(\mathbb{T}^n)\to [0,1]
	\end{align*}
	which equal to $1$ on the ball $B(0,R)$ around $0$ with radius $R$ in their respective topologies and which equal to $0$ in the complement of $B(0,2R)$. For $\sigma>0$ we further choose $\psi_{\sigma}\in C^{\infty}(\R)$ taking values in $[0,1]$ such that
	\begin{align*}
		\psi(z)=1\text{ if }|z|\leq\sigma,\quad\psi(z)=0\text{ if }|z|\geq2\sigma,\quad\text{and}\quad |\psi'(z)|\leq \frac{2}{\sigma}\text{ for }z\in\R.
	\end{align*}
\end{itemize}
Now, the nonlinearities 
	\begin{align*}
		f &\colon H^{s+2}_2(\mathbb{T}^n)\times H^{s+2(1-\delta_Y)}_2(\mathbb{T}^n)\times\R^3\to H^{s+2(1-\delta_Y)}_2(\mathbb{T}^n),\\
		&\qquad\quad (x,y,z_1,z_2,z_3)\mapsto \tfrac{1}{M}z_2+\psi_\sigma(z_1)z_1^2 \chi_2(x)x[1+\eta^2((\chi_2(x)x)^2-(\chi_1(y)y)^2)],\\
		g&\colon H^{s+2}_2(\mathbb{T}^n)\times H^{s+2+2(1-\delta_Y)}_2(\mathbb{T}^n)\times\R^3\to H^{s+2}_2(\mathbb{T}^n)\times \R^3\\
		&\qquad\quad (x,y,z_1,z_2,z_3)\mapsto \big(z_3-\chi_3(y)y[1+\eta^2((\chi_2(x)x)^2-(\chi_3(y)y)^2)],z_1,z_2,z_3\big),
	\end{align*}
	satisfy \eqref{Eq:Stommel:Nonlinearity_Condition4} globally. Moreover, if we choose $\sigma>0$ small enough, then we have
	\begin{align*}
		L_f<\tfrac{2}{M}.
	\end{align*}
With these choices, we may rewrite \eqref{Eq:Stommel} as
\begin{align}
	\begin{aligned}\label{Eq:Stommel_Our_Setting}
	\epsilon \partial_t u^{\epsilon}&=A u^{\epsilon}+f(u^{\epsilon},v^{\epsilon}_1,\sqrt{\epsilon},M,\mu),\\
	\partial_t v^{\epsilon}&= B v^{\epsilon}+g(u^{\epsilon},v^{\epsilon}_1,\sqrt{\epsilon},M,\mu),\\
	u^{\epsilon}(0)&=u_0,\quad v_1^{\epsilon}(0)=v_0.
	\end{aligned}
	\end{align}
	If $\|u^{\epsilon}\|_{H^{s+2}_2(\mathbb{T}^n)},\|v^{\epsilon}_1\|_{H^{s+2+2(1-\delta_Y)}_2(\mathbb{T}^n)}\leq R$ and $\epsilon\leq\sigma^2$, then \eqref{Eq:Stommel} and \eqref{Eq:Stommel_Our_Setting} coincide. This is why we have to introduce $T(R)$ in the statements of Theorem~\ref{Thm:Stommel:Approximation_Slow_Flow} and Theorem~\ref{Thm:Stommel:Slow_Manifolds}. For the proof Theorem~\ref{Thm:Stommel:Approximation_Slow_Flow}  we just have to check whether the assumptions of Section~\ref{Section:General_Fast_Slow:Full_System} are satisfied:
	\begin{enumerate}[(i)]
		\item It is well-known that $X=H^{s}_2(\mathbb{T}^n)$ and $Y=H^{s+2(1-\delta_Y)}_2(\mathbb{T}^n)\times\R^3$ are Banach spaces.
		\item The Laplacian generates a bounded holomorphic $C_0$-semigroup $(\txte^{t\Delta})_{t\geq0}$ on any of the spaces $H^{s+\alpha}_2(\mathbb{T}^n)$, $\alpha\in\R$, which is given by
		\[
			\txte^{t\Delta}f(x)=\sum_{k\in\Z} \txte^{-|k|^2t} \hat{f}(k) \txte^{\txti kx},
		\]
		where $\hat{f}(k)$ denotes the $k$-th Fourier coefficient. Accordingly, $A$ generates an exponentially decaying holomorphic $C_0$-semigroup and $B$ generates a holomorphic $C_0$-semigroup.
		\item It follows from complex interpolation that the spaces $X_{\alpha}=H^{s+2\alpha}_2(\mathbb{T}^n)$ and $Y_{\alpha}=H^{s+2(1-\delta_Y)+2\alpha}_2(\mathbb{T}^n)\times\R^3$ are valid choices for our Banach scales.
		\item We used cutoff techniques in order to ensure that $f$ and $g$ satisfy the continuity assumptions of Section~\ref{Section:General_Fast_Slow:Full_System}.
		\item We introduced the dummy variable $\tilde{w}^{\epsilon}$ the ensure that $f(0,0)=0$ and $g(0,0)=0$.
		\item Theorem~\ref{Thm:Semigroup_in_Scales} ensures that there are constants $M_A,C_A$ and $C_B$  such that
		\begin{align*}
 	\|\txte^{tA}\|_{\mathcal{B}(X_1)}\leq &M_A \txte^{\omega_A t},\quad \|\txte^{tA}\|_{\mathcal{B}(X_{\gamma_X},X_1)}\leq C_At^{\gamma_X-1} \txte^{\omega_A t},\\
 	& \|\txte^{tA}\|_{\mathcal{B}(X_{\delta_X},X_1)}\leq C_At^{\delta_X-1} \txte^{\omega_A t}
 \end{align*}
 and
 \begin{align*}
  	\|\txte^{tB}\|_{\mathcal{B}(Y_1)}\leq M_B \txte^{\omega_B t},\quad \|\txte^{tB}\|_{\mathcal{B}(Y_{\delta_Y},Y_1)}\leq C_Bt^{\delta_Y-1} \txte^{\omega_B t}
 \end{align*}
 hold for all $t>0$. Since $(\txte^{t\Delta})_{t\geq0}$ is a bounded holomorphic semigroup on any of the spaces $H^{s+\alpha}_p(\mathbb{T}^n)$, $\alpha\in\R$, we may take $\omega_A$ to be an arbitrary number larger than $-1$.
 \item We chose $\sigma>0$ such that $L_f<\frac{2}{M}$. If we take $M>8C_A$ and $\omega_A$ close to $-1$, then we have
 \[
 	\omega_f=\omega_A+\frac{\sqrt{2}}{4}<0.
 \]
	\end{enumerate}
	Altogether, all the assumptions of Section~\ref{Section:General_Fast_Slow:Full_System} are satisfied and we obtain Theorem~\ref{Thm:Stommel:Approximation_Slow_Flow}\\
	Let us turn to Theorem~\ref{Thm:Stommel:Slow_Manifolds}. Our task now is to find the splitting
	\[
		Y=Y_F^{\zeta}\oplus Y_S^{\zeta}
	\]
	for any $\zeta>0$ small enough. For the Stommel model we may simply take the truncation to certain Fourier modes. If $-(|k_0|+1)^2<\zeta^{-1}\omega_A\leq-|k_0|^2$ for some $k_0\in\N$, then we take
	\begin{align*}
		\tilde{Y}_S^{\zeta}&:= \operatorname{span}\big\{ [x\mapsto \txte^{\txti kx}] : k\in\Z,\,|k|\leq |k_0|-1\big\},\\
		\tilde{Y}_F^{\zeta}&:= \operatorname{cl}_{H^{s+2(1-\delta_Y)}_p(\mathbb{T})}\big(\operatorname{span}\big\{ [x\mapsto \txte^{\txti kx}] : k\in\Z,\,|k|\geq |k_0|\big\}\big),
	\end{align*}
	where $\operatorname{cl}_{H^{s+2(1-\delta_Y)}_p(\mathbb{T})}A$ means that we take the closure of a set $A\subset H^{s+2(1-\delta_Y)}_p(\mathbb{T})$ in $H^{s+2(1-\delta_Y)}_p(\mathbb{T})$. 
 Now we choose
	\[
		Y_S^{\zeta}:=\tilde{Y}_S^{\zeta}\times\R^3,\quad Y_F^{\zeta}:=\tilde{Y}_F^{\zeta}\times\{0_{\R^3}\}¸.
	\]
	These definitions indeed yield a splitting
	\[
		Y=Y_F^{\zeta}\oplus Y_S^{\zeta}.
	\]
	Let us check the conditions of Section~\ref{Section:Bates_Difficult:Our_Approach}. 
	\begin{enumerate}[(i)]
		\item Since $Y_S^{\zeta}$ is finite-dimensional and since $Y_F^{\zeta}$ is defined as a closure, both spaces are closed. Moreover, in the Fourier image it is easy to see that the their projections commute with $B$.
		\item By our construction $\tilde{Y}_F^{\zeta}$ consists of all $f\in H^{s+2(1-\delta_Y)}_2(\mathbb{T})$ such that $\hat{f}(k)=0$ for all $k\in\Z$ such that $|k|\leq |k_0|-1$. Therefore, the $\tilde{Y}_F^{\zeta}\cap H^{s+2+2(1-\delta_Y)}_2(\mathbb{T})$ consists of all $f\in H^{s+2+2(1-\delta_Y)}_2(\mathbb{T})$ such that $\hat{f}(k)=0$ for all $k\in\Z$ such that $|k|\leq |k_0|-1$. This makes $Y_F^{\zeta}$ a closed subspace of $Y_1=H^{s+2+2(1-\delta_Y)}_2(\mathbb{T})\times\R^3$.
		\item Obviously, $Y_S^{\zeta}$ is a closed subspace of $Y_1$ and thus the same holds trivially for $Y_S^{\zeta}\cap Y_1$. In addition, we know that
		\[
		g\colon X_1\times Y_1\to Y_{\delta_Y}
		\]
		is Lipschitz continuous and Plancherel's theorem yields
		\[
			\| \operatorname{pr}_{Y_S^{\zeta}}\|_{\mathcal{B}(Y_{\delta_Y},Y_1)} \leq \zeta^{\delta_Y-1}.
		\]
		Hence, we obtain that
		\[
		\operatorname{pr}_{Y_S^{\zeta}}g\colon X_1\times Y_1\to Y_{1}
		\]
		is Lipschitz continuous with Lipschitz constant $L_g\zeta^{\delta_Y-1}$.
		\item $Y_S^{\zeta}$ is a finite-dimensional space. Therefore, the realization of $B$ in $Y_S^{\zeta}$ is bounded and thus generates a $C_0$-group $(\txte^{tB_{Y_S^{\zeta}}})_{t\in\R}$. It is obvious that is group coincides with $(\txte^{tB}\vert_{Y_S^{\zeta}})$ for $t\geq0$.
		\item We show that the realization of $B$ in $Y_F^{\zeta}$ has $0$ in its resolvent set by simply giving a formula for the inverse. It is given by
		\begin{align*}
			B_{Y_F^{\zeta}}^{-1}&\colon H^{s+2+2(1-\delta_Y)}_2(\mathbb{T})\times\{0_{\R^3}\}\to H^{s+2(1-\delta_Y)}_2(\mathbb{T})\times\{0_{\R^3}\},\\
			&\left(\sum_{k\in\Z,\atop|k|\geq|k_0|}\hat{f}(k)\txte^{\txti kx},0,0,0\right)\mapsto \left(\sum_{k\in\Z,\atop|k|\geq|k_0|}\frac{\hat{f}(k)\txte^{\txti kx}}{|k|^2},0,0,0\right).
		\end{align*}
		This is well-defined if $\zeta$ is small as $k=0$ does not appear in the sum.
		\item We have already observed that $(\txte^{tB})_{t\geq0}$ is given by
		\[
			\txte^{tB} f = \bigg[x\mapsto \sum_{k\in\Z} \txte^{-|k|^2t}\hat{f}(k)\txte^{\txti kx}\bigg].
		\]
		Thus, Plancherel's theorem shows that for $y_S\in Y_S^{\zeta}$ and $t\geq0$ it holds that
		\[
			\| \txte^{-tB}y_S \|_{H^{s+2(1-\delta_Y)}_2(\mathbb{T})}\leq \txte^{(|k_0|-1)^2t},
		\]
		so that we may take $$N_S^{\zeta}:=-\zeta^{-1}\omega_A-(|k_0|-1)^2.$$ Since $-(|k_0|+1)^2<\zeta^{-1}\omega_A\leq-|k_0|^2$ it holds that $N_S^{\zeta}>0$. Similarly, we can take $$N_F^{\zeta}=-\zeta^{-1}\omega_A-|k_0|^2$$
		so that $N_S^{\zeta}-N_F^{\zeta}=2|k_0|-1\geq 2\sqrt{-\zeta^{-1}\omega_A}-3$. Therefore, we have $N_S^{\zeta}-N_F^{\zeta}>\zeta^{-1/2}$ if $\zeta$ is small and if $\omega_A$ is close to $-1$.
		\item If we take $\zeta>0$ small enough and $\epsilon<c\frac{(L_FC_A\Gamma(\gamma_X))^{1/\gamma_X}+\omega_A}{\omega_A}\zeta$ for some constant $c\in(0,1)$, then \eqref{Eq:Gap_Condition} is satisfied. Note that we need $\delta_Y>\tfrac{1}{2}$ for this to hold true.
	\end{enumerate}
	Altogether, all the assumptions we need to apply our theory are satisfied. The application of our abstract results to the diffusive Stommel model to obtain Theorem~\ref{Thm:Stommel:Slow_Manifolds} is straightforward. We should point out though that for the proof of Theorem~\ref{Thm:Stommel:Slow_Manifolds}~\eqref{Thm:Stommel:Slow_Manifolds:Flow_Convergence} one formally has different initial conditions for $(u^{\epsilon},w^{\epsilon})$ and $(u^{\zeta}_0,w^{\zeta}_0)$ due to our dummy variables: For \eqref{Eq:Stommel} we have $z_2=\sqrt{\epsilon}$ and for \eqref{Eq:Stommel_truncated} we have $z_2=0$. However, the well-posedness \eqref{Eq:Stommel} ensures that the difference of the solutions of \eqref{Eq:Stommel} with $z_2=\sqrt{\epsilon}$ and $z_2=0$ are of the order $O(\sqrt{\epsilon})$ on bounded time intervals. Thus, for the derivation of Theorem~\ref{Thm:Stommel:Slow_Manifolds}~\eqref{Thm:Stommel:Slow_Manifolds:Flow_Convergence} we can just use Corollary~\ref {Cor:Convergence_of_Flows} together with an application of the triangle inequality.

\subsection{The Doubly-Diffusive FitzHugh-Nagumo Equation}

The techniques we used for the Stommel model can also be applied to the doubly-diffusive FitzHugh-Nagumo equation, which has recently been of interest in pattern formation~\cite{CornwellJones}. It is a modification of the classical FitzHugh-Nagumo equation and given by
\begin{align}\label{Eq:FitzHugh-Nagumo}
\begin{aligned}
	\epsilon \partial_t u^{\epsilon}&=\Delta u^{\epsilon}+u^{\epsilon}(1-u^{\epsilon})(u^{\epsilon}-a)-w^{\epsilon},\\
	\partial_tw^{\epsilon}&=\Delta w^{\epsilon}+u^{\epsilon}-\gamma w^{\epsilon},\\
	u^{\epsilon}(0)&=u_0,\quad w^{\epsilon}(0)=v_0
\end{aligned}
\end{align}
where $\gamma>0$ and $a\in(0,\tfrac{1}{2})$. Of course, it is well-known from many works (see~\cite{KuehnBook} and references therein) that at the two fold points of nonlinearity, there is loss of normal hyperbolicity even without the Laplacian terms. Hence, we just illustrate our methods locally at a point on an attracting branch of the critical manifold. We simply select this point as the origin but other points could be treated similarly upon translation of the coordinates locally. Furthermore, compared to the Stommel model we have the additional difficulty that the nonlinearity in the fast variable does not get small as $\epsilon\to0$. However, we have the advantage that we do not have to introduce dummy variables and that all terms are actually linear in the slow variable. The latter property will help us to derive better convergence results, since we can avoid certain cutoffs that would cause problems with different topologies. This way, we obtain:
\begin{theorem}\label{Thm:FHN:Approximation_Slow_Flow}
	Let $\mathbb{E}\in\{\mathbb{T},\mathbb{R}\}$, i.e. let $\mathbb{E}$ either be the torus or the real line. We write $(u^{\epsilon},w^{\epsilon})$ for the strict solution of \eqref{Eq:FitzHugh-Nagumo} with $\epsilon>0$ and $(u^{0},w^{0})$ for corresponding slow flow. Then there are a neighborhood $U\subset H^2_2(\mathbb{E}^n)$ of $0$ which only depends on $a$ and constants $\epsilon_0>0$ and $C,c>0$ such that that for all $\epsilon\in(0,\epsilon_0]$, $u_0\in U$ and $v_0\in H^{2}_2(\mathbb{E}^n)$ it holds that
	\[
		\sup_{0\leq t \leq T(R,U)}\big(\|u^{\epsilon}(t)-u^0(t)\|_{H^{2}_2(\mathbb{E}^n)}+\|w^{\epsilon}(t)-v^0(t)\|_{H^{2}_2(\mathbb{E}^n)}\big)\leq C(\epsilon+\txte^{-c\epsilon^{-1}t}),
	\]
	where $T(R,U)$ is defined by
	\begin{align*}
		T(R,U):=\inf\big\{t\in[0,T]: u^0\notin U \text{ or } u^{\epsilon}\notin U\big\}.
	\end{align*}
\end{theorem}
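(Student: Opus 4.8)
The plan is to bring \eqref{Eq:FitzHugh-Nagumo}, after a local cutoff, into the abstract form of Section~\ref{Section:General_Fast_Slow:Full_System} and then quote Corollary~\ref{Cor:Original_and_Slow_Flow_Close}. Since the origin sits on an attracting branch of the critical set we have $\partial_u[u(1-u)(u-a)]\vert_{u=0}=-a<0$, so I would take $X=Y=L_2(\mathbb{E}^n)$ with scales $X_\alpha=Y_\alpha=H^{2\alpha}_2(\mathbb{E}^n)$ (so $X_1=Y_1=H^2_2(\mathbb{E}^n)$; here $n\le 3$, or a larger Sobolev index in higher dimensions, so that $H^2_2$ is a multiplication algebra) and linear parts
\[
A:=\Delta-a,\qquad B:=\Delta-\gamma,
\]
both generating exponentially stable, bounded holomorphic $C_0$-semigroups with $0$ in the resolvent set; the scale estimates in assumption~(vi) follow from Theorem~\ref{Thm:Semigroup_in_Scales} with $M_A,M_B,C_A,C_B$ close to $1$ and $\omega_A,\omega_B$ chosen just above $-a$ and $-\gamma$. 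The residual nonlinearities are $f(u,w)=(1+a)u^2-u^3-w$ and $g(u,w)=u$: they are polynomial, hence smooth into the spaces required, and vanish at the origin, so assumption~(v) holds, and since the $w$-dependence of $f$ and all of $g$ are linear the $C^1([0,t];\cdot)$-Lipschitz conditions of~(iv) reduce to the time regularity of $v^\epsilon$ itself. I would take $\gamma_X=\delta_X=1$ and, importantly, $\delta_Y=1$: because $g(u,w)=u$ maps $X_1$ into $Y_1=Y_{\delta_Y}$ this is admissible, and it is exactly what yields the sharp rate $\epsilon^{\delta_Y}=\epsilon$ in the conclusion.

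The one nontrivial point is arranging the weak normal hyperbolicity \eqref{Eq:Weak_Normal_Hyperbolicity}. The linear coupling $-w$ in $f$ forces, by itself, a fixed contribution (the norm of the embedding $Y_1\hookrightarrow X_{\gamma_X}$) to $L_f$, which I would remove by rescaling the slow variable $w\mapsto\lambda w$ with $\lambda=\lambda(a)$ large: this turns $-w$ into $-\lambda^{-1}w$ in $f$ and $u$ into $\lambda u$ in $g$, and the resulting large $L_g$ is harmless because Theorem~\ref{Thm:FHN:Approximation_Slow_Flow} only invokes the results of Section~\ref{sec:genfs}, where $L_g$ is unconstrained. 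I would then localize with a smooth cutoff $\chi$ equal to $1$ on $B(0,\rho)\subset H^2_2$ and vanishing outside $B(0,2\rho)$, replacing the quadratic and cubic terms of $f$ (and the terms they induce elsewhere) by their $\chi$-truncations; the truncated $f$ is then globally Lipschitz with $u$-Lipschitz constant $O(\rho)$. Choosing first $\lambda$ and then $\rho$ small enough, both thresholds depending on $a$ only (through $\|A^{-1}\|_{\mathcal{B}(X_{\gamma_X},X_1)}$, $\|A^{-1}\|_{\mathcal{B}(X_{\delta_X-1},X_{\delta_X})}$ and $|\omega_A|\approx a$), makes $L_f$ so small that $L_f\max\{\|A^{-1}\|_{\mathcal{B}(X_{\gamma_X},X_1)},\|A^{-1}\|_{\mathcal{B}(X_{\delta_X-1},X_{\delta_X})}\}<1$ and $\omega_f$, which may be any number exceeding $\omega_A+C_AL_f$, can be chosen negative (the $\gamma_X=1$ form from Remark~\ref{Rem:Gronwall_Specific}). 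I would set $U$ to be the ball of radius $\rho$ in $H^2_2(\mathbb{E}^n)$; it then depends only on $a$.

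With all hypotheses of Section~\ref{Section:General_Fast_Slow:Full_System} verified for the truncated system, Corollary~\ref{Cor:Original_and_Slow_Flow_Close} with $\delta_Y=1$ gives constants $C,c_0>0$ with
\[
\left\|T_\epsilon(t)\begin{pmatrix}u_0\\v_0\end{pmatrix}-T_0(t)\begin{pmatrix}h^0(v_0)\\v_0\end{pmatrix}\right\|_{X_1\times Y_1}\le C\left(\epsilon\,\txte^{(\omega_B+c_0)t}\|v_0\|_{Y_1}+\big(\epsilon\,\txte^{(\omega_B+c_0)t}+\txte^{\epsilon^{-1}\omega_f t}\big)\|u_0-h^0(v_0)\|_{X_1}\right)
\]
for the truncated flow. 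On $[0,T(R,U))$ the trajectory stays bounded (by $R$, and $u^\epsilon,u^0$ inside $U$), so the exponentials $\txte^{(\omega_B+c_0)t}$ are $O(1)$ and, using $h^0(0)=0$ and the Lipschitz continuity of $h^0$, the quantities $\|v_0\|_{Y_1}$ and $\|u_0-h^0(v_0)\|_{X_1}$ are $O(1)$; since $\omega_f<0$ the right-hand side collapses to $C(\epsilon+\txte^{-c\epsilon^{-1}t})$ for a possibly smaller $c>0$. Finally, as long as $t<T(R,U)$ one has $u^\epsilon(t),u^0(t)\in U$, where $\chi\equiv 1$, so the truncated system agrees with \eqref{Eq:FitzHugh-Nagumo} and the estimate transfers verbatim. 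I expect the genuine obstacle to be exactly the second step: showing that the rescaling $\lambda$ and the cutoff radius $\rho$ can be chosen simultaneously, depending on $a$ alone, so that \eqref{Eq:Weak_Normal_Hyperbolicity} holds — that is, balancing the irreducible coupling contribution against the $a$-dependent size of $\|A^{-1}\|$ — while everything else is bookkeeping parallel to the Stommel computation.
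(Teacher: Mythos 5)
Your proposal is correct and follows essentially the same route as the paper: the paper also rescales the slow variable (it takes $v^{\epsilon}=\tfrac{2}{a}w^{\epsilon}$, a concrete instance of your $\lambda(a)$-rescaling) so that the linear coupling in the fast equation has Lipschitz constant $\tfrac{a}{2}<a$, cuts off only the quadratic and cubic terms via a $C^1$ cutoff $\chi$ on a small $H^2_2$-ball (which defines $U$), chooses $X=Y=L_2$, $A=\Delta-a$, $B=\Delta-\gamma$, $\gamma_X=\delta_X=\delta_Y=1$, verifies the assumptions of Section~\ref{Section:General_Fast_Slow:Full_System} including \eqref{Eq:Weak_Normal_Hyperbolicity}, and then applies Corollary~\ref{Cor:Original_and_Slow_Flow_Close}, transferring the estimate back to \eqref{Eq:FitzHugh-Nagumo} on $[0,T(R,U))$ exactly as you do. The point you flag as the genuine obstacle (balancing the coupling against the $a$-dependent decay) is handled in the paper by precisely this rescaling-plus-cutoff bookkeeping, so no gap remains.
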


\begin{theorem}\label{Thm:FHN:Slow_Manifolds}
	There are a neighborhood $U\subset H^2_2(\mathbb{T})$ of $0$ which only depends on $a$, a constant $\zeta_0>0$ and a family of finite-dimensional manifolds $S_{\epsilon,\zeta}\subset H^{2}_2(\mathbb{T})\times H^{2}_2(\mathbb{T}) $ with $0<\zeta\leq\zeta_0$ and $0<\epsilon\leq C\frac{(L_FC_A\Gamma(\gamma_X))^{1/\gamma_X}+\omega_A}{\omega_A}\zeta$ for some $C\in(0,1)$ such that the following assertions hold:
	\begin{enumerate}[(a)]
		\item For each $\zeta\in(0,\zeta_0]$ there is a splitting
			\[
				L_2(\mathbb{T})=Y_F^{\zeta}\oplus Y_S^{\zeta},
			\]
			where $Y_S^{\zeta}$ is the projection of $L_2(\mathbb{T})$ to the $k$-th Fourier modes with $|k|$ being smaller than a certain number $k(\zeta)$ depending on $\zeta$. $Y_F^{\zeta}$ is to projection to the remaining Fourier modes.
		\item The manifolds $S_{\epsilon,\zeta}$ are given as the graph of a differentiable mapping
			\[
				h^{\epsilon,\zeta}\colon (Y_S^{\zeta},\|\cdot\|_{H^{s+2}_2(\mathbb{T})})\to H^{2}_2(\mathbb{T})\times (Y_F^{\zeta}\cap H_2^2(\mathbb{T}),\|\cdot\|_{H^{2}_2(\mathbb{T})}).
			\]
		\item The intersection of $S_{\epsilon,\zeta}$ with $U\times Y$ is a slow manifold which is locally invariant under the semiflow generated by \eqref{Eq:FitzHugh-Nagumo}, i.e. the semiflow can only leave $S_{\epsilon,\zeta}\cap U\times Y$ through its boundary.
		\item Let 
			\[
				S_{0,\zeta,U}:=\{ (u,w)\in S_0: w\in Y_S^{\zeta} \}\cap U\times Y
			\]
			be the intersection of $U\times Y$ with the submanifold of the critical manifold which consists of all points whose slow components are elements of $Y_S^{\zeta}$. Then constant $C>0$ such that
			\[
				\operatorname{dist}(S_{\epsilon,\zeta},S_{0,\zeta})\leq C(\epsilon+\zeta^{1/2})\leq C\zeta^{1/2}.
			\]
		\item  Suppose that $u_0\in U$ and let $(u^0_{\zeta},w^0_{\zeta})$ be the solution of the truncated slow subsystem of \eqref{Eq:FitzHugh-Nagumo} given by 
		\begin{align}
	\begin{aligned}\label{Eq:FitzHugh-Nagumo_truncated}
	0&=\Delta u^0_{\zeta}-u^0_{\zeta}(1-u^0_{\zeta})(u^0_{\zeta}-a)-v^0_{\zeta},\\
	\partial_t w^0_{\zeta}&= \operatorname{pr}_{Y_S^{\zeta}}\left[\Delta w^0_{\zeta}+u^0_{\zeta}-\gamma v^0_{\zeta}\right],\\
	u^{\epsilon}(0)&=h^0(\operatorname{pr}_{Y_S^{\zeta}}v_0),\quad w^{\epsilon}(0)=\operatorname{pr}_{Y_S^{\zeta}}v_0.
	\end{aligned}
\end{align}
Assume that $(u_0,v_0)\in S_{\epsilon,\zeta}\cap U\times Y $. Then for each $T>0$ there is a constant $C>0$ such that
\[
		\sup_{0\leq t \leq T(U)}\big(\|u^{\epsilon}(t)-u^0_{\zeta}(t)\|_{H^{2}_2(\mathbb{T})}+\|w^{0}_{\zeta}(t)-v^0(t)\|_{H^{2}_2(\mathbb{T})}\big)\leq C\zeta^{1/2},
	\]
	where $T(R,U)$ is defined by
	\begin{align*}
		T(U):=\inf\big\{t\in[0,T]: u^0_{\zeta}\notin U \text{ or } u^{\epsilon}\notin U\big\}.
	\end{align*}
	\end{enumerate}
\end{theorem}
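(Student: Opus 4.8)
The plan is to put \eqref{Eq:FitzHugh-Nagumo} into the abstract form of Sections~\ref{Section:General_Fast_Slow:Full_System} and~\ref{Section:Bates_Difficult:Our_Approach} after localizing at the origin, and then read off (a)--(e) from the general results of Section~\ref{sec:slowfinal}. Concretely, I would take $X=Y=L_2(\mathbb{T})$ with scales $X_\alpha=Y_\alpha=H^{2\alpha}_2(\mathbb{T})$, pull the linearization of the fast reaction term at $u=0$ into the linear part, i.e.\ set $A:=\Delta-a$ and $B:=\Delta-\gamma$ (both generating exponentially decaying holomorphic $C_0$-semigroups with $0$ in the resolvent, $M_A=1$, $\omega_A=-a<0$), and keep $f(u,w)=-u^3+(1+a)u^2-w$, $g(u,w)=u$. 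Then $f(0,0)=g(0,0)=0$, $g$ is linear, $f$ is affine in $w$, and $\txtD_uf(0,0)=0$, so on a small ball around $0$ in the fast state space $H^2_2(\mathbb{T})$ (a Banach algebra for $n=1$) the genuinely nonlinear part of $f$ has arbitrarily small Lipschitz constant. A cutoff $\chi$ supported near $0$ \emph{in the $u$-variable alone} — none is needed in $w$ since $g$ and the $w$-dependence of $f$ are linear, which is precisely the advantage over the Stommel model of not having to reconcile different topologies — turns $f,g$ into globally Lipschitz maps agreeing with the original nonlinearities on a neighbourhood $U\times Y$ of $0$.

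With $\gamma_X$ fixed in a suitable intermediate range, $\delta_X=1$, $\delta_Y\in(\tfrac12,1)$, and using the freedom in choosing equivalent scale norms, I would check \eqref{Eq:Weak_Normal_Hyperbolicity}, i.e.\ $\omega_f<0$ and $L_f\max\{\|A^{-1}\|_{\mathcal{B}(X_{\gamma_X},X_1)},\|A^{-1}\|_{\mathcal{B}(X_{\delta_X-1},X_{\delta_X})}\}<1$; this is the one genuine computation (discussed below). Granting it, the remaining hypotheses of Section~\ref{Section:General_Fast_Slow:Full_System} are routine (semigroup estimates via Theorem~\ref{Thm:Semigroup_in_Scales}, complex interpolation for the scales), so Theorem~\ref{Thm:FHN:Approximation_Slow_Flow} follows from Theorem~\ref{Thm:Original_and_Modified_Flow_Close} and Corollary~\ref{Cor:Original_and_Slow_Flow_Close}, restricted to the exit time $T(R,U)$. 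For Theorem~\ref{Thm:FHN:Slow_Manifolds} I would, exactly as for Stommel, build the splitting $L_2(\mathbb{T})=Y_F^\zeta\oplus Y_S^\zeta$ by Fourier truncation at $|k|=k_0(\zeta)$ with $-(|k_0|+1)^2<\zeta^{-1}\omega_A\le -|k_0|^2$, and verify (i)--(vii) of Section~\ref{Section:Bates_Difficult:Our_Approach} mode by mode via Plancherel: closedness and commutation with $B$ are immediate; $\|\operatorname{pr}_{Y_S^\zeta}\|_{\mathcal{B}(Y_{\delta_Y},Y_1)}\le\zeta^{\delta_Y-1}$ gives the Lipschitz bound in (iii); $B_{Y_S^\zeta}$ is finite-rank, hence generates a group (iv); $B$, and so $B_{Y_F^\zeta}$, has $0$ in its resolvent via the diagonal inverse (v); the decay estimates (vi) hold with $N_S^\zeta-N_F^\zeta\sim 2\sqrt{a/\zeta}\sim\zeta^{-1/2}$ (the size of the spectral gaps of $\Delta$ at the level $\zeta^{-1}\omega_A$); and the gap condition \eqref{Eq:Gap_Condition} (vii) then holds for $\zeta$ small and $\epsilon\le c\zeta\,\tfrac{(L_fC_A\Gamma(\gamma_X))^{1/\gamma_X}+\omega_A}{\omega_A}$, each of its three summands tending to $0$ as $\zeta\to0$ — the last one, $\sim\zeta^{\delta_Y-1}/(N_S^\zeta-N_F^\zeta)\sim\zeta^{\delta_Y-1/2}$, precisely because $\delta_Y>\tfrac12$.

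Parts (a)--(e) then follow by quoting Section~\ref{sec:slowfinal}: existence of $S_{\epsilon,\zeta}$ as the Lipschitz graph of $h^{\epsilon,\zeta}$ over $Y_S^\zeta$ from Proposition~\ref{Prop:Slow_Manifold_Existence} and the Lipschitz estimate proven right after it (parts (a), (b)); differentiability from Proposition~\ref{Prop:Slow_Manifolds:Differentiability}, using that after cutoff $f,g$ are $C^1$ with derivative bounds \eqref{Eq:Nonlinearities_Differentiable} (part (b)); local invariance of $S_{\epsilon,\zeta}\cap(U\times Y)$ from the global invariance of $S_{\epsilon,\zeta}$ under the cutoff flow together with the coincidence of the cutoff and original systems on $U\times Y$ (part (c)); the distance bound $\operatorname{dist}(S_{\epsilon,\zeta},S_{0,\zeta})\le C(\epsilon+\zeta^{1/2})$ from Proposition~\ref{Prop:Distance_Critical_Manifold} with $(N_S^\zeta-N_F^\zeta)^{-\delta_Y}\sim\zeta^{\delta_Y/2}$ (part (d)); and the approach to the truncated slow subsystem \eqref{Eq:FitzHugh-Nagumo_truncated} from Proposition~\ref{Prop:Slow_Manifold:Attraction} combined with Corollary~\ref{Cor:Convergence_of_Flows} and Proposition~\ref{Prop:Reduced_Slow_Flow}, where for $(u_0,v_0)\in S_{\epsilon,\zeta}$ the term $\|\operatorname{pr}_{Y_F^\zeta}v_0\|_{Y_1}$ is absorbed via Proposition~\ref{Prop:Distance_Critical_Manifold} and everything is again restricted to $T(U)$ (part (e)).

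\emph{Main obstacle.} The genuinely new difficulty over the Stommel model is that the fast reaction term $u(1-u)(u-a)$ does not become small as $\epsilon\to0$, so it cannot be treated as an $O(\epsilon)$ perturbation: one must split off its linear part $-au$ into $A=\Delta-a$ and control only the quadratic/cubic remainder by a small-neighbourhood cutoff. The delicate step is then to check that this $A$, together with the coupling term $-w$ — whose contribution to $L_f$ cannot be shrunk by shrinking $U$ — still satisfies $\omega_f<0$ and $L_f\|A^{-1}\|<1$; this is a constant-chasing argument using the freedom in $\gamma_X$ (kept bounded away from both $0$ and $1$), the choice $\delta_X=1$, and equivalent scale norms, and it is here that the genuine attractivity of the branch, $a>0$, enters decisively, through $\omega_A=-a$ and through the bound on $\|A^{-1}\|_{\mathcal{B}(X_{\gamma_X},X_1)}$. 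A secondary, bookkeeping-type obstacle is that all conclusions are only local, forcing us to carry the exit times $T(R,U)$, $T(U)$ and to verify that inside $U\times Y$ the cutoff system is literally \eqref{Eq:FitzHugh-Nagumo}.
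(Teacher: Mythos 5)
Your overall architecture is the paper's: cast \eqref{Eq:FitzHugh-Nagumo} into the framework of Section~\ref{Section:General_Fast_Slow:Full_System} with $X=Y=L_2(\mathbb{T})$, $A=\Delta-a$, $B=\Delta-\gamma$, cut off only in the fast variable, build $Y_F^{\zeta}\oplus Y_S^{\zeta}$ by Fourier truncation exactly as for Stommel, and quote Section~\ref{sec:slowfinal}. But there are two concrete gaps. First, your parameter choice $\delta_Y\in(\tfrac12,1)$ (with $\gamma_X$ ``intermediate'') is carried over from the Stommel model and does not yield the rates the theorem asserts: Proposition~\ref{Prop:Distance_Critical_Manifold} and Corollary~\ref{Cor:Convergence_of_Flows} give $(N_S^{\zeta}-N_F^{\zeta})^{-\delta_Y}\sim\zeta^{\delta_Y/2}$, which for $\delta_Y<1$ is strictly weaker than the $\zeta^{1/2}$ claimed in parts (d) and (e) — indeed you cite $\zeta^{\delta_Y/2}$ and then assert $\zeta^{1/2}$, which is inconsistent. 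The whole point of this example in the paper is that, because $g$ and the $w$-dependence of $f$ are linear, one may take $\gamma_X=\delta_X=\delta_Y=1$ (no smoothing of $g$ is needed, and the gap condition \eqref{Eq:Gap_Condition} still holds since its second and third summands are then $\sim(N_S^{\zeta}-N_F^{\zeta})^{-1}\sim\zeta^{1/2}$), and it is exactly this choice that produces the $\zeta^{1/2}$ rate. You note the linearity advantage but do not exploit it where it matters.

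Second, the verification of \eqref{Eq:Weak_Normal_Hyperbolicity} is not actually carried out, and the tool you propose for it does not work. Keeping $f(u,w)=-u^3+(1+a)u^2-w$, the slow-variable contribution to $L_f$ is exactly $1$ (the Fourier multiplier $(1+|k|^2)^{\gamma_X-1}\le1$ attains $1$ at $k=0$ for every $\gamma_X\le1$), and it cannot be reduced by shrinking $U$, by the cutoff, or by varying $\gamma_X$: for $\gamma_X=1$ one needs roughly $C_AL_f<|\omega_A|=a<\tfrac12$, and for $\gamma_X<1$ the formula $\omega_f=\omega_A+(2C_AL_f)^{1/\gamma_X}(1/\gamma_X)^{(1-\gamma_X)/\gamma_X}$ only gets worse; likewise $L_f\|A^{-1}\|_{\mathcal{B}(X_{\gamma_X},X_1)}\ge 1\cdot\tfrac1a>1$. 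The paper resolves this by the explicit rescaling $v^{\epsilon}=\tfrac{2}{a}w^{\epsilon}$ (equivalently, a rescaled norm on $Y$), which turns the coupling in $f$ into $-\tfrac{a}{2}v$ with constant $\tfrac a2<a$ and dumps the large factor $\tfrac2a$ into $g$, where a large $L_g$ is harmless because in \eqref{Eq:Gap_Condition} it is multiplied by negative powers of $N_S^{\zeta}-N_F^{\zeta}$. Your passing mention of ``equivalent scale norms'' is the right germ, but without making this rescaling explicit the key smallness condition — the step you yourself identify as the main obstacle — remains unproved, and the appeal to ``freedom in $\gamma_X$'' is misleading.
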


\begin{remark}
	One might wonder why we have to introduce the neighborhood $U$ in Theorem~\ref{Thm:FHN:Approximation_Slow_Flow} and Theorem~\ref{Thm:FHN:Slow_Manifolds}. The reason is that we have only treated the attracting case in our general theory. In order to ensure that we stay in this attracting case, we use cutoff techniques to modify the nonlinearity in the fast variable where it would be positive. However, this means that our results are only related to the system \eqref{Eq:FitzHugh-Nagumo} as long as the fast variable stays in the region where we did not modify the nonlinearity.
\end{remark}

Let us give a sketch on how these results can be obtained. Again, we only treat the case $\mathbb{E}=\mathbb{T}$. First, we rescale the slow variable and define $v^{\epsilon}=\frac{2}{a}w^{\epsilon}$ so that \eqref{Eq:FitzHugh-Nagumo} turns into
\begin{align}\label{Eq:FitzHugh-Nagumo_rescaled}
\begin{aligned}
	\epsilon \partial_t u^{\epsilon}&=\Delta u^{\epsilon}+u^{\epsilon}(1-u^{\epsilon})(u^{\epsilon}-a)-\tfrac{a}{2} v^{\epsilon},\\
	\partial_tv^{\epsilon}&=\Delta v^{\epsilon}+\tfrac{2}{a}u^{\epsilon}-\gamma v^{\epsilon},\\
	u^{\epsilon}(0)&=u_0,\quad v^{\epsilon}(0)=\tfrac{2}{a}v_0
\end{aligned}
\end{align}
Now we make the following choices:
\begin{itemize}
	\item As underlying spaces we choose $X=L_2(\mathbb{T}^n)$ and $Y=L_2(\mathbb{T}^n)$.
	\item The linear operator in the fast variable is given by
		\[
			A:L_2(\mathbb{T}^n)\supset H^{2}_2(\mathbb{T}^n)\to L_2(\mathbb{T}^n),\,u\mapsto \Delta u-au.
		\]
The linear operator in the slow variable is given by
		\[
			B:L_2(\mathbb{T}^n)\supset H^{2}_2(\mathbb{T}^n)\to L_2(\mathbb{T}^n),\,u\mapsto \Delta u-\gamma u.
		\]	
	\item The Banach scales are given by $X_{\alpha}=H^{2\alpha}_2(\mathbb{T}^n)$ and $Y_{\alpha}=H^{2\alpha}_2(\mathbb{T}^n)$.
	\item We choose $\gamma_X=\delta_X=\delta_Y=1$. This is the main difference to the Stommel model and will lead to better convergence rates. With these parameters, it suffices to choose a differentiable mapping $f\colon X_1\times Y\to X$ which is also differentiable as a mapping from $X_1\times Y_1$ to $X_1$ such that
	\begin{align*}
		\| Df(x,y) \|_{\mathcal{B}(X_1\times Y,X)}\leq L_f<a,\\
		\| Df(x,y) \|_{\mathcal{B}(X_1\times Y_1,X_1)}\leq L_f<a.
	\end{align*}
	Moreover, for the nonlinearity in the slow variable we may choose a continuous mapping $g\colon X\times Y\to Y$ which is differentiable as a mapping $g\colon X_1\times Y_1\to Y_1$ with bounded derivative. With our choices of spaces this translates into 
 	\begin{align*}
		f&\colon H^{2}_2(\mathbb{T}^n)\times L_2(\mathbb{T}^n)\to L_2(\mathbb{T}^n),\\
		g&\colon L_2(\mathbb{T}^n)\times L_2(\mathbb{T}^n)\to L_2(\mathbb{T}^n)
	\end{align*}
	and 
		\begin{align*}
		\|\txtD f(x,y)\|_{\mathcal{B}(H^{2}_2(\mathbb{T}^n)\times L_2(\mathbb{T}^n),L_2(\mathbb{T}^n))}&\leq L_f<a,\\
		\|\txtD f(x,y)\|_{\mathcal{B}(H^{2}_2(\mathbb{T}^n)\times H^{2}_2(\mathbb{T}^n),H^{2}_2(\mathbb{T}^n))}&\leq L_f<a,\\
		\|\txtD g(x,y)\|_{\mathcal{B}(H^{2}_2(\mathbb{T}^n)\times H^{2}_2(\mathbb{T}^n),H^{2}_2(\mathbb{T}^n)))}&\leq L_g.
	\end{align*}
 For the definition of $f$, we choose a small number $1>\sigma>0$ and a $C^{1}$-function $\chi\colon H^2_2(\mathbb{T}^n)\to [0,1]$ such that $\chi(u)=1$ if $\|u\|_{H^2_2(\mathbb{T}^n)}\leq\sigma^2$, $\chi(u)=0$ if $\|u\|_{H^2_2(\mathbb{T}^n)}\geq2\sigma$ and $\|\txtD\chi\|_{\mathcal{B}(H^2_2(\mathbb{T}^n);\R)}\leq \sigma$. Then we define
 \begin{align*}
 	f&\colon H^{2}_2(\mathbb{T}^n)\times L_2(\mathbb{T}^n)\to L_2(\mathbb{T}^n),\,(u,v)\mapsto -(\chi(u)u)^{3}+(1+a)(\chi(u)u)^{2}-\frac{a}{2}v,\\
 	g&\colon L_2(\mathbb{T}^n)\times L_2(\mathbb{T}^n)\to L_2(\mathbb{T}^n),\,(u,v)\mapsto\tfrac{2}{a}u.
 \end{align*}
 If $\sigma$ is small enough, then it will hold that $L_f<a$.
\end{itemize}
With these choices, the equation
\begin{align*}
\epsilon \partial_t u^\epsilon &= A u^\epsilon + f(u^\epsilon,v^\epsilon),\\
\partial_t v^\epsilon &= B v^\epsilon + g(u^\epsilon,v^\epsilon)
\end{align*}
is equivalent to \eqref{Eq:FitzHugh-Nagumo_rescaled} as long as $\|u^{\epsilon}\|_{H^2_2(\mathbb{T}^n)}\leq\sigma^2$. Concerning the splitting $Y=Y_F^{\zeta}\oplus Y_S^{\zeta}$ we make analogous choices as for the Stommel model. Now, as for the Stommel model one can verify that our theory can be applied.

\subsection{The Maxwell-Bloch Equations}

We consider the Maxwell-Bloch equations in the slow time scale
\begin{align}
\begin{aligned}\label{Eq:Maxwell_Bloch}
	\epsilon \partial_t u_1^{\epsilon}&=\mu w^{\epsilon}u_2^{\epsilon}-(1+i\delta)u_1^{\epsilon},\\
	\epsilon \partial_t u_2^{\epsilon}&=\gamma_{\parallel}(\lambda +1-u_2^{\epsilon})-\frac{\mu}{2}\big(\overline{w^{\epsilon}}u_1^{\epsilon}+w^{\epsilon}\overline{u_1^{\epsilon}}\big),\\
	\partial_tw^{\epsilon}&=-\partial_xw^{\epsilon}+\kappa\left(\tfrac{1}{\mu}u_1^{\epsilon}-w^{\epsilon}\right),\\
		u^{\epsilon}_1(0)&=u_{0,1},\quad u^{\epsilon}_2(0)=u_{0,2},\quad w^{\epsilon}(0)=v_0,
	\end{aligned}
\end{align}
on the one-dimensional torus $\mathbb{T}$. Here, $\gamma_{\parallel},\kappa,\delta,\lambda>0$ are certain parameters and $\mu=\sqrt{\lambda\gamma_{\parallel}}$. The existence of slow manifolds for this system which are given as graphs over a certain subset of the slow variable space has been shown in \cite{MenonHaller} by a direct approach. We want to illustrate that these equations are a special case accessible through our more general methods. 

\begin{theorem}\label{Thm:Maxwell-Bloch:Approximation_Slow_Flow}
	Let $R>0$ be large enough, $T>0$ and $w_0\in C^1(\mathbb{T},\C)$ be fixed. Let further $(u^{\epsilon},w^{\epsilon})$ be the strict solution of \eqref{Eq:Maxwell_Bloch} with $\epsilon>0$ and let $(u^0,w^0)$ be the corresponding slow flow. Then there are a neighborhood $U\subset C^1(\mathbb{T},\C)$ of $w_0$	and constants $\epsilon_0,C,c>0$ such that for all  $\epsilon\in(0,\epsilon_0]$, $u_0\in C^1(\mathbb{T};\C)\times C^1(\mathbb{T};\R) $ with $\|u_{0,1}\|_{C^1(\mathbb{T};\C)}+\|u_{0,2}\|_{C^1(\mathbb{T};\R)}\leq R$ and $v_0\in U$ it holds that
	\[
		\sup_{0\leq t \leq T(R,U)}\big(\|u^{\epsilon}(t)-u^0(t)\|_{C^1(\mathbb{T};\C)\times C^1(\mathbb{T};\R)}+\|w^{\epsilon}(t)-w^0(t)\|_{C^1(\mathbb{T};\C)}\big)\leq C(\epsilon+e^{-c\epsilon^{-1}t}),
	\]
	where $T(R,U)$ is defined by
	\begin{align}\begin{aligned}\label{Eq:Maxwell-Bloch:Admissible_Intervall}
		T(R,U):=\inf\big\{t\in[0,T]:& \max\{\|u^0(t)\|_{C^1(\mathbb{T};\C)\times C^1(\mathbb{T};\R)},\|u^{\epsilon}(t)\|_{C^1(\mathbb{T};\C)\times C^1(\mathbb{T};\R)}\}> R\\
		&\text{ or } w^0(t)\notin U \text{ or } w^{\epsilon}(t)\notin U \big\}.
		\end{aligned}
	\end{align}
\end{theorem}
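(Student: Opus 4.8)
The plan is to recast \eqref{Eq:Maxwell_Bloch} as an abstract fast--slow system of the type studied in Section~\ref{Section:General_Fast_Slow:Full_System} and to deduce the estimate from Corollary~\ref{Cor:Original_and_Slow_Flow_Close}. I would take the fast variable $u^{\epsilon}=(u_1^{\epsilon},u_2^{\epsilon})$ in $X:=C^1(\mathbb T;\C)\times C^1(\mathbb T;\R)$, the slow variable $w^{\epsilon}$ in $Y:=C^0(\mathbb T;\C)$, and adjoin a real dummy variable $\zeta^{\epsilon}$ with $\partial_t\zeta^{\epsilon}=0$, $\zeta^{\epsilon}(0)=1$, used to replace the constant $\gamma_{\parallel}(\lambda+1)$ so that $f(0,0)=g(0,0)=0$ (as for the Stommel model). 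The linear part of the $(u_1,u_2)$ block is a bounded, pointwise-acting matrix operator, so $X_1=D(A)=X$ and the whole interpolation--extrapolation scale for $A$ collapses; one may take $\gamma_X=\delta_X=1$. For the slow operator one has $B=-\partial_x-\kappa$ on the $w$--component (and $-\mathrm{id}$ on the dummy component), with $D(B)=C^1(\mathbb T;\C)\times\R$ — exactly the space $Y_1$ in which the theorem measures the slow error — and $B$ generates the damped shift group, so it is not holomorphic and one must use $\delta_Y=1$; moreover $\omega_B=-\kappa<0$. The classical Maxwell--Bloch computation shows that the critical set is a graph $w\mapsto h^0(w)$ over (a neighbourhood $U$ of) $w_0$ and that on the corresponding attracting branch the linearisation of the fast vector field has spectrum in the open left half-plane, which after translating this branch to the origin makes $A$ exponentially stable with $0\in\rho(A)$. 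The required semigroup bounds then follow from Theorem~\ref{Thm:Semigroup_in_Scales} (here essentially by a direct estimate on $C^1(\mathbb T)$).

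Next I would make $f$ and $g$ globally Lipschitz in these scales: translate $(h^0(w_0),w_0)$ to $0$, absorb the $u$--linear part of the resulting nonlinearity into $A$ (this is where the stable fast Jacobian enters), and then multiply the remaining genuinely nonlinear terms by smooth cutoffs supported in a ball of radius $\sim R$ in $X$ and in $U$; combined, as in the FitzHugh--Nagumo example, with a rescaling of the slow variable, this yields Lipschitz constants $L_f,L_g$ meeting the continuity hypotheses of Section~\ref{Section:General_Fast_Slow:Full_System} and, crucially, the weak normal hyperbolicity \eqref{Eq:Weak_Normal_Hyperbolicity}, i.e.\ $\omega_f=\omega_A+C_AL_f<0$. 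Where the cutoffs are inactive the modified system coincides with \eqref{Eq:Maxwell_Bloch}, which is why the conclusion is claimed only up to the stopping time $T(R,U)$ of \eqref{Eq:Maxwell-Bloch:Admissible_Intervall}; the dummy component only shifts initial data by a harmless lower-order amount, absorbed using well-posedness on $[0,T]$.

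Given this, Corollary~\ref{Cor:Original_and_Slow_Flow_Close} applied to the modified system gives
\[
 \Big\|T_{\epsilon}(t)\!\begin{pmatrix}u_0\\ v_0\end{pmatrix}-T_{0}(t)\!\begin{pmatrix}h^0(v_0)\\ v_0\end{pmatrix}\Big\|_{X_1\times Y_1}\le C\big(\epsilon\,\txte^{(\omega_B+c)t}\|v_0\|_{Y_1}+(\epsilon^{\delta_Y}\txte^{(\omega_B+c)t}+\txte^{\epsilon^{-1}\omega_f t})\|u_0-h^0(v_0)\|_{X_1}\big).
\]
On the bounded interval $[0,T(R,U)]\subset[0,T]$ the factors $\txte^{(\omega_B+c)t}$ are bounded, $\|v_0\|_{Y_1}$ and $\|u_0-h^0(v_0)\|_{X_1}$ are controlled by $R$ and $U$, $\delta_Y=1$, and $\omega_f<0$; reinterpreting $X_1\times Y_1$ as the stated $C^1$ spaces and undoing the rescaling turns this into the bound $C(\epsilon+\txte^{-c\epsilon^{-1}t})$.

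The main obstacle will be the verification of \eqref{Eq:Weak_Normal_Hyperbolicity}: since the fast nonlinearities $\mu w u_2$ and $\mu(\overline w u_1+w\overline{u_1})$ carry no factor of $\epsilon$, smallness of $L_f$ is not automatic, and one has to balance the translation to the attracting branch, the slow-variable rescaling, and the choice of cutoff radii so that $\omega_A+C_AL_f<0$ while the cutoffs remain inactive on the region of interest — this interplay is what dictates how small $U$ and how large $R$ must be taken. Establishing quantitatively that the linearised fast operator on $C^1(\mathbb T)^2$ has a spectral gap large enough to beat $C_AL_f$ is the technical heart of the argument.
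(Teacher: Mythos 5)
Your proposal follows essentially the same route as the paper's: recast \eqref{Eq:Maxwell_Bloch} with fast variable in $C^1(\mathbb{T};\C)\times C^1(\mathbb{T};\R)$ and slow variable in $C(\mathbb{T};\C)$ (plus a dummy variable), absorb the $v_0$-frozen, $u$-linear coupling into a bounded Hurwitz operator $A$ (so the scale collapses and $\gamma_X=1$, $\delta_Y=1$), make the remaining bilinear terms globally Lipschitz with small constant via cutoffs and a rescaling of the slow variable, and then apply Corollary~\ref{Cor:Original_and_Slow_Flow_Close} on the bounded interval $[0,T(R,U)]$. The obstacle you single out as the technical heart — getting $\omega_A+C_AL_f<0$ despite the $O(1)$ fast nonlinearity — is resolved in the paper exactly by the balance you sketch: writing $w^{\epsilon}=\sigma\tilde v^{\epsilon}$ and cutting off $\tilde v^{\epsilon}-v_0/\sigma$ at radius of order $K/(\tilde K\mu\sigma)$ makes $L_f$ arbitrarily small as $\sigma\to0$ and $\tilde K\to\infty$, which is precisely what dictates the size of the neighborhood $U$.
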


\begin{theorem}\label{Thm:Maxwell-Bloch:Slow_Manifolds}
	Let $R>0$ be large enough and let $w_0\in C^1(\mathbb{T},\C)$ be fixed. Then there are $\epsilon_0>0$, a neighborhood $U\subset C^1(\mathbb{T},\C)$ of $w_0$ and a family of infinite-dimensional slow manifolds $S_{\epsilon}\subset C^1(\mathbb{T},\C)\times C^1(\mathbb{T},\R)\times C^1(\mathbb{T},\C) $ with $0<\epsilon\leq\epsilon_0$ such that the following assertions hold:
	\begin{enumerate}[(a)]
		\item The slow manifold $S_{\epsilon}$ is given as the graph of a differentiable mapping
			\[
				h^{\epsilon}\colon (U,\|\cdot\|_{C^1(\mathbb{T},\C)})\to C^1(\mathbb{T},\C)\times C^1(\mathbb{T},\R).
			\]
		\item $S_{\epsilon}$ is locally invariant under the semiflow generated by \eqref{Eq:Maxwell_Bloch}, i.e. the semiflow can only leave $S_{\epsilon}$ through its boundary.
		\item Let 
			\[
				S_{0,U}:=\{ (u,w)\in S_0: w\in U \}
			\]
			be the submanifold of the critical manifold which consists of all points whose slow components are elements of $U$. Then there is a constant depending on $R$ such that
			\[
				\operatorname{dist}(S_{\epsilon},S_{0,U})\leq C\epsilon.
			\]
		\item \label{Thm:Maxwell-Bloch:Slow_Manifolds:Flow_Convergence} Suppose that $\|u_0\|_{C^1(\mathbb{T};\C)\times C^1(\mathbb{T};\R)}\leq R$, $v_0\in U$. Assume that $(u_0,v_0)\in S_{\epsilon}$. Then for each $T>0$ there is a constant $C>0$ such that
\[
		\sup_{0\leq t \leq T(R,U)}\big(\|u^{\epsilon}(t)-u^0(t)\|_{C^1(\mathbb{T};\C)\times C^1(\mathbb{T};\R)}+\|w^{\epsilon}(t)-w^0(t)\|_{C^1(\mathbb{T};\C)}\big)\leq C\epsilon,
	\]
	where $T(R,U)$ is again defined by \eqref{Eq:Maxwell-Bloch:Admissible_Intervall}.
	\end{enumerate}
\end{theorem}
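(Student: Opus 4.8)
The plan is to put the Maxwell--Bloch system \eqref{Eq:Maxwell_Bloch} into the abstract form of Section~\ref{Section:General_Fast_Slow:Full_System} and then feed it through Section~\ref{sec:slowfinal}, exactly as for the two previous examples. The two structural facts that place Maxwell--Bloch inside the regime where slow manifolds exist are: after recentering at a point of the critical manifold the operator in the fast equation is \emph{bounded}, and the operator in the slow equation generates a $C_0$-\emph{group}, so that the $\zeta$-splitting becomes trivial and the resulting manifold is $\zeta$-independent and infinite-dimensional (the whole slow space is the ``slow part''). Concretely, fix $w_0$ and the corresponding point $(h^0(w_0),w_0)\in S_0$, where solving the algebraic constraint gives $h^0_2(w)=\gamma_{\parallel}(\lambda+1)\bigl(\gamma_{\parallel}+\tfrac{\mu^2|w|^2}{1+\delta^2}\bigr)^{-1}$ and $h^0_1(w)=\tfrac{\mu w\,h^0_2(w)}{1+\txti\delta}$; translate this point to the origin, and let $A$ be the linearization of the recentered fast right-hand side in $u=(u_1,u_2)$, a bounded multiplication operator on $X=X_1=C^1(\mathbb{T},\C)\times C^1(\mathbb{T},\R)$, so that the recentered fast nonlinearity $f$ satisfies $f(0,0)=0$ because $(h^0(w_0),w_0)\in S_0$. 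For the slow variable take $Y=C(\mathbb{T},\C)$ and $B=-\partial_x-\kappa$, whose $C_0$-group is the damped shift $\txte^{tB}w=\txte^{-\kappa t}w(\cdot-t)$; then $Y_1=D(B)=C^1(\mathbb{T},\C)$, and $0\in\rho(B)$ since $\operatorname{spec}(B)=\{-\kappa+\txti k:k\in\Z\}$. Because $A$ is bounded and invertible one may take $\gamma_X=\delta_X=\delta_Y=1$, so the interpolation--extrapolation scale of $A$ is trivial and only $Y_0=Y$ and $Y_1$ from the scale of $B$ enter the hypotheses.

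Next I would install the cutoffs and fix the splitting. Since the whole system is affine in the slow variable, after absorbing $-\kappa w$ into $B$ the slow nonlinearity is the bounded linear map $g(u,v)=\tfrac{\kappa}{\mu}u_1$, which needs no modification ($L_g=\kappa/\mu$); this is why no truncated slow subsystem is needed for assertion~(d). The recentered fast nonlinearity is a quadratic polynomial, essentially $f_1(u,v)=\mu v\,(u_2+h^0_2(w_0))$ together with a similar bilinear-plus-linear expression for $f_2$ involving $h^0_1(w_0)$; multiplying the $u$-dependence by a $C^1$ cutoff supported in $\{\|u\|_{C^1}\le 2\sigma\}$ and restricting $v$ to a neighbourhood $U$ of $0$ makes $f$ globally $C^1$ with $\txtD f(0,0)=(0,\txtD_v f(0,0))$ and bounded derivative, so \eqref{Eq:Nonlinearities_Differentiable} and the remaining hypotheses of Section~\ref{Section:General_Fast_Slow:Full_System} hold; the modified system coincides with \eqref{Eq:Maxwell_Bloch} as long as $\|u^{\epsilon}(t)\|_{C^1}\le\sigma^2$ and $w^{\epsilon}(t)\in U$, which is recorded by the stopping time $T(R,U)$. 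For the splitting I take $Y_F^{\zeta}=\{0\}$ and $Y_S^{\zeta}=Y$ for every small $\zeta$: conditions (i)--(v) of Section~\ref{Section:Bates_Difficult:Our_Approach} are then immediate (condition (iii) uses $\delta_Y=1$, so $\zeta^{\delta_Y-1}=1$), and since the shift is isometric on $C^1(\mathbb{T})$ one has $\|\txte^{-tB}\|_{\mathcal{B}(Y_1)}=\txte^{\kappa t}$, so (vi) holds with $M_B=C_B=1$, $N_F^{\zeta}=0$, $N_S^{\zeta}:=\zeta^{-1}|\omega_A|-\kappa$, hence $N_S^{\zeta}-N_F^{\zeta}\sim\zeta^{-1}|\omega_A|\to\infty$. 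Inserting $\gamma_X=\delta_Y=1$, $N_F^{\zeta}=0$ and $\epsilon\le c\zeta\tfrac{(L_fC_A\Gamma(\gamma_X))^{1/\gamma_X}+\omega_A}{\omega_A}$ into \eqref{Eq:Gap_Condition}, its last two summands are $O(\zeta)$ while the first tends to $L_fC_A/|\omega_A|$ as $\epsilon/\zeta\to0$, so the gap condition \emph{and} the weak normal-hyperbolicity condition \eqref{Eq:Weak_Normal_Hyperbolicity} both hold once $\zeta$ is small and $L_f<|\omega_A|/C_A$.

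Granting $L_f<|\omega_A|/C_A$, the results of Section~\ref{sec:slowfinal} apply with no extra work. Proposition~\ref{Prop:Slow_Manifold_Existence} produces the Lipschitz graph $h^{\epsilon,\zeta}$ over $Y_S^{\zeta}\cap Y_1=C^1(\mathbb{T},\C)$ (restricted to $U$), and since $Y_F^{\zeta}=\{0\}$ for all $\zeta$ this fixed point, hence the manifold, is independent of $\zeta$ (as stated in Section~\ref{sec:slowfinal} when $B$ generates a $C_0$-group); we write it $S_{\epsilon}$, local invariance being, as usual, part of the Lyapunov--Perron construction, which together with Proposition~\ref{Prop:Slow_Manifolds:Differentiability} gives (a) and (b). Proposition~\ref{Prop:Distance_Critical_Manifold} (with $h^{\epsilon,\zeta}_{Y_F^{\zeta}}\equiv0$) gives $\|h^{\epsilon}(v_0)-h^0(v_0)\|_{X_1}\le C\bigl(\epsilon+\tfrac1{(N_S^{\zeta}-N_F^{\zeta})^{\delta_Y}}\bigr)\|v_0\|_{Y_1}\le C'(\epsilon+\zeta)$ uniformly for $v_0\in U$; since $S_{\epsilon}$ does not depend on $\zeta$ we may let $\zeta$ descend to its admissible lower bound $\zeta\sim\epsilon$ and obtain $\operatorname{dist}(S_{\epsilon},S_{0,U})\le C\epsilon$, which is (c). Finally, for $(u_0,v_0)\in S_{\epsilon}$ we have $\|u_0-h^0(v_0)\|_{X_1}=\|h^{\epsilon}(v_0)-h^0(v_0)\|_{X_1}\le C\epsilon$, so Corollary~\ref{Cor:Original_and_Slow_Flow_Close} with $\delta_Y=1$ bounds $\|T_{\epsilon}(t)(u_0,v_0)-T_0(t)(h^0(v_0),v_0)\|_{X_1\times Y_1}$ by $C\epsilon$ on $[0,T]$; translated back to the $(u,w)$-variables and restricted to $[0,T(R,U)]$ this is assertion~(d). (One may alternatively route the last step through Proposition~\ref{Prop:Slow_Manifold:Attraction} and Corollary~\ref{Cor:Convergence_of_Flows}.)

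The hard part is the verification of $L_f<|\omega_A|/C_A$. In contrast with the Stommel and doubly-diffusive FitzHugh--Nagumo examples, here the slow variable couples into the fast equations through the bilinear terms $\mu w u_2$ and $\mu(\bar w u_1+w\bar u_1)$, so the linearization of the recentered fast nonlinearity at the base point has $\txtD_v f(0,0)\neq0$: its first component is $\delta v\mapsto\mu h^0_2(w_0)\,\delta v$ and its second is of size $\mu|h^0_1(w_0)|=\mu^2|w_0|\,|h^0_2(w_0)|/\sqrt{1+\delta^2}$. Shrinking the cutoff radius $\sigma$ only annihilates the $u$-derivatives of $f$, not this $v$-derivative, so $L_f$ is bounded below by a quantity of order $\mu\,\|h^0_2(w_0)\|_{C(\mathbb{T})}\,\max\{1,\mu\|w_0\|_{C(\mathbb{T})}/\sqrt{1+\delta^2}\}$. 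One therefore has to exploit the explicit form of $h^0_2$ to see that this lower bound decays, roughly like $\|w_0\|_{C(\mathbb{T})}^{-1}$, as the base point $w_0$ moves out along the critical manifold, while simultaneously controlling the spectral bound $\omega_A$ and the semigroup constant $C_A$ of the non-normal bounded multiplication operator $A$, using the attractivity of the relevant branch of the critical manifold, which is the input borrowed from \cite{MenonHaller}. This interplay is exactly what dictates the hypothesis that $R$ be large enough and restricts the admissible base points; everything downstream of the inequality $L_f<|\omega_A|/C_A$ is routine.
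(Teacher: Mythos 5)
Your overall architecture matches the paper's: bounded multiplication operator $A$ for the fast part, $B=-\partial_x-\kappa$ generating a $C_0$-group so that the splitting is trivial ($Y_F^{\zeta}=\{0\}$, $Y_S^{\zeta}=Y$, $N_S^{\zeta}-N_F^{\zeta}\sim\zeta^{-1}$), cutoffs to globalize the Lipschitz conditions, and then Propositions~\ref{Prop:Slow_Manifold_Existence}, \ref{Prop:Slow_Manifolds:Differentiability}, \ref{Prop:Distance_Critical_Manifold} and Corollaries~\ref{Cor:Original_and_Slow_Flow_Close}/\ref{Cor:Convergence_of_Flows} for (a)--(d). However, the step you yourself flag as ``the hard part'' --- making $L_f$ small enough that $\omega_f<0$ and \eqref{Eq:Gap_Condition} hold --- is exactly where your proposal has a genuine gap, and the route you sketch for it does not work. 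You keep the slow variable $w$ unrescaled, so after recentering at $(h^0(w_0),w_0)$ the derivative $\txtD_v f(0,0)$ is of size $\mu\,|h^0_2(w_0)|$ (and $\mu|h^0_1(w_0)|$), and no cutoff in $u$ removes it. Your proposed remedy --- exploiting that $h^0_2(w_0)$ decays like $\|w_0\|^{-1}$ so that only base points far out on the critical manifold are admissible, with ``$R$ large'' encoding this restriction --- is inconsistent with the statement being proved: the theorem allows an \emph{arbitrary} fixed $w_0\in C^1(\mathbb{T},\C)$ (for $w_0$ near $0$ one has $\mu h^0_2(w_0)\approx\mu(\lambda+1)$, which need not be below the decay rate of $A$), and $R$ in the theorem bounds the size of the fast data/cutoff radius, not the location of the base point. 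Nothing in the paper's argument is ``borrowed from \cite{MenonHaller}'' in the way you suggest; that reference is only prior work.

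The missing idea is the paper's rescaling of the slow variable, $\tilde v^{\epsilon}=\sigma^{-1}w^{\epsilon}$, combined with a cutoff $\chi_2$ of the slow \emph{deviation} $\tilde v-\tfrac{v_0}{\sigma}$ at radius $\sim K/(\tilde K\mu\sigma)$ (this cutoff is precisely what produces the neighborhood $U$ of $w_0$ in the statement), while the $u$-linear part of the coupling at the base point, $\mu v_0 u_2$ and $-\tfrac{\mu}{2}(\bar v_0u_1+v_0\bar u_1)$, is absorbed into the dissipative matrix $A$. After this, $\|\txtD_{v}f\|\lesssim\sigma\mu(2R+2)$ and $\|\txtD_{u}f\|\lesssim K/\tilde K$, so $L_f$ can be made arbitrarily small by taking $\sigma$ small and $\tilde K$ large, uniformly in $w_0$; the price is a large slow Lipschitz constant $L_g=\kappa/(\sigma\mu)$, which is harmless because in \eqref{Eq:Gap_Condition} $L_g$ only multiplies terms that vanish as $\zeta\to0$ thanks to the trivial splitting. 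In other words, the smallness you need cannot be obtained in the original variables; it is manufactured by shifting the weight of the bilinear coupling from $L_f$ (which must beat the fast decay) to $L_g$ (which the abstract conditions tolerate), and a dummy variable (or your recentering) handles $f(0,0)=0$. Without this rescaling-plus-deviation-cutoff step your verification of $L_f<|\omega_A|/C_A$ is not achievable for general $w_0$, so the proof as proposed does not go through.
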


First, we rescale \eqref{Eq:Maxwell_Bloch} so that the constants in front of the nonlinearities in the fast variable can be chosen small. We define $\tilde{v}^{\epsilon}:=\sigma^{-1}w^{\epsilon}$ for some $\sigma>0$ and obtain
\begin{align}
\begin{aligned}\label{Eq:Maxwell_Bloch_Rescaled}
	\epsilon \partial_t u_1^{\epsilon}&=\sigma\mu \tilde{v}^{\epsilon}u_2^{\epsilon}-(1+i\delta)u_1^{\epsilon},\\
	\epsilon \partial_t u_2^{\epsilon}&=-\gamma_{\parallel}u_2^{\epsilon}+\gamma_{\parallel}(1+\lambda)-\tfrac{\sigma\mu}{2}\big(\overline{\tilde{v}^{\epsilon}}u_1^{\epsilon}+\tilde{v}^{\epsilon}\overline{u_1^{\epsilon}}\big),\\
	\partial_t\tilde{v}^{\epsilon}&=-\partial_x\tilde{v}^{\epsilon}+\kappa\left(\tfrac{1}{\sigma\mu}u_1^{\epsilon}-\tilde{v}^{\epsilon}\right),\\
		u^{\epsilon}_1(0)&=u_{0,1},\quad u^{\epsilon}_2(0)=u_{0,2},\quad \tilde{v}^{\epsilon}(0)=\tfrac{v_0}{\sigma}.
	\end{aligned}
\end{align}
Straightforward calculation shows that the critical manifold to this rescaled equation is given as the graph of
	\begin{align}\label{Eq:Maxwell-Bloch_Critical_Manifold}
	h^0_{\sigma}\left(\tfrac{v_0}{\sigma}\right)=\begin{pmatrix} \mu(1-i\delta)\frac{(\lambda+1)\sigma v_0}{1+\delta^2+\sigma^2\lambda|v_0|^2}\\
	\frac{(1+\delta^2)(\lambda+1)}{1+\delta^2+\sigma^2\lambda|v_0|^2} \end{pmatrix}.
\end{align}
In particular, $h^0_{\sigma}$ will be bounded in the spaces we choose later with a bound that can be chosen independently of $\sigma$. This fact will be useful for the cutoff procedure of the nonlinearities.\\
As for the Stommel model, we introduce the dummy variable $\tilde{w}^{\epsilon}$ to ensure that the nonlinearities vanish at $0$. This way, we may rewrite \eqref{Eq:Maxwell_Bloch_Rescaled} as
\begin{align}
\begin{aligned}\label{Eq:Maxwell_Bloch_Dummy}
	\epsilon \partial_t u_1^{\epsilon}&=\sigma\mu (\tilde{v}^{\epsilon}-\tfrac{v_0}{\sigma})u_2^{\epsilon}-(1+i\delta)u_1^{\epsilon}+\mu v_0 u_2^{\epsilon},\\
	\epsilon \partial_t u_2^{\epsilon}&=-\tfrac{\mu}{2}\big(\overline{v_0}u_1^{\epsilon}+v_0\overline{u_1^{\epsilon}}\big)-\gamma_{\parallel}u_2^{\epsilon}+\sigma\tilde{w}^{\epsilon}-\tfrac{\sigma\mu}{2}\big(\overline{(\tilde{v}^{\epsilon}-\tfrac{v_0}{\sigma})}u_1^{\epsilon}+(\tilde{v}^{\epsilon}-\tfrac{v_0}{\sigma})\overline{u_1^{\epsilon}}\big),\\
	\partial_t\tilde{v}^{\epsilon}&=-\partial_x\tilde{v}^{\epsilon}+\kappa\left(\tfrac{1}{\sigma\mu}u_1^{\epsilon}-\tilde{v}^{\epsilon}\right),\\
	\partial_t\tilde{w}^{\epsilon}&=0,\\
		u^{\epsilon}_1(0)&=u_{0,1},\quad u^{\epsilon}_2(0)=u_{0,2},\quad \tilde{v}^{\epsilon}(0)=\tfrac{v_0}{\sigma},\quad\tilde{w}^{\epsilon}(0)=\tfrac{(\lambda +1)\gamma_{\parallel}}{\sigma}
	\end{aligned}
\end{align}
Now we make the following choices:
 \begin{itemize}
		\item As base spaces we take 
			\[
				X:=C^1(\mathbb{T};\C)\times C^1(\mathbb{T};\R)\quad\text{and}\quad Y:=C(\mathbb{T};\C)\times\C.
			\]
			Here, we identify $\C=\R\times\R$ and treat it as a real vector space. This way complex conjugation is a differentiable mapping.
		\item The fast variable is given by $u^{\epsilon}:=(u^{\epsilon}_1,u^{\epsilon}_2)$ and the slow variable is given by $v^{\epsilon}:=(\tilde{v}^{\epsilon},\tilde{w}^{\epsilon})$.
		\item The linear operator $A$ of the fast variable is even a bounded operator:
			\[
				A\colon X\to X, \begin{pmatrix}\Re(u_1)\\\Im(u_1) \\u_2\end{pmatrix}\mapsto \begin{pmatrix} -\Re(u_1) + \delta\Im(u_1)+\mu \Re(v_0) u_2\\
				  -\delta \Re(u_1) - \Im(u_1) + \mu \Im(v_0) u_2\\ -\mu \Re(v_0)\Re(u_1)-\mu \Im(v_0)\Im(u_1)-\gamma_{\parallel}\end{pmatrix},
			\]
			i.e. it is given by the multiplication with matrix
			\begin{align*}
				\begin{pmatrix} -1 & \delta & \mu\Re(v_0) \\ -\delta & -1 & \mu\Im(v_0) \\ -\mu\Re(v_0) & \mu\Im(v_0) &-\gamma_{\parallel}\end{pmatrix}
			\end{align*}
			The eigenvalues $\lambda_1,\lambda_2,\lambda_3$ of this matrix have a negative real part. Let
			\[
				K:=|\max\{\Re(\lambda_1),\Re(\lambda_2),\Re(\lambda_3)\}|
			\]
			The linear operator $B$ of the slow variable is given by
			\[
				B\colon Y\supset D(B)\to Y,(v_1,v_2)\mapsto (-\partial_xv_1-\kappa v_1,0),
			\]
			where the domain is given by
			\[
				D(B)=C^1(\mathbb{T};\C)\times \C.
			\]
		\item We choose the parameters $\gamma_X=\delta_Y=1$ and $\delta_X=0$. Thus, we only need the Banach scales for $\alpha\in\{0,1\}$. Since $A$ is a bounded operator, the Banach scale in the fast variable is just given by $X=X_1$. For the fast variable we have $Y_1=C^1(\mathbb{T};\C)\times \C$ endowed with the norm $$\|(v_1,v_2)\|_{\mathbb{Y}_1}=\|v_1\|_{C^1(\mathbb{T};\C)}+|v_2|.$$
		\item The nonlinearities $\tilde{f}$, $\tilde{g}$ are given by
		\begin{align*}
			\tilde{f}&\colon X\times Y_1\to X,\;\begin{pmatrix}(x_1,x_2)^T\\ (y_1,y_2)^T \end{pmatrix}\mapsto \begin{pmatrix} \sigma\mu (y_1-\tfrac{v_0}{\sigma})x_2 \\ \sigma y_2-\tfrac{\sigma\mu}{2}((\overline{y_1-\tfrac{v_0}{\sigma}})x_1-(y_1-\tfrac{v_0}{\sigma})\overline{x_1})\end{pmatrix}
		\end{align*}
		and
				\begin{align*}
			g&\colon X\times Y\to Y,\;\begin{pmatrix}(x_1,x_2)^T\\ (y_1,y_2)^T \end{pmatrix}\mapsto \begin{pmatrix} \tfrac{\kappa}{\sigma\mu}x_1 \\ 0 \end{pmatrix}.
		\end{align*}
		In order to make $\tilde{f}$ globally Lipschitz continuous, we use cutoff functions again. Suppose that the critical manifold is bounded by
		\[
			M:=\sup_{v\in C^1(\mathbb{T};\C), 0<\sigma<1} \left\| h^0_\sigma(v)\right\|_{C^1(\mathbb{T};\C)\times C^1(\mathbb{T};\R)}.
		\]
		Let further $R\geq 2M$ and $\chi_1\colon X\to [0,1]$ be a $C^1$-function (in the real sense) such that $\chi_1(u)=1$ for $\|u\|_{X}\leq 2R$, $\chi_1(u)=0$ for $\|u\|_{X}\geq 2R+2$. Moreover, let $\tilde{K}>0$ large enough and $\chi_2\colon Y_1\to [0,1]$ be a $C^1$-function (in the real sense) such that $\chi_2(v)=1$ for $\|v\|_{Y_1}\leq \frac{K}{2\tilde{K}\mu\sigma}$, $\chi_2(v)=0$ for $\|v\|_{Y_1}\geq \frac{K}{\tilde{K}\mu\sigma}$ and $\|\txtD\chi_2\|_{\mathcal{B}(Y_1,\R)}\leq \tfrac{3\tilde{K}\mu\sigma}{K}$. Now we define 
				\begin{align*}
			f&\colon X\times Y\to X,\;\\
			&\quad\begin{pmatrix}(x_1,x_2)^T\\ (y_1,y_2)^T \end{pmatrix}\mapsto \begin{pmatrix} \sigma\mu (y_1-\tfrac{v_0}{\sigma})x_2\chi_1(x_2)\chi_2(y_1-\tfrac{v_0}{\sigma}) \\ \sigma y_2-\tfrac{\sigma\mu}{2}((\overline{y_1-\tfrac{v_0}{\sigma}})x_1-(y_1-\tfrac{v_0}{\sigma})\overline{x_1})\chi_1(x_1)\chi_2(y_1-\tfrac{v_0}{\sigma})\end{pmatrix}
		\end{align*}
	\end{itemize}
With these choices it holds that \eqref{Eq:Maxwell_Bloch} is given by
\begin{align}
	\begin{aligned}
		\epsilon\partial_tu^{\epsilon}&=Au^{\epsilon}+f(u^{\epsilon},v_1^{\epsilon},\tfrac{\gamma_{\parallel}(1+\lambda)}{\sigma}),\\
		\partial_tv^{\epsilon}&=Bv^{\epsilon}g(u^{\epsilon},v^{\epsilon}),\\
		u^{\epsilon}_1(0)&=u_{0,1},\quad u^{\epsilon}_2(0)=u_{0,2},\quad \tilde{v}^{\epsilon}(0)=\tfrac{v_0}{\sigma},
	\end{aligned}
\end{align}
as long as $\| u^{\epsilon}_1 \|_{C(\mathbb{T};\C)}\leq R$, $\| u^{\epsilon}_2 \|_{C(\mathbb{T};\R)}\leq R$ and $\|\sigma v_1^{\epsilon}-v_0\|_{C(\mathbb{T};\C)}\leq\tfrac{K}{10\tilde{K}\mu}$.\\
Let us now check the conditions of Section~\ref{Section:General_Fast_Slow:Full_System} for this example.
\begin{enumerate}[(i)]
	\item It is well-known that $X=C^1(\mathbb{T};\C)\times C^1(\mathbb{T};\R)$ and $Y=C(\mathbb{T};\C)\times\C$ are Banach spaces.
	\item Since all eigenvalues of $A$ have a negative real part and since $A$ is bounded, it follows that it generates an exponentially stable analytic semigroup. Moreover, it is well-known and straightforward to verify that 
	\[
		\partial_x\colon C(\mathbb{T};\C)\supset C^1(\mathbb{T};\C)\to C(\mathbb{T};\C),\,v\mapsto v
	\]
	generates the translation group $(T(t))_{t\in\R}$ given
	\[
		T(t) v(x)=v(t+x).
	\]
	Therefore, also $B$ generates a $C_0$ group which even is exponentially decaying.
	\item Since $\gamma_X=\delta_Y=1$ and $\delta_X=0$, we only need the spaces $X,Y, X_1,Y_1$ which we already defined. If we wanted, we could complete the scales by adding H\"older spaces, but this is not necessary for our considerations.
	\item The differentiability of $f\colon X\times Y_1\to X$ and $g\colon X\times Y\to Y$ in the real sense is obvious. It is also clear that $g\colon X_1\times Y_1\to Y_1$ is Lipschitz continuous. $f\colon X\times Y_1\to X$ is also globally Lipschitz continuous due to the cutoff. We need the Lipschitz constant of $f$ to be smaller than the decay rate of $\txte^{tA}$, i.e. smaller than $K$. But if $\sigma\to 0$ and $\tilde{K}\to\infty$, then we have that
	\[
		\| \txtD f(x,y) \|_{\mathcal{B}(X\times Y_1,X)} \to0
	\]
	This shows that both Lipschitz conditions on $f$ hold true with small Lipschitz constant $L_f$.
	\item We introduced the dummy variable $\tilde{w}^{\epsilon}$ so that $f(0,0)=0$ and $g(0,0)=0$.
	\item Let $\omega_A\in(-K,0)$ be close to $-K$. Since we have we chose $\gamma_X=\delta_Y=1$ and $\delta_X=0$, the estimates
	 \begin{align*}
 	\|\txte^{tA}\|_{\mathcal{B}(X_1)}\leq &M_A \txte^{\omega_A t},\quad \|\txte^{tA}\|_{\mathcal{B}(X_{\gamma_X},X_1)}\leq C_At^{\gamma_X-1} \txte^{\omega_A t},\\
 	& \|\txte^{tA}\|_{\mathcal{B}(X_{\delta_X},X_1)}\leq C_At^{\delta_X-1} \txte^{\omega_A t}
 \end{align*}
 and
 \begin{align*}
  	\|\txte^{tB}\|_{\mathcal{B}(Y_1)}\leq M_B \txte^{\omega_B t},\quad \|\txte^{tB}\|_{\mathcal{B}(Y_{\delta_Y},Y_1)}\leq C_Bt^{\delta_Y-1} \txte^{\omega_B t}
 \end{align*}
 hold trivially.
	\item Since we can make $L_f$ arbitrarily small by choosing $\sigma$ small and $\tilde{K}$ large enough, we immediately obtain that $\omega_f=\omega_A+C_AL_f<0$.
\end{enumerate}
 Now, the proof of Theorem~\ref{Thm:Maxwell-Bloch:Approximation_Slow_Flow} is a direct application of Corollary~\ref{Cor:Original_and_Slow_Flow_Close}. Concerning Theorem~\ref{Thm:Maxwell-Bloch:Slow_Manifolds} we are in the easy situation that $B$ already generated a $C_0$-group. Thus, we may choose the trivial splitting
 \[
 	Y=Y^{\zeta}_{F}\oplus Y^{\zeta}_{S}:= \{0\}\oplus Y
 \]
 for all $\zeta>0$. Therefore, we may take $\zeta=C\epsilon$ for some $C\in(0,1)$, $N_F^{\zeta}=0$ and $N_S^{\zeta}=-\omega_A\zeta^{-1}-\kappa$. If $\epsilon>0$ is small enough, then all the conditions of Section~\ref{Section:Bates_Difficult:Our_Approach} can easily be verified and Theorem~\ref{Thm:Maxwell-Bloch:Slow_Manifolds} follows from the results in Section~\ref{sec:slowfinal}.

\section{Outlook}
\label{sec:outlook}

We have provided a quite general theory to use time scale separation in infinite-dimensional evolution equations with a focus on slow manifolds. Evidently, there are always further generalizations one could pursue. Examples are trying to weaken the conditions on the linear operators $A$ and $B$, trying to lift the theory into a completely non-standard form setting~\cite{Wechselberger4}, or extending it to quasilinear problems~\cite{Amann_2019}. In addition, the case of loss of invertibility/hyperbolicity of the fast dynamics has been a key focus in many finite-dimensional problems~\cite{KuehnBook}, i.e., in this scenario one has to track invariant slow manifolds through special regions. Therefore, combining our slow manifold theory here with the recent development of the blow-up method for fast-slow PDEs~\cite{EngelKuehn1} is a natural challenge for future work. 

From the viewpoint of applications, several directions are likely to be important. First, one may want to compute the invariant slow manifolds numerically, and we refer to~\cite[Ch.~11]{KuehnBook} for a survey of methods available for computing slow manifolds for finite-dimensional fast-slow systems. In fact, our analytically intermediate approximation~\eqref{Eq:Nonlinear_Slow_Subsystem_Extended} provides a hint, how to prove rigorous error estimates for computational methods based upon the invariance equation and/or iterated asymptotics for infinite-dimensional fast-slow dynamics. Second, working out concrete examples from pattern formation problems will be relevant as this can provide additional insights, which aspects of the theory need extensions, while others are immediately applicable. Third, trying to make many results, which have been obtained only via formal asymptotic matching methods for PDEs, rigorous is likely to be possible since a similar strategy using Fenichel theory has worked already in finite dimensions~\cite{Kaper,KuehnBook}.\medskip   

\textbf{Acknowledgments:} FH and CK acknowledge support of the EU within the TiPES project funded the European Unions Horizon 2020 research and innovation programme under grant agreement No.~820970. CK has also been supported by a Lichtenberg Professorship of the VolkswagenStiftung. FH and CK also acknowledge partial support of the SFB/TR109 ``Discretization in Geometry and Dynamics''.

\bibliographystyle{plain}
\bibliography{Bibliography}
\end{document}